\documentclass[10pt]{amsart} 

\usepackage{soul} 
\usepackage[dvipsnames]{xcolor}
\usepackage[bookmarks,colorlinks=true,citecolor=OliveGreen,linkcolor=RoyalBlue]{hyperref}
\usepackage{amssymb,amsthm}
\usepackage[inline]{enumitem}
\usepackage{mathrsfs}
\usepackage{mathtools}
\usepackage{xparse}
\usepackage{mathabx}
\usepackage{nicefrac}
\usepackage{thmtools}
\usepackage[capitalise]{cleveref}		
\usepackage{bm}

\usepackage{lmodern}

\makeatletter
\def\thmt@refnamewithcomma #1#2#3,#4,#5\@nil{%
  \@xa\def\csname\thmt@envname #1utorefname\endcsname{#3}%
  \ifcsname #2refname\endcsname
    \csname #2refname\expandafter\endcsname\expandafter{\thmt@envname}{#3}{#4}%
  \fi
}
\makeatother

\declaretheorem[numberwithin=section]{theorem}
\declaretheorem[sibling=theorem]{proposition}
\declaretheorem[sibling=theorem]{corollary}
\declaretheorem[sibling=theorem]{question}
\declaretheorem[sibling=theorem]{lemma}

\declaretheorem[sibling=theorem,style=definition]{definition}

\declaretheorem[sibling=theorem,style=remark]{remark}

\newcounter{claimCounter}[theorem]

\newcounter{subClaimCounter}[claimCounter]

\newlist{equivalent}{enumerate}{1}
\setlist[equivalent,1]{label=\textup{(\arabic*)}}

\newlist{sublemma}{enumerate}{1}
\setlist[sublemma,1]{label=\textup{(\alph*)}}

\newlist{sublemma*}{enumerate*}{1}
\setlist[sublemma*,1]{label=\textup{(\alph*)},afterlabel=\hspace{5pt}}

\newlist{orderedlist}{enumerate}{1}
\setlist[orderedlist,1]{label=\textup{(\roman*)}}

\newlist{orderedlist*}{enumerate*}{1}
\setlist[orderedlist*,1]{label=\textup{(\roman*)},afterlabel=\hspace{3pt}}

\newcommand{\seq}[1]{{\left\langle{#1}\right\rangle}}
\newcommand{\smallseq}[1]{\langle{#1}\rangle}

\newcommand{\rest}[1]{\restriction{#1}} 
\newcommand{\tth}{{}^{\textup{th}}}
\newcommand{\conc}{\hat{\,\,}}
\newcommand{\cat}{\widehat{\phantom{\alpha}}}
\newcommand{\andd}{\,\,\,\&\,\,\,}
\newcommand{\orr}{\,\vee\,}
\newcommand{\Then}{\,\Longrightarrow\,}
\newcommand{\Iff}{\,\,\Longleftrightarrow\,\,}
\renewcommand{\iff}{\leftrightarrow}
\newcommand{\then}{\,\,\rightarrow\,\,}

\DeclareMathOperator{\dom}{dom}

\DeclareMathOperator{\rk}{rk}

\newcommand{\converge}{\!\!\downarrow}
\newcommand{\diverge}{\!\!\uparrow}

\newcommand{\w}{\omega}
\newcommand{\s}{\sigma}
\newcommand{\vphi}{\varphi}
\renewcommand{\epsilon}{\varepsilon}
\renewcommand{\setminus}{\smallsetminus}

\renewcommand{\le}{\leqslant}
\renewcommand{\ge}{\geqslant}

\renewcommand{\preceq}{\preccurlyeq}
\renewcommand{\succeq}{\succcurlyeq}

\newcommand{\Nat}{\mathbb N}
\newcommand{\RR}{\mathbb R}

\DeclarePairedDelimiterX{\norm}[1]{\lVert}{\rVert}{#1}

\newcommand{\bSigma}{\boldsymbol{\Sigma}}

\newcommand{\bPi}{\boldsymbol{\Pi}}

\newcommand{\force}{\Vdash}
\newcommand{\nforce}{\nVdash}

\newcommand{\rooot}{\seq{}}
\newcommand{\Leaves}{\mathscr{L}}
\newcommand{\XXX}{\mathbb{X}}
\newcommand{\YYY}{\mathbb{Y}}
\newcommand{\WWW}{\mathbb{W}}
\newcommand{\symdiff}{\bigtriangleup}
\newcommand{\PP}{\mathbb{P}}
\newcommand{\QQ}{\mathbb{Q}}

\newcommand{\PowerSet}{\mathscr P}

\newcommand{\Dpoint}{\+D_{\textup{point}}}
\DeclareMathOperator{\trk}{rk}

\newcommand{\SSS}{\mathbb{S}}
\newcommand{\TTT}{\mathbb{T}}
\newcommand{\UUU}{\mathbb{U}}
\newcommand{\AAA}{\mathbb{A}}
\newcommand{\BBB}{\mathbb{B}}

\title{Topology, forcing, and graph colourings}

\author[N.~Greenberg]{Noam Greenberg}
\address{School of Mathematics and Statistics\\
Victoria University of Wellington\\
P.O.~Box 600, Wellington, New Zealand.}
\email{greenberg@sms.vuw.ac.nz}

\author[D.~Lecomte]{Dominique Lecomte}
\address{Sorbonne Universit\'e and Universit\'e Paris Cit\'e, CNRS, IMJ-PRG, F-75005 Paris, France\\
and Universit\'e de Picardie, I.U.T. de l'Oise, site de Creil, 13 all\'ee de la faïencerie, 60100 Creil, France.}
\email{dominique.lecomte@upmc.fr}

\author[D.~Turetsky]{Dan Turetsky}
\address{School of Mathematics and Statistics\\
Victoria University of Wellington\\
P.O.~Box 600, Wellington, New Zealand.}
\email{dan.turetsky@vuw.ac.nz}

\author[M.~Zelen\'y]{Miroslav Zelen\'y}
\address{Charles University, Faculty of Mathematics and Physics, Department of Mathematical Analysis \\ Sokolovsk\'a 83, 186 75 Prague, Czech Republic.}
\email{zeleny@karlin.mff.cuni.cz}

\begin{document}

\begin{abstract}
We introduce a family of forcing notions that are helpful in showing that certain graphs do not have countable $\bSigma^0_\alpha$ colourings. We construct graphs that are ``weakly minimal'' for such colourings. 
\end{abstract}

\maketitle

\section{Introduction}

One of the major results in descriptive set theory is the \emph{$\mathbb{G}_0$ dichotomy}:

\begin{theorem}[Kechris,Solecki,Todor\v{c}evi\'{c}, \cite{KST:G_0}]\label{thm:G_0_dichotomy}
  There is a Borel directed graph $\mathbb{G}_0$ on Cantor space such that for any analytic directed graph $G$ on a Polish space~$X$, exactly one of the following holds:
  \begin{equivalent}
    \item $G$ has a countable Borel colouring; 
    \item There is a continuous homomorphism from $\mathbb{G}_0$ to~$G$. 
  \end{equivalent}
\end{theorem}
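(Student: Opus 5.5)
We will prove the $\mathbb{G}_0$ dichotomy by first fixing the graph. Choose a sequence $\langle s_n : n\in\omega\rangle$ with $s_n\in 2^n$ that is dense in $2^{<\omega}$, meaning every finite string is extended by some $s_n$. Define $\mathbb{G}_0$ as the set of pairs $(s_n\conc 0\conc x,\; s_n\conc 1\conc x)$ for $n\in\omega$ and $x\in 2^\omega$. This is a closed-in-a-nice-way (hence Borel) directed graph on $2^\omega$.

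The first step shows that the two alternatives are mutually exclusive. The key lemma is that every non-meagre set $B\subseteq 2^\omega$ with the Baire property contains a $\mathbb{G}_0$-edge. To see this, note that such a $B$ is comeagre in some basic open set $[t]$. By density there is an $s_n$ extending $t$. The map $s_n\conc 0\conc x\mapsto s_n\conc 1\conc x$ is a homeomorphism between $[s_n\conc 0]$ and $[s_n\conc 1]$, so it preserves comeagreness. Hence some $x$ has both $s_n\conc 0\conc x$ and $s_n\conc 1\conc x$ in $B$. Now suppose $c$ is a countable Borel colouring of $G$ and $\varphi$ is a continuous homomorphism from $\mathbb{G}_0$ to $G$. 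Then $c\circ\varphi$ is a countable Borel colouring of $\mathbb{G}_0$. Some colour class is non-meagre and Borel, so by the lemma it contains an edge, which is a contradiction.

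The main step is to show that if $G$ has no countable Borel colouring, then a continuous homomorphism exists. We follow the Kechris--Solecki--Todor\v{c}evi\'c derivative argument, as simplified by Miller. Fix continuous maps $f:\omega^\omega\to X$ onto the relevant analytic set and $g:\omega^\omega\to G$ onto $G$. Call a set $A\subseteq X$ \emph{$G$-independent} if it contains no edge. The crucial observation is a reflection step: the $\sigma$-ideal $I$ generated by the Borel $G$-independent sets does not contain $X$. Because $G$ is analytic, the first reflection theorem (equivalently, a Lusin separation argument) shows that every analytic $G$-independent set is contained in a Borel $G$-independent set. Consequently, the analytic sets not in $I$ behave well under the following derivative.

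We then build, by recursion on $n$, approximations $u_n:2^n\to\omega^{<\omega}$ and edge witnesses $v_{n,t}\in\omega^{<\omega}$ for $t\in 2^{k}$, where $k$ runs over the remaining coordinates. These satisfy three conditions. First, they are increasing with lengths tending to infinity. Second, the coded edge $g([v])$ projects into $f([u_{n}(s_k\conc 0\conc t)])\times f([u_n(s_k\conc 1\conc t)])$. Third, the invariant holds that the ``realisable'' analytic set
\[
A(u_n,v_n)=\{\text{points coded at the root of a consistent extension}\}
\]
is not in $I$. The extension step is where we use that sets not in $I$ contain edges. Let $A$ be the realisable set for $s_n$. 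It is analytic and not independent modulo $I$, so removing the $I$-small part leaves it non-independent. Hence there is an edge between two points of it. Splitting $[u_n(s)]$ according to $s_n\conc 0$ and $s_n\conc 1$ then gives the next stage while preserving non-membership in $I$. Finally we set $\varphi(x)=f(\bigcup_n u_n(x\rest n))$. Continuity is immediate, and the edge witnesses guarantee that $\varphi$ maps each $\mathbb{G}_0$-edge to a $G$-edge, because $G$ is the closed image of the limit of the $v$'s.

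The main obstacle is making this invariant precise. Following Miller, we phrase it via the derivative
\[
D(A)=A\setminus\bigcup\{B\in\bSigma^1_1 : B\text{ is } G\text{-independent}\},
\]
iterated on the codes in $\omega^\omega$. We then show, using reflection and the countable colouring failure, that the iteration stabilises at a nonempty set whose every relative open piece contains an edge. With that in place, the recursive construction goes through using only this density-of-edges property at each stage.
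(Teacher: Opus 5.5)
Note first that the paper does not prove this theorem: it is quoted from Kechris--Solecki--Todor\v{c}evi\'c, and the closest argument in the paper is the effective construction in \cref{sec:an_embedding_result} for $H_3$.

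Your proof that the two alternatives are mutually exclusive is correct. It is the standard Baire category argument, the same idea as the topological reading of the proof of \cref{prop:K_alpha:no_colouring} at the end of Section~3. The gap is in the hard direction, where the step that carries the argument is never actually supplied.

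\textbf{The invariant does not propagate.} You only maintain that the set of $s_n$-coordinates of configurations compatible with the current approximation is $I$-positive. That gives one edge inside it, hence one $(n+1)$-configuration. However, the next realisable set consists of $s_{n+1}$-coordinates, that is, $u$-coordinates (where $s_{n+1}=u\conc i$) of pairs of $n$-configurations whose $s_n$-coordinates are adjacent. Positivity of the $s_n$-projection says nothing about that set.

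Here is a concrete failure. Let $X=2^\omega\times\{0,1\}$. Let $G$ consist of a copy of $\mathbb{G}_0$ on $2^\omega\times\{0\}$, together with all edges from $2^\omega\times\{0\}$ to the single point $(0^\omega,1)$. Take $s_1=\langle 0\rangle$ and $s_2=\langle 1,0\rangle$.
\begin{itemize}
\item At stage $0$ your recipe allows the edge $((a,0),(0^\omega,1))$. It also allows a $1$-approximation putting coordinate $\langle 0\rangle$ into a basic open subset of $2^\omega\times\{0\}$ and coordinate $\langle 1\rangle$ into $2^\omega\times\{1\}$.
\item The realisable set of this approximation is that basic open subset, which is not in $I$. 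So your invariant holds at stage $1$.
\item But every compatible $2$-configuration has $x_{\langle 1,0\rangle}=x_{\langle 1,1\rangle}=(0^\omega,1)$. Hence every extension has realisable set contained in $\{(0^\omega,1)\}$, which is in $I$ and contains no edge.
\end{itemize}
So the invariant fails at stage $2$ and stage $3$ is unreachable. A minor related point: $g([v])\subseteq f([u])\times f([u'])$ cannot be arranged in general, since $f$ is not open. You should first pull $G$ back to the code space and take a continuous surjection onto the pulled-back graph.

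\textbf{The repair is not the right derivative.} As written, $D(A)=A\setminus\bigcup\{B\in\bSigma^1_1 : B\text{ independent}\}$ is degenerate. Every singleton $\{x\}$ with $(x,x)\notin G$ is analytic and independent, so $D(X)$ is just the set of loops.

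Read lightface, this is the original KST proof.
\begin{itemize}
\item One step suffices: by reflection, $X\setminus D(X)$ is a union of countably many $\Delta^1_1$ independent sets.
\item Every nonempty $\Sigma^1_1$ subset of $D(X)$ contains an edge. Because this property is hereditary, requiring \emph{all} coordinates to lie in $D(X)$ makes mere nonemptiness of the $\Sigma^1_1$ realisable sets propagate.
\item The fusion must then be done in the Gandy--Harrington topology, or with shrinking effectively closed sets projecting into $G$ (like the sets $D_p$ in \cref{sec:an_embedding_result}). The property that every relatively open piece in the Polish topology contains an edge is not what is needed.
\end{itemize}

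Read as Miller's argument, the derivative acts on approximations, not on points.
\begin{itemize}
\item Call $p$ $Y$-terminal if no one-step extension of $p$ admits a configuration with all points in $Y$.
\item Your amalgamation shows that the realisable set $A(p,Y)$ of a terminal $p$ is independent. Separation then covers it by a Borel independent set.
\item Remove these countably many sets and iterate through the countable ordinals. Terminality is monotone and there are countably many approximations, so the process stabilises at a countable stage.
\item At that stage, either the empty approximation is terminal, which gives a countable Borel colouring, or you build the homomorphism along a sequence of non-terminal approximations.
\end{itemize}

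In either version the invariant is something other than $I$-positivity of a single projection. Supplying it is the missing core of your proof.
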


This result has found a large number of applications (for a survey, see for example \cite{BenMiller}). It is natural to ask for a level-by-level version of this result, with respect to the Borel hierarchy. This work was initiated in \cite{LecomteZeleny}, where the authors prove the following. 

\begin{theorem}[Lecomte,Zelen\'y, \cite{LecomteZeleny}] \label{thm:LZ}
 Let $\alpha\in \{1,2,3\}$. 
There is a zero-dimensional Polish space $\XXX_\alpha$, and an analytic directed graph $A_\alpha$ on~$\XXX_\alpha$, such that for any Polish space~$X$ (zero dimensional if $\alpha=1$), for any analytic directed graph~$G$ on~$X$, exactly one of the following holds:
\begin{equivalent}
  \item There is a countable $\bSigma^0_\alpha$ colouring of~$G$. 
  \item There is a continuous graph homomorphism from $A_\alpha$ to~$G$. 
\end{equivalent}
\end{theorem}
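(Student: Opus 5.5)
The plan follows the pattern of the $\mathbb{G}_0$ dichotomy (\cref{thm:G_0_dichotomy}): build a canonical obstruction graph $A_\alpha$, show it has no countable $\bSigma^0_\alpha$ colouring, and prove that any analytic graph without such a colouring admits a continuous homomorphism from it. Throughout, $\mathbb{X}_\alpha$ will be a closed subset of $\omega^\omega$ (for $\alpha\ge 2$ a suitable zero-dimensional Polish space carrying a finer topology). $A_\alpha$ will be the set of pairs $(x,y)$ that are ``$\alpha$-inseparable'': $y$ lies in every $\bSigma^0_\alpha$ set from a canonical countable family that contains~$x$, yet $x\ne y$. For $\alpha=1$ this can be made very concrete. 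Take $\mathbb{X}_1=\omega^\omega$ and let $A_1$ consist of pairs $(x,y)$ where $y$ is the limit of a fixed convergent sequence built from $x$, for instance $x_n = (x\restriction n)\conc \seq{k}\conc 0^\infty$. With $\bSigma^0_2$ and $\bSigma^0_3$ one climbs the hierarchy by iterating this limit construction (limits of limits), using the Baire-class structure of $\bSigma^0_\alpha$ sets.

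The easy direction is that (1) and (2) are mutually exclusive. Suppose $\varphi$ is a continuous homomorphism from $A_\alpha$ to $G$ and $c$ is a countable $\bSigma^0_\alpha$ colouring of $G$. Then $c\circ\varphi$ is a countable $\bSigma^0_\alpha$ colouring of $A_\alpha$, because continuous preimages preserve $\bSigma^0_\alpha$. So it suffices to show that $A_\alpha$ has no countable $\bSigma^0_\alpha$ colouring. I will prove this with a Baire category argument in an appropriate topology. For $\alpha=1$, some open colour class $U$ is nonempty. Pick $x\in U$ such that the defining sequence of $x$ stays in $U$, so that its limit partner is also in $U$; this produces an edge inside one colour class. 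For $\alpha=2,3$, refine the topology so that $\bSigma^0_\alpha$ sets become open (or locally closed). Then one colour class is comeagre in some basic set, and the iterated-limit structure forces an edge inside it.

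The hard direction assumes $G$ has no countable $\bSigma^0_\alpha$ colouring and builds a continuous homomorphism. Following the KST proof, consider the $\sigma$-ideal $I$ generated by $\bSigma^0_\alpha$ $G$-independent sets. By assumption, $X\notin I$. Using an effective refinement (Gandy--Harrington style, relativised to a parameter for $G$), the union of all $\Delta^1_1$ sets that are $\bSigma^0_\alpha$-independent is $\Pi^1_1$, and its complement $Y$ is a nonempty $\Sigma^1_1$ set. Every nonempty $\Sigma^1_1$ subset of $Y$ that is ``$\Sigma^0_\alpha$-large'' then contains a $G$-edge whose endpoints witness the appropriate limit pattern. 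One then runs a fusion construction along the tree of $\mathbb{X}_\alpha$: maintain a sequence of nonempty $\Sigma^1_1$ sets with shrinking diameters, together with witnesses in $G$ connecting the approximations to $x$ to the approximation to its partner $y$. At the end, compatible Cauchy sequences define $\varphi$.

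The main obstacle is this last step for $\alpha=2,3$. The topology that makes $\bSigma^0_\alpha$ sets open is not the Polish topology of $X$, yet continuity of $\varphi$ must hold for the original topology. I would handle this with the Gandy--Harrington topology restricted to $Y$, combined with Louveau-type separation for $\bSigma^0_\alpha$. Louveau's theorem ensures that $\Delta^1_1$ independent sets can be taken to be $\Sigma^0_\alpha$ with hyperarithmetic codes, which makes the ideal $I$ definable enough for the fusion. I expect the restriction to zero-dimensional $X$ when $\alpha=1$ to arise exactly because clopen refinements are unavailable otherwise.
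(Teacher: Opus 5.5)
Your skeleton (pull a colouring back along a homomorphism, show the obstruction is uncolourable, reduce effectively and run a fusion) is the right one. For $\alpha=3$ it is the skeleton of the paper's new proof, with $(\XXX_3,A_3)=(\YYY_3,H_3)$ (\cref{prop:H_alpha_is_not_colourable,thm:minimality_of_H_3}). However, you never define $A_\alpha$, and that is where the whole difficulty lies.

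Your $\alpha=1$ example is degenerate. As $n\to\infty$ we have $(x\restriction n)\conc\seq{k}\conc 0^\infty\to x$, so the ``partner'' is $x$ itself and the edges are loops. This contradicts $x\ne y$, and a graph with loops has no homomorphism into any loopless $G$, so the dichotomy would fail. The correct picture is a sequence of edges $(a_n,b_n)$ converging to a diagonal point $(p,p)$. Your uncolourability argument is also backwards: a limit of points of an open $U$ need not lie in $U$. Instead, the accumulation point lies in an open colour class, which then contains all late edges.

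More seriously, for $\alpha=3$ the recipe ``iterate the limit construction'' yields graphs such as $K_3$ or $L_3$, and the paper shows these are not minimal. The proof of \cref{prop:no_embedding_of_K_3_into_L_3} shows there is no continuous homomorphism from $K_3$ to $L_3$, and \cref{prop:L_3_is_not_minimal} shows there is none from $L_3$ to $H_3$. Yet none of the three has a countable $\bSigma^0_3$ colouring. So for such an $A_3$ the hard direction is false (take $G=L_3$ or $G=H_3$), and no Gandy--Harrington or Louveau machinery repairs it. The obstacle you identify, continuity in the original topology, is not the real one; the real one is combinatorial.

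The missing idea is the dynamic coding of \cref{def:Y_alpha}:
\begin{itemize}
\item the approximations $n^t$ and $k^t(m)$;
\item the coding lengths $c^x(m)=\min\{|s| : s\sqsubseteq_m x\}$;
\item the domain $\YYY_3$, on which $k^x(m)$ codes $y\rest{(c^x(m)+1)}$;
\item edges placed at $c^x(n^x)$ rather than at $n^x$.
\end{itemize}
This is what makes the fusion over $\TTT$ work. By \cref{lem:TTT:v_p_and_p_hat_properties}\ref{item:TTT:v_p_p_hat:when_v_p_is_same}, when $n^{v(p)}=n^p$ we have $v(\hat p)=\widehat{v(p)}$, and one of $v(p)$, $v(\hat p)$ is an immediate predecessor. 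So in the fourth case of the construction the new edge can be taken from $U_{v(p)}$. One endpoint is placed near the immediate predecessor $p^-$ using requirement~\ref{item:construction:main_closure_condition}, and the other automatically lands in $X_{v(\hat p)}$. For $L_3$ this is exactly what fails.

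Your uncolourability sketch for $\alpha=2,3$ also has a hole. The topology generated by all $\bSigma^0_\alpha$ sets is discrete for $\alpha\ge 2$. In any usable refinement, Baire category only gives a colour class comeagre on some basic set, and that set may lie inside the edge-free set $P=\{T^x(\rooot)=1\}$; in $\tau_{\alpha+1}$, $P$ is clopen, so this really happens.

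The paper's device is the untagging lemma (\cref{prop:simple_forcing:Pi_untagging_lemma} and its analogues). After passing to a suitably complete extension, a condition strongly forcing a $\Sigma_\alpha$ code has its restriction $p\rest\alpha\in\QQ_\alpha$ already strongly forcing it. Hence the class is comeagre in a $\tau_\alpha$-basic set that meets $P$, and this can be split into two conditions producing an edge. For $H_3$ you must additionally force with the supported conditions $\RR$, so that generic points stay in $\YYY_3$.
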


The known proofs of \cref{thm:LZ} (in \cite{LecomteZeleny} and in \cite{LZ2:rectangles}) for $\alpha = 3$ are quite technical, and thus difficult to generalise. In this paper we introduce a method for defining families of graphs without $\bSigma^0_\alpha$ colourings. For these families we obtain a weak version of \cref{thm:LZ} (see \cref{prop:dichotomy_alpha_alpha} below), in which the graph homomorphisms are not continuous, but in a class which is sufficient to preclude such colourings. Such graphs are relatively easy to define (showing that they are not colourable is a different matter). However, for $\alpha=3$, the graph defined in \cref{def:K_alpha} is not minimal for non-colourability (see \cref{prop:no_embedding_of_K_3_into_L_3}). In two steps, we devise a family of graphs~$H_\alpha$ (\cref{def:Y_alpha}) that does generalise the graphs from \cref{thm:LZ}, therefore giving yet a new proof of \cref{thm:LZ} (see \cref{prop:H_alpha_is_not_colourable} and \cref{thm:minimality_of_H_3}). 

The definition of the graphs $H_\alpha$ involves a notion of \emph{true stages}, also known as a \emph{representation theorem}, for specific $\Sigma^0_\alpha$ subsets of Cantor space. The representation theorem for Borel sets of Debs and Saint Raymond \cite{DS:representations} was used several times in investigations of close topics (see for example \cite{Lecomte:potential,Lecomte:SeparationRectangles,L5}). An effective version of this method was independently introduced by Montalb\'an \cite{Montalban:TrueStages}, articulating dynamic aspects of iterated priority arguments in computable structure theory, originally designed by Ash \cite{Ash:Metatheorem}. Montalb\'an's method was used in effective descriptive set theory \cite{DDW,DGHT:BSL} and computability theory to analyse variants of Wadge reducibility \cite{DQT2}, and their proof-theoretic strength as measured by reverse mathematics \cite{DGHT:JEMS}. In this paper we develop a particular system of true stage relations suited for our purposes. In future work, we aim to present a more general framework using general systems of representations (see for example \cite{Lecomte:potential}). 

The main tool that we introduce, to prove that certain graphs do not have $\bSigma^0_\alpha$ colourings, is a family of notions of forcing that all share an ``untagging'' property, inspired by Steel's method of forcing with tagged trees \cite{Steel:tagged}. One of our aims is to explain how this method can be also presented in the language of topology, using Baire category as a main tool. In \cref{sec:topology_and_forcing} we give a detailed development of the simplest notion of forcing in our family, and explain how to view it topologically; we explain the connection to a result of M\'atrai \cite{Matrai}, that relates descriptive complexity and Baire category. In \cref{sec:a_weak_dichotomy} we introduce the first family of graphs, $K_\alpha$, and prove the weak dichotomy theorem \cref{prop:dichotomy_alpha_alpha}. Toward defining the family of graphs $H_\alpha$, in \cref{sec:_smaller_non_colourable_graphs} we define an intermediate family (\cref{def:W_alpha_and_L_alpha}), and present a more elaborate notion of forcing to show non-colourability of these graphs. The most complicated family of graphs, $H_\alpha$, is defined in \cref{sec:candidates_for_minimal_graphs}, where we first develop the true stage machinery required to define these graphs. In \cref{sec:an_embedding_result} we show the minimality of~$H_3$. 

In \cref{sec:separators_of_iterated_fr_echet_ideals} we give a new proof of a result of Debs and Saint Raymond regarding separators of the iterated Fr\'echet filters and ideals, using yet another variant of our forcing with untagging. Finally, in \cref{sec:questions} we list some open questions, including one regarding separating subsets of product spaces by countable unions of Borel rectangles. This is a family of results and problems closely related to graph colourings; we mention some of the background in that section.

\section{Topology and forcing} \label{sec:topology_and_forcing}

\subsection{The space}

Let $\alpha$ be a computable ordinal. (We work with a computable ordinal since some of our results rely on lightface arguments; however, all results relativise to an oracle, so apply to all countable ordinals.)

For the purposes of the following definition, and the rest of the section, we identify~$\alpha$ with a computable well-ordering of a computable subset of~$\Nat$, for which the successor relation and the set of limit points are both computable (roughly, an ``ordinal notation''). Note that this computable presentation gives, for every limit $\delta\le \alpha$, uniformly, a computable increasing and cofinal sequence $(\delta_k)$ in~$\delta$.

\begin{definition} \label{def:the_space_L_alpha}
  We 
  let $T_\alpha\subset \Nat^{<\w}$ be a computable well-founded tree of rank~$\alpha$, which has a computable rank function, defined as follows:
  \begin{itemize}
    \item The root~$\rooot$ of the tree~$T_\alpha$ has rank $\alpha$; 
    
    \item If $\delta\le \alpha$ is a limit, with a chosen computable cofinal and increasing sequence $(\delta_k)$, and $\s\in T_\alpha$ has rank~$\delta$, then for all~$k$, $\s\conc k\in T_\alpha$, and has rank~$\delta_k$. 

    \item If $\beta<\alpha$ and $\s\in T_\alpha$ has rank $\beta+1$, then for all~$k$, $\s\conc k\in T_\alpha$ and has rank~$\beta$. 
  \end{itemize}

  Note that $T_\alpha$ is not ``saturated''; if $\s\in T_\alpha$ and $\rk(\s)>1$, then not all $\beta<\rk(\s)$ are realised as ranks of children of~$\s$ (nodes of the form $\s\conc k$). Indeed, we will make use of the following fact: for all $\s\in T_\alpha$, for all $\beta<\rk(\s)$, there are only finitely many~$k$ such that $\rk(\s\conc k)<\beta$. Indeed, if $\rk(\s)$ is a successor, then there are no such~$k$; if $\rk(\s)=\delta$ is a limit, then we use the fact that $(\delta_k)$ is inceasing and cofinal in~$\delta$. 

  We let~$\Leaves_\alpha$ denote the collection of leaves of~$T_\alpha$, which is a computable set (the leaves of~$T_\alpha$ are the nodes of rank~$0$). Any computable 
  bijection between $\Leaves_\alpha$ and~$\w$ induces a computable isomorphism of the space $2^{\Leaves_\alpha}$ and Cantor space~$2^\Nat$. 
\end{definition}

Henceforth, we often suppress mention of the ordinal~$\alpha$ in some subscripts.

\begin{definition} \label{def:the_tree_determined_by_a_labelling_of_the_leaves}
  Suppose that $x\in 2^{\Leaves_\alpha}$. We define a $\{0,1\}$-valued labelling $T^x= T^x_\alpha$ of~$T_\alpha$, extending~$x$, by transfinite recursion on the nodes of~$T_\alpha$, as follows:
  \begin{itemize}
    \item For each $\s\in \Leaves_\alpha$, $T^x(\s) = x(\s)$;
    \item For each $\s\in T_\alpha\setminus \Leaves_\alpha$, 
    \[
      T^x(\s) = 0 \qquad \Iff \qquad (\exists k)\,\,T^x(\s\conc k)=1. 
    \]
  \end{itemize}
\end{definition}

The definition of $T^x(\s)=0$ mimics an existential quantifier. By effective transfinite recursion, we obtain:

\begin{lemma} \label{lem:the_complexity_of_a_label_of_a_node}
  If $\s\in T_\alpha$ has rank~$\beta$, then 
  \[
    \left\{ x\in 2^{\Leaves_\alpha} \,:\,   T^x(\s) = 0   \right\}
  \]
  is $\Sigma^0_{\beta}$, uniformly in~$\beta$. 
\end{lemma}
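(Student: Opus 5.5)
The plan is to prove, by effective transfinite recursion along $T_\alpha$ (equivalently, by induction on $\rk(\s)$), the following statement together with its dual. If $\rk(\s)=\beta$, then
\[
  S_\s=\{x\in 2^{\Leaves_\alpha}: T^x(\s)=0\}
\]
is $\Sigma^0_\beta$, and its complement $\{x : T^x(\s)=1\}$ is $\Pi^0_\beta$. Moreover, a $\Sigma^0_\beta$-index for $S_\s$ is produced by a single partial computable function $e(\s)$. We read $\Sigma^0_0=\Pi^0_0$ as the clopen sets given by a finite condition.

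\emph{Base case.} Let $\s\in\Leaves_\alpha$, so $\rk(\s)=0$. Then $S_\s=\{x: x(\s)=0\}$ is a basic clopen subset of $2^{\Leaves_\alpha}$, and its complement is also basic clopen. An index for it is computed directly from $\s$; this uses the fact from \cref{def:the_space_L_alpha} that $\Leaves_\alpha$ is computable and that we have a computable bijection with~$\Nat$.

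\emph{Inductive step.} Let $\rk(\s)=\beta>0$. By \cref{def:the_tree_determined_by_a_labelling_of_the_leaves},
\[
  S_\s=\bigcup_k \left(2^{\Leaves_\alpha}\setminus S_{\s\conc k}\right).
\]
Each child $\s\conc k$ has rank $\gamma_k<\beta$. If $\beta=\gamma+1$, then every $\gamma_k=\gamma$. If $\beta=\delta$ is a limit, then $\gamma_k=\delta_k$, which is increasing and cofinal in~$\delta$. In both cases the inductive hypothesis makes each complement $2^{\Leaves_\alpha}\setminus S_{\s\conc k}$ a $\Pi^0_{\gamma_k}$ set, with an index computable from an index for $S_{\s\conc k}$. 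The union above is then an effective countable union of $\Pi^0_{<\beta}$ sets. This is exactly the form of a $\Sigma^0_\beta$ set in the standard effective hierarchy, including at limit levels, where $\Sigma^0_\delta$ sets are effective unions of $\Pi^0_{\delta_k}$ sets. The complement of $S_\s$ is then $\Pi^0_\beta$. When some $\gamma_k=0$ the corresponding piece is clopen, which is harmless.

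\emph{Uniformity.} We apply the recursion theorem. We define a computable procedure that, given $\s$ and an index $p$ for a partial computable function, does two things. If $\s$ is a leaf, it outputs the clopen index. Otherwise it outputs the $\Sigma^0_{\rk(\s)}$-index coding ``$\bigcup_k$ complement of the set with index $\varphi_p(\s\conc k)$, viewed at level $\rk(\s\conc k)$''. This procedure uses only that the rank function and the successor/limit structure of the notation are computable. A fixed point $p$ gives the function~$e$. Totality and correctness of~$e$ are verified by transfinite induction on rank, using well-foundedness of~$T_\alpha$.

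I expect the only real obstacle to be bookkeeping rather than mathematics. One must fix the conventions for $\Sigma^0_0$ and for limit levels consistently with the chosen ordinal notation. One must also check that passing from a $\Sigma^0_\gamma$-index to a $\Pi^0_\gamma$-index of the complement is computable and uniform in~$\gamma$. With the usual effective Borel codes, both points are immediate.
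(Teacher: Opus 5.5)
Your proposal is correct and is the same argument as the paper's, which just observes that $T^x(\s)=0$ is an existential quantifier over the children $\s\conc k$ (so the set is a union of complements of the corresponding sets for the children, which have rank $<\rk(\s)$) and invokes effective transfinite recursion. You have made the induction on rank and the recursion-theorem bookkeeping for uniformity explicit.
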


If $\alpha=1$, then $T_\alpha$ consists of a root and the set of leaves (which all have height~1). The space $2^{\Leaves_1}$ is identified with Cantor space in a straightforward way: $x\in 2^{\Leaves_1}$ is identified with $k\mapsto x(\smallseq{k})$. (We say that the \emph{location}~$k$ for Cantor space, which is an element of~$\Nat$, is identified with the \emph{location} $\smallseq{k}$ for the space $2^{\Leaves_1}$, which is an element of $\Leaves_1$.) The value of $T^x$ at the root records the fact if, considered as a subset of~$\Nat$ via identification with its characteristic function, $x$ is empty or not, with the value~0 indicating the latter. 

If $\alpha=2$, then the space $2^{\Leaves_2}$ is naturally identified with $(2^\Nat)^\Nat$, which in turn is identified with Cantor space using a pairing function. A location in~$\Nat$ coding the pair $(k,l)$ is identified with the leaf $\smallseq{k,l}\in \Leaves_2$. The value of~$T^x$ on a rank 1 node $\smallseq{k}$ records whether the $k\tth$ column of~$x$ is empty or not; while the value of~$T^x$ at the root records whether $x$ has an empty column or not. 

This continues to higher ranks. When $\alpha=3$, the space is naturally identified with $((2^\Nat)^\Nat)^\Nat$, and the value of~$T^x$ at the root records if, when considering~$x$ as built up of columns of columns, it has a column, each sub-column of which is nonempty; and so on. At limit levels~$\delta$, the space $2^{\Leaves_\delta}$ is naturally identified with the product $\prod 2^{\Leaves_{\delta_k}}$. 

We remark that these ``versions of Cantor space'' were used in the past, in particularly for understanding the iterated Fr\'echet ideals and filters; see \cite{DebsSaintRaymond,DayMarks}.

\subsection{The notion of forcing, and the associated topology}

\begin{definition} \label{def:simple_forcing_notion}
   We let $\QQ$ be the collection of all partial functions $p\colon T_\alpha\to \{0,1\}$ satisfying:
   \begin{orderedlist}
     \item $\dom p$ is finite; and
     \item If $\s\in T_\alpha\setminus \Leaves_\alpha$, then $p(\s)=0$ if and only if there is some~$k$ such that $p(\s\conc k)=1$. 
    \end{orderedlist}
   The set~$\QQ$ is partially ordered by reverse extension: for $p,q\in \QQ$, $q\le p$ if and only if $p\subseteq q$. 
\end{definition}

The partial ordering $\QQ$ is called a \emph{notion of forcing}, and its elements are called \emph{forcing conditions}. If $q\le p$ then we say that~$q$ \emph{extends}~$p$.  We also say that two conditions $p$ and~$q$ are \emph{compatible} if they have a common extension in~$\QQ$. 

\begin{lemma} \label{lem:simple_forcing:compatibility_is_being_a_function}
  Conditions $p,q\in \QQ$ are compatible if and only if $p\cup q$ is a function, in which case $p\cup q\in \QQ$. 
\end{lemma}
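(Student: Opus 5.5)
The plan is to prove both directions directly from \cref{def:simple_forcing_notion}. I read clause~(ii) as constraining every $\s\in T_\alpha\setminus\Leaves_\alpha$, whether or not $\s\in\dom p$, with ``$p(\s)=0$'' understood as false when $\s\notin\dom p$. Under this reading, a condition cannot give value~$1$ to a child $\s\conc k$ unless it also gives value~$0$ to~$\s$.

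This reading is needed for the lemma to hold. Under the weaker reading, where (ii) only applies to $\s\in\dom p$, take $p=\{\s\conc k\mapsto 1\}$ and $q=\{\s\mapsto 1\}$. Both would be conditions, and $p\cup q$ would be a function. But $p\cup q$ would have no extension in~$\QQ$, since any extension assigns~$1$ to both $\s$ and $\s\conc k$.

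\emph{Compatible implies function.} Suppose $r\in\QQ$ extends both $p$ and~$q$, so $p\subseteq r$ and $q\subseteq r$. Then $p\cup q\subseteq r$. A subset of a function is a function, so $p\cup q$ is a function.

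\emph{Function implies $r:=p\cup q\in\QQ$.} The domain $\dom r=\dom p\cup\dom q$ is finite. For clause~(ii), fix a non-leaf~$\s$ and check both implications.
\begin{itemize}
\item Suppose $r(\s)=0$. Without loss of generality $\s\in\dom p$ and $p(\s)=0$. By (ii) for~$p$ there is a~$k$ with $p(\s\conc k)=1$. Hence $r(\s\conc k)=1$.
\item Suppose $r(\s\conc k)=1$ for some~$k$. Without loss of generality this value comes from~$p$. By (ii) for~$p$, we get $\s\in\dom p$ and $p(\s)=0$. Hence $r(\s)=0$.
\end{itemize}
The only point used here is that $r$ is single-valued: any value $r$ takes at a node is the value $p$ or $q$ takes there, so each witness supplied by $p$ or $q$ transfers to~$r$.

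Since $r\in\QQ$ and $r$ extends both $p$ and~$q$, the conditions $p$ and~$q$ are compatible. I expect no real obstacle beyond fixing the reading of clause~(ii) above.
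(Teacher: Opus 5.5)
Your proof is correct and is essentially the paper's argument. One direction holds because $p\cup q\subseteq r$ for any common extension~$r$; the other checks clause~(ii) for $p\cup q$ at each non-leaf node. The paper writes this as a single chain of biconditionals, which relies, as you do, on clause~(ii) of \cref{def:simple_forcing_notion} holding at every non-leaf node, including nodes outside the domain — that is the literal reading of the definition, and your counterexample correctly shows why the weaker reading would break the lemma.
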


\begin{proof}
  If $r$ extends both~$p$ and~$q$ then $p\cup q\subseteq r$, implying that $p\cup q$ is a function. On the other hand, suppose that $s = p\cup q$ is a function. For all $\s\in T_\alpha\setminus \Leaves_\alpha$, $s(\s)=0$ if and only if $p(\s)=0$ or $q(\s)=0$ if and only if there is some~$k$ such that $p(\s\conc k)=1$ or $q(\s\conc k)=1$ if and only if there is some~$k$ such that $s(\s\conc k)=1$; so $s\in \QQ$, and extends both~$p$ and~$q$. 
\end{proof}

Viewed topologically, a condition $p\in \QQ$ can be thought of as a code of a $\Delta^1_1$ subset of our space $2^{\Leaves_\alpha}$.

\begin{definition} \label{def:simple_forcing:topology_of_space}
  For $p\in \QQ$ we let 
\[
  [p] = \left\{ x\in 2^{\Leaves_\alpha} \,:\, T^x\supset p  \right\}. 
\]
We let $\tau_\QQ$ denote the topology on $2^{\Leaves_\alpha}$ generated by the sets $[p]$ for $p\in \QQ$. 
\end{definition}

Observe that if $q$ extends $p$ then $[q]\subseteq [p]$ (this explains writing $q\le p$ when~$q$ extends~$p$, rather than $p\le q$). The converse holds, but this is not (yet) immediate, nor is it important. 

\smallskip

We will shortly verify that the $\tau_\QQ$-topology satisfies the Baire category theorem. To do so, we will show that we can view points in the space $2^{\Leaves_\alpha}$ as {filters} on~$\QQ$. We recall the following definitions. 

\begin{definition} \label{def:simple_forcing:generic_filters} \ 
  \begin{sublemma}
  \item A \emph{filter} on~$\QQ$ is a nonempty subset $G\subseteq \QQ$ that is:
  \begin{itemize}
      \item directed, i.e., for all $p,q\in G$ there is some $r\in G$ that extends both~$p$ and~$q$; and
      \item upwards closed, i.e., for all $p, q\in \QQ$, if $q$ extends~$p$ and $q\in G$ then $p\in G$. 
    \end{itemize}  

  \item A set $D\subseteq \QQ$ is called \emph{dense} if every $p\in \QQ$ has an extension in~$D$. 

  \item Let $\+D$ be a collection of dense subsets of~$\QQ$. We say that a filter $G$ of~$\QQ$ is \emph{$\+D$-generic} if $G$ intersects every $D\in \+D$.   
  \end{sublemma}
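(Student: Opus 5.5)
This is a definition, not an assertion, so there is nothing to derive. What I would do is check that each clause is well-posed for the particular poset $\QQ$ of \cref{def:simple_forcing_notion}, and record the reformulations that make these notions usable later. Throughout I keep the order ``$q\le p$ iff $p\subseteq q$''. So ``$q$ extends $p$'' means $q$ carries more information, and upward closure of a filter means closure under taking subfunctions that still lie in $\QQ$.

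First, for clause (a), I would note that the empty function $\emptyset$ lies in $\QQ$. Its domain is finite, and condition (ii) holds vacuously. Since $\emptyset$ is extended by every condition, upward closure puts $\emptyset$ in every filter, so nonemptiness is automatic once a filter contains anything. Next I would rephrase directedness using \cref{lem:simple_forcing:compatibility_is_being_a_function}. If $p,q\in G$ have a common extension $r\in G$, then $p\cup q\subseteq r$, so $p\cup q$ is a function. That lemma then gives $p\cup q\in\QQ$, and upward closure gives $p\cup q\in G$. Hence, for $\QQ$, a nonempty upward-closed $G$ is a filter if and only if $G$ is closed under pairwise unions and any two members of $G$ agree on their common domain. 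In particular $\bigcup G$ is a single partial function on $T_\alpha$.

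For clause (b), the only point to check is that density is a nonvacuous notion here. $\QQ$ itself is dense. Every dense set is nonempty, since $\QQ\neq\emptyset$. Density is preserved under enlarging the set, so the collection of dense sets is upward closed under inclusion. For clause (c), genericity depends only on the collection $\+D$, and it is monotone: if $\+D\subseteq\+D'$, then every $\+D'$-generic filter is $\+D$-generic.

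I do not expect a real obstacle. The only subtle step is the directedness reformulation, and it rests entirely on \cref{lem:simple_forcing:compatibility_is_being_a_function}. The care needed there is to use that lemma in the right direction: first, that a common extension forces $p\cup q$ to be a function, and second, that $p\cup q$ is then itself a condition, so upward closure applies to it.
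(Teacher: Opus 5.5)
You are right that this is a definition with nothing to prove, and the paper likewise gives no proof; it introduces these notions with ``We recall the following definitions.'' Your side checks are all correct. In particular, your use of \cref{lem:simple_forcing:compatibility_is_being_a_function} to show that $\bigcup G$ is a single partial function is exactly the fact the paper later relies on in the proof of \cref{lem:simple_forcing:filters_are_points}.
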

\end{definition}

\begin{definition} \label{def:simple_forcing:Dpoint}
  We define some subsets of $\QQ$:
\begin{itemize}
  \item For $\s\in T_\alpha$, let $D_\s$ be the set of $p\in \QQ$ such that $\s\in \dom p$. 


  \item For $p\in \QQ$, let $D_p$ be the set $\left\{ q\in \QQ \,:\,   q\le p, \text{ or } p \text{ and } q \text{ are incompatible}\right\}$. 
\end{itemize}

We let $\Dpoint$ be the collection of all sets $D_\s$ for $\s\in T_\alpha$ and $D_p$ for $p\in \QQ$. 
\end{definition}

\begin{lemma} \label{lem:simple_forcing:density_of_the_sets_in_Dpoint}
  Each set in $\Dpoint$ is dense. 
\end{lemma}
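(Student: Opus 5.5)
The sets $D_p$ are easy, and I will dispose of them first. Given $q\in\QQ$: if $q$ is compatible with $p$, then by \cref{lem:simple_forcing:compatibility_is_being_a_function} the union $q\cup p$ is a condition. It extends $q$ and also extends $p$, so it lies in $D_p$. Otherwise $q$ is incompatible with $p$, and then $q$ itself lies in $D_p$.

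The real work is for $D_\s$. Given $p\in\QQ$ and $\s\in T_\alpha$, I must find $q\le p$ with $\s\in\dom q$. My plan is a general extension lemma, proved by induction on the rank of~$\s$. It states that for every $p\in\QQ$, every $\s\notin\dom p$ and each value $i\in\{0,1\}$, there is $q\le p$ with $q(\s)=i$. In the construction, $q\setminus p$ consists of $\s$ together with some nodes in the subtree strictly below $\s$ that are outside $\dom p$. The nodes above $\s$ are a concern, since adding $\s$ could affect the condition (ii) at its parent~$\s^-$. I would therefore only choose $i$ compatible with the constraints from ancestors: if $\s^-\in\dom p$ with $p(\s^-)=1$, set $i=0$; otherwise either value is harmless. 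Here I note that a child labelled $0$ never forces anything on the parent. Also, if $p(\s^-)=0$, a witness child with value $1$ already lies in $\dom p$, so adding a new child does not break condition (ii) at the parent. If $\s^-\notin\dom p$, the new label is unconstrained at $\s^-$ since condition (ii) only speaks about nodes in the domain. This is why I would fix the value $0$ when $p(\s^-)=1$ and allow either value otherwise; taking $i=0$ whenever $\s$ is not the root is always safe.

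The recursion itself runs as follows. To make $q(\s)=0$, pick a child $\s\conc k\notin\dom p$; one exists since $\dom p$ is finite and $\s$ has infinitely many children. Recursively extend $p$ to give $\s\conc k$ value $1$, using the child's unconstrained status, and then add $\s\mapsto 0$. To make $q(\s)=1$, I need every child of $\s$ that is in the domain to have value $0$. Since $\s\notin\dom p$, I have to check that no child in $\dom p$ has value $1$. This is the main obstacle: $p$ might already have $p(\s\conc k)=1$ with $\s$ undefined. In that case simply choose $i=0$, which is consistent, because the instructions only require putting $\s$ into the domain. So for the base claim I aim only for ``some value''. The rule is: set $\s\mapsto 0$ if some child in $\dom p$ has value $1$. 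Otherwise, if $\s$ is a leaf set $\s\mapsto 0$. Otherwise pick a fresh child, recursively give it value $1$, and set $\s\mapsto 0$. This yields $q(\s)=0$ in all cases, except at leaves where the value is free.

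It remains to handle the parent constraint. If $p(\s^-)=1$, then all children of $\s^-$ in the domain must be $0$, and my rule gives $0$. If $\s^-\notin\dom p$ or $p(\s^-)=0$, any value is fine. Thus I would prove by induction on $\rk(\s)$ the following statement: for $\s\notin\dom p$ with the parent not forcing value $1$, there is $q\le p$ with $q(\s)=1$ and all new nodes below $\s$. For the leaf case, set $x(\s)=1$ directly. For a non-leaf $\s$, this requires all children in the domain to be $0$, and that holds exactly when $\s\notin\dom p$ would not otherwise be forced. For the recursion I only ever invoke it on fresh children, whose own children then lie outside $\dom p$ entirely if I choose $k$ beyond all first coordinates appearing in $\dom p$. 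So the induction requires only the clean case in which the whole subtree below $\s$ is disjoint from $\dom p$. That case is a straightforward induction on rank, using well-foundedness of $T_\alpha$: label a fresh leaf $1$; for a non-leaf, label $\s$ with $1$ and no children. Indeed a non-leaf node with value $1$ only requires that no child in the domain has value $1$, so no children need to be added at all. This simplification makes the argument short, and I expect the only care needed is in verifying condition (ii) at $\s$ and at $\s^-$.
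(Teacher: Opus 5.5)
Your final construction is exactly the paper's: for $\s\notin\dom p$, add $\s\mapsto 0$, and if $\s$ is not a leaf also add $\s\conc k\mapsto 1$, where $k$ is large enough that no node of $\dom p$ extends $\s\conc k$. Your treatment of $D_p$ is also the paper's, and the ``induction on rank'' collapses to this single step and can be dropped. The reasoning around the construction, however, rests on a misreading of clause (ii) in the definition of $\QQ$. That clause is a biconditional for \emph{every} non-leaf $\s\in T_\alpha$, not only for $\s\in\dom p$. Since ``$p(\s)=0$'' is false when $\s\notin\dom p$, the clause forces $\s\in\dom p$ with $p(\s)=0$ whenever some $p(\s\conc k)=1$. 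The paper relies on this in the proof of \cref{lem:simple_forcing:filters_are_points}. Moreover, \cref{lem:simple_forcing:compatibility_is_being_a_function}, which you cite for $D_p$, would be false under your reading: $\{(\s,1)\}$ and $\{(\s\conc k,1)\}$ would both be conditions, and their union would be a function but not a condition.

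This misreading makes three of your claims wrong.
\begin{itemize}
\item The ``main obstacle'' you identify, $p(\s\conc k)=1$ with $\s\notin\dom p$, never arises.
\item Your assertion that a $1$-label below an undefined parent is harmless is wrong.
\item The auxiliary statement you propose to prove by induction (some $q\le p$ with $q(\s)=1$ and all new nodes below $\s$) is false whenever $\s\ne\rooot$ and $\s^-\notin\dom p$. That is precisely the situation in which you invoke it, for the fresh child $\s\conc k$ of $\s\notin\dom p$.
\end{itemize}
The final object survives only because $\s$ and $\s\conc k$ are labelled together. So drop the intermediate ``extension'' and check (ii) directly for $q=p\cup\{(\s,0),(\s\conc k,1)\}$:
\begin{itemize}
\item at $\s$, which has a $1$-labelled child;
\item at $\s\conc k$, which has no children in $\dom q$ by the choice of $k$;
\item at $\s^-$, where only a $0$-labelled child was added, so nothing changes.
\end{itemize}
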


\begin{proof}
  Let $\s\in T_\alpha$, and let $p\in \QQ$; we find an extension of~$p$ in $D_\s$. If $\s\notin \dom p$, we extend~$p$ to $p'$ by setting $p'(\s)=0$, and further, if~$\s$ is not a leaf of~$T_\alpha$, then we set $p'(\s\conc k)=1$ for some large~$k$ (so that $\dom p$ contains no extensions of $\s\conc k$). This choice of~$k$ ensures that $p'\in \QQ$, so $p'\in D_\s$. 

  For $D_p$, the argument is quick. Let $q\in \QQ$. If $p$ and $q$ are incompatible  then $q\in D_p$. Otherwise, $q$ and~$p$ have a common extension~$r$; then $r$ is an extension of~$q$ in~$D_p$.
\end{proof}

\begin{definition} \label{def:simple_forcing:x_G}
  For a filter $G\subseteq \QQ$, we let 
  \[
    x_G = \big(\bigcup G\big)\rest{\Leaves_\alpha}. 
  \]
\end{definition}

\begin{lemma} \label{lem:simple_forcing:filters_are_points}
  If $G$ is $\Dpoint$-generic, then $x_G\in 2^{\Leaves_\alpha}$, and for all $p\in \QQ$, 
  \[
    x_G \in [p] \Iff p\in G.
  \]
\end{lemma}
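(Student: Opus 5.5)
Let $g=\bigcup G$. The plan is to show first that $g$ is a total function $T_\alpha\to\{0,1\}$ satisfying the closure condition of \cref{def:the_tree_determined_by_a_labelling_of_the_leaves}. Then $g=T^{x_G}$, and both directions of the equivalence follow quickly.

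\emph{Step 1: $g$ is a total function on~$T_\alpha$.} Since $G$ is directed, any two $p,q\in G$ have a common extension, so by \cref{lem:simple_forcing:compatibility_is_being_a_function} $p\cup q$ is a function; hence $g$ is a function. Since $G$ meets every $D_\s$, each $\s\in T_\alpha$ lies in $\dom g$. In particular $x_G=g\rest{\Leaves_\alpha}\in 2^{\Leaves_\alpha}$.

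\emph{Step 2: $g=T^{x_G}$.} We argue by induction on rank; it suffices to check that for each non-leaf~$\s$, $g(\s)=0$ iff $g(\s\conc k)=1$ for some~$k$.
\begin{itemize}
\item If $g(\s\conc k)=1$, choose $p,q\in G$ with $p(\s)$ and $q(\s\conc k)=1$ defined, and a common extension $r\in G$. Condition~(ii) for~$r$ gives $r(\s)=0$, so $g(\s)=0$.
\item Conversely, if $g(\s)=0$, pick $p\in G$ with $p(\s)=0$. By~(ii) there is $k$ with $p(\s\conc k)=1$, so $g(\s\conc k)=1$.
\end{itemize}
Thus $g$ satisfies the recursion defining $T^{x_G}$ and agrees with $x_G$ on leaves. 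By well-founded induction on~$T_\alpha$, $g=T^{x_G}$.

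\emph{Step 3: the equivalence.} If $p\in G$, then $p\subseteq g=T^{x_G}$, so $x_G\in[p]$. Conversely, suppose $x_G\in[p]$, i.e.\ $p\subseteq T^{x_G}=g$. Use genericity for $D_p$: some $q\in G$ satisfies either $q\le p$ or $q$ incompatible with~$p$. If $q$ were incompatible with~$p$, then by \cref{lem:simple_forcing:compatibility_is_being_a_function} $p\cup q$ would not be a function. But $p,q\subseteq g$, so $p\cup q$ is a function, a contradiction. Hence $q\le p$, and upward closure gives $p\in G$.

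The only real point is Step~2, identifying $\bigcup G$ with the full labelling $T^{x_G}$; that identification is what makes the reverse direction of the equivalence work via $D_p$.
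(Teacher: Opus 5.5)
Your proof is correct and follows essentially the same route as the paper. You show that $\bigcup G$ is a total function satisfying the recursion that defines $T^{x_G}$, and you use the dense set $D_p$ together with the compatibility lemma for the reverse direction. The only (harmless) difference is in the step $g(\s\conc k)=1 \Rightarrow g(\s)=0$: you pass to a common extension, whereas the paper reads $p(\s)=0$ directly off condition (ii) for a single $p\in G$ with $p(\s\conc k)=1$.
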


\begin{proof}
  Suppose that $G$ is a $\Dpoint$-generic filter. We let 
  \[
    S = \bigcup G.
  \]
  Since $G$ is directed, $S$ is a function on $T_\alpha$. Since $G\cap D_\s\ne\emptyset$ for all $\s\in T_\alpha$, $S$ is a total function, i.e., $\dom S = T_\alpha$. 

  We claim that $S = T^{x_G}$. To do so, we check that~$S$ satisfies the definition of $T^{x_G}$. By definition of~$x_G$, we have $S\rest{\Leaves_\alpha} = x_G$. Suppose that $\s\in T_\alpha\setminus \Leaves_\alpha$. If $S(\s)=0$, let $p\in G$ with $p(\s) = 0$; since $p\in \QQ$, there is some~$k$ such that $p(\s\conc k)=1$, so $S(\s\conc k)=1$. On the other hand, if for some~$k$ we have $S(\s\conc k)=1$, let $p\in G$ such that $p(\s\conc k)=1$; since $p\in \QQ$, $p(\s)=0$, so $S(\s)=0$. 

  Hence, if $p\in G$, then $p\subset S = T^{x_G}$, that is, $x_G\in [p]$. In the other direction, let $p\in \QQ$, and suppose that $x_G\in [p]$, i.e., that $p\subseteq S$. For all $q\in G$, $q\subseteq S$, so $p\cup q$ is a function. By \cref{lem:simple_forcing:compatibility_is_being_a_function}, $p$ and~$q$ are compatible. That is, $p$ is compatible with all $q\in G$. Since $G\cap D_p\ne\emptyset$, $G$ must contain an extension of~$p$. Since~$G$ is closed upwards in~$\QQ$, $p\in G$.
\end{proof}

\begin{remark} \label{rmk:Stone_duality}
  In fact, the map $G\mapsto x_G$ is a \emph{bijection} between the $\Dpoint$-generic filters and the space $2^{\Leaves_\alpha}$, giving us some kind of Stone duality. We will however not need this fact. It is enough that ``most'' points in $2^{\Leaves_\alpha}$ are~$x_G$ for a generic~$G$, in a sense made precise below.  
\end{remark}

The following is often referred to as the \emph{Rasiowa-Sikorski lemma}. 

\begin{lemma} \label{lem:simple_forcing:Rasiowa-Sikorski}
  For any countable collection $\+D$ of dense subsets of~$\QQ$, for all $p\in \QQ$, there is a $\+D$-generic filter~$G$ such that $p\in G$. 
\end{lemma}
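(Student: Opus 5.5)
This is the standard Rasiowa–Sikorski argument: build a descending sequence of conditions meeting each dense set in turn, and take the upward closure.

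Enumerate $\+D$ as $D_0, D_1, D_2, \dots$. If $\+D$ is finite or empty, repeat sets in the list or pad it with the trivially dense set $\QQ$ itself.

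Set $p_0 = p$. Given $p_n$, the density of $D_n$ lets us choose some $p_{n+1}\in D_n$ with $p_{n+1}\le p_n$. This gives a descending sequence
\[
  p = p_0 \ge p_1 \ge p_2 \ge \cdots
\]
in~$\QQ$. Only countably many choices are made, so this needs at most dependent choice. In fact, $\QQ$ is countable, being a set of finite partial functions on the countable set $T_\alpha$, so we could fix an enumeration of~$\QQ$ and always take the least suitable extension, avoiding choice altogether.

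Now let
\[
  G = \left\{ q\in \QQ \,:\, (\exists n)\,\, p_n \le q \right\},
\]
the upward closure of the sequence. We check the required properties in turn.
\begin{itemize}
  \item $G$ is nonempty and contains $p = p_0$.
  \item $G$ is upwards closed: if $q\in G$ and $q\le r$, then $p_n\le q\le r$ for some~$n$, and $\le$ is transitive (it is reverse inclusion), so $r\in G$.
  \item $G$ is directed: if $q, q'\in G$ are witnessed by $p_n$ and $p_m$, then $p_{\max(n,m)}$ lies in~$G$ and extends both, because the sequence is descending.
  \item $G$ is $\+D$-generic: for each~$n$, $p_{n+1}\in D_n\cap G$.
\end{itemize}

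There is no real obstacle here. The only points needing care are to fix an enumeration of $\+D$, possibly with repetitions, and to note that the construction uses nothing specific to~$\QQ$ beyond its being a partial order.
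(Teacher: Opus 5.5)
Your proof is correct and is the same argument the paper gives: enumerate the dense sets as $(D_n)$, build a descending sequence $p = p_0 \ge p_1 \ge p_2 \ge \cdots$ in $\QQ$ with $p_{n+1}\in D_n$, and take its upward closure as $G$. Your extra remarks about padding a finite family and about avoiding choice via the countability of $\QQ$ are correct but not needed.
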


\begin{proof}
  Write $\+D = \left\{ D_n \,:\, n\in \Nat  \right\}$ and define a decreasing sequence of conditions $(p_n)$ as follows: $p_0=p$; given $p_n$, $p_{n+1}$ is some condition extending~$p_n$ such that  $p_{n+1}\in D_n$, which exists since~$D_n$ is dense. Then $G = \left\{ q \,:\,  (\exists n)\,\,p_n\text{ extends }q \right\}$ is a $\+D$-generic filter containing~$p$. 
\end{proof}

Applying \cref{lem:simple_forcing:Rasiowa-Sikorski} to $\Dpoint$, we get that for all $p\in \QQ$, $[p]\ne\emptyset$. This implies:

\begin{lemma} \label{lem:simple_forcing:dense_open_and_comeagre} \ 
  \begin{sublemma}
      \item A set $U\subseteq 2^{\Leaves_\alpha}$ is $\tau_\QQ$-dense and open if and only if there is a dense set $D\subseteq \QQ$ such that 
    \[
      U =  \bigcup \left\{ [p] \,:\,  p\in D \right\}. 
        \]

    \item A set $A\subseteq 2^{\Leaves_\alpha}$ is $\tau_\QQ$-comeagre if and only if there is a countable collection $\+D\supseteq \Dpoint$ of dense subsets of~$\QQ$ such that 
  \[
    A\supseteq \left\{ x_G \,:\,  G\text{ is a $\+D$-generic filter} \right\}.
  \]
  \end{sublemma}
\end{lemma}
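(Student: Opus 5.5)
For part (a), I will use that the sets $[p]$ form a basis of $\tau_\QQ$. The first step is to check that they are closed under finite intersections, so that they really are a basis and not merely a subbasis. Given $p,q\in\QQ$, if $p$ and $q$ are compatible then $[p]\cap[q]=[p\cup q]$ by \cref{lem:simple_forcing:compatibility_is_being_a_function}. If they are incompatible, then $p\cup q$ is not a function, and no $T^x$ can extend both, so $[p]\cap[q]=\emptyset$. The key non-trivial fact is that $[p]\neq\emptyset$ for every $p$, which follows from \cref{lem:simple_forcing:Rasiowa-Sikorski} applied to $\Dpoint$ together with \cref{lem:simple_forcing:filters_are_points}. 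From this, $[q]\subseteq[p]$ and nonemptiness force $q$ to be compatible with $p$.

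For the forward direction of (a), let $U$ be dense open and put $D=\{p:[p]\subseteq U\}$. Then $U=\bigcup_{p\in D}[p]$ because $U$ is open. To see that $D$ is dense in $\QQ$, take any $q$. The set $[q]$ is nonempty and open, so it meets $U$. Hence some basic set $[r]$ satisfies $[r]\subseteq[q]\cap U$, and by the basis property we may take $r=q\cup p'$ for a suitable $p'$. Then $r\le q$ and $r\in D$.

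For the converse of (a), the set $U=\bigcup_{p\in D}[p]$ is clearly open. It is dense because every nonempty basic set $[q]$ contains $[r]$ for some $r\le q$ with $r\in D$, and $[r]\neq\emptyset$.

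For part (b), suppose first that $A$ is comeagre. Then $A$ contains $\bigcap_n U_n$ for some dense open sets $U_n$. Let $D_n$ be the dense sets given by (a), and let $\+D=\Dpoint\cup\{D_n\}$; this is countable because $T_\alpha$ and $\QQ$ are countable. If $G$ is $\+D$-generic, pick $p\in G\cap D_n$. Then $x_G\in[p]\subseteq U_n$ by \cref{lem:simple_forcing:filters_are_points}, so $x_G\in A$.

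Conversely, given a countable $\+D\supseteq\Dpoint$ with $A$ containing all the $x_G$, I will show that $A\supseteq\bigcap_{D\in\+D}U_D$, where $U_D=\bigcup_{p\in D}[p]$. These sets are dense open by (a), so $A$ is comeagre.

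The main obstacle is exactly this last step: turning a point $x\in\bigcap_D U_D$ into a generic filter. The natural choice is $G_x=\{p\in\QQ:x\in[p]\}=\{p:p\subseteq T^x\}$. This is nonempty because it contains the empty condition. It is upward closed. It is directed because, for $p,q\subseteq T^x$, the union $p\cup q$ is a function and hence a condition by \cref{lem:simple_forcing:compatibility_is_being_a_function}. For each $D\in\+D$, the fact that $x\in U_D$ gives some $p\in D$ with $x\in[p]$, that is, $p\in G_x$, so $G_x$ meets $D$. Finally, $x_{G_x}=x$, because for each leaf the condition $\{\s\mapsto x(\s)\}$ lies in $G_x$. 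Therefore $x\in A$.

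This argument makes the surjectivity hinted at in \cref{rmk:Stone_duality} explicit exactly where it is needed.
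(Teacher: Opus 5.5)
Your proof is correct. It is the routine verification the paper leaves implicit when it says the lemma follows from $[p]\neq\emptyset$ (obtained via \cref{lem:simple_forcing:Rasiowa-Sikorski} and \cref{lem:simple_forcing:filters_are_points}), and the point-to-filter map $x\mapsto G_x$ is the right tool for the converse of (b).

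One local slip needs fixing. For a leaf $\s$ with $x(\s)=1$, the function $\{\s\mapsto 1\}$ is not in $\QQ$. Clause (ii) of \cref{def:simple_forcing_notion} requires the parent $\rho$ of $\s$ to be in the domain with label $0$ whenever a child is labelled $1$; the paper uses exactly this reading in proving \cref{lem:simple_forcing:filters_are_points}. Use the condition $\{\rho\mapsto 0,\s\mapsto 1\}\subseteq T^x$ instead. Alternatively, note that $G_x$ meets $D_\s$, which lies in $\Dpoint$ and hence in your collection of dense sets.
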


Now \cref{lem:simple_forcing:dense_open_and_comeagre} and \cref{lem:simple_forcing:Rasiowa-Sikorski} imply: 

\begin{proposition} \label{prop:simple_forcing:Baire_category_theorem}
  The topology $\tau_\QQ$ satisfies the Baire category theorem: for all $p\in \QQ$ and $\tau_\QQ$-comeagre set $A\subseteq 2^{\Leaves_\alpha}$, $[p]\cap A\ne \emptyset$. 
\end{proposition}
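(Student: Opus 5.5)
The argument combines \cref{lem:simple_forcing:dense_open_and_comeagre}(b) with the Rasiowa--Sikorski lemma. Fix $p\in\QQ$ and a $\tau_\QQ$-comeagre set $A\subseteq 2^{\Leaves_\alpha}$.

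First, by \cref{lem:simple_forcing:dense_open_and_comeagre}(b) we obtain a countable collection $\+D\supseteq\Dpoint$ of dense subsets of~$\QQ$ such that $x_G\in A$ for every $\+D$-generic filter~$G$. If one wants to avoid relying on the ``only if'' direction of (b) as a black box, it can be unpacked as follows. Write $A\supseteq\bigcap_n U_n$ with each $U_n$ $\tau_\QQ$-dense open. By part~(a), each $U_n$ is $\bigcup\{[q]: q\in D_n\}$ for some dense $D_n\subseteq\QQ$. Then set $\+D=\Dpoint\cup\{D_n:n\in\Nat\}$.

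Next we check that $\+D$ is countable. $\Dpoint$ is countable because $T_\alpha$ is countable, and $\QQ$ is countable since its elements are finite partial functions on a countable set. Hence $\+D$ is countable.

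By \cref{lem:simple_forcing:Rasiowa-Sikorski}, there is a $\+D$-generic filter $G$ with $p\in G$. Since $G$ is in particular $\Dpoint$-generic, \cref{lem:simple_forcing:filters_are_points} gives $x_G\in 2^{\Leaves_\alpha}$. That lemma also gives $x_G\in[p]$, because $p\in G$.

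It remains to see that $x_G\in A$. For each~$n$, $G$ meets $D_n$ at some $q$, and then \cref{lem:simple_forcing:filters_are_points} gives $x_G\in[q]\subseteq U_n$. Hence $x_G\in\bigcap_n U_n\subseteq A$, so $[p]\cap A\neq\emptyset$.

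The only point needing care is the extraction of the dense sets $D_n$, that is, part~(a) of \cref{lem:simple_forcing:dense_open_and_comeagre}. Given a dense open $U$, the natural choice is $D=\{q: [q]\subseteq U\}$. It is dense because $[p']\ne\emptyset$ for every condition~$p'$ (shown right after \cref{lem:simple_forcing:Rasiowa-Sikorski}), so $U$ meets $[p']$. Since $U$ is open, it then contains some basic set $[r]$ inside $[p']\cap U$. To turn $[r]\subseteq[p']$ into an actual extension of~$p'$, take a generic $x\in[r]$ and pass to a condition in the corresponding filter extending both $p'$ and~$r$, using \cref{lem:simple_forcing:filters_are_points} and directedness. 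I take this as given from the lemma.
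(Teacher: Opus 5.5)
Your proof is correct and is essentially the paper's argument: the paper obtains this proposition directly from \cref{lem:simple_forcing:dense_open_and_comeagre}(b) and the Rasiowa--Sikorski lemma (\cref{lem:simple_forcing:Rasiowa-Sikorski}), taking a sufficiently generic filter containing $p$ and using \cref{lem:simple_forcing:filters_are_points} to place $x_G$ in $[p]\cap A$. Your unpacking of parts (a) and (b) is also sound, and the density step can be shortened: for any $x\in[r]\subseteq[p']$, the union $p'\cup r\subseteq T^x$ is a function, so by \cref{lem:simple_forcing:compatibility_is_being_a_function} it is a condition extending both $p'$ and $r$, with no generic filter needed.
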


\begin{remark} \label{rmk:tau_Q_is_Polish}
  The topology $\tau_\QQ$ is Polish. To get a complete metric for this topology, fix an $\w$-ordering $(\sigma_n)$ of~$T_\alpha$; for distinct $x,y\in 2^{\Leaves_\alpha}$, the distance between~$x$ and~$y$ is $2^{-n}$, where~$n$ is least such that $T^x(\s_n)\ne T^y(\s_n)$. 
\end{remark}

\begin{definition} \label{def:the_sufficiently_generic_quantifier}
   The quantifier ``for all sufficiently generic~$G$, \dots'' means: there is a countable collection~$\+D$ of dense subsets of~$\QQ$ such that for every $\+D$-generic filter~$G$, \dots

 By \cref{lem:simple_forcing:dense_open_and_comeagre}, for any $A\subseteq 2^{\Leaves_\alpha}$, the following are equivalent: (1) For all sufficiently generic~$G$, $x_G\in A$. (2) $A$ is $\tau_\QQ$-comeagre.
\end{definition}

\subsection{The strong forcing relation}

The (very restricted) \emph{forcing language} that we will use consists of codes of Borel subsets of our space.

\begin{definition} \label{def:simple_forcing:Borel_codes}
  We define, by recursion, the collection of \emph{Borel codes} of subsets of $2^{\Leaves_\alpha}$:
  \begin{itemize}
    \item Any finite partial function from $\Leaves_\alpha$ to $\{0,1\}$ is a Borel code;
    \item If $\vphi$ is a Borel code, then $\lnot \vphi$ is a Borel code;
    \item If $(\vphi_n)_{n<\w}$ is an $\w$-sequence of Borel codes, then $\bigvee_n \vphi_n$ is a Borel code.
  \end{itemize}
\end{definition}

Thus, Borel codes can be identified as formulas in an infinitary propositional logic, or alternatively, as labelled well-founded trees (where the leaves are labelled by finite functions as described,  and each non-leaf is labelled by either $\lnot$ or $\vee$; a node labelled by~$\lnot$ has one child only). We omit conjunction to make some following definitions and arguments shorter. 

For a Borel code~$\vphi$, we let $[\vphi]$ denote the subset of $2^{\Leaves_\alpha}$ that is coded by~$\vphi$, namely:
\begin{itemize}
  \item If $r\colon \Leaves_\alpha \to \{0,1\}$ is finite, then $[r] = \left\{ x\in 2^{\Leaves_\alpha} \,:\, r\subset x  \right\}$ (this is a special case of \cref{def:simple_forcing:topology_of_space});
  \item $[\lnot\vphi] = [\vphi]^\complement = 2^{\Leaves_\alpha}\setminus [\vphi]$; 
  \item $[\bigvee \vphi_n] = \bigcup_n [\vphi_n]$. 
\end{itemize}

Note that at the basic level we take finite partial functions on $\Leaves_\alpha$, not all elements of~$\QQ$. Thus, we are considering Borel sets as they are generated from the standard topology on $2^{\Leaves_\alpha}$, rather than the $\tau_\QQ$-topology. 

Certainly, every Borel subset of $2^{\Leaves_\alpha}$ has a code, indeed many different codes. The connection between generic points and Borel sets is given by the \emph{forcing relation}, which is the heart of the theory. The standard forcing relation $p\force \vphi$ (between conditions and Borel codes) is defined combinatorially, but is equivalent to $[\vphi]$ being $\tau_\QQ$-comeagre in~$[p]$. For our purposes, we will need a variation~$\force^*$, a stronger version of the usual forcing relation. 

\begin{definition} \label{def:simple_forcing:strong_forcing_relation}
   We define the relation $\force^*$ between~$\QQ$ and Borel codes, by recursion on the complexity of the Borel code. Let $p\in \QQ$.
   \begin{itemize}
     \item If $r\colon \Leaves_\alpha\to 2$ is a finite partial function, then $p\force^* r$ if $p$ extends~$r$.

     \item $p\force^* \bigvee_n \vphi_n$ if there is some~$n$ such that $p\force^* \vphi_n$. 

     \item $p\force^* \lnot \vphi$ if there is no $q$ extending~$p$ such that $q\force^* \vphi$. 
   \end{itemize}
   If $p\force^* \vphi$, we say that~$p$ \emph{strongly forces~$\vphi$}. 
\end{definition}

We remark that the standard forcing relation can be defined as follows: $p\force \vphi$ if the collection of $q\le p$ that strongly force~$\vphi$ is dense below~$p$ (every extension of~$p$ has an extension that strongly forces~$\vphi$). The difference between the two notions lies in disjunctions: suppose that densely below some~$p$, we have conditions that strongly force some~$\vphi_n$. Then $p$ forces~$\vphi$, but~$p$ itself may not force any particular~$\vphi_n$, so does not strongly force~$\vphi$.  In contrast, $p\force^* \lnot \vphi$ if and only if $p$ forces $\lnot\vphi$. 

\begin{lemma} \label{lem:basic_forcing:basic_properties_of_strong_forcing}
  Let $\vphi$ be a Borel code. 
  \begin{sublemma}
    \item There is no $p\in \QQ$ which strongly forces both $\vphi$ and $\lnot\vphi$. 
    \item $\left\{ p\in \QQ \,:\, p\force^* \vphi \orr  p\force^*\lnot \vphi   \right\}$ is dense. 
    \item If $q$ extends $p$ and $p\force^* \vphi$ then $q\force^* \vphi$. 
  \end{sublemma}
\end{lemma}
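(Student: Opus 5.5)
The plan is to prove the three parts in the order (c), (a), (b), each directly from \cref{def:simple_forcing:strong_forcing_relation}, with (c) by induction on the complexity of the Borel code~$\vphi$ (that is, on the rank of its well-founded tree).

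\emph{Part (c).} Let $q\le p$ with $p\force^*\vphi$. If $\vphi$ is a finite partial function $r$, then $r\subseteq p\subseteq q$, so $q\force^* r$. If $\vphi=\bigvee_n\vphi_n$, pick~$n$ with $p\force^*\vphi_n$; the induction hypothesis gives $q\force^*\vphi_n$, hence $q\force^*\vphi$. If $\vphi=\lnot\psi$, no extension of~$p$ strongly forces~$\psi$. Every extension of~$q$ is an extension of~$p$, so no extension of~$q$ does either, and $q\force^*\lnot\psi$. This case needs no induction hypothesis.

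\emph{Part (a).} This needs no induction. Suppose $p\force^*\lnot\vphi$ and $p\force^*\vphi$. Since $p$ extends itself (the order is reflexive), $p$ is an extension of~$p$ strongly forcing~$\vphi$. This contradicts $p\force^*\lnot\vphi$.

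\emph{Part (b).} This is also immediate. Let $p\in\QQ$. If some $q\le p$ strongly forces~$\vphi$, that $q$ lies in the set. Otherwise no extension of~$p$ strongly forces~$\vphi$, so by definition $p\force^*\lnot\vphi$, and $p$ itself lies in the set.

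The only point needing care is the well-foundedness of the recursion and the induction in (c). Borel codes are well-founded trees, so induction on their rank is legitimate. I expect no real obstacle, since the disjunction case of (c) is the only place the induction hypothesis is used.
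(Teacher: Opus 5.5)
Your proof is correct and follows the paper's own argument: (a) and (b) come straight from the definition of $p\force^*\lnot\vphi$ (using reflexivity of $\le$), and (c) is proved by induction on the complexity of the code, with only the disjunction case needing the induction hypothesis.
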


\begin{proof}
  (a) and~(b) are immediate, from the definition of strongly forcing $\lnot\vphi$. (c) is immediate except for disjunctions, and so is proved by induction on the complexity of the Borel code~$\vphi$. 
\end{proof}

The \emph{forcing theorem} shows how Borel sets have the property of Baire for the topology $\tau_\QQ$.

\begin{proposition} \label{prop:simple_forcing:forcing_theorem}
  For each Borel code~$\vphi$, for all sufficiently generic~$G$, 
  \[
    x_G\in [\vphi]\,\,\Iff\,\,(\exists p\in G)\,\,(p\force^* \vphi).
  \]
\end{proposition}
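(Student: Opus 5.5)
We prove the statement by induction on the complexity of the Borel code~$\vphi$. Along the way we build a countable collection $\+D_\vphi\supseteq \Dpoint$ of dense subsets of~$\QQ$ such that the equivalence holds for every $\+D_\vphi$-generic filter~$G$. We take $\+D_\vphi$ to be $\Dpoint$ together with, for every subcode~$\psi$ of~$\vphi$ (there are countably many), the set
\[
  E_\psi=\left\{ p\in \QQ \,:\, p\force^* \psi \orr  p\force^*\lnot \psi   \right\},
\]
which is dense by \cref{lem:basic_forcing:basic_properties_of_strong_forcing}(b). We then show by induction on subcodes~$\psi$ of~$\vphi$ that for every $\+D_\vphi$-generic~$G$, $x_G\in[\psi]$ iff some $p\in G$ strongly forces~$\psi$.

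\emph{Base case.} Let $\psi=r$ be a finite partial function on $\Leaves_\alpha$. If $p\in G$ extends~$r$, then $r\subseteq p\subseteq T^{x_G}$ by \cref{lem:simple_forcing:filters_are_points}, so $x_G\in[r]$. Conversely, suppose $r\subseteq x_G$. Note that $r$ itself is in~$\QQ$, since clause (ii) of the definition of~$\QQ$ is vacuous on leaves. Since $x_G\in[r]$, \cref{lem:simple_forcing:filters_are_points} gives $r\in G$, and $r\force^* r$.

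\emph{Disjunction.} Let $\psi=\bigvee_n\psi_n$. Then $x_G\in[\psi]$ iff $x_G\in[\psi_n]$ for some~$n$. By the induction hypothesis this holds iff some $p\in G$ strongly forces some~$\psi_n$, which by definition means $p\force^*\psi$.

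\emph{Negation.} Let $\psi=\lnot\chi$; this case is where genericity is really used. Suppose first that $x_G\in[\lnot\chi]$, so $x_G\notin[\chi]$. Since $G$ meets $E_\chi$, some $p\in G$ strongly forces $\chi$ or $\lnot\chi$. If $p\force^*\chi$, the induction hypothesis would give $x_G\in[\chi]$, a contradiction. Hence $p\force^*\lnot\chi$.

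Conversely, suppose $p\in G$ and $p\force^*\lnot\chi$. Assume for contradiction that $x_G\in[\chi]$. By the induction hypothesis some $q\in G$ satisfies $q\force^*\chi$. Since $G$ is directed, there is $r\in G$ extending both $p$ and~$q$. By \cref{lem:basic_forcing:basic_properties_of_strong_forcing}(c), $r$ strongly forces both $\chi$ and $\lnot\chi$, contradicting part~(a) of that lemma.

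The only subtle point is making the single countable family $\+D_\vphi$ work uniformly for all subcodes. Including every $E_\psi$ for subcodes~$\psi$ handles this, because $\+D_\vphi$-genericity implies $\+D_\psi$-genericity for each subcode~$\psi$ of~$\vphi$, so the induction hypothesis applies to the same~$G$.
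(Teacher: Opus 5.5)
Your argument is the paper's argument. You use the same induction on the code, add the same dense sets $\{p\in\QQ : p\force^*\chi \orr p\force^*\lnot\chi\}$ for negations, and reason the same way in the disjunction and negation steps. Fixing in advance one family that contains these sets for all subcodes is only a cosmetic variant of the paper's recursively built families $\mathcal{D}_\vphi$.

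There is one step that does not hold as written. In the base case you assert that $r$ itself lies in $\QQ$ because clause (ii) is "vacuous on leaves". Clause (ii) of the definition of $\QQ$ constrains every non-leaf $\s$, including those outside $\dom p$. This is how the proof of \cref{lem:simple_forcing:filters_are_points} uses it: from $p(\s\conc k)=1$ it concludes $p(\s)=0$. So if $r(\s\conc k)=1$ for some leaf $\s\conc k$, then $r\notin\QQ$, since $r(\s)$ is undefined. Hence you cannot apply \cref{lem:simple_forcing:filters_are_points} to $r$ directly.

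The repair is immediate. Let $\hat r$ be $r$ together with the label $0$ at every parent of a leaf that $r$ labels $1$. Then $\hat r\in\QQ$, and $[\hat r]=[r]$, because any $x\supseteq r$ has $T^x(\s)=0$ at those parents. Also $\hat r\force^* r$. So $x_G\in[r]$ gives $\hat r\in G$ by the lemma, which is the witness you need. Alternatively, the proof of the lemma shows $\bigcup G=T^{x_G}$; combined with directedness of $G$ and finiteness of $r$, this yields a single $p\in G$ with $r\subseteq p$. With this patch your proof is complete.
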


\begin{proof}
This is proved by induction on the complexity of~$\vphi$. For each~$\vphi$, we define a countable collection $\+D_{\vphi}\supseteq \Dpoint$ of dense subsets of~$\QQ$ such that the equivalence above holds for every $\+D_{\vphi}$-generic filter~$G$.

\smallskip

If $\vphi = r$ is a finite partial function from $\Leaves_\alpha$ to~$\{0,1\}$, then we can take $\+D_{r} = \Dpoint$. The desired equivalence follows from \cref{lem:simple_forcing:filters_are_points}. 

  \smallskip

  If $\vphi$ is $\bigvee_n \vphi_n$, then we let 
  \[
    \+D_{\vphi} = \bigcup_n \+D_{\vphi_n}. 
  \]
  Suppose that $G$ is $\+D_{\vphi}$-generic. Then $x_G\in [\vphi]$ if and only if $x_G\in [\vphi_n]$ for some~$n$. By induction, this holds if and only if there is some~$n$ and some $p\in G$ such that $p\force^* \vphi_n$. By definition of strong forcing, this holds if and only if there is some $p\in G$ such that $p\force^* \vphi$. 

  \smallskip

  If $\vphi$ is $\lnot \psi$, then we let $\+D_\vphi$ be $\+D_\psi$, together with the  set 
   \[
   \left\{ p\in \QQ \,:\, p\force^* \psi\orr p\force^*\lnot \psi   \right\}, 
   \]
   which is dense by \cref{lem:basic_forcing:basic_properties_of_strong_forcing}(b). 

   Suppose that $G$ is $\+D_{\vphi}$-generic.  In one direction, suppose that $p\in G$ and $p\force^* \vphi$. That is, there is no $q\le p$ with $q\force^* \psi$. If $x_G\in [\psi]$ then by induction, there is some $r\in G$ such that $r\force^* \psi$. Since~$G$ is a filter, there is some $q\in G$ extending both~$p$ and~$r$. Since~$q$ extends~$r$, by \cref{lem:basic_forcing:basic_properties_of_strong_forcing}, $q\force^*\psi$, contradicting the assumption on~$p$. Hence, $x_G\notin [\psi]$, i.e., $x_G\in [\vphi]$. 

  In the other direction, suppose that $x_G\in [\vphi]$. Since $G$ is $\+D_\vphi$-generic, there is some $p\in G$ such that $p\force^*\psi$ or $p\force^*\vphi$. However, since $x_G\notin [\psi]$, by induction, $p\force^*\psi$ is impossible, so $p\force^*\vphi$. 
\end{proof}

For each Borel code~$\vphi$ we let 
\[
  U_\vphi  = \bigcup \left\{ [p] \,:\,  p\force^* \vphi \right\}, 
\]
which is $\tau_\QQ$-open. Translated, \cref{prop:simple_forcing:forcing_theorem} says:

\begin{proposition} \label{prop:simple_forcing:forcing_theorem_topologically}
  For any Borel code~$\vphi$, $[\vphi]$ and $U_\vphi$ are equivalent modulo a $\tau_\QQ$-meagre set. 
\end{proposition}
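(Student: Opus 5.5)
The plan is to read \cref{prop:simple_forcing:forcing_theorem} through the dictionary between genericity and comeagreness given by \cref{lem:simple_forcing:dense_open_and_comeagre} and \cref{def:the_sufficiently_generic_quantifier}. We need to show that the symmetric difference $[\vphi]\symdiff U_\vphi$ is $\tau_\QQ$-meagre. Equivalently, the set
\[
  A = \left\{ x\in 2^{\Leaves_\alpha} \,:\, x\in [\vphi] \Iff x\in U_\vphi \right\}
\]
should be $\tau_\QQ$-comeagre. By the equivalence recorded in \cref{def:the_sufficiently_generic_quantifier}, it suffices to find a countable collection $\+D$ of dense sets such that $x_G\in A$ for every $\+D$-generic filter~$G$.

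Take $\+D=\+D_\vphi\supseteq\Dpoint$, the collection produced in the proof of \cref{prop:simple_forcing:forcing_theorem}, and let $G$ be $\+D$-generic. We show that $x_G\in U_\vphi$ holds if and only if some $p\in G$ strongly forces~$\vphi$. The forcing theorem then gives $x_G\in[\vphi]$ exactly when $x_G\in U_\vphi$, so $x_G\in A$.

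For the equivalence, first suppose $p\in G$ and $p\force^*\vphi$. By \cref{lem:simple_forcing:filters_are_points}, $x_G\in[p]\subseteq U_\vphi$. Conversely, suppose $x_G\in U_\vphi$, so $x_G\in[p]$ for some $p$ with $p\force^*\vphi$. Since $G$ is $\Dpoint$-generic, \cref{lem:simple_forcing:filters_are_points} gives $p\in G$.

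There is no real obstacle here. The only point needing care is the passage between ``for all sufficiently generic $G$'' and ``comeagre''. That direction uses \cref{lem:simple_forcing:dense_open_and_comeagre}(b): the set $\{x_G : G\text{ is }\+D\text{-generic}\}$ is comeagre whenever $\+D\supseteq\Dpoint$ is a countable collection of dense sets, so any superset of it, in particular $A$, is comeagre.
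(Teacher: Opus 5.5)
Your proposal is correct and takes the same route as the paper. The paper presents this proposition as a direct topological translation of the forcing theorem, using the correspondence between ``for all sufficiently generic $G$'' and comeagreness, and you have written that translation out explicitly, including the check (via the lemma identifying $\Dpoint$-generic filters with points) that $x_G\in U_\vphi$ if and only if some $p\in G$ satisfies $p\force^*\vphi$.
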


\subsection{Untagging lemma}

The development so far is quite general, and can be applied to many notions of forcing other than our particular~$\QQ$. What comes next, however, is special to~$\QQ$ and the similar notions of forcing that we will use in this paper. We consider a fine-grained connection between strong forcing of statements in the Borel hierarchy, and intermediate topologies between the standard one and~$\tau_\QQ$. 

First, we recall that codes can be placed in a syntactic hierarchy that mirrors the Borel hierarchy:
\begin{itemize}
  \item The $\Pi_0$ codes are the codes $r$ for clopen sets (finite functions from $\Leaves_\alpha$ to~$\{0,1\}$). 
  \item For $\beta>0$, a code $\vphi$ is $\Sigma_\beta$ if it is of the form $\bigvee_n \vphi_n$, where each $\vphi_n$ is $\Pi_\gamma$ for some $\gamma<\beta$. 
  \item For $\beta> 0$, a code $\vphi$ is $\Pi_\beta$ if it is of the form $\lnot \psi$, where~$\psi$ is $\Sigma_\beta$. 
\end{itemize}

We observe that if $\vphi$ is a $\Sigma_\beta$ code then $[\vphi]$ is a $\bSigma^0_\beta$ subset of $2^{\Leaves_\alpha}$ (according to the standard topology), and similarly for~$\Pi_\beta$.

\begin{definition} \label{def:simple_forcing:restriction_of_condition_to_beta}
  For $p\in \QQ$ and $\beta\le \alpha+1$, we define a condition $p\rest{\beta}\subseteq p$ as follows: for all~$\s$, $(p\rest\beta)(\s)\converge$ if and only if $p(\s)\converge$,\footnote{Recall that $p(\s)\converge$ means $\s\in \dom p$, and $p(\s)\diverge$ means $\s\notin \dom p$.} and further, either 
  \begin{itemize}
    \item $\rk(\s)<\beta$; or
    \item there is some~$k$ such that $p(\s\conc k)=1$ and $\rk(\s\conc k)<\beta$. 
  \end{itemize}
\end{definition}

That is, $p\rest{\beta}$ records what $p$ says about nodes of rank $<\beta$, and the consequences of that information to parents of nodes.

\begin{lemma} \label{lem:simple_forcing:restricted_condition_is_a_condition}
  Let $p\in \QQ$ and $\beta\le \alpha+1$.
  \begin{sublemma}
    \item $p\rest{\beta}\in \QQ$ and $p$ extends $p\rest{\beta}$. 
    \item If $0<\gamma<\beta\le \alpha+1$ then $p\rest\gamma = (p\rest\beta)\rest\gamma$, so $p\rest\beta$ extends $p\rest\gamma$. 
  \end{sublemma}
\end{lemma}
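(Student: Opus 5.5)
The plan is a direct verification from \cref{def:simple_forcing:restriction_of_condition_to_beta}. Write $q = p\rest\beta$; by construction $q\subseteq p$. The domain of $q$ is contained in $\dom p$, hence finite. So $q\in\QQ$ reduces to clause (ii) of \cref{def:simple_forcing_notion}, and once this is established, $p$ extends $q$ by definition of the ordering.

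For (a), fix a non-leaf $\s\in\dom q$ and check both directions of (ii).

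First suppose $q(\s)=0$; we need some $k$ with $q(\s\conc k)=1$. There are two cases for why $\s\in\dom q$.
\begin{itemize}
\item If there is $k$ with $p(\s\conc k)=1$ and $\rk(\s\conc k)<\beta$, then $\s\conc k\in\dom q$ by the first clause, so $q(\s\conc k)=1$.
\item If $\rk(\s)<\beta$, then $p(\s)=0$ gives some $k$ with $p(\s\conc k)=1$. Since $\rk(\s\conc k)<\rk(\s)<\beta$, again $q(\s\conc k)=1$.
\end{itemize}

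Conversely, suppose $q(\s\conc k)=1$. Then $p(\s\conc k)=1$, so $p(\s)=0$ since $p\in\QQ$. It remains to show $\s\in\dom q$. As $\s\conc k\in\dom q$, either $\rk(\s\conc k)<\beta$, or some child $\tau$ of $\s\conc k$ has $p(\tau)=1$ and rank $<\beta$.
\begin{itemize}
\item In the first case, the second clause for $\s$ applies directly.
\item In the second case, $p(\s\conc k)=0$ by (ii) for $p$, contradicting $p(\s\conc k)=1$.
\end{itemize}
Hence $q(\s)=0$. This small contradiction in the last case is the only subtle point of (a).

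For (b), show $(p\rest\beta)\rest\gamma = p\rest\gamma$ by comparing domains, since all values come from $p$. Write $q=p\rest\beta$ again.

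First, $\s\in\dom(q\rest\gamma)$ means $\s\in\dom q$ and either $\rk(\s)<\gamma$, or $q(\s\conc k)=1$ for some $k$ with $\rk(\s\conc k)<\gamma$. In the latter case $p(\s\conc k)=1$ as well. Since $\s\in\dom p$, the first containment $\dom(q\rest\gamma)\subseteq\dom(p\rest\gamma)$ is immediate.

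Conversely, let $\s\in\dom(p\rest\gamma)$. We must show $\s\in\dom q$ and that the witnessing condition transfers to $q$.
\begin{itemize}
\item If $\rk(\s)<\gamma$, then $\rk(\s)<\beta$, so $\s\in\dom q$, and it lies in $\dom(q\rest\gamma)$.
\item If $p(\s\conc k)=1$ with $\rk(\s\conc k)<\gamma<\beta$, then $\s\in\dom q$ via the second clause. Also $\s\conc k\in\dom q$ since its rank is below $\beta$, so $q(\s\conc k)=1$ and $\s\in\dom(q\rest\gamma)$.
\end{itemize}

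The "so" part of (b) then follows from (a) applied to $q$: $q$ extends $q\rest\gamma=p\rest\gamma$. I expect no real obstacle; the main care is in the case analysis of (a).
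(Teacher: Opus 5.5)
Your proof is correct. The paper states this lemma without proof, treating it as a routine check against the definition of $p\rest\beta$, and your case analysis is exactly that check; the key step is that a node $\sigma\conc k$ with $p(\sigma\conc k)=1$ cannot enter $\dom(p\rest\beta)$ via the second clause. One cosmetic fix: in (a) you announce that $\sigma\in\dom q$ is fixed, but in the converse direction you correctly do not assume this and prove it instead, so drop that assumption from the setup.
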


\begin{definition} \label{def:simple_forcing:beta-complete}
  Let $\beta \le \alpha+1$. We say that $p\in \QQ$ is \emph{$\beta$-complete} if for all $\s\in \dom p$, if $\rk(\s)>\beta$ and $p(\s)=1$, then $p(\s\conc k)\converge$ for all~$k$ such that $\rk(\s\conc k)<\beta$. 
\end{definition}

Note that for all such~$k$, we must have $p(\s\conc k)=0$. That is, for all~$k$, $p$ ``knows'' that $T^x(\s\conc k) = 0$ for all $x\in [p]$, but since $\dom p$ is required to be finite, we cannot actually have $p(\s\conc k)=0$ for all~$k$. The requirement of~$\beta$-completeness is that at least below rank~$\beta$, this knowledge of~$p$ is recorded in~$p$ itself (meaning that~$p$ determines witnesses for $p(\s\conc k)=0$). Note again that the definition only cares about~$\s$ such that $\rk(\s)$ is a limit; otherwise, if $\rk(\s)>\beta$, then there are no~$k$ such that $\rk(\s\conc k)<\beta$. The key point, for the next lemma, is our observation early on that even in the limit case there will be only finitely many~$k$ such that $\rk(\s\conc k)<\beta<\rk(\s)$.

\begin{lemma} \label{lem:simple_forcing:beta_complete_is_dense}
  Let $\beta\le \alpha+1$. For all $p\in \QQ$ there is a $\beta$-complete~$p'$ extending~$p$.
\end{lemma}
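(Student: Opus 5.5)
We extend $p$ in finitely many steps, adding the missing nodes $\s\conc k$ with value $0$, each supported by a fresh witness child carrying value $1$. Call a pair $(\s,k)$ \emph{defective} for a condition $q$ if $\s\in\dom q$, $\rk(\s)>\beta$, $q(\s)=1$, $\rk(\s\conc k)<\beta$ and $q(\s\conc k)\diverge$. By the observation made right after \cref{def:the_space_L_alpha}, for each $\s$ with $\rk(\s)>\beta$ there are only finitely many $k$ with $\rk(\s\conc k)<\beta$. Since $\dom p$ is finite, $p$ has only finitely many defective pairs.

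Given a condition $q$ and a defective pair $(\s,k)$, define $q'$ by setting $q'(\s\conc k)=0$. Since $\rk(\s\conc k)<\beta<\rk(\s)$, the node $\s\conc k$ has rank at least $0$. If it is a leaf, nothing more is needed. Otherwise, choose $m$ large enough that no node of $\dom q$ extends $\s\conc k\conc m$, and set $q'(\s\conc k\conc m)=1$.

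We check that $q'\in\QQ$, exactly as in the proof of \cref{lem:simple_forcing:density_of_the_sets_in_Dpoint}.
\begin{itemize}
\item At $\s$: $q(\s)=1$ already, and the new child $\s\conc k$ has value $0$, so condition (ii) of \cref{def:simple_forcing_notion} is preserved.
\item At $\s\conc k$: its value is $0$, and it has a child with value $1$.
\item At $\s\conc k\conc m$: its value is $1$, and none of its children are in the domain.
\end{itemize}
The other nodes are unaffected.

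The only possible obstacle is that the new nodes could create new defective pairs, which might make the iteration fail to terminate. Consider the node $\s\conc k\conc m$. It has value $1$, but its rank is below $\rk(\s\conc k)<\beta$, so it is not $>\beta$. The node $\s\conc k$ has value $0$. Hence no new defective pairs arise, and the number of defective pairs strictly decreases at each step. After finitely many steps we obtain a $\beta$-complete $p'\le p$. Alternatively, we can treat all the finitely many defective pairs at once, choosing distinct fresh witnesses.
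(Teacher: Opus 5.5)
Your proposal is correct and is essentially the paper's argument. In both, you add a $0$-label at each missing child $\s\conc k$ of rank $<\beta$ below a $1$-labelled node of rank $>\beta$, give it a fresh $1$-labelled witness child, use the finiteness observation to keep the domain finite, and note that every new $1$-label sits at rank $<\beta$, so no new completeness obligations arise. The paper makes all these additions in one simultaneous step rather than iterating, which is the alternative you mention at the end.
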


\begin{proof}
  Do the obvious: define~$p'$ extending $p$ by letting $p'(\s\conc k)=0$ whenever $p(\s)=1$ and $\rk(\s\conc k)<\beta < \rk(\s)$. For each such~$k$, also define $p'(\s\conc k\conc l)=1$ for some large~$l$. As just discussed, $\dom p'$ is finite. By design, $p'\in \QQ$. If $\tau\in \dom p'\setminus \dom p$ and $p'(\tau)=1$ then $\rk(\tau)<\beta$; this implies that $p'$ is $\beta$-complete. 
\end{proof}

The following is the key technical lemma.

\begin{lemma} \label{lem:simple_forcing:untagging:extension_lemma}
  Let $0<\beta\le \alpha$. If $p\in \QQ$ is $\beta$-complete and~$r$ extends $ p\rest{(\beta+1)}$ then $p$ and $(r\rest \beta)$ are compatible in~$\QQ$. 
\end{lemma}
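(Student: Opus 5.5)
By \cref{lem:simple_forcing:restricted_condition_is_a_condition}(a), $r\rest\beta\in\QQ$. So by \cref{lem:simple_forcing:compatibility_is_being_a_function}, it suffices to show that $p\cup(r\rest\beta)$ is a function. That is, for every $\s\in\dom p\cap\dom(r\rest\beta)$ we must show $p(\s)=(r\rest\beta)(\s)=r(\s)$. The one fact used repeatedly is that every node of rank $<\beta+1$ lying in $\dom p$ is in $\dom(p\rest(\beta+1))$, by the first clause of \cref{def:simple_forcing:restriction_of_condition_to_beta}. Since $r$ extends $p\rest(\beta+1)$, on such nodes $r$ agrees with $p$.

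I split according to the rank of $\s$.

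\emph{Case $\rk(\s)<\beta$.} Then $\s\in\dom(p\rest(\beta+1))$, so $r(\s)=p(\s)$ and we are done.

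\emph{Case $\rk(\s)\ge\beta$.} Since $\s\in\dom(r\rest\beta)$ but $\rk(\s)\not<\beta$, the second clause of \cref{def:simple_forcing:restriction_of_condition_to_beta} gives some $k$ with $r(\s\conc k)=1$ and $\rk(\s\conc k)<\beta$. As $r\in\QQ$, this forces $r(\s)=0$, so we must show $p(\s)=0$. Suppose instead that $p(\s)=1$; there are two subcases.
\begin{itemize}
  \item If $\rk(\s)=\beta$, then $\rk(\s)<\beta+1$, so $\s\in\dom(p\rest(\beta+1))$. Hence $r(\s)=p(\s)=1$, a contradiction.
  \item If $\rk(\s)>\beta$, this is the main step, and the only place where $\beta$-completeness is used. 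Since $p(\s)=1$ and $\rk(\s\conc k)<\beta$, \cref{def:simple_forcing:beta-complete} gives $p(\s\conc k)\converge$. Because $p\in\QQ$ and $p(\s)=1$, the value must be $p(\s\conc k)=0$. But $\rk(\s\conc k)<\beta$, so $\s\conc k\in\dom(p\rest(\beta+1))$, and therefore $r(\s\conc k)=p(\s\conc k)=0$. This contradicts $r(\s\conc k)=1$.
\end{itemize}

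In all cases $p$ and $r\rest\beta$ agree on their common domain. Hence $p\cup(r\rest\beta)$ is a function, and $p$ and $r\rest\beta$ are compatible in~$\QQ$.
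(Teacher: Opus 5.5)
Your proof is correct and follows essentially the same route as the paper: reduce to $p\cup(r\rest\beta)$ being a function, handle nodes of rank at most $\beta$ using the agreement of $r$ with $p\rest{(\beta+1)}$, and use $\beta$-completeness to rule out $p(\s)=1$ when $\rk(\s)>\beta$. The only cosmetic difference is that the paper handles $\rk(\s)\le\beta$ in a single case, whereas you split off $\rk(\s)=\beta$ and route it through $r(\s)=0$.
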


\begin{proof}
  By \cref{lem:simple_forcing:compatibility_is_being_a_function}, we need to show that $p\cup (r\rest\beta)$ is a function. Let $\s\in T_\alpha$, and suppose that $p(\s)\converge$ and $(r\rest\beta)(\s)\converge$; we need to show that $p(\s) = (r\rest\beta)(\s)$.

  First, suppose that $\rk(\s)\le \beta$. Then $(p\rest{(\beta+1)})(\s) = p(\s)$, and since $r\le p\rest{(\beta+1)}$, we have $r(\s)= p(\s)$; and $(r\rest\beta)(\s) = r(\s)$. 

  Suppose that $\rk(\s)>\beta$. Since $(r\rest\beta)(\s)\converge$, we must have $r(\s)=0$ and $r(\s\conc k)=1$ for some~$k$ such that $\rk(\s\conc k)<\beta$. We need to show that $p(\s)=0$ as well. But if $p(\s)=1$, then as $p$ is $\beta$-complete, we would have $p(\s\conc k)=0$ and as $r\le p\rest{\beta}$, we would have $r(\s\conc k)=0$, which is not the case. 
\end{proof}

The following is the ``untagging lemma''.

\begin{proposition} \label{prop:simple_forcing:Pi_untagging_lemma}
  Let $\gamma\le \alpha$, and let $\vphi$ be a $\Pi_\gamma$ Borel code. If $p\in \QQ$ is $\gamma$-complete and $p\force^*\vphi$ then $p\rest{(\gamma+1)}\force^* \vphi$. 
\end{proposition}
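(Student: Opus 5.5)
We argue by induction on $\gamma\le\alpha$. For $\gamma=0$, the code $\vphi$ is a finite partial function $s\colon\Leaves_\alpha\to\{0,1\}$. Here $p\force^* s$ just means $s\subseteq p$. Leaves have rank $0<1$, so $p\rest{1}$ retains all of $p$'s values on leaves. Hence $p\rest 1$ extends $s$, that is, $p\rest{1}\force^* s$.

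Now let $\gamma>0$. Write $\vphi=\lnot\psi$ with $\psi=\bigvee_n\psi_n$, where each $\psi_n$ is a $\Pi_{\delta_n}$ code for some $\delta_n<\gamma$. We must show that no $r$ extending $p\rest{(\gamma+1)}$ strongly forces $\psi$. Suppose toward a contradiction that $r\le p\rest{(\gamma+1)}$ and $r\force^*\psi$. Then $r\force^*\psi_n$ for some $n$; put $\delta=\delta_n<\gamma$.

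The plan is to shrink the condition forcing $\psi_n$ down to $r'\rest\gamma$, which is compatible with $p$. First, by \cref{lem:simple_forcing:beta_complete_is_dense}, extend $r$ to a $\delta$-complete condition $r'$. By \cref{lem:basic_forcing:basic_properties_of_strong_forcing}(c), $r'\force^*\psi_n$. The induction hypothesis, applied to $\delta<\gamma$ and the $\Pi_\delta$ code $\psi_n$, gives $r'\rest{(\delta+1)}\force^*\psi_n$. (If $\delta=0$, this is the base case, and $0$-completeness is vacuous.) Since $\delta+1\le\gamma$, \cref{lem:simple_forcing:restricted_condition_is_a_condition}(b) shows that $r'\rest\gamma$ extends $r'\rest{(\delta+1)}$; this also covers $\delta+1=\gamma$, where the two coincide. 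So by monotonicity, $r'\rest\gamma\force^*\psi_n$, and hence $r'\rest\gamma\force^*\psi$.

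Finally, $r'$ extends $r$, which extends $p\rest{(\gamma+1)}$, and $p$ is $\gamma$-complete with $0<\gamma\le\alpha$. So \cref{lem:simple_forcing:untagging:extension_lemma} (with $\beta=\gamma$) says that $p$ and $r'\rest\gamma$ are compatible. Let $q$ be a common extension. Then $q\le p$ and, by monotonicity, $q\force^*\psi$. This contradicts $p\force^*\lnot\psi$.

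The main point needing care is the passage from $r'\rest{(\delta+1)}$ to $r'\rest\gamma$ while keeping strong forcing. It requires both that $\delta+1\le\gamma$ and that restrictions are nested. It also requires that we first made $r$ $\delta$-complete, so that the induction hypothesis applies.
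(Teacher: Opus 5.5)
Your proof is correct and follows essentially the same route as the paper. The steps match: induction on $\gamma$; take $r\le p\rest{(\gamma+1)}$ with $r\force^*\psi_n$; pass to a $\delta$-complete extension and apply the induction hypothesis; lift to $r'\rest\gamma$ using the nestedness of restrictions; then invoke the extension lemma to get a common extension with $p$ that strongly forces $\psi$. The only difference is cosmetic: you argue by contradiction, while the paper argues the contrapositive.
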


\begin{proof}
  The proposition is proved by induction on~$\gamma$. 

  \medskip

  The base case $\gamma = 0$ follows from the definition of strong forcing; $p\force^* r$ means $r\subseteq p$, and since $r$ is only defined on $\s\in \Leaves_\alpha$, this implies that $r\subseteq p\rest{1}$, so $p\rest{1}\force^* r$. (Note that every condition is 0-complete.)

  \medskip

  Suppose that $\gamma>0$, and that the proposition has been verified for all $\gamma'<\gamma$. Let $\vphi$ be a $\Pi_\gamma$ Borel code, and let $p\in \QQ$ be $\gamma$-complete. We prove the contrapositive: if $p\rest{(\gamma+1)}\nforce^* \vphi$ then $p\nforce^* \vphi$. 

  Suppose that $p\rest{(\gamma+1)}\nforce^* \vphi$. Since $\vphi$ is $\lnot \psi$ (where $\psi$ is $\Sigma_\gamma$), by definition, this means that there is some~$r$ extending $ p\rest{(\gamma+1)}$ such that $r\force^* \psi$. Now $\psi = \bigvee_n \psi_n$, and by definition, there is some~$n$ such that $r\force^* \psi_n$; and $\psi_n$ is $\Pi_{\gamma'}$ for some $\gamma'<\gamma$. By \cref{lem:basic_forcing:basic_properties_of_strong_forcing}(c) and \cref{lem:simple_forcing:beta_complete_is_dense}, we may assume that $r$ is $\gamma'$-complete. Hence, by induction, $r\rest{(\gamma'+1)}\force^* \psi_n$, so $r\rest{(\gamma'+1)}\force^* \psi$. By \cref{lem:simple_forcing:restricted_condition_is_a_condition}, $r\rest{\gamma}$ extends $r\rest{(\gamma'+1)}$, so by \cref{lem:basic_forcing:basic_properties_of_strong_forcing}(c) again, $r\rest{\gamma}\force^* \psi$. 

  By \cref{lem:simple_forcing:untagging:extension_lemma}, $q = p\cup (r\rest{\gamma})$ extends both~$p$ and $r\rest{\gamma}$. Since $q$ extends~$r\rest\gamma$, $q\force^* \psi$. Since~$q$ extends~$p$, by definition, $p$ does not strongly force~$\vphi$, as required. 
\end{proof}

\subsection{Interpretations of untagging}

We obtain a refinement of the forcing theorem (\cref{prop:simple_forcing:forcing_theorem}).

\begin{definition} \label{def:simple_forcing:Q_beta}
  Let $\beta \le \alpha+1$. We let
  \[
    \QQ_\beta = \left\{ p\rest\beta \,:\,  p\in \QQ \right\}. 
  \]
\end{definition}

In other words, $\QQ_\beta$ is the collection of all $p\in \QQ$ such that for all $\s\in \dom p$, either $\rk(\s)<\beta$, or $p(\s)=0$ and $p(\s\conc k)=1$ for some~$k$ with $\rk(\s\conc k)<\beta$. 

\begin{proposition} \label{prop:simple_forcing:Sigma_untagging_lemma}
  Suppose that $0<\beta \le \alpha+1$, and that~$\vphi$ is a $\Sigma_\beta$ code. For every sufficiently generic~$G$, 
  \[
    x_G \in [\vphi]\,\,\Iff \,\, (\exists p\in G\cap \QQ_\beta)\,\,p\force^*\vphi.
  \]
\end{proposition}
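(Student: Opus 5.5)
The plan is to reduce to the forcing theorem (\cref{prop:simple_forcing:forcing_theorem}) and then use the untagging lemma (\cref{prop:simple_forcing:Pi_untagging_lemma}) to push the witnessing condition down into $\QQ_\beta$.

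Write $\vphi = \bigvee_n \vphi_n$, where each $\vphi_n$ is a $\Pi_{\gamma_n}$ code for some $\gamma_n<\beta$. I take ``sufficiently generic'' to mean $\+D_\vphi$-generic, where $\+D_\vphi$ is the collection from the proof of \cref{prop:simple_forcing:forcing_theorem}. I add to it, for each $n$, the set of $\gamma_n$-complete conditions, which is dense by \cref{lem:simple_forcing:beta_complete_is_dense}; this keeps the collection countable.

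The direction from right to left is immediate. If $p\in G\cap\QQ_\beta$ and $p\force^*\vphi$, then $p\in G$, so $x_G\in[\vphi]$ by the forcing theorem.

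For the direction from left to right, suppose $x_G\in[\vphi]$.
\begin{orderedlist}
\item By the forcing theorem, some $p\in G$ strongly forces $\vphi$, so by definition $p\force^*\vphi_n$ for some $n$.
\item Pick $q\in G$ that is $\gamma_n$-complete, and let $p'\in G$ be a common extension of $p$ and $q$. Then $p'\force^*\vphi_n$ by \cref{lem:basic_forcing:basic_properties_of_strong_forcing}(c).
\item Check that $p'$ is still $\gamma_n$-complete. Suppose $p'(\s)=1$ with $\rk(\s)>\gamma_n$, and $\rk(\s\conc k)<\gamma_n$. If $\s\in\dom q$, then $q(\s)=1$, so $q$, and hence $p'$, is defined at $\s\conc k$. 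If $\s\notin\dom q$, completeness of $q$ says nothing about $\s$. To avoid this gap, I instead choose $p'\in G$ extending $p$ and then make it complete by genericity. For this I add, for each $n$ and each $p$, the set of conditions that either are incompatible with $p$ or extend $p$ and are $\gamma_n$-complete; this set is dense by \cref{lem:simple_forcing:beta_complete_is_dense}. Simplest of all is to add, for each $n$, the dense set of conditions $r$ such that either $r$ is $\gamma_n$-complete and $r\force^*\vphi_n$, or $r$ is incompatible with every condition strongly forcing $\vphi_n$. Meeting this set gives a $\gamma_n$-complete $p'\in G$ with $p'\force^*\vphi_n$, since the incompatible alternative is ruled out by $p\in G$.
\item By \cref{prop:simple_forcing:Pi_untagging_lemma}, $p'\rest(\gamma_n+1)\force^*\vphi_n$, and therefore $p'\rest(\gamma_n+1)\force^*\vphi$.
\item Since $\gamma_n+1\le\beta$, \cref{lem:simple_forcing:restricted_condition_is_a_condition}(b) shows that $p'\rest\beta$ extends $p'\rest(\gamma_n+1)$. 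Hence $p'\rest\beta\force^*\vphi$ by \cref{lem:basic_forcing:basic_properties_of_strong_forcing}(c).
\item Finally, $p'\rest\beta\in\QQ_\beta$ by definition, and $p'\rest\beta\in G$ because $G$ is upward closed and $p'\le p'\rest\beta$ by \cref{lem:simple_forcing:restricted_condition_is_a_condition}(a).
\end{orderedlist}
This gives the required witness.

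The main obstacle is step (iii): securing a $\gamma_n$-complete member of $G$ that strongly forces $\vphi_n$, because a common extension of complete conditions need not be complete. The dense-set trick there resolves it. Its density follows from \cref{lem:simple_forcing:beta_complete_is_dense} together with \cref{lem:basic_forcing:basic_properties_of_strong_forcing}(c): any condition compatible with some condition strongly forcing $\vphi_n$ extends to such a condition, and then to a complete one. The restriction $\gamma_n+1\le\beta$ is also where $\beta\le\alpha+1$ and the $\Sigma_\beta$ hypothesis enter.
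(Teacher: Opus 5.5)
Your proof is correct and follows the paper's argument: the forcing theorem, then genericity to get a $\gamma_n$-complete member of $G$ that strongly forces $\vphi_n$, then the $\Pi$ untagging lemma and restriction to $\beta$. The paper uses your second fix, the dense sets $E_{p,\gamma}$ of conditions that either extend $p$ and are $\gamma$-complete or are incompatible with $p$. Your single dense set per $n$ is an equally valid variant, and you were right that just meeting the set of $\gamma_n$-complete conditions would not suffice.
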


\begin{proof}
  For each $p\in \QQ$ and $\gamma \le \alpha+1$, let $E_{p,\gamma}$ be the collection of $q\in \QQ$ such that either 
  \begin{itemize}
    \item $q$ extends $p$ and~$q$ is $\gamma$-complete; or
    \item $q$ and~$p$ are incompatible. 
  \end{itemize}
  By \cref{lem:simple_forcing:beta_complete_is_dense}, each $E_{p,\gamma}$ is dense. Let $\+E = \left\{ E_{p,\gamma} \,:\,  p\in \QQ \andd \gamma \le \alpha+1 \right\}$. By \cref{prop:simple_forcing:forcing_theorem}, let $\+D$ be a countable collection of dense subsets of~$\QQ$ such that for all $\+D$-generic~$G$, $x_G\in [\vphi]$ if and only if $p\force^* \vphi$ for some $p\in G$. We claim that if $G$ is $\+E\cup \+D$-generic then the equivalence above holds. 

  In the direction which is not immediate, suppose that $x_G\in [\vphi]$. Let $p\in G$ be such that $p\force^* \vphi$. Write $\vphi = \bigvee \vphi_n$. By definition of strong forcing, $p\force^* \vphi_n$ for some~$n$. Let $\gamma<\beta$ such that $\vphi_n$ is $\Pi_\gamma$. Since $G$ is $\+E$-generic, find some $q\le p$ in~$G$ that is $\gamma$-complete. By \cref{prop:simple_forcing:Pi_untagging_lemma}, $q\rest{(\gamma+1)}$ strongly forces~$\vphi_n$, and so strongly forces~$\vphi$. Since $\gamma<\beta$, $q\rest\beta$ strongly forces~$\vphi$. Since $q$ extends $q\rest{\beta}$, $q\rest{\beta}\in G$, and so is the desired condition. 
\end{proof}

We interpret these results in the language of topology.

\begin{definition} \label{def:simple_forcing:tau_beta}
  We let  $\tau_\beta = \tau_{\QQ,\beta}$ denote the topology generated by $[p]$ for $p\in \QQ_{\beta}$.
\end{definition}

Hence, $\tau_{1}$ is the standard topology on $2^{\Leaves_\alpha}$, and $\tau_{\alpha+1}$ is~$\tau_\QQ$.  Note that by \cref{lem:the_complexity_of_a_label_of_a_node}, the generating sets of the topology $\tau_{\beta}$ are all finite Boolean combinations of $\Sigma^0_{<\beta}$ and $\Pi^0_{<\beta}$ sets, and so are all $\Delta^0_\beta$ sets; so each $\tau_{\beta}$-open set is~$\bSigma^0_\beta$. 

Recall that we let $U_\vphi = \bigcup \left\{ [p] \,:\,  p\force^* \vphi \right\}$. 

\begin{proposition} \label{prop:simple_forcing:untagging_lemma_topologically}
  Let $0<\beta \le \alpha+1$. For any $\Sigma_\beta$ Borel code~$\vphi$, the set $U_\vphi$ is $\tau_{\beta}$-open, and is equivalent to~$[\vphi]$ modulo a $\tau_\beta$-meagre set. 
\end{proposition}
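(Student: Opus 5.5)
The proof has two parts: first that $U_\vphi$ is $\tau_\beta$-open, then that $U_\vphi\symdiff[\vphi]$ is $\tau_\beta$-meagre. The second part is not a formal consequence of \cref{prop:simple_forcing:forcing_theorem_topologically}, since a $\tau_\QQ$-meagre set need not be $\tau_\beta$-meagre. So the untagging results must do the work, together with a transfer lemma between the topologies $\tau_\gamma\subseteq\tau_\beta$.

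\emph{Openness.} I will show that
\[
U_\vphi=\bigcup\left\{[q]\,:\,q\in\QQ_\beta\andd q\force^*\vphi\right\},
\]
which is $\tau_\beta$-open by definition. The inclusion $\supseteq$ is trivial. For $\subseteq$, let $x\in[p]$ with $p\force^*\vphi$, where $\vphi=\bigvee_n\vphi_n$.
\begin{itemize}
\item Pick $n$ with $p\force^*\vphi_n$, and $\gamma<\beta$ such that $\vphi_n$ is $\Pi_\gamma$.
\item Build a $\gamma$-complete $p'\supseteq p$ with $p'\subseteq T^x$, following the proof of \cref{lem:simple_forcing:beta_complete_is_dense} but reading values from $T^x$. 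For each $\s$ with $p(\s)=1$ and $\rk(\s)>\gamma$ there are finitely many $k$ with $\rk(\s\conc k)<\gamma$. For each such $k$ we have $T^x(\s\conc k)=0$, so we add $p'(\s\conc k)=0$ together with a witness $p'(\s\conc k\conc l)=1$ taken from $T^x$.
\item Then $p'\in\QQ$, $x\in[p']$, and $p'\force^*\vphi_n$ by \cref{lem:basic_forcing:basic_properties_of_strong_forcing}(c).
\item By \cref{prop:simple_forcing:Pi_untagging_lemma}, $p'\rest{(\gamma+1)}\force^*\vphi_n$. Since $p'\rest\beta$ extends $p'\rest{(\gamma+1)}$, monotonicity gives $p'\rest\beta\force^*\vphi$.
\item Finally $p'\rest\beta\in\QQ_\beta$ and $x\in[p'\rest\beta]\subseteq U_\vphi$.
\end{itemize}

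\emph{Transfer lemma.} For $\gamma<\beta$, every $\tau_\gamma$-dense $\tau_\gamma$-open set is $\tau_\beta$-dense. Consequently $\tau_\gamma$-meagre sets are $\tau_\beta$-meagre, and likewise with $\tau_\QQ$ in place of $\tau_\beta$. To prove it, take a nonempty basic set $[q]$ with $q\in\QQ_\beta$, and use \cref{lem:simple_forcing:beta_complete_is_dense} to assume $q$ is $\gamma$-complete. The set $[q\rest{(\gamma+1)}]$ is $\tau_{\gamma+1}$-open, but it contains the $\tau_\gamma$-open set $[q\rest\gamma]$. Density therefore yields $r\le q\rest\gamma$ in $\QQ_\gamma$ with $[r]\subseteq U$. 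I must then check that $r$ can be chosen so that \cref{lem:simple_forcing:untagging:extension_lemma} applies, making $q$ and $r\rest\gamma$ compatible and hence $[q]\cap U\ne\emptyset$. I expect this to be the main obstacle: the extension lemma asks that $r$ extend $q\rest{(\gamma+1)}$, whereas density only gives $r\le q\rest\gamma$. My first attempt will be to use rank-$\gamma$ nodes directly. Such nodes have value $0$ witnessed by rank-$<\gamma$ children, which $\tau_\gamma$-conditions can express. Nodes of rank $\gamma$ with value $1$ are the troublesome case, since $[q\rest{(\gamma+1)}]$ is only $\Pi^0_\gamma$. I will handle them by an induction on $\gamma$ applied to the $\tau_{<\gamma}$-open sets.

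\emph{Meagreness.} I will prove by induction on $\beta$ that for every $\Sigma_\beta$ code $\vphi$, $[\vphi]\symdiff U_\vphi$ is $\tau_\beta$-meagre. Write $\vphi=\bigvee_n\lnot\theta_n$ with $\theta_n$ of class $\Sigma_{\gamma_n}$ and $\gamma_n<\beta$. By induction, $[\theta_n]\equiv U_{\theta_n}$ modulo a $\tau_{\gamma_n}$-meagre set, which is $\tau_\beta$-meagre by the transfer lemma. Hence $[\lnot\theta_n]$ agrees modulo $\tau_\beta$-meagre with the $\tau_{\gamma_n}$-interior of the complement of $U_{\theta_n}$. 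This interior is $\tau_\beta$-open and equals $\bigcup\{[p]:p\in\QQ_{\gamma_n+1},\ p\force^*\lnot\theta_n\}$, up to a $\tau_\beta$-nowhere dense boundary. The equality is again obtained by combining the untagging lemma with \cref{lem:simple_forcing:untagging:extension_lemma}, exactly as in the openness argument. Taking the countable union over $n$ and using openness of $U_\vphi$ gives the claim.
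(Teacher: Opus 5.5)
Your openness argument is correct. It is essentially the argument the paper uses to prove its identity $(*)$: build a $\gamma$-complete extension inside $T^x$, untag it, and restrict it to $\beta$.

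The meagreness part has a genuine gap. It rests on the transfer lemma, and that lemma is false, not merely hard. Take $\gamma=\alpha$, $\beta=\alpha+1$, and $P=\{x : T^x(\rooot)=1\}$.
\begin{itemize}
\item As shown in the proof of \cref{thm:Matrai}, $P$ is $\tau_\alpha$-closed and $\tau_\alpha$-nowhere dense, so $P^\complement$ is $\tau_\alpha$-open and $\tau_\alpha$-dense.
\item But $P=[p]$ for the condition $p$ with $\dom p=\{\rooot\}$ and $p(\rooot)=1$. So $P$ is a nonempty $\tau_{\alpha+1}$-open set disjoint from $P^\complement$.
\item By \cref{prop:simple_forcing:Baire_category_theorem}, $P$ is therefore not $\tau_{\alpha+1}$-meagre. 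For $\alpha=1$ this just says that $\{0^\w\}$ is nowhere dense in the standard topology but isolated in $\tau_\QQ$.
\end{itemize}
The rank-$\gamma$ nodes labelled $1$, which you flagged as the obstacle, are exactly the counterexamples, so no induction will rescue the lemma.

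It is not only the justification that fails; the intermediate claim is false too. With $\alpha=1$ and $\beta=2$, let $\theta=\bigvee_k r_k$, where $r_k$ is the function $\smallseq{k}\mapsto 1$. Then $U_\theta=[\theta]=\{x : x\ne 0^\w\}$, and the $\tau_1$-interior of $U_\theta^\complement=\{0^\w\}$ is empty. Yet $[\lnot\theta]=\{0^\w\}$ is not $\tau_2$-meagre. So $[\lnot\theta_n]$ does not in general agree, modulo a $\tau_\beta$-meagre set, with the $\tau_{\gamma_n}$-interior of $U_{\theta_n}^\complement$.

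The remedy is never to change topology. Fix $\beta$, and prove by induction on the code that $[\theta]\symdiff U_\theta$ is $\tau_\beta$-meagre for every $\Sigma_\gamma$ code $\theta$ with $0<\gamma\le\beta$. The inductive step needs only two facts.
\begin{itemize}
\item For $\theta$ of class $\Sigma_\gamma$ with $\gamma<\beta$, $U_\theta$ is $\tau_\gamma$-open by your openness part. Hence it is $\tau_\beta$-open, since $\QQ_\gamma\subseteq\QQ_\beta$.
\item For every $p\in\QQ$, $[p]\cap U_\theta=\emptyset$ if and only if $p\force^*\lnot\theta$. This follows from \cref{lem:simple_forcing:compatibility_is_being_a_function} and the nonemptiness of basic sets.
\end{itemize}
Together these show that $U_\theta^\complement\setminus U_{\lnot\theta}$ is $\tau_\beta$-nowhere dense. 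Indeed, a basic set $[p]$ with $p\in\QQ_\beta$ either meets the $\tau_\beta$-open set $U_\theta$, or is contained in $U_{\lnot\theta}$. Hence $[\lnot\theta]\symdiff U_{\lnot\theta}$ is $\tau_\beta$-meagre, and a countable union handles the disjunction. The interior you want is the $\tau_\beta$-interior of $U_\theta^\complement$, which is exactly $U_{\lnot\theta}$, not the $\tau_{\gamma_n}$-interior.

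The paper instead reruns the forcing machinery over $\QQ_\beta$ with the relativised relation $\force^*_\beta$, so the forcing theorem for $\tau_\beta$ comes for free. Untagging is then used only to identify $\bigcup\{[p] : p\in\QQ_\beta,\ p\force^*_\beta\vphi\}$ with $U_\vphi$.
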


\begin{proof}
Fix nonzero $\beta\le \alpha+1$. We can repeat the development of the beginning of this section, with $\QQ_\beta$ replacing~$\QQ$:
\begin{itemize}
  \item We define the notion of a filter $G\subset \QQ_\beta$, the notion of a dense subset of~$\QQ_\beta$, and of a $\+D$-generic filter, where~$\+D$ is a countable family of dense subsets of~$\QQ_\beta$ (\cref{def:simple_forcing:generic_filters}). 

  \item For a sufficiently generic filter $G\subset \QQ_\beta$, we define $x_G$ as in \cref{def:simple_forcing:x_G}. The analogue of \cref{lem:simple_forcing:filters_are_points} holds for~$\QQ_\beta$, with the same proof, except that $\bigcup G$ is the restriction of $T^{x_G}$ to~$\s$ of rank $<\beta$, and parents of 1-labelled nodes of rank $<\beta$. 

  \item The Rasiowa-Sikorski lemma (\cref{lem:simple_forcing:Rasiowa-Sikorski}) holds for any notion of forcing, including $\QQ_\beta$. This gives the analogues of \cref{lem:simple_forcing:dense_open_and_comeagre} and \cref{prop:simple_forcing:Baire_category_theorem} for the topology~$\tau_\beta$. 

  \item We define the strong forcing relation, where for negation, we only search for extensions in~$\QQ_\beta$. Denote this notion by $\force^*_\beta$. The forcing theorem, \cref{prop:simple_forcing:forcing_theorem}, holds for~$\QQ_\beta$ as well, with the same proof. 
\end{itemize}

Hence, $[\vphi]$ is equivalent to the $\tau_\beta$-open set $\bigcup \{ [p] \,:\,  p\force^*_\beta \vphi \}$ modulo a $\tau_\beta$-meagre set. The real content of \cref{prop:simple_forcing:untagging_lemma_topologically} is that this open set is the same as the one that we get when we consider~$\tau_\QQ$. That is, if $\vphi$ is $\Sigma_\beta$ then
  
   \begin{description}
      \item[$(*)$] $U_\vphi = \bigcup \big\{ [p] \,:\,  p\in \QQ_\beta\andd p\force^*_\beta  \vphi \big\}. $
   \end{description}
  
To see this, we observe that the argument proving \cref{prop:simple_forcing:Pi_untagging_lemma} gives:
\begin{itemize}
  \item If $\vphi$ is $\Sigma_\beta$, then for all $p\in \QQ_\beta$, $p\force^* \vphi$ if and only if $p\force^*_\beta \vphi$. 
\end{itemize}
Now~$(*)$ follows by an argument similar to that giving \cref{prop:simple_forcing:Sigma_untagging_lemma}. Suppose that $x\in U_\vphi$; we need to show that there is some $r\in\QQ_\beta$ such that $r\force^* \vphi$ and $x\in [r]$. Let $p\in \QQ$ such that $p\force^* \vphi$ and $x\in [p]$. Then $p\force^*\vphi_n$ for some~$n$, and $\vphi_n$ is $\Pi_\gamma$ for some $\gamma<\beta$. There is some $q\le p$ such that $x\in [q]$ and $q$ is $\gamma$-complete (define~$q$ as in the proof of \cref{lem:simple_forcing:beta_complete_is_dense}, adding witnesses according to $T^x$). Now $r = q\rest{\beta}$ (which extends $q\rest{(\gamma+1)}$) is as required. 
\end{proof}

\subsection{M\'atrai's result}

The work above is closely related to a result of M\'atrai \cite{Matrai}:

\begin{theorem}[M\'atrai] \label{thm:Matrai}
  Let $\alpha \ge 1$ be a countable ordinal. There is a $\bPi^0_\alpha$ set $P\subseteq 2^\w$ and a Polish topology $\tau_\alpha$ on $2^\w$ such that:
  \begin{orderedlist}
    \item $\tau_\alpha$ is finer than the standard topology on $2^\w$; 

    \item $P$ is $\tau_\alpha$-closed and $\tau_\alpha$-nowhere dense;

    \item If $B$ is a basic $\tau_\alpha$-open set meeting $P$, $D\subseteq 2^\w$ is $\bPi^0_{<\alpha}$ (in the standard topology), and $P\cap B \cap D$ is comeagre in $(P\cap B,\tau_\alpha)$, then there is a $\tau_\alpha$-open set $B'$ such that $P\cap B = P\cap B'$ and $D\cap B'$ is comeagre in $(B',\tau_\alpha)$. 
  \end{orderedlist}
\end{theorem}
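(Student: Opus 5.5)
Since $2^{\Leaves_\alpha}$ is computably homeomorphic to $2^\w$, I would work in $2^{\Leaves_\alpha}$. I take $\tau_\alpha$ to be $\tau_\QQ=\tau_{\alpha+1}$, which is Polish by \cref{rmk:tau_Q_is_Polish}. For $P$ I take the set of $x$ with $T^x(\rooot)=1$; by \cref{lem:the_complexity_of_a_label_of_a_node} it is $\Pi^0_\alpha$. Property (i) holds because each basic standard clopen set $[r]$, with $r$ a finite function on leaves, lies in $\QQ$.

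For (ii): $P=[\{(\rooot,1)\}]$ is $\tau_\QQ$-open, and its complement $\bigcup_k[\{(\rooot,0),(\smallseq{k},1)\}]$ is also open, so $P$ is $\tau_\QQ$-clopen. But $P$ is not nowhere dense in $\tau_\QQ$. So I would use the coarser topology $\tau_\alpha:=\tau_{\QQ,\alpha}$, generated by $[p]$ with $p\in\QQ_\alpha$. This topology is Polish by the same metric argument, restricted to nodes of rank $<\alpha$ together with witness information. It is finer than the standard topology. No $p\in\QQ_\alpha$ can mention $\rooot$ except with value $0$. Given $p\in\QQ_\alpha$, one extends it by putting $(\smallseq{k},1)$ for large $k$, and the resulting $[p']$ is disjoint from $P$. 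Hence $P$ has empty $\tau_\alpha$-interior. $P$ is $\tau_\alpha$-closed, since its complement is the union of the $[\{(\rooot,0),(\smallseq{k},1)\}]$, and these conditions lie in $\QQ_\alpha$ as $\rk\smallseq{k}<\alpha$. So (ii) holds.

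For (iii), the plan is the untagging lemma. Let $B=[p]$ with $p\in\QQ_\alpha$ and $B\cap P\ne\emptyset$. The relative topology on $P$ corresponds to forcing with the conditions $p\cup\{(\rooot,1)\}$, which are exactly conditions of $\QQ_{\alpha+1}$ containing $(\rooot,1)$. So comeagreness of $D\cap P\cap B$ in $(P\cap B,\tau_\alpha)$ should be expressed as $p\cup\{(\rooot,1)\}\force\vphi$ in $\QQ$. Here $\vphi$ is a $\Pi_\gamma$ code for $D$ with $\gamma<\alpha$. Since $\vphi$ is a negation, this weak forcing is strong forcing, so $p\cup\{(\rooot,1)\}\force^*\vphi$.

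I then make the condition $\gamma$-complete. By \cref{lem:simple_forcing:beta_complete_is_dense}, $\gamma$-completeness only adds $p(\smallseq{k})=0$ for the finitely many $k$ with $\rk\smallseq{k}<\gamma$, plus witnesses. Call the result $q$; it is determined by the root value. \cref{prop:simple_forcing:Pi_untagging_lemma} gives $q\rest{(\gamma+1)}\force^*\vphi$. Since $\gamma+1\le\alpha$, $q\rest{(\gamma+1)}$ drops the root value, and it lies in $\QQ_\alpha$. Let $B'=[q\rest{(\gamma+1)}]\cap B$. Then $D\cap B'$ is comeagre in $B'$ for $\tau_\alpha$, by \cref{prop:simple_forcing:untagging_lemma_topologically} or the $\QQ_\alpha$ forcing theorem.

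The main obstacle is the requirement $P\cap B=P\cap B'$. Restricting to $q$ shrinks $B$ inside $P$ only by the finitely many constraints $T^x(\smallseq{k})=0$. To keep $P\cap B'=P\cap B$, I would instead take $B'$ to be $B$ minus the set where some such $\smallseq{k}$ is labelled $1$. This set is a finite union of $\tau_\alpha$-open pieces, is disjoint from $P$, and does not cover $P\cap B$, so I can handle it separately. On each piece $[p\cup\{(\smallseq{k},1)\}]$ I need not have comeagreness, so I would choose a different $B'$. Let $B'$ be the union over all $\gamma$-complete $q\le p\cup\{(\rooot,1)\}$ of $[q\rest{(\gamma+1)}]$. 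This union covers $P\cap B$, because every $x\in P\cap B$ lies in some such $[q]$ by adding witnesses according to $T^x$. Each piece forces $\vphi$, so $D\cap B'$ is comeagre in $B'$. Also $B'\cap P\subseteq B$ because each $q\rest{(\gamma+1)}$ extends $p$. Checking that $p\subseteq q\rest{(\gamma+1)}$, which uses $p\in\QQ_\alpha$ and the ranks involved, is the delicate point I expect to spend the most care on.
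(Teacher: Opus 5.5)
Your setup is the paper's: the same $P=\{x : T^x(\rooot)=1\}$, the same topology $\tau_{\QQ,\alpha}$, and the same arguments for (i) and (ii). In (ii), when you add $(\smallseq{k},1)$ you must also add $(\rooot,0)$ if $p(\rooot)\diverge$; this is harmless. For (iii) you also use the paper's mechanism. The hypothesis amounts to $p\cup\{(\rooot,1)\}\force^*\vphi$, and \cref{prop:simple_forcing:Pi_untagging_lemma}, applied to $\gamma$-complete extensions, removes the root.

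However, the last step of your construction fails as written. The claim that every $q\rest{(\gamma+1)}$ extends $p$ is false. Restricting to $\gamma+1$ discards every label at a node of rank $>\gamma$, except $0$-labels witnessed at rank $\le\gamma$. A condition $p\in\QQ_\alpha$ may carry such labels whenever $\gamma+1<\alpha$, and this is unavoidable when $\alpha$ is a limit. (For successor $\alpha$ and $\gamma=\alpha-1$ your claim does hold, and in fact $B'=B$ then works.)

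For example, let $p=\{(\smallseq{k},0),(\smallseq{k,0},1)\}$ with $\rk(\smallseq{k,0})>\gamma$. Then $[p]$ meets $P$. The minimal $\gamma$-complete $q\le p\cup\{(\rooot,1)\}$ adds only $0$-labels, with witnesses, at the finitely many children of rank $<\gamma$ of the root and of $\smallseq{k,0}$. So $q\rest{(\gamma+1)}$ omits both $\smallseq{k}$ and $\smallseq{k,0}$. Hence $[q\rest{(\gamma+1)}]$ contains points $x\in P$ with $T^x(\smallseq{k,0})=0$, which lie outside $B$. Take $D=2^{\Leaves_\alpha}$: the hypothesis of (iii) holds trivially, yet your $B'$ satisfies $P\cap B'\supsetneq P\cap B$.

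The repair is to intersect with $B$, as you already did in your single-$q$ attempt. Put $B'=\bigcup_q[p\cup q\rest{(\gamma+1)}]$, the union over all $\gamma$-complete $q\le p\cup\{(\rooot,1)\}$. Each piece works for the following reasons.

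\begin{itemize}
\item Since $p\cup q\rest{(\gamma+1)}\subseteq q$, it is a condition by \cref{lem:simple_forcing:compatibility_is_being_a_function}.
\item Neither part mentions the root: $p(\rooot)\diverge$ because $[p]$ meets $P$, and the restriction drops $q(\rooot)=1$. So the condition lies in $\QQ_\alpha$.
\item It extends $q\rest{(\gamma+1)}$, so it strongly forces $\vphi$. Hence $D$ is $\tau_\alpha$-comeagre in it, by \cref{prop:simple_forcing:untagging_lemma_topologically} applied to the $\Sigma_\alpha$ code $\bigvee_n\vphi$.
\end{itemize}

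Then $B'\subseteq B$ is automatic, and choosing the completion of $q$ along $T^x$ gives $P\cap B\subseteq B'$.

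Your intermediate remark that a single $q$ shrinks $P\cap B$ only by the constraints $T^x(\smallseq{k})=0$ is also inaccurate: the witnesses chosen in the completion constrain it as well. That is exactly why the union over completions along each $T^x$ is needed, and your final version handles this correctly. Once repaired, your $B'$ is essentially the paper's, namely a union of $[r]$ with $r\in\QQ_\alpha$, $r\le p$, $r\force^*\vphi$. Your explicit pointwise use of untagging makes the equality $P\cap B'=P\cap B$ more transparent than the paper's density phrasing of the hypothesis.
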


\begin{proof}
  We work in $2^{\Leaves_\alpha}$, which, as discussed above, is computably isomorphic to Cantor space. We let $\tau_\alpha = \tau_{\QQ,\alpha}$ defined above (rather than using $\tau_\QQ = \tau_{\alpha+1}$). Note that $\QQ_\alpha$ is the collection of $p\in \QQ$ such that it is not the case that $p(\rooot)=1$.  We let 
  \[
    P = \left\{ x\in 2^{\Leaves_\alpha} \,:\, T^x(\rooot) = 1  \right\}, 
  \]
  which is $\Pi^0_\alpha$ by \cref{lem:the_complexity_of_a_label_of_a_node}. The set $P$ is $\tau_\alpha$-closed since it equals the intersection of the sets 
  \[
   \left\{ x\in 2^{\Leaves_\alpha}  \,:\,  T^x(\smallseq{k}) = 0   \right\},
  \]
  each of which is $\tau_\alpha$-clopen. The set~$P$ is $\tau_\alpha$-nowhere dense since any $p\in \QQ_\alpha$ can be extended to some $p'\in \QQ$ with $p'(\rooot)=0$; so every $\tau_\alpha$-open set intersects the complement of~$P$. Note that for $q\in \QQ_\alpha$, $[q]$ meets~$P$ if and only if $q(\smallseq{k})=0$ whenever $q(\smallseq{k})\converge$. 

  Let $B = [p]$ for $p\in \QQ_\alpha$ be a basic $\tau_\alpha$-open set that meets~$P$. Let $D$ be $\bPi^0_{<\alpha}$; let $\vphi$ be a $\Pi_{<\alpha}$ Borel code of~$D$ (that is, $D = [\vphi]$). We suppose that $P\cap B \cap D$ is comeagre in $(P\cap B,\tau_\alpha)$. This means that for any $q\in \QQ_\alpha$ extending~$p$, if $[q]$ intersects~$P$ then there is some $r$ extending $q$ in $\QQ_\alpha$ which also meets~$P$ and strongly forces~$\vphi$. Then 
  \[
    B' = [p] \cup \bigcup \left\{[q]\,:\,  q\in \QQ_\alpha, q\le p, [q]\cap P = \emptyset, \text{ and } q\force^*  \vphi \right\}
  \]
  is as required. 
\end{proof}

\section{A weak dichotomy} \label{sec:a_weak_dichotomy}

Let $\alpha\ge 1$ be a computable ordinal. We let 
\[
\XXX_\alpha = 2^{\Leaves_\alpha} \times \PowerSet(\Nat)
\] (both factors can be identified with Cantor space). 

\begin{definition} \label{def:K_alpha}
  The directed graph $K_\alpha$ on $\XXX_\alpha$ is defined as follows: a pair $(x,y)$ is connected to a pair $(x',y')$ if $x'=x$, $T^x(\rooot)=0$, and for the least~$n$ such that $T^x(\seq{n})=1$, we have $y\symdiff y' = \left\{ n   \right\}$, and $n\notin y$.
\end{definition}

The directed graph $K_\alpha$ is Borel, indeed the collection of edges is $\Sigma^0_\alpha$. Note that~$K_\alpha$ is the graph of a bijection between two disjoint~$\Sigma^0_\alpha$ sets.

\begin{theorem} \label{prop:dichotomy_alpha_alpha}
    Let $G$ be a $\Sigma^1_1$ directed graph on a computably presented Polish space~$Y$. The following are equivalent:
    \begin{equivalent}
      \item \label{item:dichotomy:Delta11} 
      There is a homomorphism $f\colon (\XXX_\alpha,K_\alpha)\to (Y,G)$ which is continuous when both spaces are equipped with the topology generated by the $\Sigma^0_\alpha(\Delta^1_1)$ sets.\footnote{That is, sets that are $\Sigma^0_\alpha$ relative to some $\Delta^1_1$ parameter.} 

      
      \item \label{item:dichotomy:boldface} 
      There is a homomorphism $f\colon (\XXX_\alpha,K_\alpha)\to (Y,G)$ such that for every $A\in \Sigma^0_\alpha(\Delta^1_1)$, $f^{-1}(A)$ is $\bSigma^0_\alpha$.

      \item \label{item:dichotomy:no_colouring} 
      There is no countable $\bSigma^0_\alpha$ colouring of~$G$.  
    \end{equivalent}
    For $\alpha=1$, we need to require that~$Y$ be 0-dimensional. 
\end{theorem}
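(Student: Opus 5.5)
The plan is to prove the cycle \ref{item:dichotomy:Delta11}$\Rightarrow$\ref{item:dichotomy:boldface}$\Rightarrow$\ref{item:dichotomy:no_colouring}$\Rightarrow$\ref{item:dichotomy:Delta11}. The first implication is a counting argument. There are only countably many $\Sigma^0_\alpha(\Delta^1_1)$ sets, since each is given by a $\Delta^1_1$ real together with an index. Hence every open set of the topology they generate is a countable union of such sets, and so is $\bSigma^0_\alpha$. Continuity of~$f$ therefore gives \ref{item:dichotomy:boldface} directly.

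For \ref{item:dichotomy:boldface}$\Rightarrow$\ref{item:dichotomy:no_colouring} I need two ingredients.

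\emph{A reflection lemma.} If $G$ has a countable $\bSigma^0_\alpha$ colouring, then $Y$ is covered by the $G$-independent $\Sigma^0_\alpha(\Delta^1_1)$ sets. I would prove this by a Gandy--Harrington/Louveau style argument. Let $B$ be the complement of the union of all $G$-independent $\Sigma^0_\alpha(\Delta^1_1)$ sets; this $B$ is $\Sigma^1_1$. Then show that a nonempty such~$B$ cannot be covered by countably many $G$-independent $\bSigma^0_\alpha$ sets, using that $\Sigma^0_\alpha$ separation and reflection hold with $\Delta^1_1$ parameters. For $\alpha=1$ zero-dimensionality is needed here, so that independent open sets can be refined to clopen ones.

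\emph{Non-colourability of $K_\alpha$.} The graph $K_\alpha$ has no countable $\bSigma^0_\alpha$ colouring.

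Granting both, suppose $f$ is as in \ref{item:dichotomy:boldface} and $G$ has a colouring. Take $G$-independent $\Sigma^0_\alpha(\Delta^1_1)$ sets $A_n$ covering~$Y$. Their preimages $f^{-1}(A_n)$ are $\bSigma^0_\alpha$, $K_\alpha$-independent (since $f$ is a homomorphism), and cover~$\XXX_\alpha$. This contradicts the non-colourability of $K_\alpha$.

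To prove that $K_\alpha$ has no countable $\bSigma^0_\alpha$ colouring, which is the heart of the matter, I relativise to a parameter for the colouring. I then force with $\QQ\times\mathbb C$, where $\mathbb C$ is Cohen forcing for $y\in\PowerSet(\Nat)$, restricting to the comeagre set where $T^x(\rooot)=0$. Fix a sufficiently generic $(x,y)$ and let $n$ be least with $T^x(\seq{n})=1$; by genericity I may assume $n\notin y$. Both $(x,y)$ and $(x,y\cup\{n\})$ are generic, because flipping a bit is an automorphism of~$\mathbb C$. The pair $(x,y)$ lies in some colour class $C_m=[\vphi]$ with $\vphi$ a $\Sigma_\alpha$ code. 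By the product analogue of \cref{prop:simple_forcing:Sigma_untagging_lemma}, some condition $(p,s)$ in the generic with $p\in\QQ_\alpha$ strongly forces $\vphi$. Since $p\in\QQ_\alpha$, it cannot assert $p(\rooot)=1$, and I want to pass to an extension in which $\seq{n}$ is labelled~$1$ with $n$ above $\dom s$. By density, I arrange that the generic has $\seq{k}$ labelled $0$ (made $\alpha$-complete via \cref{lem:simple_forcing:beta_complete_is_dense}) for all $k\le\max\dom s$, so that $n\notin\dom s$. Then $(p,s\cup\{(n,1)\})$ is a condition in the generic for $(x,y\cup\{n\})$, and it still strongly forces~$\vphi$. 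Thus both endpoints of a $K_\alpha$-edge lie in $C_m$, contradicting independence of the colour class. The main obstacle is exactly this step: the untagged condition must not already pin down the least witness~$n$ (or touch the coordinate~$n$ of~$y$). This is where \cref{lem:simple_forcing:untagging:extension_lemma} and $\alpha$-completeness are used; in the successor case $\alpha=\beta+1$, $p$ may contain some $\seq{k}=1$, and I would handle this by first fixing such $k$ and arguing within $[p]$.

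For \ref{item:dichotomy:no_colouring}$\Rightarrow$\ref{item:dichotomy:Delta11}, use the reflection lemma in contrapositive form. If no colouring exists, the $\Sigma^1_1$ set $B$ is nonempty. Inside $B$, build $f$ by a Gandy--Harrington type construction indexed by conditions of~$\QQ_\alpha\times\mathbb C$, mirroring the proof above. Nodes of rank $<\alpha$ are handled through $\Sigma^0_{<\alpha}(\Delta^1_1)$ approximations, and edges are realised using that $B\cap A$ is not $G$-independent for each relevant $\Sigma^0_\alpha(\Delta^1_1)$ set~$A$. Continuity for the $\Sigma^0_\alpha(\Delta^1_1)$ topologies would then follow from the untagging lemma, \cref{prop:simple_forcing:untagging_lemma_topologically}.
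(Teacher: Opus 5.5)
Your chain (1)$\Rightarrow$(2)$\Rightarrow$(3) is the paper's; the paper simply cites your reflection lemma as \cite[Theorem~2.1]{LecomteZeleny}. But your proof that $K_\alpha$ has no countable $\bSigma^0_\alpha$ colouring has a genuine gap, and it cannot be repaired as set up. You work with a generic $(x,y)$ having $T^x(\rooot)=0$, and on that part of the space $K_\alpha$ \emph{does} have a $2$-colouring by $\bSigma^0_\alpha$ sets. Indeed, for $i=0,1$ let
\[
C_i=\bigcup_{n}\left\{(x,y)\in\XXX_\alpha \,:\, (\forall k<n)\,T^x(\seq{k})=0,\ T^x(\seq{n})=1,\ y(n)=i\right\}.
\]
Each piece is a finite Boolean combination of $\Sigma^0_{<\alpha}$ sets, so $C_i$ is $\bSigma^0_\alpha$. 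Every edge goes from $C_0$ to $C_1$, so both sets are independent, and $C_0\cup C_1=\{(x,y): T^x(\rooot)=0\}$. Hence no argument confined to generic points with $T^x(\rooot)=0$ can produce a monochromatic edge. Concretely, your step ``by density, arrange that $n\notin\dom s$'' is circular, since $s$ is found only after the generic, and hence $n$, is fixed. It is also false. If $(x,y)\in C_0$, then every $(p,s)$ in the generic that strongly forces a code of $C_0$ must have $n\in\dom s$; otherwise your own bit-flip argument would give $(x,y\cup\{n\})\in C_0$. Note also that $\{T^x(\rooot)=0\}$ is $\tau_\QQ$-clopen, not comeagre; it is comeagre only for $\tau_\alpha$.

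The missing idea is that the contradiction must come from the points with $T^x(\rooot)=1$, which lie on no edge. The paper takes a generic filter containing $p^*$, where $u^{p^*}(\rooot)=1$, and uses untagging to find $p\in\PP_\alpha$ in the filter that strongly forces the colour class. Since $p\in\PP_\alpha$ and $p$ is compatible with $p^*$, $u^p$ leaves $\rooot$ undefined and labels each $\seq{k}$ in its domain by $0$. Only at this point can you choose a \emph{fresh} $n>|\zeta^p|$ such that every node of $\dom u^p$ extends some $\seq{k}$ with $k<n$. Then extend $p$ by labelling $\rooot$ and all $\seq{k}$, $k<n$, by $0$ (adding witnesses), labelling $\seq{n}$ by $1$, and setting $\zeta(n)=0$, respectively $1$. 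Two mirror-image generics through these two extensions both contain $p$, so both lie in the colour class, and they form an edge. Your bit-flip automorphism, with the dense sets closed under the flips, works here just as well as the paper's interleaving.

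Your (3)$\Rightarrow$(1) is only a sketch of a Gandy--Harrington construction, and none is needed. The $\Sigma^0_\alpha(\Delta^1_1)$ topology is countably based, so any point $z$ of your set $B$ has $G$-edges $(a_n,b_n)$ with $a_n,b_n\to z$. Map $(x,y)$ to $z$ if $T^x(\rooot)=1$. Otherwise, with $n$ least such that $T^x(\seq{n})=1$, map it to $a_n$ if $n\notin y$ and to $b_n$ if $n\in y$. This is a homomorphism. The preimage of an open set is either a union of the $\Delta^0_\alpha$ pieces above or the complement of finitely many of them, so the map is continuous.
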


We remark that in conditions \ref{item:dichotomy:Delta11} and~\ref{item:dichotomy:boldface}, we can replace the topology generated by the $\Sigma^0_\alpha(\Delta^1_1)$ sets by the topology generated by sets that are both $\Sigma^1_1$ and $\bPi^0_{<\alpha}$. 

Most of the work is in the following:

\begin{theorem} \label{prop:K_alpha:no_colouring}
  There is no countable $\bSigma^0_\alpha$ colouring of~$K_\alpha$.  
\end{theorem}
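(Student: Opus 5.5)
Suppose, towards a contradiction, that $\XXX_\alpha=\bigcup_m A_m$, where each $A_m$ is $\bSigma^0_\alpha$ and contains no $K_\alpha$-edge. Relativising to a real parameter that codes all the $A_m$ (as the paper notes, everything relativises), I may assume that each $A_m=[\vphi_m]$ for a $\Sigma_\alpha$ code $\vphi_m$. I will work with the product notion of forcing $\QQ\times\mathbb{C}$, where $\mathbb{C}=2^{<\w}$ is Cohen forcing, which adds the $\PowerSet(\Nat)$ coordinate. Borel codes are now built from finite partial functions on $\Leaves_\alpha\sqcup\Nat$. The topology is $\tau_\QQ\times(\text{standard})$, and the strong forcing relation $\force^*$ is defined exactly as in \cref{def:simple_forcing:strong_forcing_relation}.

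The first step is to re-run Section~2 for $\QQ\times\mathbb{C}$: the Rasiowa--Sikorski lemma, the forcing theorem (\cref{prop:simple_forcing:forcing_theorem}), and the untagging lemma. The relevant restriction is $(p,s)\rest\beta=(p\rest\beta,s)$, so the Cohen coordinate is never restricted. The proofs of \cref{lem:simple_forcing:untagging:extension_lemma}, \cref{prop:simple_forcing:Pi_untagging_lemma} and \cref{prop:simple_forcing:Sigma_untagging_lemma} go through verbatim, since compatibility in the product is coordinatewise and $\mathbb{C}$ only contributes basic clopen information. In particular, for a $\Sigma_\alpha$ code $\vphi$ and sufficiently generic $(G,y)$ we have $(x_G,y)\in[\vphi]$ if and only if some $(p,s)$ with $p\in G\cap\QQ_\alpha$ and $s\prec y$ strongly forces $\vphi$.

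Next I will locate a condition forcing a colour. Fix a sufficiently generic $(G,y)$. Some $m$ satisfies $(x_G,y)\in A_m$, so there is a condition $(p,s)$ with $p\in\QQ_\alpha$ and $(p,s)\force^*\vphi_m$. By the definition of strong forcing of a disjunction, $(p,s)\force^*\psi$ for some disjunct $\psi$ of $\vphi_m$, which is a $\Pi_\gamma$ code for some $\gamma<\alpha$. The key point is that $p\in\QQ_\alpha$ does not determine the least $n$ with $T^x(\seq{n})=1$. It may contain some $\seq{k}\mapsto 1$, but then $p(\rooot)=0$ is witnessed by a child of rank $<\alpha$. Since $p$ has finite domain, I can choose $n$ larger than $|s|$ and larger than every $k$ such that $\seq{k}$ has an extension in $\dom p$.

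Now I extend $p$ to $q\in\QQ$. For each $k<n$ with $p(\seq{k})$ undefined I set $q(\seq{k})=0$ and add a witness $q(\seq{k}\conc l)=1$ with $l$ large. I also set $q(\seq{n})=1$, $q(\rooot)=0$, and $q(\seq{n}\conc l)=0$ for the finitely many $l$ required by $\gamma$-completeness-type considerations; that is, I make $q$ satisfy condition (ii) of \cref{def:simple_forcing:simple_forcing_notion}. Here is the catch: if $p(\seq{k})=1$ for some $k<n$, then that $k$ would be the least index, not $n$. I avoid this by instead taking $n$ to be the least $k$ with $p(\seq{k})=1$, if there is one; then $n$ could be smaller than $|s|$. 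To handle that case, I first extend $s$ so that $s(n)$ is defined, and I then need a flip of bit $n$ to remain compatible with $s$. This is the main obstacle. The cleanest resolution is to note that the graph only requires $n\notin y$ and $y'=y\cup\{n\}$, so I must also ensure that the extended condition with bit $n$ flipped still forces $\psi$.

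To get this, I will use the \emph{$\Pi$-form} of untagging applied to a suitably complete extension. The claim is that $\psi$ is then forced by every condition agreeing with $(p\rest(\gamma+1),s)$, and since flipping $\{n\}$ is an automorphism of $\mathbb{C}$ fixing all formulas only up to renaming, I need a symmetry argument. Here is the argument I will try first. Choose the edge index $n$ \emph{after} $s$, as in the first case: extend $p$ to a $\gamma$-complete condition and pass to $p\rest(\gamma+1)$, which contains no assignment of $1$ to any $\seq{k}$ (because $\rk\seq{k}$ may equal $\gamma+1$ only when $\alpha$ is a successor; in the limit case $\rk\seq{k}=\alpha_k$, and I handle the finitely many small $k$ by choosing $n$ beyond them). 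Then $q$ extends $p\rest(\gamma+1)$ with least $1$-child $n\ge|s|$, and $q$ is compatible with $p$ by \cref{lem:simple_forcing:untagging:extension_lemma}.

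Finally, I take $H$ generic containing $q\cup p$ and a Cohen real $y\succ s$ with $y(n)=0$, chosen so that both $(H,y)$ and $(H,y\cup\{n\})$ are generic. This is possible because flipping bit $n$ is a homeomorphism of $2^\Nat$ preserving comeagreness, so the two requirements together are still comeagre. Both pairs lie below $(p,s)$, so both $(x_H,y)$ and $(x_H,y\cup\{n\})$ belong to $A_m$, and they form a $K_\alpha$-edge. This contradicts the assumption that $A_m$ contains no edge.
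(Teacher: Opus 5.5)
Your proposal has a genuine gap. You start from an arbitrary sufficiently generic point, and nothing in the argument uses the $\bPi^0_\alpha$ set $P=\{x : T^x(\rooot)=1\}$. As written, your argument would show that every $\bSigma^0_\alpha$ set containing a sufficiently generic point contains a $K_\alpha$-edge. That is false. For $n\in\Nat$ and $i<2$, let $B_{n,i}$ be the set of $(x,y)$ such that $n$ is least with $T^x(\seq{n})=1$ and $y(n)=i$. Then:
\begin{itemize}
\item $B_{n,i}$ is a finite Boolean combination of a clopen set and the sets $\{x : T^x(\seq{j})=0\}$, $j\le n$, each of which is $\bSigma^0_{\rk(\seq{j})}$ with $\rk(\seq{j})<\alpha$; so $B_{n,i}$ is $\bSigma^0_\alpha$.
\item $B_{n,i}$ is nonempty and $\tau_\PP$-open, so it contains sufficiently generic points.
\item $B_{n,i}$ contains no edge.
\end{itemize}

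Your attempted repair breaks exactly where this example says it must. The claim that $p\rest{(\gamma+1)}$ assigns no $1$ to any $\seq{k}$ is false. If $\alpha=\delta+1$ and $\gamma=\delta$ (the typical case), every child of the root has rank $\gamma<\gamma+1$. So $p\rest{(\gamma+1)}=p\rest{\alpha}$ keeps all of $p$'s labels on the children of the root. If $\alpha$ is a limit, the labels on those $\seq{k}$ with $\rk(\seq{k})\le\gamma$ survive. If one of them is $1$, you cannot push the least $1$-child past it. Moreover, \cref{lem:simple_forcing:untagging:extension_lemma} makes $p$ compatible with $q\rest{\gamma}$, not with $q$. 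Indeed $p\cup q$ is not even a function if $p(\seq{k})=1$ for some $k<n$.

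The missing idea is to put the generic point inside $P$. Take $G$ containing the condition $p^*$ with $u^{p^*}(\rooot)=1$ and empty Cohen part. The condition $p\in G\cap\PP_\alpha$ supplied by \cref{lem:K_alpha:Sigma_untagging_lemma} is compatible with $p^*$, so $u^p(\seq{k})=0$ wherever it is defined. Since $p\in\PP_\alpha$, $u^p(\rooot)$ is undefined. Untagging has discarded precisely the root label $1$, which is what kept the point out of every edge. Now choose $n$ larger than $|\zeta^p|$ and larger than every $k$ such that some node of $\dom u^p$ extends $\seq{k}$. Label the root $0$, the remaining $\seq{k}$ with $k<n$ by $0$ (with witnesses), and $\seq{n}$ by $1$.

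From there your bit-flip argument is a valid substitute for the paper's interleaved construction of two filters, since flipping bit $n$ is an automorphism of the Cohen factor and so both points can be made generic. It yields an edge inside the colour class forced by $p$, even though the generic point itself lies in no edge.
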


Given this result, we can prove the weak dichotomy theorem.

\begin{proof}[Proof of \cref{prop:dichotomy_alpha_alpha}]
    \ref{item:dichotomy:Delta11}$\Then$\ref{item:dichotomy:boldface} %
    is immediate. 

    \ref{item:dichotomy:boldface}$\Then$\ref{item:dichotomy:no_colouring} follows from \cref{prop:K_alpha:no_colouring}, and \cite[Theorem~2.1]{LecomteZeleny}, which shows that if \ref{item:dichotomy:no_colouring} fails, then there is a $\Sigma^0_\alpha(\Delta^1_1)$ colouring of~$G$. 

    For the rest, suppose that~\ref{item:dichotomy:no_colouring} holds. By Proposition~2.4 of \cite{LecomteZeleny}, there is some point $p\in Y$ which is an accumulation point of edges of~$G$ in the $\Sigma^0_\alpha(\Delta^1_1)$ topology: say $a_n,b_n\to p$ and $(a_n,b_n)\in G$. Define the following map~$F$ from $\XXX_\alpha$ to~$Y$: for $(x,y)\in \XXX_\alpha$
  \begin{itemize}
    \item If $T^x(\rooot)= 1$ then $F(x,y)=p$. 
    \item If $T^x(\rooot)=0$, let~$n$ be least such that $T^x(\seq{n})=1$.
    \begin{itemize}
      \item If $n\notin y$, let $F(x,y)=a_n$. 
      \item If $n\in y$, let $F(x,y)=b_n$. 
    \end{itemize}
  \end{itemize}
  The range of~$F$ is a countable set, and for any $\Sigma^0_\alpha(\Delta^1_1)$-open set $A$, $F^{-1}[A]$ has one of two forms. For $n<\w$ and $i<2$ let  
   \[
   B_{n,i} = \left\{ (x,y) \,:\,  n\text{ least s.t.\ }T^x(\seq{n})=1 \andd y(n)=i \right\}.
   \]
   Each set $B_{n,i}$ is $\Pi^0_{<\alpha}(\Delta^1_1)$. 
  
  \begin{itemize}
    \item If $p\notin A$, then $F^{-1}[A]$ is the union of sets among $B_{n,i}$. 
    \item Otherwise, $F^{-1}[A]$ is the complement of the union of finitely many of the $B_{n,i}$. 
  \end{itemize}
  Thus, $F^{-1}[A]$ is $\Sigma^0_\alpha(\Delta^1_1)$ whenever~$A$ is $\Sigma^0_\alpha(\Delta^1_1)$. 
\end{proof}

It remains to prove \cref{prop:K_alpha:no_colouring}. The proof is an elaboration on the forcing argument of the previous section. We force with $\QQ\times$Cohen.

\begin{definition} \label{def:K_alpha_forcing_notion}
   We let $\PP = \QQ\times 2^{<\w}$. For $p = (u,\zeta)$ and $q = (v,\xi)$ in~$\PP$, we write $q\le p$ ($q$ extends~$p$) if $u\subseteq v$ and $\zeta\preceq \xi$. 
\end{definition}

When $p\in \PP$ we often write $p = (u^p,\zeta^p)$. For $\zeta\in 2^{<\w}$ we let $[\zeta] = \left\{ y\in 2^\w \,:\, \zeta\prec y   \right\}$, and for $p\in \PP$ we let
\[
  [p] = [u^p]\times [\zeta^p] \subseteq \XXX_\alpha.
\]

The notions of a filter of~$\PP$ and a dense subset of~$\PP$ are defined as before. For a filter~$G$ of~$\PP$ we define
\[
  x_G = \bigcup \left\{ u^p\rest{\Leaves_\alpha} \,:\,  p\in G \right\}
\]
and
\[
  y_G = \bigcup \left\{ \zeta^p \,:\,  p\in G \right\}.
\]
The proof of \cref{lem:simple_forcing:filters_are_points} gives its analogue for~$\PP$: there is a countable collection ${\Dpoint}(\PP)$ of dense subsets of~$\PP$, such that for any ${\Dpoint}(\PP)$-generic filter~$G$, $(x_G,y_G)\in \XXX_\alpha$, and for all $p\in \PP$, 
\[
  (x_G,y_G)\in [p]\,\,\Iff\,\,p\in G.\footnote{To be specific, ${\Dpoint}(\PP)$ consists of the sets $\left\{ p \,:\,  \s\in \dom u^p \right\}$ for $\s\in T_\alpha$; the sets $\left\{ p \,:\,  |\zeta^p|\ge m \right\}$ for all $m\in \Nat$; and the sets $\left\{ q \,:\,  q\le p, \text{ or $q$ and~$p$ are incompatible} \right\}$ for $p\in \PP$.}
\]
We obtain the same characterisation of dense open and comeagre subsets of $\XXX_\alpha$, equipped with the product topology $\tau_\PP$ (the product of $\tau_\QQ$ and the usual topology on $\PowerSet(\Nat)$). 

We define Borel codes for subsets of $\XXX_\alpha$ analogously to the definition above; the only difference is in the clopen level, where a code is a pair $p = (u,\zeta)\in \PP$ such that $\dom u\subset \Leaves_\alpha$, and its interpretation is~$[p]$ as defined above. The strong forcing relation $p\force^* \vphi$ is defined exactly as in \cref{def:simple_forcing:strong_forcing_relation}. \Cref{lem:basic_forcing:basic_properties_of_strong_forcing} holds. 
The forcing theorem, \cref{prop:simple_forcing:forcing_theorem}, holds, with the same proof. 

Fix some~$\beta$ with $0<\beta \le \alpha+1$. We define 
\[
  \PP_\beta = \QQ_\beta \times 2^{<\w},
\]
and for $p\in \PP$, we let 
\[
  p\rest{\beta} = (u^p\rest{\beta}, \zeta^p). 
\]
Immediately from \cref{lem:simple_forcing:restricted_condition_is_a_condition} we get its analogue for~$\PP$: $p\rest{\beta}\in \PP_\beta$ and $p \le p\rest{\beta}$. We say that~$p$ is \emph{$\beta$-complete} if $u^p$ is $\beta$-complete (\cref{def:simple_forcing:beta-complete}). \Cref{lem:simple_forcing:beta_complete_is_dense} implies its analogue for~$\PP$. We get an analogue of \cref{lem:simple_forcing:untagging:extension_lemma}:

\begin{lemma} \label{lem:K_alpha:untagging:extension_lemma}
  Suppose that $p\in \PP$ is $\beta$-complete and $r$ extends $p\rest{(\beta+1)}$. Then~$p$ and $(r\rest \beta)$ are compatible in~$\PP$. 
\end{lemma}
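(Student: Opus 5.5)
The plan is to reduce the statement to the two coordinates of~$\PP$ separately, using \cref{lem:simple_forcing:untagging:extension_lemma} on the $\QQ$-coordinate and a trivial compatibility argument on the Cohen coordinate. Write $p = (u^p,\zeta^p)$ and $r = (u^r,\zeta^r)$. By definition, $p\rest{(\beta+1)} = (u^p\rest{(\beta+1)},\zeta^p)$ and $r\rest\beta = (u^r\rest\beta,\zeta^r)$. Two conditions of $\PP = \QQ\times 2^{<\w}$ are compatible exactly when their $\QQ$-parts are compatible in~$\QQ$ and their Cohen parts are comparable under~$\preceq$. Indeed, if $u,v$ have a common extension $w\in\QQ$ and $\zeta,\xi$ are comparable with longer one $\eta$, then $(w,\eta)$ extends both; the converse is immediate from the definition of the order on~$\PP$.

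For the $\QQ$-coordinate: $p$ being $\beta$-complete means by definition that $u^p$ is $\beta$-complete. Since $r\le p\rest{(\beta+1)}$ in~$\PP$, we get $u^p\rest{(\beta+1)}\subseteq u^r$, that is, $u^r$ extends $u^p\rest{(\beta+1)}$ in~$\QQ$. Then \cref{lem:simple_forcing:untagging:extension_lemma} applies directly, with $\beta$ in the range $0<\beta\le\alpha$ as in the ambient setting, and gives that $u^p$ and $u^r\rest\beta$ are compatible in~$\QQ$. By \cref{lem:simple_forcing:compatibility_is_being_a_function}, their common extension can be taken to be $u^p\cup (u^r\rest\beta)$.

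For the Cohen coordinate: $r\le p\rest{(\beta+1)}$ gives $\zeta^p\preceq\zeta^r$, and restriction does not change the Cohen part. So $(u^p\cup(u^r\rest\beta),\zeta^r)$ is a common extension of $p$ and $r\rest\beta$.

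There is essentially no obstacle here. The only point to check is that the restriction operation on~$\PP$ leaves the second coordinate untouched, which is immediate from its definition. If one wants to cover $\beta=\alpha+1$ as well, that case also needs attention: since the root has rank $\alpha<\beta$, $u^r\rest\beta=u^r$ and $u^p\rest{(\beta+1)}=u^p$, so $u^r$ extends $u^p$ and compatibility is trivial.
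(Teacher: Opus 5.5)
Your proposal is correct and is essentially the paper's own proof: apply \cref{lem:simple_forcing:untagging:extension_lemma} to the $\QQ$-coordinates to get $v = u^p\cup(u^r\rest\beta)$, and take $(v,\zeta^r)$ as the common extension, using that $\zeta^p\preceq\zeta^r$ follows from $r\le p\rest{(\beta+1)}$. Your separate treatment of $\beta=\alpha+1$ is a harmless extra that the paper does not spell out.
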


\begin{proof}
  By \cref{lem:simple_forcing:untagging:extension_lemma}, there is some $v\in \QQ$ that extends both $u^p$ and $u^r\rest{\beta} = u^{r\rest\beta}$ (namely $v=u^p\cup (u^{r}\rest{\beta})$). Then $(v,\zeta^r)$ extends both~$p$ and $r\rest\beta$ (note that $\zeta^r\succeq \zeta^p$ follows from $r\le p\rest{(\beta+1)}$). 
\end{proof}

Finally, we obtain the untagging lemma for~$\PP$. The proofs are identical, and so we obtain:

\begin{lemma} \label{lem:K_alpha:Sigma_untagging_lemma}
Suppose that $0<\beta \le \alpha+1$, and that~$\vphi$ is a $\Sigma_\beta$ code for a subset of $\XXX_\alpha$. For every sufficiently generic $G\subset \PP$, 
  \[
    (x_G,y_G) \in [\vphi]\,\,\Iff \,\, (\exists p\in G\cap \PP_\beta)\,\,\,p\force^*\vphi.
  \]
\end{lemma}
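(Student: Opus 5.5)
The plan is to transfer the proofs of \cref{prop:simple_forcing:Pi_untagging_lemma} and \cref{prop:simple_forcing:Sigma_untagging_lemma} verbatim from~$\QQ$ to~$\PP$. The Cohen coordinate is never restricted: $p\rest\beta$ keeps $\zeta^p$. The only place the structure of~$\QQ$ entered those proofs was through \cref{lem:simple_forcing:untagging:extension_lemma}, and that is replaced by \cref{lem:K_alpha:untagging:extension_lemma}.

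The first step is the $\Pi$-untagging lemma for~$\PP$: if $\gamma\le\alpha$, $\vphi$ is a $\Pi_\gamma$ code for a subset of~$\XXX_\alpha$, $p\in\PP$ is $\gamma$-complete and $p\force^*\vphi$, then $p\rest{(\gamma+1)}\force^*\vphi$. This goes by induction on~$\gamma$.
\begin{itemize}
\item In the base case $\vphi$ is a pair $(u,\zeta)$ with $\dom u\subseteq\Leaves_\alpha$. Then $p\force^*\vphi$ means $u\subseteq u^p$ and $\zeta\preceq\zeta^p$. Since leaves have rank $0<1$, we get $u\subseteq u^p\rest 1$, and $\zeta^{p\rest 1}=\zeta^p$, so $p\rest1$ extends $(u,\zeta)$.
\item In the inductive step, write $\vphi=\lnot\bigvee_n\psi_n$ and argue by contrapositive. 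Suppose some $r\le p\rest{(\gamma+1)}$ strongly forces~$\psi_n$, where $\psi_n$ is $\Pi_{\gamma'}$ with $\gamma'<\gamma$. Extend~$r$ so that it is $\gamma'$-complete; this uses the $\PP$-analogue of \cref{lem:simple_forcing:beta_complete_is_dense} together with monotonicity of~$\force^*$. By induction $r\rest{(\gamma'+1)}\force^*\psi_n$, hence $r\rest\gamma\force^*\bigvee_n\psi_n$, using $r\rest\gamma\le r\rest{(\gamma'+1)}$, which is the $\PP$-version of \cref{lem:simple_forcing:restricted_condition_is_a_condition}(b). By \cref{lem:K_alpha:untagging:extension_lemma}, $p$ and $r\rest\gamma$ have a common extension~$q$. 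Then $q\le p$ and $q\force^*\bigvee_n\psi_n$, so $p\nforce^*\vphi$.
\end{itemize}

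The second step derives the $\Sigma$ statement exactly as in \cref{prop:simple_forcing:Sigma_untagging_lemma}. Take~$\+D$ from the forcing theorem for~$\PP$ applied to~$\vphi$. Add the dense sets $E_{p,\gamma}$, consisting of those conditions that either extend~$p$ and are $\gamma$-complete or are incompatible with~$p$, for $p\in\PP$ and $\gamma\le\alpha+1$. This is still a countable collection, since $\PP$ is countable.

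Now let $G$ be generic for this collection.
\begin{itemize}
\item For the forward direction, suppose $(x_G,y_G)\in[\vphi]$. The forcing theorem gives $p\in G$ with $p\force^*\vphi_n$ for some~$n$, where $\vphi_n$ is $\Pi_\gamma$ with $\gamma<\beta$. Genericity for $E_{p,\gamma}$ gives a $\gamma$-complete $q\le p$ in~$G$. By the first step, $q\rest{(\gamma+1)}\force^*\vphi$. Hence $q\rest\beta\force^*\vphi$, and $q\rest\beta\in G\cap\PP_\beta$ by upward closure.
\item For the converse, a condition in $G\cap\PP_\beta$ that strongly forces~$\vphi$ lies in~$G$, so the forcing theorem gives $(x_G,y_G)\in[\vphi]$.
\end{itemize}

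I expect no real obstacle here; the point needing care is the compatibility step in the first step. \Cref{lem:K_alpha:untagging:extension_lemma} requires $\zeta^r\succeq\zeta^p$, and this holds because restriction does not shrink the Cohen part. One should also check that the base-case codes restrict correctly: their~$u$ lives only on leaves, so it survives every restriction~$\rest\beta$ with $\beta\ge1$.
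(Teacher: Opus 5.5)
Your proposal is correct and takes the same route as the paper. The paper just states that the proofs of \cref{prop:simple_forcing:Pi_untagging_lemma} and \cref{prop:simple_forcing:Sigma_untagging_lemma} carry over verbatim to~$\PP$, with \cref{lem:K_alpha:untagging:extension_lemma} in place of \cref{lem:simple_forcing:untagging:extension_lemma}, and you have written out exactly that transfer, including the point that restriction leaves the Cohen coordinate untouched, so that $\zeta^r\succeq\zeta^p$.
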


\begin{proof}[Proof of \cref{prop:K_alpha:no_colouring}]
  Let $\+C$ be a countable collection of $\bSigma^0_\alpha$ sets. Let $\+D$ be a countable collection of dense subsets of~$\PP$, so that for every set $C\in \+C$ there is a $\Sigma_\alpha$ code~$\vphi$ of~$C$ such that for any $\+D$-generic filter $G\subset \PP$, 
  \[
    (x_G,y_G) \in C\,\,\Iff \,\, (\exists p\in G\cap \PP_\alpha)\,\,\,p\force^*\vphi.
  \]

  Let $p^*$ be the condition defined by $\zeta^{p^*}$ being the empty string, $\dom u^{p^*}= \{ \rooot \}$, and $u^{p^*}(\rooot)=1$. By the Rasiowa-Sikorski lemma, let~$G$ be a $\+D$-generic filter containing~$p^*$. 

  We claim that for every set $C\in \+C$, if $(x_G,y_G)\in C$ then there are two points in~$C$ connected by an edge of~$K_\alpha$. Thus, $\+C$ cannot be a partition of $\XXX_\alpha$ into $K_\alpha$-independent sets. Note though that $(x_G,y_G)$ itself is not part of an edge of~$K_\alpha$.

  Let $C\in \+C$ and suppose that $(x_G,y_G)\in C$. Let $\vphi$ be a $\Sigma_\alpha$ code for~$C$ such that there is some $p\in G\cap \PP_\alpha$ that strongly forces~$\vphi$. Since $p$ is compatible with~$p^*$,  $u^{p}(\seq{k})=0$ whenever defined. Since $p\in \PP_\alpha$, it is not the case that $u^p(\rooot)=1$.

  Let~$n$ be large, so $n > |\zeta^p|$ and every $\tau\in \dom u^{p}$ 
  extends $\smallseq{k}$ for some $k<n$. Define $p'$ extending $p$ by 
  setting $u^{p'}(\rooot)=0$, $u^{p'}(\seq{k})=0$ for all $k<n$, and $u^{p'}(\seq{n})=1$. Now define two conditions $q_0$ and~$r_0$, both extending~$p'$, 
  by setting $|\zeta^{q_0}| = |\zeta^{r_0}|=n+1$, $\zeta^{q_0}\rest n = \zeta^{r_0}\rest n$, 
  and $\zeta^{q_0}(n)=0$ while $\zeta^{r_0}(n)=1$; $u^{q_0} = u^{r_0} = u^{p'}$. 

  We now define two filters $Q$ and~$R$ starting with $q_0\in Q$ and $r_0\in R$. They will both be $\+D$-generic, but not mutually so. Indeed, we will let~$Q$ be the upward closure of a decreasing sequence $(q_\ell)$ of conditions, and~$R$ be the upward closure of a sequence~$(r_\ell)$ of conditions, and for each~$\ell$ we will ensure:
  \begin{itemize}
    \item $u^{q_\ell} = u^{r_\ell}$; and
    \item $|\zeta^{q_\ell}| = |\zeta^{r_\ell}|$, and for all $m<|\zeta^{r_\ell}|$ other than~$n$, $\zeta^{q_\ell}(m) = \zeta^{r_\ell}(m)$. 
  \end{itemize}
  This is done by an interleaving construction: suppose that $q_\ell$ and $r_\ell$ have been determined. We first extend~$q_\ell$ to some $q'_\ell$ meeting the $\ell\tth$ dense set in~$\+D$.  We let $r'_\ell$ extend~$r_\ell$ by copying over the new values of~$q'_\ell$. We extend~$r'_\ell$ to $r_{\ell+1}$ in the same dense set, and then copy the new 
  values of $r_{\ell+1}$ to define $q_{\ell+1}$. This construction ensures that $(x_Q,y_Q)$ is connected by an edge to $(x_R,y_R)$. Both points lie in~$C$, as both filters contain $p$. 
\end{proof}

\subsection*{Using topological language}

As discussed, one of our aims is to help bridge the gap between practitioners more comfortable with forcing, and ones more comfortable with topology. We therefore give a translation of the proof above of \cref{prop:K_alpha:no_colouring}, using topological notions. 

The main idea of the proof is passing between the two topologies, $\tau_\alpha$ and $\tau_\PP =\tau_{\alpha+1}$. We abuse notation by allowing $\tau_\alpha$ to refer both to the topology on $2^{\Leaves_\alpha}$ and on $\XXX_\alpha$ (using the product with the usual topology on $\PowerSet(\Nat)$). As above, let 
\[
  P = \left\{ x\in 2^{\Leaves_\alpha} \,:\, T^x(\rooot) = 1   \right\}. 
\]
As discussed in the proof of \cref{thm:Matrai}, according to $\tau_\alpha$, $P$ is closed and nowhere dense, whereas it is clopen according to $\tau_{\alpha+1}$. 

Suppose that $\+C$ is a partition of $\XXX_\alpha$ into $\bSigma^0_\alpha$ sets. Since~$P$ is $\tau_{\alpha+1}$-clopen, there is some $C\in \+C$ such that $C\cap (P\times \PowerSet(\Nat))$ is $\tau_{\alpha+1}$-nonmeagre. Let $\tilde p\in \PP$ such that $[u^{\tilde p}]\subseteq P$ and $C$ is $\tau_{\alpha+1}$-comeagre in~$[\tilde p]$. 

The untagging lemma for~$\PP$ ensures that there is some $p\in \PP_\alpha$ such that $[u^p]\cap P\ne \emptyset$, and such that~$C$ is $\tau_\alpha$-comeagre in~$[p]$. Since $[u^p]\cap P\ne \emptyset$, 
we can extend~$p$ to $q_0$ and~$r_0$ as in the proof of \cref{prop:K_alpha:no_colouring}. Letting~$n$ be the same as in that proof, the map $(x,y)\mapsto (x,y\cup \{n\})$ is a homeomorphism between $[q_0]$ and $[r_0]$. Since~$C$ is $\tau_\alpha$-comeagre in both $[q_0]$ and $[r_0]$, there is some $(x,y)\in [q_0]\cap C$ such that $(x,y\cup \{n\})\in C$ as well, showing that~$C$ contains an edge of~$K_\alpha$.

\section{``Smaller'' non-colourable graphs} \label{sec:_smaller_non_colourable_graphs}

The graph~$K_3$ is not a least graph with no $\bSigma^0_3$ colouring, with respect to continuous homomorphisms. To explain why, we define a family of graphs $L_\alpha$ for $\alpha\ge 3$ that are ``smaller'' in some sense than the graphs $K_\alpha$. We will show that $L_\alpha$ has no countable $\bSigma^0_\alpha$ colouring, and that there is no continuous homomorphism from $K_3$ to $L_3$.

\subsection{The graphs $L_\alpha$}

Fix $\alpha\ge 3$. This implies that we may assume that no $\s\in T_\alpha$ of height $\le 2$ is a leaf of~$T_\alpha$.\footnote{We may assume that for all limit $\delta \le\alpha$, every element $\delta_k$ of the cofinal sequence given by the computable presentation of~$\alpha$, is a successor of a successor ordinal. Hence, for example, if $\alpha$ is a limit ordinal, then none of the children of the root are leaves, nor can nodes of height~2 be leaves.}

Below, to avoid excess notation, for $x\in 2^{\Leaves_\alpha}$, we write $T^x(m)$ instead of $T^x(\smallseq{m})$, $T^x(m,k)$ instead of $T^x(\smallseq{m,k})$, etc. We similarly write $\rk(m)$, $\rk(m,k)$, and so on. 

\begin{definition} \label{def:k_x_m}
  Let $x\in 2^{\Leaves_\alpha}$. 
  \begin{sublemma}
    \item We let $n^x = \sup \left\{ n \,:\,  (\forall m<n) \,\,\,T^x({m})=0 \right\}$. 
    \item For $m<n^x$ we let $k^x(m)$ be the least~$k$ such that $T^x({m,k})=1$.
  \end{sublemma}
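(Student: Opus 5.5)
Since this is a definition rather than an assertion, what needs checking is that both quantities are well defined. I will also record the descriptive complexity of the associated sets, which is what the subsequent definition of $L_\alpha$ will presumably rely on. The plan is to unwind \cref{def:the_tree_determined_by_a_labelling_of_the_leaves} at the root and at its children, and then invoke \cref{lem:the_complexity_of_a_label_of_a_node}.

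First, for (a): the set $\left\{ n \,:\, (\forall m<n)\,\,T^x(m)=0 \right\}$ contains $0$ and is downward closed, so it is an initial segment of $\Nat$. Hence $n^x\in \Nat\cup\{\w\}$. Moreover $n^x<\w$ exactly when some child of the root is labelled $1$, in which case $n^x$ is the least $m$ with $T^x(m)=1$. By the recursive clause at the root, this happens exactly when $T^x(\rooot)=0$. So $n^x=\w$ if and only if $T^x(\rooot)=1$, consistent with the role of the least $n$ in \cref{def:K_alpha}.

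Second, for (b): if $m<n^x$ then $T^x(m)=0$. Since $\alpha\ge 3$, we arranged that $\smallseq{m}$ is not a leaf, so the recursive clause of \cref{def:the_tree_determined_by_a_labelling_of_the_leaves} applies. It yields some $k$ with $T^x(m,k)=1$, and the least such $k$ exists; this is $k^x(m)$. Since nodes of height $2$ are also not leaves, $T^x(m,k)$ is itself determined by the labels of the children $\smallseq{m,k,l}$. This is where the assumption on heights $\le 2$ is used.

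Finally, for complexity: by \cref{lem:the_complexity_of_a_label_of_a_node}, the set $\{x : T^x(m)=0\}$ is $\Sigma^0_{\rk(m)}$, and the set $\{x : T^x(m,k)=1\}$ is $\Pi^0_{\rk(m,k)}$, uniformly in $m,k$. Consequently $\{x: n^x = n\}$ and $\{x : m<n^x \andd k^x(m)=k\}$ are finite Boolean combinations of sets of these forms, hence $\Delta^0_\alpha$ and indeed $\bPi^0_{<\alpha}$-ish pieces. The only mildly delicate step is the limit case, where $\rk(m)$ varies with $m$. There the bounds must be stated per $m$, using that $\rk(m)<\alpha$; no genuine obstacle is expected.
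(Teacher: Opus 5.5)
Your check is correct and matches what the paper records right after the definition. The set is a nonempty, downward-closed initial segment of $\Nat$, so $n^x$ is the least $n$ with $T^x(n)=1$ when $T^x(\rooot)=0$, and $n^x=\omega$ when $T^x(\rooot)=1$; for $m<n^x$, the label $T^x(m)=0$ has a witnessing child because $\smallseq{m}$ is not a leaf.

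Two small corrections. First, only the non-leaf status of height-$1$ nodes is used here; height $2$ matters later, e.g.\ where the approximations $k^t(m)$ inspect the nodes $\smallseq{m,k,a}$. Second, your complexity aside should stop at $\Delta^0_\alpha$. For successor $\alpha$, a set like $\{x : n^x=1\}=\{x : T^x(0)=0\}\cap\{x : T^x(1)=1\}$ combines a $\Sigma^0_{\alpha-1}$ and a $\Pi^0_{\alpha-1}$ condition, and is not in general $\bPi^0_{<\alpha}$.
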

\end{definition}

Thus, if $T^x(\rooot)=0$, then $n^x$ is the least~$n$ such that $T^x({n})=1$, so $(x,y)\sim (x,y')$ in $K_\alpha$ exactly when $y\symdiff y' = \{n^x\}$. If $T^x(\rooot)=1$ then $n^x = \w$. 

\begin{definition} \label{def:M_k}
  We let $(M_k)$ be a uniformly computable and dense list of elements of Cantor space.
\end{definition}

We think of each number~$k$ as a ``code'' of every finite initial segment of~$M_k$. 

\begin{definition} \label{def:W_alpha_and_L_alpha} 
We let $\WWW_\alpha$ be the collection of $(x,y)\in \XXX_\alpha$ such that for all $m<n^x$, $k^x(m)$ codes $y\rest{(m+1)}$ (meaning that $y\rest{(m+1)}\prec M_{k^x(m)}$).

We let $L_\alpha$ be the restriction of $K_\alpha$ to $\WWW_\alpha$. 
\end{definition}

\begin{definition} \label{def:alpha_star}
  We let $\alpha^* = \alpha-2$. More precisely, 
  \[
    \alpha^* = \begin{cases*}
      \alpha -2, & if $\alpha$ is the successor of a successor; \\
      \alpha -1, & if $\alpha$ is the successor of a limit ordinal; \\
      \alpha, & if $\alpha$ is a limit ordinal. 
    \end{cases*}
  \]
\end{definition}

\begin{proposition} \label{prop:complexity_of_WWW_alpha}
  $\WWW_\alpha$ is $\Pi^0_2(\tau_{\alpha^*})$. 
\end{proposition}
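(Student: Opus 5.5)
The plan is to write $\WWW_\alpha$ explicitly as a countable intersection of finite unions of sets that are $\tau_{\alpha^*}$-open or $\tau_{\alpha^*}$-closed. The first step is a local observation about $\tau_\beta$ for $\beta=\alpha^*$: for every $\s\in T_\alpha$ with $\rk(\s)\le\alpha^*$, the set $\{x: T^x(\s)=0\}$ is $\tau_{\alpha^*}$-open, and if $\rk(\s)<\alpha^*$ it is also $\tau_{\alpha^*}$-clopen.
\begin{itemize}
\item If $\rk(\s)<\alpha^*$, then both $[\{(\s,0)\}]$ and $[\{(\s,1)\}]$ are basic open sets, since the one-point functions lie in $\QQ_{\alpha^*}$ (extended if necessary by a witness child of rank $<\alpha^*$). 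The two sets partition the space.
\item If $\rk(\s)=\alpha^*$, then $\{x:T^x(\s)=0\}=\bigcup_k [\{(\s,0),(\s\conc k,1)\}]$. Each such condition is in $\QQ_{\alpha^*}$ because $\rk(\s\conc k)<\alpha^*$.
\end{itemize}
Consequently $\{T^x(\s)=1\}$ is $\tau_{\alpha^*}$-closed when $\rk(\s)\le\alpha^*$. It follows that for $\rk(\s)\le\alpha^*+1$ the set $\{T^x(\s)=1\}=\bigcap_j\{T^x(\s\conc j)=0\}$ is $\Pi^0_2(\tau_{\alpha^*})$, and $\{T^x(\s)=0\}$ is $\Sigma^0_2(\tau_{\alpha^*})$.

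Next I check the ranks of the relevant nodes $\smallseq{m}$ and $\smallseq{m,k}$ in the three cases of \cref{def:alpha_star}, using the footnote convention that each $\delta_k$ is a successor of a successor.
\begin{itemize}
\item If $\alpha=\beta+2$, then $\rk(m)=\alpha^*+1$ and $\rk(m,k)=\alpha^*$.
\item If $\alpha=\delta+1$ with $\delta$ a limit, then $\rk(m)=\alpha^*$ and $\rk(m,k)<\alpha^*$.
\item If $\alpha$ is a limit, then $\rk(m)<\alpha=\alpha^*$ and likewise $\rk(m,k)<\alpha^*$.
\end{itemize}
In every case, $\{T^x(m,k)=0\}$ is $\tau_{\alpha^*}$-open and $\{T^x(m,k)=1\}$ is $\tau_{\alpha^*}$-closed. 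Moreover $\{T^x(m)=1\}$ is $\Pi^0_2(\tau_{\alpha^*})$, and in the last two cases even closed.

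Then I rewrite the definition of $\WWW_\alpha$. The condition $m<n^x$ says that $T^x(m')=0$ for all $m'\le m$, and then $k^x(m)$ is the least $k$ with $T^x(m,k)=1$. So $(x,y)\in\WWW_\alpha$ iff for all $m$ and $k$ at least one of the following holds:
\begin{itemize}
\item $T^x(m')=1$ for some $m'\le m$;
\item $T^x(m,k')=1$ for some $k'<k$;
\item $T^x(m,k)=0$;
\item $y\rest{(m+1)}\prec M_k$.
\end{itemize}
For each fixed $(m,k)$, this disjunction is a finite union. Its pieces are $\Pi^0_2(\tau_{\alpha^*})$ sets, $\tau_{\alpha^*}$-closed sets, a $\tau_{\alpha^*}$-open set, and a clopen set in the $\PowerSet(\Nat)$ coordinate. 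Each of these is $\Pi^0_2(\tau_{\alpha^*})$ in the product topology, and a finite union of $\Pi^0_2$ sets is $\Pi^0_2$. Hence the countable intersection over $(m,k)$ is $\Pi^0_2(\tau_{\alpha^*})$.

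The only delicate point is the rank bookkeeping in the first two steps. One must verify that the sets $\{T^x(m)=1\}$, which involve rank $\alpha^*+1$ in the successor-of-successor case, are genuinely $\Pi^0_2$ rather than worse, and that the witnesses to ``$=0$'' at rank $\alpha^*$ come from conditions in $\QQ_{\alpha^*}$. Both facts follow directly from the description of $\QQ_\beta$ after \cref{def:simple_forcing:Q_beta}.
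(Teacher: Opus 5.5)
Your proof is correct. It has the same overall shape as the paper's: $\WWW_\alpha$ is written as a countable intersection of finite unions of sets of low $\tau_{\alpha^*}$-complexity. The difference lies in how you express ``$m<n^x$''.

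\emph{Your route.} You state it directly through the disjunct ``$T^x(m')=1$ for some $m'\le m$''. In the successor-of-successor case this brings in the nodes $\smallseq{m}$, which have rank $\alpha^*+1$. You therefore need the extra computation that $\{T^x(m)=1\}$ is $\Pi^0_2(\tau_{\alpha^*})$, which you carry out correctly.

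\emph{The paper's route.} The paper indexes the intersection by tuples $\bar k=(k_0,\dots,k_m)$ that guess $k^x(0),\dots,k^x(m)$. The set $A_{\bar k}$ excludes $(x,y)$ only when every guess is correct and some $k_\ell$ codes $y\rest{(\ell+1)}$ wrongly. Correctness of the guesses already implies $m<n^x$. So only nodes $\smallseq{\ell,k}$, of rank $\le\alpha^*$, appear. Each $A_{\bar k}$ is then the union of a $\tau_{\alpha^*}$-open set, a $\tau_{\alpha^*}$-closed set and a clopen set, hence $\Delta^0_2(\tau_{\alpha^*})$.

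\emph{What each buys.} Your version is more transparent and is indexed by pairs $(m,k)$ only. The paper's version gives $\Delta^0_2$ pieces and needs nothing about rank $\alpha^*+1$. More importantly, it is exactly the form that carries over to $\YYY_\alpha$ in \cref{lem:complexity_of_Y_alpha}. There the coding length $c^x(m)$ depends on all of $k^x(0),\dots,k^x(m-1)$, so guessing the whole tuple is the natural way to pin it down.

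One small imprecision in your first step: for a non-leaf $\s$, $\{(\s,0)\}$ is not a condition. Hence $\{T^x(\s)=0\}$ is not a single basic set but the union $\bigcup_k[\{(\s,0),(\s\conc k,1)\}]$. This union is still $\tau_{\alpha^*}$-open, so your clopen conclusion for $\rk(\s)<\alpha^*$ stands.
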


\begin{proof}
  For all~$m$ and~$k$, the collection of $x\in 2^\Leaves$ such that $T^x(m,k)=0$ is $\Sigma^0_1(\tau_{\alpha^*})$. For each finite tuple $\bar k = (k_0, k_1,\dots, k_{m})$ of natural numbers, let $A_{\bar k}$ be the collection of $(x,y)\in \XXX_\alpha$ such that:
  \begin{itemize}
    \item there is some $\ell\le m$ such that $T^x(\ell,k_\ell)=0$; or
    \item there is some $\ell\le m$ and some $k<k_\ell$ such that $T^x(\ell, k)=1$; or
    \item for all $\ell \le m$, $k_\ell$ is a code of $y\rest{(\ell+1)}$. 
  \end{itemize}
  then $A_{\bar k}$ is $\Delta^0_2(\tau_{\alpha^*})$, and $\WWW_\alpha$ is the intersection of all the sets $A_{\bar k}$. 
\end{proof}

In particular, $\WWW_3$ is a $\Pi^0_2$ set, and so equipped with the subspace topology is Polish.

\subsection{Non-colourability of $L_\alpha$}

We cannot use the notion of forcing $\PP$ above to always obtain points in $\WWW_\alpha$; indeed, $\WWW_\alpha\cap \left\{ (x,y) \,:\,  T^x(\rooot)=1 \right\}$ is meagre in the $\tau_{\alpha+1}$ topology. 

\begin{definition} \label{def:k_u_and_n_u}
  For $u\in \QQ$ we let 
  \[
     n^u = \max \left\{ n \,:\,  (\forall m<n)\,\,u({m})=0 \right\}.
  \]
  For each $m<n^u$ we let 
  \[
  k^u(m) = \min \left\{ k \,:\,  u({m,k})=1 \right\}. 
\]
\end{definition}

Here we use the same notation as above: $u(m) = u(\seq{m})$, $u(m,k) = u(\seq{m,k})$, etc.

\begin{definition} \label{def:the_notion_of_forcing_tilde_Q}
  We let $\tilde \QQ$ be the collection of $u\in \QQ$ satisfying:
  \begin{orderedlist}
    \item if $u({n^u})\diverge$ then for all $m>n^u$, $u({m})\diverge$; 
    \item There is no $m\ge n^u$ such that $u({m})=0$;
    \item For all $m<n^u$, for all $k<k^u(m)$, $u({m,k})\converge$; 
    \item For all $m<m'<n^u$, $\rk({m,k^u(m)}) \le \rk({m',k^u(m')})$. 
  \end{orderedlist}
\end{definition}

Note that requirement~(iv) in the definition is only relevant when~$\alpha$ is the successor of a limit ordinal; otherwise, it holds automatically. Note that for $u\in \tilde \QQ$, $u(n^u)\converge$ if and only if $u(\rooot)=0$. If $u(n^u)\diverge$ we can have either $u(\rooot)\diverge$ or $u(\rooot)=1$.

\begin{lemma} \label{lem:tilde_Q_is_closed_under_unions}
  Suppose that $u,v\in \tilde \QQ$ and $u\cup v$ is a function; then $u\cup v\in \tilde \QQ$. 
\end{lemma}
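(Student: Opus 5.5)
By \cref{lem:simple_forcing:compatibility_is_being_a_function}, $w = u\cup v$ already lies in~$\QQ$, so only clauses (i)--(iv) of the definition of~$\tilde\QQ$ need checking. The plan is to first identify $n^w$ and $k^w(m)$ in terms of the data of $u$ and~$v$, and then verify the four clauses one at a time.

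\emph{Step 1: computing $n^w$.} Say, without loss of generality, $n^u\le n^v$. For $m<n^u$ we have $u(m)=0$, so $w(m)=0$; the same holds for $m<n^v$. If $n^u<n^v$, then by clause~(ii) for~$u$ we have $u(n^u)\ne 0$, while $v(n^u)=0$. Compatibility then forces $u(n^u)\diverge$, and clause~(i) for~$u$ gives $u(m)\diverge$ for all $m>n^u$. Hence on the children of the root, $w$ agrees with~$v$. If $n^u=n^v$, then $w(m)=0$ for $m<n^u$, and by clause~(ii) for both $u$ and~$v$, $w(m)\ne 0$ for all $m\ge n^u$. In either case $n^w=\max(n^u,n^v)$, and clauses (i) and~(ii) for~$w$ follow. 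For~(i), when $n^u=n^v$: if $w(n^w)\diverge$ then both $u(n^w)\diverge$ and $v(n^w)\diverge$, so clause~(i) for each gives the conclusion. When $n^u<n^v$, $w$ agrees with~$v$ on level~1, so (i) is inherited from~$v$.

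\emph{Step 2: computing $k^w(m)$ and checking (iii).} Fix $m<n^w$. I claim $k^w(m)$ equals $k^u(m)$ or $k^v(m)$ (whichever is defined), and that the two agree when both are defined. Suppose $m<n^u$ and $m<n^v$, and say $k^u(m)\le k^v(m)$. If $k^u(m)<k^v(m)$, then clause~(iii) for~$v$ gives $v(m,k^u(m))\converge$, and $v(m,k^u(m))\ne 1$ by minimality of $k^v(m)$; this contradicts compatibility with $u(m,k^u(m))=1$. So $k^u(m)=k^v(m)$. Since $u$ and $v$ each assign~$1$ to no earlier $(m,k)$, neither does~$w$, so $k^w(m)$ is this common value. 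If instead $m$ lies only below~$n^v$, then by Step~1 $u(m)\diverge$. I expect $u$ is then undefined on nodes extending $\smallseq{m}$ as well, for otherwise the condition property gives nothing directly. This is the delicate point. I would try to show it using that $u\in\QQ$: if $u(m,k)=1$ then $u(m)=0$, contradicting $u(m)\diverge$. So $u$ has no $1$ among the $(m,k)$, and $k^w(m)=k^v(m)$. Clause~(iii) for~$w$ now follows from~(iii) for whichever of $u,v$ witnesses $k^w(m)$, since $\dom w$ contains both domains.

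\emph{Step 3: clause (iv).} Since $k^w(m)$ is always $k^v(m)$ (when $n^u\le n^v$), and $k^u(m)=k^v(m)$ where both are defined, the rank sequence $\rk(m,k^w(m))$ for $m<n^w$ is exactly that of~$v$, which is monotone by~(iv) for~$v$. The main obstacle is the bookkeeping in Step~2, specifically ruling out a $1$ in~$u$ below $\smallseq{m}$ when $u(m)\diverge$. The definition of~$\QQ$ handles this, since $u(m,k)=1$ forces $u(m)=0$.
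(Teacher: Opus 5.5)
Your proof is correct and matches the paper's approach: the paper obtains $u\cup v\in\QQ$ from \cref{lem:simple_forcing:compatibility_is_being_a_function} and says clauses (i)--(iv) are ``easily verified''. Your computation $n^{u\cup v}=\max(n^u,n^v)$ and $k^{u\cup v}=k^v$ (when $n^u\le n^v$) is exactly that verification written out; the same facts reappear in the proof of \cref{lem:union_and_restriction_of_supported_conditions_is_supported}. One small correction to your aside in Step~2: $u(m)\diverge$ does not make $u$ undefined on every node extending $\smallseq{m}$ (for instance $u(m,k)=0$ with a witness below it is allowed), but you only need, and correctly prove, that $u$ assigns no $1$ to any node $\smallseq{m,k}$.
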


\begin{proof}
  \Cref{lem:simple_forcing:compatibility_is_being_a_function} implies that $u\cup v\in \QQ$; the conditions for being in~$\tilde \QQ$ are easily verified. 
\end{proof}

\begin{lemma} \label{lem:restrictions_of_conditions_in_tilde_Q}
  Let $u\in \tilde \QQ$. For all $\beta \le \alpha+1$, $u\rest{\beta}\in \tilde \QQ$; $n^{u\rest{\beta}}\le n^u$, and for all $m<n^{u\rest\beta}$, $k^{u\rest\beta}(m) = k^u(m)$. 
\end{lemma}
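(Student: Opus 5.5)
Write $v=u\rest\beta$. By \cref{lem:simple_forcing:restricted_condition_is_a_condition}, $v\in\QQ$ and $v\subseteq u$, and a node $\s\in\dom u$ lies in $\dom v$ exactly when $\rk(\s)<\beta$, or when $u(\s\conc k)=1$ for some $k$ with $\rk(\s\conc k)<\beta$. We verify the numerical claims first and then conditions (i)--(iv) of \cref{def:the_notion_of_forcing_tilde_Q}.

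\emph{$n^v\le n^u$.} If $m<n^v$ then $v(m)=0$, so $u(m)=0$; hence $n^v\le n^u$.

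\emph{$k^v(m)=k^u(m)$ for $m<n^v$.} Fix $m<n^v$, so $v(m)=0$ and $u(m)=0$. Since $v\subseteq u$, the least $k$ with $v(m,k)=1$ is at least $k^u(m)$. For the reverse inequality it suffices to show $v(m,k^u(m))=1$, i.e.\ that $\smallseq{m,k^u(m)}$ survives the restriction. This is the main step. Because $v(m)\converge$ with value $0$, either $\rk(m)<\beta$, or there is some $k$ with $u(m,k)=1$ and $\rk(m,k)<\beta$.
\begin{itemize}
\item In the first case every child of $\smallseq{m}$ has rank $<\beta$, so $\smallseq{m,k^u(m)}\in\dom v$.
\item In the second case $k\ge k^u(m)$. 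If $\rk(m)$ is a successor, all children of $\smallseq{m}$ have the same rank, so again $\rk(m,k^u(m))<\beta$. If $\rk(m)$ is a limit, the ranks of children increase with the index, so $\rk(m,k^u(m))\le\rk(m,k)<\beta$.
\end{itemize}
In every case $v(m,k^u(m))=u(m,k^u(m))=1$, so $k^v(m)=k^u(m)$.

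\emph{Condition (iii).} For $m<n^v$ and $k<k^v(m)=k^u(m)$, we have $u(m,k)\converge$ by (iii) for $u$, and its value is $0$ by minimality of $k^u(m)$. The same rank argument shows $\rk(m,k)\le\rk(m,k^u(m))<\beta$ (or all children share a rank). Hence $v(m,k)\converge$.

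\emph{Condition (iv).} This is inherited directly, since $k^v=k^u$ on $n^v$.

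\emph{Condition (ii).} Suppose $m\ge n^v$ and $v(m)=0$. Then $u(m)=0$, so by (ii) for $u$ we get $m<n^u$. For every $m'<m$ we have $u(m')=0$, since $m'<m<n^u$. We must show $v(m')=0$ for all such $m'$, which would force $m<n^v$, a contradiction.
\begin{itemize}
\item Since $v(m)\converge$, either $\rk(m)<\beta$, or $\rk(m,k^u(m))<\beta$ by the argument above.
\item By (iv) for $u$, $\rk(m',k^u(m'))\le\rk(m,k^u(m))<\beta$. Since $u(m',k^u(m'))=1$, this gives $m'\in\dom v$.
\item In the subcase $\rk(m)<\beta$ we also need a bound on $\rk(m')$. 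Children of the root either all share the same rank $\alpha-1$ (successor case), or have increasing ranks (limit case). In either case $\rk(m')\le\rk(m)<\beta$.
\end{itemize}
So every $m'<m$ lies in $\dom v$ with $v(m')=0$, giving $m<n^v$, a contradiction.

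\emph{Condition (i).} Suppose $v(n^v)\diverge$ but $v(m)\converge$ for some $m>n^v$. By (ii) for $v$, just proved, $v(m)=1$, hence $u(m)=1$. By (ii) for $u$, this forces $m\ge n^u$. If $m=n^u$ then $n^v<n^u$; if $m>n^u$ then $u(n^u)\converge$ by (i) for $u$. In either case $u(n^v)\converge$. The remaining task is to show that $\smallseq{n^v}$ is retained in $v$, contradicting $v(n^v)\diverge$.
\begin{itemize}
\item If $n^v<n^u$, we argue as in (ii): $v(m)\converge$ forces $\rk(m)<\beta$, since $u(m)=1$ rules out the alternative clause. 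Comparing ranks of root children then gives $\rk(n^v)\le\rk(m)<\beta$, or, if needed, we use the child witness $\smallseq{n^v,k^u(n^v)}$ together with (iv).
\item If $n^v=n^u$, then $u(n^u)=1$ and $\rk(n^u)\le\rk(m)<\beta$ by the same monotonicity of root-children ranks.
\end{itemize}
Either way $\smallseq{n^v}\in\dom v$, a contradiction.

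We expect the rank bookkeeping in (i) and (ii) to be the main obstacle. It relies on the monotonicity of ranks of siblings: the increasing cofinal sequences $(\delta_k)$ in the limit case, and constant ranks of children in the successor case.
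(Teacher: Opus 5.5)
Your proof is correct and follows the paper's route. The paper rests on the same key observation you prove: for $m<n^u$, $(u\rest\beta)(m)\converge$ iff $\rk(m,k^u(m))<\beta$, by monotonicity of sibling ranks. Requirement (iv) then makes the surviving $m$ an initial segment, and $\rk(m)\ge\beta$ together with $u(m)\ne 0$ disposes of the root's children $m\ge n^u$. You just verify (i)--(iv) more explicitly; the only slip is cosmetic, since $u(m)=1\Rightarrow m\ge n^u$ follows from the definition of $n^u$ rather than from (ii).
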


\begin{proof}
  We use the fact that for all~$\s$ and $k<k'$, $\rk(\s\conc k)\le \rk(\s\conc k')$. This implies that for all $m<n^u$, $(u\rest{\beta})({m})\converge$ if and only if $\rk({m,k^u(m)})<\beta$.\footnote{In detail: if $\rk(m,k^u(m))<\beta$ then since $u(m,k^u(m))=1$, we have $(u\rest{\beta})(m)\converge$. If $(u\rest{\beta})(m)]\converge$, then either $\rk(m)<\beta$, in which case $\rk(m,k)<\beta$ for all~$k$; or there is some~$k$ such that $\rk(m,k)<\beta$ and $u(m,k)=1$. The minimality of $k^u(m)$ implies $k^u(m)\le k$, so $\rk(m,k^u(m))\le \rk(m,k)<\beta$.} Requirement~(iv) implies that this is an initial segment of $m<n^u$. If $n^{u\rest{\beta}}<n^u$ then for all $m\ge n^{u}$, $(u\rest{\beta})(m)\diverge$, since $\rk(m)\ge\beta$ and $u(m)\ne 0$. 
\end{proof}

\begin{definition} \label{def:notion_of_forcing_SSS}
  We let $\SSS$ be the collection of pairs $(u,\zeta)\in \tilde \QQ\times 2^{<\w}$ such that $|\zeta|\ge n^u$, and for all $m<n^u$, $k^u(m)$ is a code of $\zeta\rest{(m+1)}$. 
\end{definition}

The collection of conditions~$\SSS$ is partially ordered by co-ordinatewise extension, just like~$\PP$ (it is a sub-ordering of~$\PP$).

\begin{lemma} \label{lem:forcing_S:extension_lemma} \  
  \begin{sublemma}
    \item \label{item:forcing_S:extension_lemma:starting_condition}
    The condition~$p^*$ defined by $\dom u^{p^*} = \{ \rooot \}$, $u^{p^*}(\rooot)=1$, and $\zeta^{p^*} = \seq{}$ is in~$\SSS$.
    \end{sublemma}

Suppose that $(u,\zeta)\in\SSS$. 

    \begin{sublemma}[resume]

    \item \label{item:forcing_S:extension_lemma:extending_zeta}
    For all $\xi\succeq \zeta$, $(u,\xi)\in \SSS$. 

    \item \label{item:forcing_S:extension_lemma:extending_a_1}
    Suppose that $u(n^u)= 1$.  Then for all $v\supseteq u$ in~$\tilde\QQ$,  $(v,\zeta)\in \SSS$.

    \item \label{item:forcing_S:extension_lemma:adding_a_1}
    Suppose that $u(\rooot)\diverge$. Let~$v$ extend~$u$ by defining $v(n^u)=1$ (and $v(\rooot)=0$). Then $(v,\zeta)\in \SSS$. 

    \item \label{item:forcing_S:extension_lemma:extending_a_void}
    Suppose that $u(n^u)\diverge$. Then there is some $v\supseteq u$ in $\tilde \QQ$ such that $n^v = |\zeta|$, $v(n^v)\diverge$, and $(v,\zeta)\in \SSS$. 
  \end{sublemma}
\end{lemma}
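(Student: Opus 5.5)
The plan is to check each item directly against \cref{def:the_notion_of_forcing_tilde_Q} and \cref{def:notion_of_forcing_SSS}. The one structural observation is that membership in $\SSS$ depends only on $n^u$, the values $k^u(m)$ for $m<n^u$, and $\zeta\rest{n^u}$. The work therefore consists of tracking how $n^u$ and $k^u$ change under each extension.

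For (a), $u^{p^*}$ is in $\QQ$ because the root is not a leaf and the value $1$ makes no existential claim. Moreover $n^{u^{p^*}}=0$, since $u(\seq 0)\diverge$. Conditions (i)--(iv) are then vacuous, and $|\zeta|=0\ge n^u$.

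For (b), nothing about $u$ changes, and lengthening $\zeta$ keeps $\zeta\rest{(m+1)}$ for $m<n^u$. For (c), since $u(n^u)=1$ and $v\supseteq u$, we get $v(n^u)=1$ and $v(m)=0$ for $m<n^u$, so $n^v=n^u$. For $m<n^u$ the value $k^v(m)$ is the least $k$ with $v(m,k)=1$. By (iii) for $u$, all $k<k^u(m)$ lie in $\dom u$ with value $0$, so $k^v(m)=k^u(m)$. The coding condition is therefore inherited.

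For (d), first use (i) and (ii) for $u$: since $u(n^u)\diverge$, no $u(m)$ with $m>n^u$ is defined. Also $u(\rooot)\diverge$, so setting $v(n^u)=1$ and $v(\rooot)=0$ keeps $v\in\QQ$. We need a witness for the root, and $n^u$ serves. Nothing below $\seq{n^u}$ is forced, because the value is $1$ (this requires checking that $u$ has no $u(n^u,k)=1$). If $u(n^u,k)=1$ we would need $u(n^u)=0$, contradicting $u(n^u)\diverge$. So $v\in\QQ$, and $v\in\tilde\QQ$ with $n^v=n^u$ and $k^v=k^u$. Coding is then as before.

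Item (e) is the main step. Put $n=n^u$ and $N=|\zeta|\ge n$. For each $m$ with $n\le m<N$ we extend $u$ by setting $v(m)=0$. We also need $v(m,k)=0$ for $k<k_m$ and $v(m,k_m)=1$, where $k_m$ is chosen so that:
\begin{itemize}
\item $M_{k_m}$ extends $\zeta\rest{(m+1)}$, which is possible by density of $(M_k)$;
\item $k_m$ is large enough that $\rk(m,k_m)\ge\rk(m',k^u(m'))$ for all earlier $m'$, and that the ranks are nondecreasing in $m$, which secures (iv);
\item for the $0$-labels at $\seq{m,k}$, $k<k_m$, witnesses $\seq{m,k,l}$ are chosen with fresh $l$.
\end{itemize}
Nodes $\seq{m,k}$ of height $2$ are not leaves, which is why the assumption $\alpha\ge3$ is needed. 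The node $\seq{m}$ itself was undefined in $u$, and so are its descendants, because in $\tilde\QQ$ a defined descendant would force a defined ancestor, since $u\in\QQ$ ties a $1$ at $\seq{m,k}$ to a $0$ at $\seq m$. A defined $0$ at $\seq{m,k}$ would likewise require a $1$ below, and that propagation stops only at $\seq m$ being defined. This part needs care: $u\in\QQ$ does not forbid $u(m,k)=0$ with $u(m)\diverge$.

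Since (i) says nothing is defined at $\seq{m}$ for $m>n$, a single careful step is needed. Choose $k_m$ larger than all $k$ appearing in $\dom u$ under $\seq m$, and keep $u$'s existing values there (they are then $0$-labels, or $1$-labels below rank constraints). Any $k<k_m$ not already in $\dom u$ gets value $0$ with a fresh witness. Leave $\rooot$ and $\seq{N}$ undefined. Then $n^v=N$, $v(N)\diverge$, and (i)--(iv) hold. Also $(v,\zeta)\in\SSS$, because $|\zeta|=N$ and each $k_m$ codes $\zeta\rest{(m+1)}$, with the old $k^u(m)$ unchanged for $m<n$.

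The obstacle is exactly this bookkeeping of pre-existing values of $u$ below $\seq m$ for $n\le m<N$. It is resolved by taking $k_m$ beyond all of them, and by noting that a pre-existing $u(m,k)=1$ is impossible since $u(m)\diverge$.
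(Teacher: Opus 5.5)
Your proposal is correct and follows the paper's argument. Items (a)--(d) come down to checking $n^v=n^u$ and $k^v=k^u$. Item (e) is the same construction: set $v(m)=0$ and $v(m,k_m)=1$, and give fresh $1$-witnesses to the $0$-labels at $\seq{m,k}$ for $k<k_m$, where $k_m$ is chosen large enough that $M_{k_m}$ codes the needed initial segment of $\zeta$ and requirement (iv) holds.

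Your handling of pre-existing $0$-labels of $u$ below $\seq{m}$ for $m\ge n^u$, by taking $k_m$ beyond them, fills in a detail the paper leaves implicit. Two small points of wording: in (d) you should say explicitly that $u(\rooot)\diverge$ implies $u(n^u)\diverge$. In (e) the root may already carry the label $1$ rather than being undefined, which is harmless because you only add $0$-labels at height $1$.
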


\begin{proof}
  For~\ref{item:forcing_S:extension_lemma:extending_a_1}, the point is that $n^v = n^u$, so there are no further coding requirements on $(v,\xi)$. For~\ref{item:forcing_S:extension_lemma:adding_a_1}, note that $u(\rooot)\diverge$ implies $u(n^u)\diverge$; we again get $n^v = n^u$. 

  For~\ref{item:forcing_S:extension_lemma:extending_a_void}: let $l = |\zeta|$. For each $m$ with $n^u\le m <l$, choose some~$k_m$ that codes~$\zeta$, and such that for $m < n^u \le m' < m'' < l$ we have $\rk(m,k^u(m))\le \rk(m',k_{m'})\le \rk(m'',k_{m''})$. Extend~$u$ to~$v$ by setting, for all~$m$ with $n^u\le m <l$:
  \begin{itemize}
    \item $v(m)=0$, and $v(m,k_m)=1$; and
    \item for all $k<k_m$, $v(m,k)=0$, and $v(m,k,\ell)=1$ for some large~$\ell$.
  \end{itemize}
  Then~$v$ is as required. 
\end{proof}

\begin{lemma} \label{lem:forcing_S:totality}
  For all $\s\in T_\alpha$, the collection of $q\in \SSS$ such that $u^q(\s)\converge$ is dense in~$\SSS$. 
\end{lemma}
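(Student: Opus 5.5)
Fix $\s\in T_\alpha$ and a condition $q=(u,\zeta)\in\SSS$. We will find an extension in $\SSS$ whose first coordinate is defined at $\s$. If $\s=\rooot$ or $\s=\seq{m}$, or more generally $\s$ lies in a column $\seq{m}$ with $m\ge n^u$, we first use \cref{lem:forcing_S:extension_lemma} to make $\s$ lie inside the ``decided'' part of the first level.

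First we make the first level large enough. If $u(n^u)\diverge$ and $u(\rooot)\diverge$, we apply \ref{item:forcing_S:extension_lemma:extending_zeta} to extend $\zeta$ to some $\xi$ with $|\xi|>m$, where $\s$ extends $\seq{m}$ (or $m=0$ if $\s=\rooot$). We then apply \ref{item:forcing_S:extension_lemma:extending_a_void} to get $v$ with $n^v=|\xi|>m$ and $(v,\xi)\in\SSS$. Next we apply \ref{item:forcing_S:extension_lemma:adding_a_1} to set $v(n^v)=1$ and $v(\rooot)=0$. If instead $u(\rooot)=1$, then $n^u$ can never grow, so we leave the first level alone. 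We are now in one of two situations:
\begin{itemize}
\item Some $v(n^v)=1$ is defined. Then by \ref{item:forcing_S:extension_lemma:extending_a_1} any $\tilde\QQ$-extension of $v$ keeps us in $\SSS$.
\item $v(\rooot)=1$, with $n^v$ frozen at its current value.
\end{itemize}

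In either case the remaining task is to extend $v$ inside $\tilde\QQ$ so that $\s$ becomes defined, without changing $n^v$ or any value $k^v(m)$ for $m<n^v$. For this we use the density argument of \cref{lem:simple_forcing:density_of_the_sets_in_Dpoint}: set the value at $\s$ to $0$ and, if $\s$ is not a leaf, put a $1$ at $\s\conc k$ for a large fresh $k$. We must check that this does not spoil conditions (i)--(iv) of \cref{def:the_notion_of_forcing_tilde_Q}. The dangerous cases are the low-height nodes:
\begin{itemize}
\item If $\s=\seq{m}$ with $m\ge n^v$, we must give it the value $1$, not $0$, to respect (ii). This is allowed when $v(\rooot)=1$ or $m>n^v$ with $v(n^v)=1$. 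If $\s=\seq{m}$ is still undefined and $v(\rooot)=1$, setting $v(\seq{m})=1$ forces nothing further, since a value $1$ only needs its children to be non-$1$ when they are defined.
\item If $\s=\seq{m,k}$ with $m<n^v$ and $k<k^v(m)$, then $\s$ is already defined by (iii).
\item If $k>k^v(m)$, giving $\s$ the value $0$ plus a fresh witness child is harmless and does not change $k^v(m)$.
\item If $m\ge n^v$, then $\seq{m}$ has value $1$ or is undefined, so we set $\s$ to $1$ (or to $0$ with a witness). Either choice is consistent because the parent is not $0$.
\item Deeper nodes use the plain $\QQ$ argument.
\end{itemize}
The second coordinate $\xi$ is unchanged, so the coding requirement of \cref{def:notion_of_forcing_SSS} persists because $n$ and the values $k(m)$ are unchanged. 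The closing step is \cref{lem:tilde_Q_is_closed_under_unions}, which confirms membership in $\tilde\QQ$.

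The main obstacle is this case analysis at heights $1$ and $2$. There we must choose values compatible with the requirements (ii) and (iii) and with the rank-monotonicity requirement (iv). Extension (iv) is unaffected because it only concerns $m<n^v$, and those $k(m)$ are fixed.
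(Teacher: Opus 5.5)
Your argument has a genuine gap in the case $u(\rooot)=1$.

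\emph{The claim that $n^u$ cannot grow is false.} A $1$ at the root only forbids $1$-labels on its children. So you may label $\seq{n^u},\seq{n^u+1},\dots$ by $0$, each with a $1$-labelled coding witness below it. Part (e) of \cref{lem:forcing_S:extension_lemma} needs only $u(n^u)\diverge$, and this always holds when the root is labelled $1$. This is in fact the central case: a generic through the starting condition $p^*$ has $T^{x_G}(\rooot)=1$, hence $n^{x_G}=\omega$, so $n$ must grow unboundedly.

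\emph{The move you substitute is illegal.} If $v(\rooot)=1$, no child of the root may be labelled $1$, since the defining biconditional of \cref{def:simple_forcing_notion} would then force $v(\rooot)=0$. A $1$-label constrains the parent, not only the children. Requirement (ii) of \cref{def:the_notion_of_forcing_tilde_Q} forbids $v(\seq{m})=0$ for $m\ge n^v$. So with $n^v$ frozen, the node $\seq{m}$ for $m\ge n^u$ can never be labelled, and your case analysis does not close for $\s=\seq{m}$.

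The same misreading affects your third bullet. For $\s=\seq{m,k}$ with $m\ge n^v$, labelling $\s$ by $1$ is never possible. If $\seq{m}$ is labelled $1$, this is an outright contradiction. If $\seq{m}$ is unlabelled, the biconditional would force $\seq{m}$ to be labelled $0$, which (ii) forbids. Only the option ``$0$ with a fresh $1$-labelled child'' works.

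The repair is the paper's route: split on whether $u(n^u)=1$, not on the value at the root.
\begin{itemize}
\item If $u(n^u)=1$, part (c) lets you add any label that keeps you in $\tilde\QQ$.
\item If $u(n^u)\diverge$ (root unlabelled or labelled $1$), take $m$ with $\seq{m}\preceq\s$. Use (b) to make $|\zeta|>m$, then (e) to make $n^u=|\zeta|>m$. Now $\seq{m}$ is already labelled $0$, and the nodes $\seq{m,k}$ with $k\le k^u(m)$ are already labelled. Any other $\s$ can be labelled $0$ with a fresh $1$-labelled child without changing $n^u$ or the values $k^u(\ell)$.
\end{itemize}
Your extra application of (d) when $u(\rooot)\diverge$ is harmless but unnecessary.
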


\begin{proof}
  Let $p = (u,\zeta)\in \SSS$ and let $\s$ such that $u(\s)\diverge$. If $\s = \rooot$, then \ref{item:forcing_S:extension_lemma:adding_a_1} of \cref{lem:forcing_S:extension_lemma} allows us to extend~$p$ to $(v,\zeta)\in \SSS$ with $v(\rooot)\converge$. Suppose that $\s\ne \rooot$. 

  If $u(n^u)=1$, then \ref{item:forcing_S:extension_lemma:extending_a_1} allows us to easily extend~$p$ to $(v,\zeta)\in \SSS$ with $v(\s)\converge$. Suppose that $u(n^u)\diverge$. Let~$m$ such that $\smallseq{m}\preceq \s$. By \ref{item:forcing_S:extension_lemma:extending_zeta}, we may assume that $|\zeta|>m$; by \ref{item:forcing_S:extension_lemma:extending_a_void}, we may assume that $n^u = |\zeta|$, so $m<n^u$. Hence, if $\s = \smallseq{m}$, we are done. If $\s = \smallseq{m,k}$ for some~$k$, and $k\le k^u(m)$, we are also done. In all other cases, we may extend~$u$ to~$v$ by defining $v(\s)=0$ and $v(\s\conc l)=1$ for some large~$l$, and keep $(v,\zeta)\in \SSS$, since $n^v = n^u$. 
\end{proof}

\begin{lemma} \label{lem:forcing_S:compatibility}
  Conditions $p,q\in \SSS$ are compatible if and only if $u^p\cup u^q$ is a function and $\zeta^p$, $\zeta^q$ are comparable. 
\end{lemma}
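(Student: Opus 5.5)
The forward direction is immediate. If $r\in\SSS$ extends both $p$ and $q$, then $u^p,u^q\subseteq u^r$, so $u^p\cup u^q$ is a function. Also $\zeta^p,\zeta^q\preceq\zeta^r$, so $\zeta^p$ and $\zeta^q$ are comparable.

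For the converse, I will show that the obvious candidate is a common extension. Write $u=u^p$ and $v=u^q$. Let $w=u\cup v$, and let $\xi$ be the longer of $\zeta^p$ and $\zeta^q$. By \cref{lem:tilde_Q_is_closed_under_unions}, $w\in\tilde\QQ$. Clearly $(w,\xi)$ extends both $p$ and $q$ coordinatewise. So it remains to check that $(w,\xi)\in\SSS$: that $|\xi|\ge n^w$, and that for every $m<n^w$, $k^w(m)$ codes $\xi\rest{(m+1)}$.

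The key step, and the only real content, is computing $n^w$ and $k^w$. By symmetry, assume $n^u\le n^v$. I claim $n^w=n^v$ and $k^w(m)=k^v(m)$ for all $m<n^v$.
\begin{itemize}
  \item For $m<n^v$ we have $v(m)=0$, hence $w(m)=0$, so $n^w\ge n^v$.
  \item At $m=n^v$, I show $w(n^v)\ne 0$. By requirement~(ii) of \cref{def:the_notion_of_forcing_tilde_Q}, $v(n^v)\ne 0$. Since $n^v\ge n^u$, the same requirement gives $u(n^v)\ne 0$. Hence $n^w=n^v$.
  \item Fix $m<n^v$. We have $v(m,k^v(m))=1$. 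By requirement~(iii), $v(m,k)$ is defined for all $k<k^v(m)$, and it equals $0$ by minimality of $k^v(m)$. Since $w\supseteq v$ is a function, $w$ has the same values at these nodes. Hence $k^w(m)=k^v(m)$.
\end{itemize}

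With this in hand, $(w,\xi)\in\SSS$ follows. Since $q\in\SSS$, $|\xi|\ge|\zeta^q|\ge n^v=n^w$. For each $m<n^w$, $k^w(m)=k^v(m)$ codes $\zeta^q\rest{(m+1)}=\xi\rest{(m+1)}$, because $\xi\succeq\zeta^q$.

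The potential obstacle is the symmetric case where it is $q$, not $p$, that has the longer string. This is harmless, since $\xi$ extends both $\zeta^p$ and $\zeta^q$, and the coding argument above uses only $\xi\succeq\zeta^q$. The case analysis for $n^w$ is also handled uniformly by requirement~(ii).
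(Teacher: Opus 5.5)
Your proof is correct and is essentially the paper's argument: the common extension is $(u^p\cup u^q,\xi)$ with $\xi$ the longer of the two strings, and \cref{lem:tilde_Q_is_closed_under_unions} gives membership in $\tilde\QQ$. The only difference is bookkeeping. You normalise by $n^{u^p}\le n^{u^q}$ and show $n^{w}=n^{u^q}$ and $k^{w}=k^{u^q}$, making explicit the uses of requirements (ii) and (iii) and the check $|\xi|\ge n^w$, which the paper leaves implicit; the paper instead normalises by $\zeta^p\preceq\zeta^q$ and splits on whether $m<n^{u^q}$ or $m<n^{u^p}$.
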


\begin{proof}
  Suppose that $v=u^p\cup u^q$ is a function and $\zeta^p$, $\zeta^q$ are comparable; without loss of generality, $\zeta^p\preceq \zeta^q$. By \cref{lem:tilde_Q_is_closed_under_unions}, $v\in \tilde \QQ$. To see that $(v,\zeta^q)\in \SSS$, let $m<n^v$. Then either $m<n^{u^q}$ or $m<n^{u^p}$. If the former, then $k^v(m) = k^{u^q}(m)$ codes $\zeta^q\rest{(m+1)}$, as $q\in \SSS$. If the latter, then $k^v(m) = k^{u^p}(m)$ codes $\zeta^p\rest{(m+1)}$, as $p\in \SSS$, but $\zeta^p\preceq \zeta^q$. 
\end{proof}

For sufficiently generic $G\subset \SSS$, we define $(x_G,y_G)$ as above. 

\begin{lemma} \label{lem:forcing_S:end_up_in_WWW}
  If $G\subset \SSS$ is sufficiently generic, then $(x_G,y_G)\in \WWW_\alpha$, and for all $p\in \SSS$, $(x_G,y_G)\in [p]$ if and only if $p\in G$. 
\end{lemma}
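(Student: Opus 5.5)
The proof follows the pattern of \cref{lem:simple_forcing:filters_are_points}, using the density lemmas just proved for $\SSS$. I will take the countable family $\Dpoint(\SSS)$ to consist of three kinds of sets. First, the sets $\{q\in\SSS : u^q(\s)\converge\}$ for $\s\in T_\alpha$; these are dense by \cref{lem:forcing_S:totality}. Second, the sets $\{q\in \SSS : |\zeta^q|\ge m\}$ for $m\in\Nat$; these are dense by \cref{lem:forcing_S:extension_lemma}\ref{item:forcing_S:extension_lemma:extending_zeta}. Third, the sets $\{q : q\le p \text{ or } q,p \text{ incompatible}\}$ for $p\in\SSS$; these are dense for any poset.

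Let $G$ be $\Dpoint(\SSS)$-generic, and put $S=\bigcup_{p\in G}u^p$ and $y_G=\bigcup_{p\in G}\zeta^p$. Since $G$ is directed, $S$ is a function and the $\zeta^p$ are pairwise comparable. By genericity $S$ is total on $T_\alpha$ and $y_G\in 2^\w$. Each $u^p\in\QQ$, so the argument of \cref{lem:simple_forcing:filters_are_points} applies verbatim and shows $S=T^{x_G}$.

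Next I show $(x_G,y_G)\in[p]\Iff p\in G$. If $p\in G$, then $u^p\subseteq S=T^{x_G}$ and $\zeta^p\prec y_G$. Conversely, suppose $(x_G,y_G)\in[p]$ and $q\in G$. Then $u^p\cup u^q\subseteq S$ is a function, and $\zeta^p,\zeta^q$ are both initial segments of $y_G$, hence comparable. By \cref{lem:forcing_S:compatibility}, $p$ and $q$ are compatible. So $p$ is compatible with every member of $G$, and genericity for the third kind of dense set puts an extension of $p$ into $G$; upward closure then gives $p\in G$.

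It remains to check $(x_G,y_G)\in\WWW_\alpha$. Fix $m<n^{x_G}$. I claim some $p\in G$ has $m<n^{u^p}$. Because $T^{x_G}(m')=0$ for all $m'\le m$, and $S$ is total with $\dom S=\bigcup_{p\in G}\dom u^p$, each $\smallseq{m'}$ lies in $\dom u^{p_{m'}}$ for some $p_{m'}\in G$; take a common extension $p\in G$ of $p_0,\dots,p_m$. Then $u^p(m')=0$ for all $m'\le m$, so $n^{u^p}>m$. Condition (iii) of $\tilde\QQ$ says $u^p(m,k)\converge$ for all $k<k^{u^p}(m)$, and these values must be $0$ by minimality of $k^{u^p}(m)$. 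Since $u^p\subseteq T^{x_G}$, it follows that $k^{x_G}(m)=k^{u^p}(m)$. The definition of $\SSS$ then says this number codes $\zeta^p\rest{(m+1)}=y_G\rest{(m+1)}$, using $|\zeta^p|\ge n^{u^p}>m$. Hence $(x_G,y_G)\in\WWW_\alpha$.

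The main point needing care is the step $k^{x_G}(m)=k^{u^p}(m)$. It works only because clause (iii) of $\tilde\QQ$ forces the conditions themselves to decide the zeros below $k^{u^p}(m)$. Without that clause the generic could contain a $1$ at a smaller index, and the coding could fail. Everything else is routine once the density lemmas are in place.
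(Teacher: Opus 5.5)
Your proof is correct and follows the same route as the paper's brief proof: totality via \cref{lem:forcing_S:totality}, the equivalence $(x_G,y_G)\in[p]\Iff p\in G$ via \cref{lem:forcing_S:compatibility} exactly as in \cref{lem:simple_forcing:filters_are_points}, and membership in $\WWW_\alpha$ by taking $p\in G$ with $n^{u^p}>m$ and reading off $k^{x_G}(m)=k^{u^p}(m)$ from clause (iii) of $\tilde\QQ$, a step the paper leaves implicit and you spell out. One small caveat concerns your closing remark, which overstates the role of clause (iii): without it, the conclusion for generic $G$ would still hold, because you could choose $p\in G$ that also decides the finitely many nodes $\smallseq{m,k}$ for $k\le k^{x_G}(m)$; the clause only ensures that the step works for every $p$ with $n^{u^p}>m$.
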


\begin{proof}
  \Cref{lem:forcing_S:totality} implies that for a sufficiently generic~$G$, $\bigcup \left\{ u^p \,:\,  p\in G \right\}$ is defined on all of~$T_\alpha$ (so in particular, $x_G\in 2^{\Leaves_\alpha}$), and $y_G\in 2^\w$. Hence, $(x_G,y_G)\in \XXX_\alpha$; the proof that $p\in G$ iff $(x_G,y_G)\in [p]$ is as for \cref{lem:simple_forcing:filters_are_points}, using \cref{lem:forcing_S:compatibility}. 

  To show that in fact $(x_G,y_G)\in \WWW_\alpha$, let $m<n^{x_G}$. There is some $p= (u,\zeta)\in G$ such that $m<n^u$; the fact that $(u,\zeta)\in \SSS$ and $\zeta \prec y_G$ implies that $k^{x_G}(m) = k^u(m)$ codes $y_G\rest{(m+1)}$. 
\end{proof}

The definition of a $\beta$-complete condition is the same as for~$\PP$. 

\begin{lemma} \label{lem:forcing_S:extending_to_beta_complete}
  For every $\beta \le \alpha+1$, every $p\in \SSS$ can be extended to a $\beta$-complete condition in~$\SSS$. 
\end{lemma}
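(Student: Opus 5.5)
The plan is to do what the proof of \cref{lem:simple_forcing:beta_complete_is_dense} does, and then check that the extension stays in~$\SSS$. Let $p=(u,\zeta)\in\SSS$. We will define $v\supseteq u$ and keep the second coordinate~$\zeta$. Here $v$ is obtained by adding, for every $\s\in\dom u$ with $u(\s)=1$ and $\rk(\s)>\beta$ and every~$k$ with $\rk(\s\conc k)<\beta$ and $u(\s\conc k)\diverge$, the values $v(\s\conc k)=0$ and $v(\s\conc k\conc l)=1$, for a large fresh~$l$. As noted before \cref{lem:simple_forcing:beta_complete_is_dense}, there are only finitely many such~$k$, so $\dom v$ is finite. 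We have $v\in\QQ$ by design, and $v$ is $\beta$-complete because every newly added $1$ sits at a node of rank $<\beta$ (a node $\s\conc k\conc l$, which lies below $\s\conc k$). The choice of~$l$ ensures that no descendant of $\s\conc k\conc l$ is in $\dom v$.

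The work is to check that $v\in\tilde\QQ$ and $n^v=n^u$, $k^v(m)=k^u(m)$ for $m<n^u$. Then the coding requirement of \cref{def:notion_of_forcing_SSS} holds for $(v,\zeta)$ as it does for $(u,\zeta)$. The changes at depth one and depth two are the only ones that could affect $n$, $k$, or conditions (i)--(iv).

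Depth one ($\s=\rooot$) is the first case. New values occur only if $u(\rooot)=1$. In that case $u(m)\diverge$ for all~$m$ would be needed for consistency with $u\in\tilde\QQ$; indeed $u(\rooot)=1$ forces $u(m)\ne 1$, and (ii) forbids $u(m)=0$ for $m\ge n^u=0$. Adding $v(m)=0$ for the finitely many~$m$ with $\rk(m)<\beta$ could then violate~(ii), by making $n^v>0$ without the coding. This is the main obstacle.

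The fix is to handle the root by passing to an intermediate condition first. If $u(\rooot)=1$, then $n^u=0$, and $u(m)=1$ is impossible, so $u(m)\diverge$ for all~$m$. In this case we instead first extend $p$ as in \cref{lem:forcing_S:extension_lemma}\ref{item:forcing_S:extension_lemma:extending_zeta}, lengthening~$\zeta$ to have length at least the finite number $N$ of children $\smallseq{m}$ of rank $<\beta$. These are an initial segment $m<N$, since ranks of children are nondecreasing. We then choose, for each $m<N$, a code $k_m$ of $\zeta\rest{(m+1)}$. We take $k_m$ large and with ranks nondecreasing in~$m$, to respect~(iv), as in the proof of \cref{lem:forcing_S:extension_lemma}\ref{item:forcing_S:extension_lemma:extending_a_void}; this is possible because the codes are dense. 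We set $v(m)=0$, $v(m,k_m)=1$, and $v(m,k)=0$ with large fresh witnesses $v(m,k,\ell)=1$ for $k<k_m$, giving condition~(iii). Now $n^v=N$, and the coding requirement holds.

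Depth two is the other case: a node $\smallseq{m}$ with $u(m)=1$. Then $m=n^u$, and adding zeros at $\smallseq{m,k}$ does not change $n$ or $k$. Deeper nodes are irrelevant to $\tilde\QQ$ and~$\SSS$. Finally, the newly added nodes of value~$1$ in the root case have ranks possibly $\ge\beta$, so these nodes must themselves be completed. We iterate the completion step, which terminates because ranks strictly drop. Alternatively, one can note that the new $1$'s at $\smallseq{m,k_m}$ are below $\smallseq{m}$, which has rank $<\beta$, so they cause no failure of $\beta$-completeness.
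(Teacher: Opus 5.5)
Your overall plan is the paper's. You complete below non-root nodes as in the proof of \cref{lem:simple_forcing:beta_complete_is_dense}; this only adds $0$'s at depth at least $2$, below nodes $\smallseq{m}$ with $u(m)=1$ (so $m\ge n^u$) or deeper, so $n^u$, $k^u$ and (i)--(iv) are unaffected. You then handle the root by lengthening $\zeta$ and coding.

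However, your root case rests on a false claim. You assert that $u(\rooot)=1$ forces $n^u=0$, and the justification is circular: (ii) only forbids $u(m)=0$ for $m\ge n^u$, and you substitute $n^u=0$. For $u\in\tilde\QQ$, $u(\rooot)=1$ only forces $u(n^u)\diverge$, and then $u(m)\diverge$ for all $m\ge n^u$ by (i). Labels $u(m)=0$ for $m<n^u$ are consistent with $u(\rooot)=1$. For example, $u(\rooot)=1$, $u(0)=0$, $u(0,0)=1$ lies in $\tilde\QQ$ with $n^u=1$. This is the central case, not a marginal one. The generic filters in the proof of \cref{prop:L_alpha_is_non_colourable} contain $p^*$, and repeated use of \cref{lem:forcing_S:extension_lemma}\ref{item:forcing_S:extension_lemma:extending_a_void} below $p^*$ yields conditions with $u(\rooot)=1$ and arbitrarily large $n^u$.

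When $n^u>0$ your construction fails as written. For $m<n^u$ you would choose a fresh large $k_m$ and set $v(m,k)=0$ for all $k<k_m$, which clashes with $u(m,k^u(m))=1$. Your conclusion $n^v=N$ is also wrong when $n^u>N$.

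The repair is to leave the children $m<n^u$, and their values $k^u(m)$, untouched, and code only for $n^u\le m<N$; if $N\le n^u$, nothing needs to be done at the root. This is exactly \cref{lem:forcing_S:extension_lemma}\ref{item:forcing_S:extension_lemma:extending_a_void}, applied after lengthening $\zeta$ to length at least $N$ by \ref{item:forcing_S:extension_lemma:extending_zeta}. Its hypothesis $u(n^u)\diverge$ holds because $u(\rooot)=1$. With this change your argument becomes the paper's proof.

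Two smaller remarks. For a node $\smallseq{m}$ with $u(m)=1$ one only gets $m\ge n^u$, not $m=n^u$: if $u(n^u)=1$, further $1$'s at $\smallseq{m}$ with $m>n^u$ are allowed. This is harmless. Your worry about new $1$'s of rank at least $\beta$ is unfounded, for the reason you give yourself: every new $1$ lies below some $\smallseq{m}$ with $\rk(m)<\beta$ (or below $\s\conc k$ with $\rk(\s\conc k)<\beta$), so no iteration is needed.
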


\begin{proof}
  Write $p = (u,\zeta)$. Suppose that $p(\s)=1$, and $\rk(\s)>\beta$ is a limit. If $|\s|\ge 1$, then adding 0-labels to those $\s\conc k$ of rank $<\beta$ (as in the proof of \cref{lem:simple_forcing:beta_complete_is_dense}) does not affect $n^u$, and hence being in~$\SSS$.  If $\s$ is the root~$\rooot$, we apply \cref{lem:forcing_S:extension_lemma} to add 0-labels to finitely many children of the root (first extend~$\zeta$ by~\ref{item:forcing_S:extension_lemma:extending_zeta}, then extend as in~\ref{item:forcing_S:extension_lemma:extending_a_void}). 
\end{proof}

\Cref{lem:restrictions_of_conditions_in_tilde_Q} implies that for all $p\in \SSS$ and $\beta \le \alpha$, $p\rest{\beta}\in \SSS$. We obtain the analogue of \cref{lem:K_alpha:untagging:extension_lemma}:

\begin{lemma} \label{lem:forcing_S:untagging:extension_lemma}
  Suppose that $p\in \SSS$ is $\beta$-complete and $r\in \SSS$ extends $p\rest{(\beta+1)}$. Then~$p$ and $(r\rest \beta)$ are compatible in~$\SSS$. 
\end{lemma}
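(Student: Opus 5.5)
Write $p=(u,\zeta)$ and $r=(v,\xi)$. The natural candidate for the common extension is
\[
  s=\bigl(u\cup (v\rest\beta),\ \xi\bigr).
\]
First, $r\le p\rest{(\beta+1)}$ gives $u\rest{(\beta+1)}\subseteq v$ and $\zeta\preceq\xi$. Second, $v\rest\beta$ extends $u\rest{(\beta+1)}\rest\beta = u\rest\beta$ by \cref{lem:simple_forcing:restricted_condition_is_a_condition}. By \cref{lem:simple_forcing:untagging:extension_lemma}, applied to $u$ (which is $\beta$-complete) and $v$ (which extends $u\rest{(\beta+1)}$ in $\QQ$), the union $w=u\cup(v\rest\beta)$ is a function and lies in $\QQ$.

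Next I check that $w\in\tilde\QQ$. Both $u$ and $v\rest\beta$ lie in $\tilde\QQ$, the latter by \cref{lem:restrictions_of_conditions_in_tilde_Q}, so \cref{lem:tilde_Q_is_closed_under_unions} gives $w\in\tilde\QQ$.

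It remains to show $(w,\xi)\in\SSS$, meaning $|\xi|\ge n^w$ and, for every $m<n^w$, $k^w(m)$ codes $\xi\rest{(m+1)}$. This is the only genuinely new step. Let $m<n^w$, so $w(m)=0$, hence $u(m)=0$ or $(v\rest\beta)(m)=0$.
\begin{itemize}
\item In the first case $m<n^u$: since $u\in\tilde\QQ$, clause (ii) forces every $m'<m$ to have $u(m')=0$, as $u(m')=1$ would give $n^u\le m'$. Then $k^w(m)=k^u(m)$. This holds because $u$ already determines all $u(m,k)$ for $k<k^u(m)$ (clause (iii)), and the union is a function. Since $p\in\SSS$ and $\zeta\preceq\xi$, $k^u(m)$ codes $\zeta\rest{(m+1)}=\xi\rest{(m+1)}$.
\item In the second case $m<n^{v\rest\beta}\le n^v$, and by \cref{lem:restrictions_of_conditions_in_tilde_Q} together with the same clause (iii) argument, $k^w(m)=k^{v\rest\beta}(m)=k^v(m)$. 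This codes $\xi\rest{(m+1)}$ because $r\in\SSS$.
\end{itemize}
Finally, $n^w\le\max(n^u,n^{v\rest\beta})$, which is at most $\max(|\zeta|,|\xi|)=|\xi|$.

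The step I expect to need the most care is showing $k^w(m)$ equals the relevant $k^u(m)$ or $k^v(m)$, in particular ruling out that the other component contributes a smaller $k$ with label $1$. Suppose, say, $(v\rest\beta)(m,k)=1$ with $k<k^u(m)$. Then $u(m,k)$ is defined by clause (iii) and equals $0$, contradicting that $w$ is a function. The symmetric case is identical. Hence $s=(w,\xi)\in\SSS$, and $s$ extends both $p$ and $r\rest\beta=(v\rest\beta,\xi)$, so by \cref{lem:forcing_S:compatibility} the two conditions are compatible in $\SSS$.
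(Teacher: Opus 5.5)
Your proof is correct and follows the paper's route. The common extension is $(u\cup(v\rest\beta),\xi)$; \cref{lem:simple_forcing:untagging:extension_lemma} puts it in $\QQ$ and \cref{lem:tilde_Q_is_closed_under_unions} puts it in $\tilde\QQ$. The only difference is that you check the coding requirement by hand, whereas the paper gets $(w,\xi)\in\SSS$ in one step from \cref{lem:forcing_S:compatibility}, applied to $p$ and $r\rest\beta\in\SSS$; for the same reason, your closing appeal to that lemma is redundant once you have exhibited the common extension.
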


\begin{proof}
  As in the proof of \cref{lem:K_alpha:untagging:extension_lemma}, let $v = u^p\cup (u^r\rest{\beta})$. By \cref{lem:simple_forcing:untagging:extension_lemma}, $v\in \QQ$. By \cref{lem:tilde_Q_is_closed_under_unions},  $v\in \tilde \QQ$. By \cref{lem:forcing_S:compatibility},  $(v,\zeta^r)\in \SSS$. 
\end{proof}

Again, the proof of the untagging lemma is the same, so we obtain the analogue of \cref{prop:simple_forcing:Sigma_untagging_lemma,lem:K_alpha:Sigma_untagging_lemma}: if $\vphi$ is $\Sigma_\beta$ and $G\subset\SSS$ is sufficiently generic, then $(x_G,y_G) \in [\vphi]\,\,\Iff \,\, (\exists p\in G\cap \SSS_\beta)\,\,\,p\force^*\vphi$, where $\SSS_\beta = \PP_\beta\cap \SSS$. 

\begin{theorem} \label{prop:L_alpha_is_non_colourable}
  Let $\alpha\ge 3$. There is no countable $\bSigma^0_\alpha$-colouring of~$L_\alpha$. 
\end{theorem}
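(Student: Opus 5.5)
The proof will follow the proof of \cref{prop:K_alpha:no_colouring}, with the notion of forcing~$\SSS$ in place of~$\PP$. Fix a countable collection $\+C$ of $\bSigma^0_\alpha$ subsets of $\WWW_\alpha$ covering $\WWW_\alpha$. Extend each to a $\bSigma^0_\alpha$ subset of $\XXX_\alpha$ and choose a $\Sigma_\alpha$ code $\vphi_C$ for each.

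By the $\SSS$-analogue of the untagging lemma stated just before the theorem, together with \cref{lem:forcing_S:end_up_in_WWW}, there is a countable collection $\+D$ of dense subsets of~$\SSS$ with two properties for every $\+D$-generic $G\subset\SSS$. First, $(x_G,y_G)\in\WWW_\alpha$. Second, for every $C\in\+C$,
\[
(x_G,y_G)\in[\vphi_C] \Iff (\exists p\in G\cap\SSS_\alpha)\,\,p\force^*\vphi_C .
\]
By \cref{lem:forcing_S:extension_lemma}\ref{item:forcing_S:extension_lemma:starting_condition}, $p^*\in\SSS$, so Rasiowa--Sikorski gives a $\+D$-generic $G\ni p^*$. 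Its point lies in some $C\in\+C$, so there is $p=(u,\zeta)\in G\cap\SSS_\alpha$ with $p\force^*\vphi_C$. I will show that $C$ contains an $L_\alpha$-edge.

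First I need the shape of~$p$. Since $p\in\SSS_\alpha$, we do not have $u(\rooot)=1$. Since $p$ is compatible with~$p^*$, $u(\rooot)\ne 0$, so $u(\rooot)\diverge$. Hence $u(n^u)\diverge$, and by requirement~(i) of $\tilde\QQ$, also $u(m)\diverge$ for all $m>n^u$. Choose $n$ large: $n>|\zeta|$, $n\ge n^u$, and every $\s\in\dom u$ other than the root lies above some $\smallseq{k}$ with $k<n$. Then:
\begin{itemize}
\item by \ref{item:forcing_S:extension_lemma:extending_zeta}, extend $\zeta$ to some $\zeta'$ of length~$n$;
\item by \ref{item:forcing_S:extension_lemma:extending_a_void}, get $v\supseteq u$ with $n^v=n$, $v(n)\diverge$ and $(v,\zeta')\in\SSS$;
\item by \ref{item:forcing_S:extension_lemma:adding_a_1}, set $v(n)=1$ and $v(\rooot)=0$, still in~$\SSS$ and still extending~$p$;
\item by \ref{item:forcing_S:extension_lemma:extending_zeta}, let $q_0=(v,\zeta'\conc 0)$ and $r_0=(v,\zeta'\conc 1)$.
\end{itemize}
Both $q_0$ and $r_0$ extend~$p$, so both strongly force~$\vphi_C$.

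The key observation is that once $v(n^v)=1$ with $n^v=n$, every extension $w\supseteq v$ in $\tilde\QQ$ has $n^w=n$. The coding constraints in the definition of~$\SSS$ therefore only concern $\zeta\rest{(m+1)}$ for $m<n$, and never see bit~$n$. So for any $(w,\xi)\in\SSS$ extending $q_0$ or $r_0$, flipping $\xi(n)$ gives another condition in~$\SSS$.

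Next, run the interleaved construction exactly as in the proof of \cref{prop:K_alpha:no_colouring}. Build decreasing sequences $(q_\ell)$ and $(r_\ell)$ with $u^{q_\ell}=u^{r_\ell}$, and with $\zeta^{q_\ell}$, $\zeta^{r_\ell}$ of equal length and differing only at~$n$. At each stage, extend one sequence into the next dense set of~$\+D$, then copy the new values across to the other sequence; the key observation guarantees the copy is still in~$\SSS$. The resulting filters $Q$ and $R$ are both $\+D$-generic. Hence $(x_Q,y_Q)$ and $(x_R,y_R)$ lie in $\WWW_\alpha$, and both lie in $C$, since $p\in Q\cap R$ and $p\in\SSS_\alpha$. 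Moreover $x_Q=x_R$, $T^{x_Q}(\rooot)=0$, $n^{x_Q}=n$, $y_Q\symdiff y_R=\{n\}$ and $n\notin y_Q$. So they form an $L_\alpha$-edge inside~$C$.

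The step I expect to need the most care is checking that $p\in\SSS_\alpha$ can be extended to the two conditions $q_0,r_0\in\SSS$. This needs the leftover children of the root above $n^u$ filled in with correct codes, which is exactly what \cref{lem:forcing_S:extension_lemma}\ref{item:forcing_S:extension_lemma:extending_a_void} supplies. It also needs the flip-invariance of bit~$n$, so that the copying in the interleaving never leaves~$\SSS$.
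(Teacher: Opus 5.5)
Your proposal is correct and is essentially the paper's own proof. You untag to get $p=(u,\zeta)\in G\cap\SSS_\alpha$ with $u(\rooot)\diverge$, use parts \ref{item:forcing_S:extension_lemma:extending_a_void}, \ref{item:forcing_S:extension_lemma:adding_a_1} and \ref{item:forcing_S:extension_lemma:extending_zeta} of \cref{lem:forcing_S:extension_lemma} to produce $q_0,r_0$ differing only at bit $n=n^v$, and then interleave, with your ``flip bit $n$'' observation being exactly parts \ref{item:forcing_S:extension_lemma:extending_a_1} and \ref{item:forcing_S:extension_lemma:extending_zeta} combined. The only cosmetic difference is that you fix $n$ in advance; the extra requirement that $\dom u$ lie above nodes $\smallseq{k}$ with $k<n$ is carried over from the $K_\alpha$ proof and is harmless but unnecessary here, since parts \ref{item:forcing_S:extension_lemma:extending_a_void} and \ref{item:forcing_S:extension_lemma:adding_a_1} already guarantee consistency.
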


\begin{proof}
  Follow the proof of \cref{prop:K_alpha:no_colouring}, using \cref{lem:forcing_S:extension_lemma}. By \ref{item:forcing_S:extension_lemma:starting_condition} of the lemma, we may start with the condition $p^*$ described. We extend this condition to a sufficiently generic $G\subset \SSS$. By the untagging lemma, we obtain $p\in G\cap \SSS_\alpha$ that strongly forces into one of the $\bSigma^0_\alpha$ sets $C\in \+C$. Writing $p = (u,\zeta)$, we have $u(\rooot)\diverge$, so $u(n^u)\diverge$. By \ref{item:forcing_S:extension_lemma:extending_a_void}, we may assume that $|\zeta| = n^u$. Extend~$u$ to~$v$ by setting $v(n^u)=1$ and $v(\rooot)=0$. By \ref{item:forcing_S:extension_lemma:adding_a_1} and \ref{item:forcing_S:extension_lemma:extending_zeta}, $q_0 = (v,\zeta\conc 0)$ and $r_0 = (v,\zeta\conc 1)$ are both in~$\SSS$. We then use \ref{item:forcing_S:extension_lemma:extending_a_1} to extend~$q_0$ and~$r_0$ to filters~$Q$ and~$R$ as in the proof of \cref{prop:K_alpha:no_colouring}, to show that the set~$C$ contains an edge of~$L_\alpha$. 
\end{proof}

\begin{remark} \label{rmk:why_this_coding_scheme}
  In our coding scheme, instead of using a number~$k$ to code all initial segments of a real $M_k$, we could instead have fixed a single finite binary string coded by~$k$ (as long as each finite string has infinitely many codes). We will require our more flexible coding scheme in the next section. 
\end{remark}

\subsection{Approximating values at heights 1 and 2}

Our next goal is \cref{prop:no_embedding_of_K_3_into_L_3} below: there is no continuous embedding of $K_3$ into~$L_3$. To prove this, we will diagonalise against a given continuous function from $\XXX_3$ to $\WWW_3$. In this kind of argument, we construct approximations to various points in $\XXX_3$, and somehow ensure that there is a pair of points $a$, $b$ that we construct that is connected by an edge in~$K_3$, but such that $F(a)$ and~$F(b)$ cannot be connected by an edge in~$L_3$. To do that, we will need to observe, during the construction, approximations to points such as $F(a)$ and $F(b)$ built by our opponent, and make guesses about whether they are connected by an edge or not. We now describe this guessing procedure. 

Recall that an isomorphism between $2^{{\Leaves}_\alpha}$ and $2^\w$ can be determined by an $\w$-ordering of the leaves of~$T_\alpha$. Fix such a computable ordering $<^*$. This gives us a notion of ``initial segments'' of elements of $2^{\Leaves_\alpha}$:

\begin{definition} \label{def:initial_segments_of_leaves}
  We let $2^{<\Leaves_\alpha}$ be the collection of all finite $r\colon \Leaves_\alpha \to \{0,1\}$ such that if $\s<^* \tau$ and $r(\tau)\converge$ then $r(\s)\converge$. 
\end{definition}

For the rest of this section, we fix $\alpha=3$; we will later generalise the guessing machinery that we develop now for $\alpha=3$. We now define, for each $t\in 2^{<\Leaves_3}$,
\begin{itemize}
  \item $n^t$; and
  \item For $m<n^t$, $k^t(m)$, 
\end{itemize}
that will serve as an approximation for $n^x$ and $k^x(m)$ for $x\in 2^{\Leaves_3}$ extending~$t$. 
The idea is similar to Dekker's ``deficiency stages''. Suppose that at the previous stage, we are guessing that $k^x(m)$ is some~$k$, but we have just discovered that this is wrong: there is some~$a$ such that $t(m,k,a)=1$, so we cannot have $T^x(m,k)=1$ for any $x\succ t$. Then $k^t(m)$ is increased compared to the previous stage, telling us that there's a chance it will go to~$\infty$. That is, at that stage, we are guessing that $T^x(m)=1$, so $n^x\le m$. We therefore arrange that $n^t\le m$ as well. 

\begin{definition} \label{def:alpha_is_three:n_t_and_k_t_m}
  For $t\in 2^{<\Leaves_3}$ we define numbers $n^t$, and for $m<n^t$, numbers $k^t(m)$, by recursion on $|t|$. If $t = \seq{}$ then we set $n^t=0$. 

  Suppose that $t\ne \seq{}$; let $t^-$ be the initial segment of~$t$ of length $|t|-1$. 
  \begin{itemize}
    \item If there is some $m<n^{t^-}$ and some~$a$ such that $t(m,k,a)=1$, where $k = k^{t^-}(m)$, then we let $n^t$ be the least such~$m$. 
    \item Otherwise, we let $n^t = n^{t^-}+1$. 
  \end{itemize}

  Then, for each $m<n^t$, we let $k^t(m)$ be the least~$k$ such that there is no~$a$ with $t(m,k,a)=1$. 
\end{definition}

\begin{lemma} \label{lem:basics_on_n_r_and_k_r_m_for_alpha_is_three}
For all $x\in 2^{\Leaves_3}$, 
\[
  n^x = \liminf \left\{ n^t \,:\,  t\prec x \right\}. 
\]
\end{lemma}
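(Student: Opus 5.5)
The plan is to fix $x\in 2^{\Leaves_3}$ and let $t_s\prec x$ be the initial segment of length~$s$ (in the ordering $<^*$). The claim is then $n^x=\liminf_s n^{t_s}$, and I will prove the two inequalities separately. Recall the structure at $\alpha=3$. The nodes $\smallseq{m}$ have rank~2, the nodes $\smallseq{m,k}$ have rank~1, and the nodes $\smallseq{m,k,a}$ are leaves. Moreover $T^x(m,k)=1$ iff $x(m,k,a)=0$ for all~$a$, and $T^x(m)=0$ iff some $T^x(m,k)=1$. So $n^x$ is the least~$m$ (possibly $\w$) such that for every~$k$ there is an~$a$ with $x(m,k,a)=1$. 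A preliminary observation will be used throughout: for fixed~$m$, the number $k^{t}(m)$ (the least~$k$ such that $t$ shows no $1$ on any $(m,k,a)$) depends only on~$t$, not on $n^t$. Consequently, along $t_s$ the value $k^{t_s}(m)$ is nondecreasing in~$s$, and it is bounded by $k^x(m)$ when $m<n^x$.

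For $\liminf_s n^{t_s}\ge n^x$, fix $m<n^x$; I will show that $n^{t_s}>m$ for all large~$s$. Take $s_0$ such that for every $m'\le m$ and every $k<k^x(m')$, the leaf witnessing $T^x(m',k)=0$ already lies in $t_{s_0}$. Since $x$ has no $1$ on any $(m',k^x(m'),a)$, we get $k^{t_s}(m')=k^x(m')$ for all $s\ge s_0$ and $m'\le m$. It follows that after stage~$s_0$ the reset clause of \cref{def:alpha_is_three:n_t_and_k_t_m} can never fire at any $m'\le m$, because it would require a $1$ on $(m',k^x(m'),a)$. Hence after~$s_0$, if $n^{t^-}\le m$ there is no reset at all and $n$ increases by~$1$. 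If instead $n^{t^-}>m$, then either it increases or it resets to a value $>m$. So $n^{t_s}$ exceeds~$m$ after at most $m+1$ further steps and stays above it. This gives $\liminf\ge m+1$ for every $m<n^x$, which settles the case $n^x=\w$ as well.

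For $\liminf_s n^{t_s}\le n^x$, assume $N=n^x<\w$, and suppose toward a contradiction that $n^{t_s}>N$ for all $s\ge s_1$. Then $k^{t_s}(N)$ is defined for $s\ge s_1$ and is nondecreasing. Since $T^x(N)=1$, every $k$ has some $a$ with $x(N,k,a)=1$, so $k^{t_s}(N)$ is unbounded. Take the first $s>s_1$ at which $k^{t_s}(N)>k^{t_{s-1}}(N)=k$. The leaf added at stage~$s$ must be some $(N,k,a)$ with value~$1$. Because $N<n^{t_{s-1}}$, the reset clause then applies and gives $n^{t_s}\le N$, which is a contradiction. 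Hence $n^{t_s}\le N$ infinitely often, and combining with the first paragraph the $\liminf$ equals~$N$.

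The main point needing care is the bookkeeping in the first inequality. One must check that a reset to some least $m'<n^{t^-}$ can never land at or below~$m$ once the $k$-values at levels $\le m$ have stabilised. One must also check that whenever no reset occurs, $n$ increments by exactly one, so that $n$ climbs above~$m$ from any value below it in finitely many steps.
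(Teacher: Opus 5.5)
Your proof is correct and follows essentially the same route as the paper. For the lower bound, the $k$-values at levels $\le m$ stabilise at $k^x(\cdot)$, so no reset can land at or below~$m$. For the upper bound, the first increase of $k^{t}(n^x)$ past a given point forces a reset to $n^t\le n^x$. The only cosmetic difference is that you stabilise all levels $m'\le m$ at once, using that the formula for $k^t(m)$ does not depend on $n^t$, where the paper argues by induction on~$m$.
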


\begin{proof}
Let $x\in 2^{\Leaves_3}$. By induction on $m<n^x$ we show that for all but finitely many $t\prec x$ we have $n^t>m$ and $k^t(m) = k^x(m)$. Suppose that $t_0\prec x$ is such that for all~$t$ with $t_0\preceq t\prec x$, we have $n^t \ge m$ and for all $m'<m$, $k^t(m')= k^x(m')$. Let $t_1$ such that $t_0 \prec t_1 \prec x$ and for all $k<k^x(m)$ there is some~$a$ such that $t_1(m,k,a)=1$. Then for all $t$ with $t_1\preceq t \prec x$, if $n^t>m$ then $k^t(m)= k^x(m)$. This implies that for all~$t$ with $t_1\prec t \prec x$ we have $n^t>m$. 

On the other hand, for any $s\prec x$ there is some~$t$ with $s\preceq t\prec x$ and $n^t\le n^x$. For suppose that $n^s>n^x$; let $k = k^s(n^x)$. Since $T^x(n^x,k)=0$, there is some $t\prec x$ with $t(n^x,k,a)=1$ for some~$a$. For the least such~$t$ we must have $s\prec t$. Suppose that for all $r$ with $s\preceq r \prec t$ we have $n^r>n^x$. Then the minimality of~$t$ implies that $k^{t^-}(n^x) = k$; now the definition implies that $n^t\le n^x$. 
\end{proof}

\begin{lemma} \label{lem:alpha_is_three:extending_s_to_have_desired_n}
  Let $s\in 2^{<\Leaves_3}$. 
  \begin{sublemma}
    \item There is some $t\succ s$ in $2^{<\Leaves_3}$ such that $n^t = 0$. 
    \item For all $m\ge n^s$, there is some $t\succ s$ in $2^{<\Leaves_3}$ such that $n^t = m$, and for all~$r$ with $s\preceq r \preceq t$ we have $n^r\ge n^s$.
  \end{sublemma}
\end{lemma}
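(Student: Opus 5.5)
The plan is to build $t$ by extending $s$ one leaf at a time in the $<^*$-order, choosing each new bit so that the recursion in \cref{def:alpha_is_three:n_t_and_k_t_m} moves $n^t$ in the desired direction. Two elementary moves are available. \emph{Setting a bit to $0$} never creates a new witness $t(m,k,a)=1$, so it never triggers the first clause; hence $n^t=n^{t^-}+1$, and $k^t(m)$ keeps the value $k^{t^-}(m)$ for all $m<n^{t^-}$. \emph{Setting a bit to $1$ at a leaf $\smallseq{m,k,a}$}, with $m<n^{t^-}$ and $k=k^{t^-}(m)$, triggers the first clause and gives $n^t\le m$. Here we use that $\alpha=3$, so that by the footnote's convention nodes of height $\le 2$ are not leaves and the leaves are exactly the triples $\smallseq{m,k,a}$. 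Every leaf appears at some finite position in $<^*$, and each step increases $|t|$ by one. Note also that whenever $n^t$ is defined, $k^t(m)$ is recomputed from $t$ as the least $k$ with no witness, so only the current witnesses matter.

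For (a), first suppose $n^s>0$ and let $k=k^s(0)$. Extend $s$ by zeros until the next leaf to be added is one of the infinitely many leaves $\smallseq{0,k,a}$ that are not yet in $\dom s$, and set that bit to $1$. While padding with zeros, $n$ only increases, so $0<n^{t^-}$ still holds. Since only zeros were added, $k^{t^-}(0)=k$. The final step therefore triggers the first clause with least $m=0$, giving $n^t=0$. If instead $n^s=0$, first append one zero to reach $n=1$ and then proceed as above; this gives a proper extension.

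For (b), let $m\ge n^s$. If $m>n^s$, append exactly $m-n^s$ zeros. Each zero increases $n$ by one, so we reach $n^t=m$, and every intermediate $r$ has $n^r\ge n^s$. If $m=n^s$, we need a proper extension that returns to $n=m$ without dipping below $n^s$. Pad with zeros until the next leaf is some $\smallseq{m,k,a}$ with $k=k^{r}(m)$. This is possible because the values at index $m$ stabilise once $n^r>m$, and that happens after the first zero. Then set that bit to $1$. All padding steps have $n^r>n^s$.

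The main obstacle is this last step: the $1$ must drop $n$ to exactly $m$ rather than to something smaller, since the first clause takes the \emph{least} offending index. This is why we put the $1$ under index $m$ and at the current guess $k^{t^-}(m)$. We must also check that no smaller index $m'<m$ is offending, i.e.\ that no $t(m',k^{t^-}(m'),a)=1$ holds. This follows because $k^{t^-}(m')$ was computed as the least $k$ without a witness in $t^-$, and the only new bit lies under index $m$. Hence $n^t=m$, and all $r$ between $s$ and $t$ satisfy $n^r\ge n^s$.
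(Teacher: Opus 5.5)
Your proof is correct and follows the paper's argument. You pad with $0$'s, which never creates a witness, so $n$ increases by one each step and the guesses $k(m)$ are preserved; then you place a single $1$ at a leaf $\smallseq{m,k^r(m),a}$ to drop $n$ to exactly $m$. Your departures are harmless refinements: in (b) with $m>n^s$ you stop after exactly $m-n^s$ zeros instead of overshooting and dropping back, and in (a) you handle $n^s=0$ (where $k^s(0)$ is undefined) by first appending a zero, a case the paper's sketch glosses over.
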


\begin{proof}
The main point is that if $r$ is obtained from~$s$ by only adding 0's, then $n^r\ge n^s$ and for all $m<n^s$, $k^r(m)= k^s(m)$. For~(a), keep extending~$s$ by 0's, until we get to place a 1 at a location $(0,k^s(0),a)$ for some~$a$. For~(b), keep extending~$s$ by 0's, until we get to place a 1 at a location $(m,k,a)$, where $k = k^r(m)$, where~$r$ is a sufficiently long extension of~$s$ by adding $0$'s. 
\end{proof}

\subsection{$K_3$ is not minimal}

Together with \cref{prop:L_alpha_is_non_colourable}, the following implies that $K_3$ is not a least graph with no $\bSigma^0_3$ colouring (with respect to continuous homomorphisms). 

\begin{proposition} \label{prop:no_embedding_of_K_3_into_L_3}
  There is no continuous embedding of $K_3$ into~$L_3$. 
\end{proposition}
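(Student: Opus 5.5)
We argue by contradiction. Suppose $F\colon \XXX_3\to\WWW_3$ is continuous and injective, and is a homomorphism from $K_3$ to $L_3$. Write $F(x,y)=(F_0(x,y),F_1(x,y))$. We build, by a finite-extension construction, a point $x^*\in 2^{\Leaves_3}$ and two distinct sets $y_0\ne y_1$ such that:
\begin{itemize}
\item $F_0(x^*,y_0)=F_0(x^*,y_1)=:x'$;
\item $n^{x'}=\w$.
\end{itemize}
Since both images lie in $\WWW_3$, the codes $k^{x'}(m)$ for all $m<\w$ then determine both $F_1(x^*,y_0)$ and $F_1(x^*,y_1)$: for each $m$, the string $F_1(x^*,y_i)\rest{(m+1)}$ is an initial segment of $M_{k^{x'}(m)}$. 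Hence $F(x^*,y_0)=F(x^*,y_1)$, which contradicts injectivity.

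The source of equal first coordinates is the edges of $K_3$. If $n^x=n$, then $(x,y)$ and $(x,y\cup\{n\})$ are adjacent in $K_3$ (taking $n\notin y$). Their images are therefore adjacent in $L_3$, so they have the same first coordinate $x'$, and their second coordinates differ exactly at $n^{x'}<\w$. We therefore cannot use a single edge at the limit. Instead we approximate: $x^*$ will be a limit of points $x_j$ whose $n^{x_j}$ take prescribed values, and we track how first coordinates of images behave along the way.

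\textbf{Construction.} The construction runs through finite strings $s_j\in 2^{<\Leaves_3}$ and finite strings $\zeta_j$, together with the finite approximations of $F$ obtained from continuity. We use \cref{lem:alpha_is_three:extending_s_to_have_desired_n} to steer $n^{s}$ up and down at will.
\begin{itemize}
\item To make $x^*$ have $T^{x^*}(\rooot)=1$, infinitely often we let $n^{s_j}$ rise past every fixed bound.
\item At the same time we watch the guesses $n^t$ and $k^t(m)$ (\cref{def:alpha_is_three:n_t_and_k_t_m}) along the initial segments $t$ of the opponent's first coordinate that $F$ has committed to.
\end{itemize}
At a stage we pick a fresh $n$, extend $s_j$ to some $s$ with $n^{s}=n$, and consider the two extensions $\zeta\conc 0$ and $\zeta\conc 1$ at position $n$. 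On these two cylinders, the images of corresponding points form edges of $L_3$, so they have equal first coordinates. By continuity, sufficiently long common extensions of both make $F_0$ agree on a long initial segment.

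The two options at each stage are then these.
\begin{itemize}
\item If the guessed $n^t$ of the common image keeps growing, we keep the $y$-difference. The endpoint $y_0$, $y_1$ is built to differ at one coordinate chosen at a stage where, after that stage, the approximation is never injured. This is arranged through the liminf characterisation (\cref{lem:basics_on_n_r_and_k_r_m_for_alpha_is_three}) by only letting $n^{s}$ fall below previously handled values finitely often.
\item If the guess drops, so that the image edge is located at a small $n^{x'}$, then the two images differ in their second coordinate at a small position. In that case we return $n^{s}$ upward and use the freedom in $\zeta$ to force either a disagreement in $F_0$ or a violation of the $\WWW_3$ coding, contradicting that $F$ maps into $\WWW_3$.
\end{itemize}

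\textbf{Main obstacle.} The hardest step is reconciling two requirements simultaneously in the limit:
\begin{itemize}
\item the images of the $y_0$-side and $y_1$-side must share their first coordinate;
\item the image first coordinate must satisfy $n^{x'}=\w$, that is, $T^{x'}(m)=0$ for every $m$.
\end{itemize}
Equal first coordinates come only from edges, and every image edge has finite $n^{x'}$. So equality must be forced by continuity in the limit as the edge position goes to infinity.

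To get $n^{x'}=\w$, we exploit the fact that the label $T^{x'}(m,k)=1$ is a closed condition. Once a stabilised value $k^{x'_j}(m)$ persists along the images of the approximating edges, it survives to $x'$, giving $T^{x'}(m)=0$. The plan is to show, via a priority-style argument with the deficiency-stage guesses, that either each $k(m)$ eventually stabilises, or infinitely many changes let us diagonalise as above.
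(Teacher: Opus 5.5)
Your final step is sound: if $F_0(x^*,y_0)=F_0(x^*,y_1)=x'$ with $n^{x'}=\w$, then membership in $\WWW_3$ determines both second coordinates from $x'$, so injectivity fails. But the proposal never produces such an $x^*$. The construction is only a plan, and the branch in which you ``force \dots a violation of the $\WWW_3$ coding'' is given no mechanism.

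What you do describe also does not fit together. For a \emph{fixed} pair $y_0\ne y_1$ differing at $n$, equality of first coordinates at $x^*$ would come from approximating points $x_j\to x^*$ with $n^{x_j}=n$ fixed. It would not come from edge positions tending to infinity: with $y_0$ fixed, $y_0\cup\{n_j\}\to y_0$ gives nothing. Arranging $T^{x^*}(\rooot)=1$ is also beside the point, since what you need is $n^{x'}=\w$ for the \emph{image}.

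In fact the first requirement is free. Each set $\{x\,:\,n^x=n\}$ is dense in $2^{\Leaves_3}$, so continuity plus the homomorphism property give $F_0(x,y)=F_0(x,y\symdiff\{n\})$ for all $x,y,n$. Hence $F_0(x,y)=g(x)$ does not depend on~$y$. So everything rests on finding $x^*$ with $n^{g(x^*)}=\w$, and the hypotheses work against exactly this.

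Take $x_0$ with $n^{x_0}=0$. The images of $(x_0,0^\w)$ and $(x_0,1\conc 0^\w)$ form an $L_3$-edge, so their second coordinates differ exactly at some $d<\w$. By continuity there is a neighbourhood $N$ of $x_0$ such that for $x\in N$, the strings $F_1(x,0^\w)\rest{(d+1)}$ and $F_1(x,1\conc 0^\w)\rest{(d+1)}$ are the same as at $x_0$, hence incompatible. Both images lie in $\WWW_3$ with common first coordinate $g(x)$. So if $n^{g(x)}>d$, then $k^{g(x)}(d)$ would have to code both incompatible strings; thus $n^{g(x)}\le d$ on~$N$. Therefore $n^{g(x)}$ is finite on a dense open set. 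The ``stabilisation'' branch of your dichotomy cannot occur near any point with $n^x=0$, and the whole proof must come from the branch you left unspecified.

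That branch needs the idea the paper uses, and once you have it, neither injectivity nor a point with $n^{x'}=\w$ is needed. By uniform continuity of $F_1$ on the compact space $\XXX_3$, choose $\ell$ such that $F_1(x,0^\w)$ and $F_1(x,0^\ell\conc 1\conc 0^\w)$ agree on their first $d+1$ bits for every $x$. By density, choose $x\in N$ with $n^x=\ell$. Then $(x,0^\w)$ and $(x,0^\ell\conc 1\conc 0^\w)$ are joined in $K_3$, so their images are joined in $L_3$ at the position $n^{g(x)}$ where their second coordinates differ. This position is $>d$, contradicting $n^{g(x)}\le d$ on~$N$.

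The paper runs exactly this argument as a staged construction of $z$. It works toward $n^z=0$ until $\zeta^a_\ell$ and $\zeta^b_\ell$ become incomparable; the coding then bounds the $n$-value of the common image first coordinate by $n_{a,b}$, which plays the role of $d$. From stage $\ell^*$ on, it works toward $n^z=\ell^*$ and compares $a=(z,0^\w)$ with $c=(z,0^{\ell^*}\conc 1\conc 0^\w)$. It thereby shows that there is no continuous \emph{homomorphism} from $K_3$ to $L_3$ at all.
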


\begin{proof}
Before we prove this proposition, let us explain informally how it is done. As described above, we are given a continuous function $F\colon \XXX_3\to \WWW_3$, and our goal is to build a pair of points in~$\XXX_3$ that are connected by an edge of $K_3$, but whose images under~$F$ are not, thus, showing that~$F$ is not a graph homomorphism. 

We start by building two points $a$ and~$b$ and plan for them to be connected in~$K_3$: the intention will be to have $a = (z,w^a)$ and $b = (z,w^b)$, where we currently plan that $n^z = 0$ and $w^a\symdiff w^b = \{0\}$ (we can set in advance $w^a = 0^\w$ and $w^b = 1\conc 0^\w$). The opponent, playing~$F$, will have to show us how $F(a)$ and $F(b)$ are connected by an edge: it will show us finite pieces of $F(a) = (x^a,y^a)$ and $F(b) = (x^b, y^b)$. To counter our move, the opponent better ensure that $x^a= x^b = x$ and $y^a\symdiff y^b = \{n^*\}$ for some~$n^*$; and then ensure that $n^{x}= n^*$. 

The main point of the construction, our advantage over the opponent, is that since he has to ensure that $F(a), F(b)\in \WWW_3$, if he showed us this much, he cannot have $n^x > n^*$; otherwise, at least one of $F(a)$ or $F(b)$ cannot be in $\WWW_3$. Thus, when we see this information, we can defeat the opponent by creating a new point~$c$, very close to~$a$, and connected to~$a$ by an edge, which requires us to reset $n^z$ to be some large number $\ell^*$. This is sufficiently large so that the symmetric difference of the second coordinates of $F(a)$ and~$F(c)$ must be greater than~$n^*$, ruling out the possibility that $F(a)$ and~$F(c)$ are connected by an edge. 

\medskip

We can now give the details of the construction. Let $F\colon \XXX_3\to \WWW_3$ be continuous. 

\smallskip

At stages $\ell = 0,1,\dots$ of the construction, we define $s_\ell\in 2^{<\Leaves_3}$; we will ensure that 
\[
  s_0 \prec s_1 \prec s_2 \prec \cdots
\]

For each~$\ell$, we let $t^a_\ell$ be the longest $t\in 2^{<\Leaves_3}$ such that $|t|\le \ell$ and
\[
  F[s_\ell,0^\ell] \subseteq [t]\times 2^\w;
\]
here $[s,\xi] = [s]\times [\xi]\subseteq \XXX_3$ and $F[s,\xi]$ is the pointwise image under~$F$ of this set. We similarly let $t^b_\ell$ be the longest $t$ such that $|t|\le \ell$ and
\[
   F[s_\ell,1\conc 0^{\ell-1}] \subseteq [t]\times 2^\w.
\]
We let $\zeta^a_\ell$ be the longest $\zeta\in 2^{<\w}$ such that $|\zeta|\le \ell$
\[
  F[s_\ell,0^\ell] \subseteq [t^a_\ell,\zeta], 
\]
 and similarly we let $\zeta^b_\ell$ be the longest $\zeta\in 2^{<\w}$ such that $|\zeta|\le \ell$ and 
\[
  F[s_\ell,1\conc 0^{\ell-1}] \subseteq [t^b_\ell,\zeta].
\]

Note that the fact that $(s_\ell)$ is increasing implies that $(t^a_\ell)$, $(t^b_\ell)$, $(\zeta^a_\ell)$, and $(\zeta^b_\ell)$ are all (weakly) increasing with~$\ell$.

\medskip

We start with $s_0 = \seq{}$. Let $\ell\ge 0$, and suppose that we have defined~$s_\ell$. There are three ``phases'' that we consider. 

\smallskip
\noindent\textit{Phase 1: $t^a_\ell$ and $t^b_\ell$ are comparable, and $\zeta^a_\ell$ and $\zeta^b_\ell$ are also comparable.} We let $s_{\ell+1}$ be an extension of $s_\ell$ satisfying $n^{s_{\ell+1}}=0$ (use \cref{lem:alpha_is_three:extending_s_to_have_desired_n}).

\smallskip
\noindent\textit{Phase 2: $t^a_\ell$ and $t^b_\ell$ are comparable, and $\zeta^a_\ell$ and $\zeta^b_\ell$ are incomparable.} In this case, let $\ell^*\le \ell$  be the least stage at which this case applied. We will ensure that for all~$\ell$, $n^{s_\ell}\le \ell^*$.\footnote{If $\ell = \ell^*$ then $n^{s_\ell} = 0$.} Extend $s_\ell$ to $s_{\ell+1}\in 2^{<\Leaves_3}$ so that $n^{s_{\ell+1}} = \ell^*$, and $n^r\ge n^{s_\ell}$ for all~$r$ with $s_\ell\preceq r\preceq s_{\ell+1}$ (again, this is possible by \cref{lem:alpha_is_three:extending_s_to_have_desired_n}).

\smallskip
\noindent\textit{Phase 3: $t^a_\ell$ and $t^b_\ell$ are incomparable.} We let $s_{\ell+1}$ be an extension of $s_\ell$ satisfying $n^{s_{\ell+1}}=0$. 

\smallskip

This completes the construction of the sequence $(s_\ell)$. The ``phases'' are named so because we never ``go back'': when the construction starts, phase~1 applies. If we ever move to phase 2, we never later go back to phase~1. If we ever move to phase~3, we never later go back to phase 1 or phase 2. 

We now argue that there are two points in~$\XXX_3$ that are connected by an edge of $K_3$, but whose images under~$F$ are not connected by an edge. 

\medskip

Let $z = \bigcup_\ell s_\ell$, and let 
\begin{itemize}
  \item $a = (z,0^\w)$;
  \item $b = (z,1\conc 0^\w)$; and
  \item if the construction ever enters phase~2, at stage $\ell^*$, then we let $c = (z,0^{\ell^*}\conc 1 \conc 0^\w)$. 
\end{itemize}

Write $F(a) = (x^a, y^a)$ and $F(b) = (x^b,y^b)$; if the construction enters phase~2, write $F(c) = (x^c, y^c)$. We note that $x_a = \bigcup_\ell t^a_\ell$, $y^a = \bigcup_\ell \zeta^a_\ell$, and similarly for~$b$. We consider which is the last phase that the construciton reaches. 

\smallskip
\noindent\textit{The construction is always in phase~1}. In this case, the sequence $(n^{s_\ell})$ shows that $n^z = \liminf  \left\{ n^r \,:\,  r\prec z \right\} = 0$ (\cref{lem:basics_on_n_r_and_k_r_m_for_alpha_is_three}), so $a$ and~$b$ are connected by an edge in~$K_3$. We claim that $F(a)$ and $F(b)$ are not connected by an edge; indeed, in this case, $F(a)=F(b)$.

\smallskip
\noindent\textit{The construction eventually settles in phase~2}. Let $\ell^*$ be the stage at which the construction enters stage~2. In this case, the sequence $(n^{s_\ell})_{\ell \ge \ell^*}$, as well as the instructions of how to pass from $s_\ell$ to $s_{\ell+1}$, show that $n^z = \ell^*$. This means that~$a$ and~$c$ are connected by an edge. We claim that $F(a)$ and $F(c)$ are not connected by an edge.

Suppose that they were. Then $x^a = x^c$ (call this common value~$x$), and $y^a\symdiff y^c$ is the singleton $\{n^x\}$. Also, since we never leave phase~2, $x^b = x$. Since we did enter phase~2, $y^a\ne y^b$; let $n_{a,b} = \min (y^a\symdiff y^b)$. 

We obtain a contradiction by showing that $n^x>n_{a,b}$ and $n^x\le n_{a,b}$. 
\begin{itemize}
  \item Since $F[s_{\ell^*},0^{\ell^*}]\subseteq [t^a_{\ell^*},\zeta^a_{\ell^*}]$, and the second coordinates of~$a$ and~$c$ both extend $0^{\ell^*}$, we must have that $y^a$ and $y^c$ both extend $\zeta^a_{\ell^*}$, so their point of difference, $n^x$, must be greater than $|\zeta^a_{\ell^*}|$, and so greater than $n_{a,b}$. 

  \item On the other hand, if $n^x>n_{a,b}$, then since the strings $y^a\rest{(n_{a,b}+1)}$ and $y^b\rest{(n_{a,b}+1)}$ are incomparable, they cannot be both coded by the number $k^x(n_{a,b})$. This would imply that $F(a)\notin \WWW_3$ or $F(b)\notin \WWW_3$. 
\end{itemize}

\smallskip
\noindent\textit{The construction eventually settles in phase~3}. In this case, as in the first case, $n^z = 0$, so $a$ and~$b$ are connected by an edge. But in this case, $x^a \ne x^b$, so $F(a)$ and~$F(b)$ are not connected by an edge.
\end{proof}

\section{Candidates for minimal graphs} \label{sec:candidates_for_minimal_graphs}

Unfortunately, even the graph $L_3$ is not a least graph with no $\bSigma^0_3$ colouring, with respect to continuous homomorphisms. We will present a graph $H_3$, which is in fact a least graph with no $\bSigma^0_3$ colouring, and observe that there is no continuous homomorphism from $L_3$ into~$H_3$. The graph $H_3$ is a variant of the minimal graph constructed by Lecomte and Zeleny in \cite{LecomteZeleny}. We show how this graph can potentially be generalised to levels $\alpha>3$, all using an elaboration of the machinery presented so far. 

The difference between $H_3$ and~$L_3$ is the location at which we connect two points. Instead of requiring that $y\symdiff y' = \{n^x\}$, we will require $y\symdiff y' = \{c^x(n^x)\}$, where $c^x(n^x)$ is a number (likely larger than~$n^x$) that is computed using the approximation to~$n^x$ introduced above. First, we explain how to similarly approximate~$n^x$ even when $\alpha>3$. To do so, we need to approximate values of $T^x(\s)$ for various~$\s$ (eventually, we will be interested in $|\s|=3$).

\subsection{True stages for $T_\alpha$}

Fix a computable $\alpha\ge 1$. Recalling that $\alpha$ is actually a concrete computable ordinal (a nice computable well-ordering of a computable subset of~$\Nat$), for $\beta<\alpha$ (considered as a sub-ordering of~$\alpha$) we let $a_\beta\in \Nat$ be the least upper bound of~$\beta$ in~$\alpha$.

For each $\beta<\alpha$, we (uniformly) fix a computable $\w$-ordering $<^*_\beta$ on the set of nodes of $T_\alpha$ of rank~$\beta$. This is done in a reasonable way, in particular, if $\s\conc k$ and $\s\conc k'$ are two sibling nodes of the same rank~$\beta$, and $k<k'$, then $\s\conc k <^*_\beta \s\conc k'$. Thus, if $\alpha$ is a successor ordinal (so all children of the root have the same rank $\alpha -1$), then $<^*_{\alpha-1}$ agrees with the ``natural'' ordering on the children of the root. 

As above, we use the ordering $<^*_0$ on the leaves to give us a notion of an initial segment of an element of $2^{\Leaves_\alpha}$, that is, we define the collection $2^{<\Leaves_\alpha}$ as in \cref{def:initial_segments_of_leaves}. 

We will define, for each $t\in 2^{<\Leaves_\alpha}$, a partial labelling $T^t$ of $T_\alpha$, to serve as an approximation (or guess) for $T^x$ for some $x\in 2^{\Leaves_\alpha}$ extending~$t$. This will be done by induction on~$|t|$. We start with:

\begin{orderedlist}
  \item \label{item:true_stages:definition:empty_string_empty_tree} 
  If $t = \seq{}$ then $T^t$ is empty (no nodes are labelled). 
\end{orderedlist}

Now let $t\in 2^{<\Leaves_\alpha}$ be nonempty. To define $T^t$, by induction on $\beta \le \alpha$, we will define $T^t(\s)$ for nodes of rank~$\beta$. If this is done for all $\gamma<\beta$, then we let $T^t_\beta$ be the partial labelling defined so far (the restriction of $T^t$ to nodes of rank $<\beta$). 

\begin{orderedlist}[resume]
  \item \label{item:true_stages:definition:0} 
  We start by setting $T^t_1 = t$.
\end{orderedlist}

That is, for nodes~$\s$ of rank~0, $T^t(\s) = t(\s)$. Now, let $\beta\le \alpha$ be nonzero, and suppose that $T^t_\beta$ is already defined. 

\begin{orderedlist}[resume]
  \item \label{item:true_stages:definition:saying_nothing} 
  Suppose that there are at most $a_\beta$-many $s\prec t$ such that $T^s_\beta \subseteq T^t_\beta$. We then let $T^t(\s)\diverge$ for all~$\s$ of rank~$\beta$. 

\item \label{item:true_stages:definition:injury_or_not} 
Suppose that this is not the case. Let~$s$ be the longest $s\prec t$ such that $T^s_\beta \subseteq T^t_\beta$. Let~$\tau$ be the $<^*_\beta$-least node such that either:
\begin{itemize}
  \item $T^s(\tau)\diverge$; or
  \item $T^s(\tau)=1$, and there is some~$k$ such that $T^t(\tau\conc k)=1$. 
\end{itemize}
  We define $T^t(\s)$ for~$\s$ of rank~$\beta$ as follows: 
  \begin{itemize}
    \item For $\s<^*_\beta \tau$, $T^t(\s) = T^s(\s)$. 
    \item For $\s >^*_\beta \tau$, $T^t(\s)\diverge$. 
    \item We let $T^t(\tau)=1$, unless there is some~$k$ with $T^t(\tau\conc k)=1$, in which case we let $T^t(\tau)=0$. 
  \end{itemize}
  \end{orderedlist}

This completes the definition of $T^t$ for all $t\in 2^{<\Leaves_\alpha}$. For $x\in 2^{\Leaves_\alpha}$, $T^x$ is already defined (\cref{def:the_tree_determined_by_a_labelling_of_the_leaves}); for $\beta \le\alpha+1$, we let $T^x_\beta$ be the restriction of~$T^x$ to nodes of rank~$<\beta$.

\begin{definition} \label{def:true_stage_relations}
  For $s,t\in 2^{\le \Leaves_\alpha}$, and nonzero $\beta \le \alpha+1$, we write $s\preceq_\beta t$ if $T^s_\beta \subseteq T^t_\beta$. 
\end{definition}

We list some basic properties of these partial labellings and relations. 

\begin{lemma} \label{lem:true_stages:basic_properties}
  Let $\beta \le \alpha+1$ be nonzero, and let $r,s,t\in 2^{\le\Leaves_\alpha}$. 
    \begin{sublemma}
      \item \label{item:true_stages:basic_properties:zero}
       $s\prec_1 t$ if and only if $s\prec t$. 

      \item \label{item:true_stages:basic_properties:empty}
       For all~$t$, $\seq{}\preceq_\beta t$.
      
      \item \label{item:true_stages:basic_properties:nested}
      If $\gamma \le \beta$ and $s\prec_\beta t$ then $s\prec_\gamma t$. 

      \item \label{item:true_stages:basic_properties:continuity}
      If $\beta$ is a limit, then  $s\prec_\beta t$ if and only if for all $\gamma<\beta$, $s\prec_\gamma t$.

      \item \label{item:true_stages:basic_properties:transitivity}
      If $r\prec_\beta s\prec_\beta t$ then $r\prec_\beta t$. 

      \item \label{item:true_stages:basic_properties:initial_segment_convergence_at_level_beta}
      If $T^t(\tau)\converge$ then for all $\s<^*_{\rk(\tau)} \tau$, $T^t(\s)\converge$. 

      \item \label{item:true_stages:basic_properties:internally_consistent_and_witnessed}
      For every non-leaf $\s$, if $T^t(\s)\converge$, then $T^t(\s) = 0$ if and only if there is some~$k$ such that $T^t(\s\conc k)=1$.

      \item \label{item:true_stages:basic_properties:only_ones_into_zeros}
      If $s\prec_\beta t$, $\trk(\s)=\beta$, and $T^s(\s)=0$,  then it is not the case that $T^t(\s)=1$. 

      \item \label{item:true_stages:basic_properties:finiteness}
      If $t\in 2^{<\Leaves_\alpha}$ (i.e., is finite), then $T^t$ is finite, and computable from~$t$.

      \item \label{item:true_stages:basic_properties:diamond}
      If $r\prec s\prec t$ and $r,s\prec_\beta t$ then  $r\prec_\beta s$.

      \item \label{item:true_stages:basic_properties:beta_plus_1_extension}
      Suppose that $\beta\le  \alpha$, $s\prec_\beta t$, and $s\nprec_{\beta+1} t$. Let~$\tau$ be  the $<^*_\beta$-least node such that it is not the case that $T^s(\tau)\converge = T^t(\tau)$. Then:
      \begin{itemize}
        \item  $T^s(\tau)=1$ and $T^t(\tau)=0$; and
        \item For all~$r$ with $s\preceq_\beta r \preceq_\beta t$, for all $\s<^*_\beta \tau$, we have $T^r(\s)\converge = T^s(\s)$. 
      \end{itemize}

      \item \label{item:true_stages:basic_properties:club}
      If $\beta\le  \alpha$,  $r\prec_\beta s\prec_\beta t$ and $r\prec_{\beta+1} t$, then  $r\prec_{\beta+1} s$. 
    \end{sublemma}
\end{lemma}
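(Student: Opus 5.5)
Most parts follow from unwinding the recursive definition of $T^t$ by induction on the rank~$\beta$, inside an induction on $|t|$. I would prove the clauses in roughly the order listed, since later ones use earlier ones, and work throughout with the data $s,\tau$ chosen in clause~\ref{item:true_stages:definition:injury_or_not}.

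\textbf{Routine clauses.}
\begin{itemize}
\item For (a): $T^t_1=t$, so $T^s_1\subseteq T^t_1$ iff $s\preceq t$.
\item For (b): $T^{\seq{}}$ is empty.
\item For (c) and (d): $T^t_\gamma$ is the restriction of $T^t_\beta$ to nodes of rank $<\gamma$, and at a limit $\beta$ the labelling $T^t_\beta$ is the union of the $T^t_\gamma$.
\item For (e): $\subseteq$ is transitive.
\item For (f): by construction the labels at rank $\beta$ are defined exactly on a $<^*_\beta$-initial segment (up to and including $\tau$). Here I use, inductively, that $T^s$ already has this property.
\item For (g): at the new node $\tau$ this is exactly the third bullet. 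For $\s<^*_\beta\tau$ we copy $T^s(\s)$. If $T^s(\s)=0$, its witness child $\s\conc k$ has rank $<\beta$ and $T^s(\s\conc k)=1$; since $T^s_\beta\subseteq T^t_\beta$, also $T^t(\s\conc k)=1$. If $T^s(\s)=1$, the minimality of $\tau$ says no child of $\s$ gets label $1$ in $T^t$.
\item For (i): at most finitely many nodes of each rank are labelled. Moreover, labels at rank $\beta$ are nonempty only when there are more than $a_\beta$ proper initial segments, so $|t|>a_\beta$. Hence only the finitely many ranks $\beta$ with $a_\beta<|t|$ contribute, and each step is computable.
\end{itemize}

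\textbf{Clause (h).} If $s\prec_\beta t$ and $T^s(\s)=0$ with $\rk(\s)=\beta$, the witness child $\s\conc k$ satisfies $T^s(\s\conc k)=1$. By $T^s_\beta\subseteq T^t_\beta$ we get $T^t(\s\conc k)=1$, and then (g) applied to $t$ forbids $T^t(\s)=1$.

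\textbf{Clauses (j)--(l).} These carry the real content, and I would prove them together by induction on $\beta$. Write $s_t$ for the longest $s\prec t$ with $s\preceq_\beta t$; it is the one used to define the rank-$\beta$ labels of $t$ (when those are defined).
\begin{itemize}
\item For (j): if $r\prec s\prec t$ and both $r,s\preceq_\beta t$, then $T^r_\beta$ and $T^s_\beta$ are both contained in the function $T^t_\beta$, so they are compatible. The plan is to show by induction that if $T^r_\beta$ and $T^s_\beta$ are compatible with $r\prec s$, then $T^r_\beta\subseteq T^s_\beta$. Use (f) and (h): a rank-$\gamma$ node labelled in $T^r$ but not in $T^s$ would force a $<^*_\gamma$-smaller disagreement, via the "longest predecessor" recursion. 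This is the subtle step. I would handle it by showing that $\preceq_\beta$ restricted to initial segments of $t$ forms a tree, where the predecessors of $t$ are the chain $s_t, s_{s_t},\dots$.
\item For (k): with $s\prec_\beta t$, $s\nprec_{\beta+1}t$ and $\tau$ the least disagreement at rank $\beta$, argue along the chain $s=r_0\prec_\beta r_1\prec_\beta\cdots\prec_\beta r_n=t$ of successive $s_{r_i}$-predecessors. Each step preserves the labels strictly below its chosen node. By (f), any change at rank $\beta$ either extends the labelled initial segment or flips a $1$ to a $0$, and (h) forbids $0\to1$. So the first change to $T^s$ at a node is a flip $1\to0$ at $\tau$, and nodes $<^*_\beta\tau$ stay fixed all along the chain.
\item For (l): if $r\prec_{\beta+1}t$, then every rank-$\beta$ label of $r$ survives to $t$. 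Were some label to change between $r$ and $s$, it would be a $1\to0$ flip by (k). By (h) and (g), that $0$ persists to $t$, contradicting agreement of $r$ and $t$.
\end{itemize}

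The main obstacle is (j), establishing the tree structure and the coherence of the recursion.
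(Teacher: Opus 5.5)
Your handling of (a)--(i), and your arguments for (k) and (l) once (j) is available, are essentially the paper's. The gap is in (j), the step you yourself single out.

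\textbf{The gap in (j).} Proposing to show that ``$\preceq_\beta$ restricted to initial segments of $t$ forms a tree whose predecessors of $t$ are the chain $s_t, s_{s_t},\dots$'' just restates (j). Moreover, (f) and (h) by themselves do not give it. Your remark that a rank-$\gamma$ label of $r$ missing from $s$ forces a $<^*_\gamma$-smaller disagreement is true, but it is the content of (k)$_\gamma$. Proving it by following the longest-predecessor recursion needs every $\prec_\gamma$-predecessor of $s$ above $r$ to lie on one chain, i.e.\ (j)$_\gamma$.

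What is missing is the order of the simultaneous induction on $\beta$: (j)$_\beta$ must be obtained from lower levels only, before (k)$_\beta$. The paper does this as follows.
\begin{itemize}
\item For $\beta=1$, (j) is trivial.
\item For limit $\beta$, it follows from (j)$_\gamma$ for $\gamma<\beta$ together with (d).
\item For $\beta=\gamma+1$: from $r,s\prec_{\gamma+1}t$ you get $r,s\prec_\gamma t$ by (c), hence $r\prec_\gamma s$ by (j)$_\gamma$. Then (l)$_\gamma$, applied to $r\prec_\gamma s\prec_\gamma t$ and $r\prec_{\gamma+1}t$, gives $r\prec_{\gamma+1}s$.
\end{itemize}
Only with (j)$_\beta$ in hand is your chain argument for (k)$_\beta$ legitimate. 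It needs $s$, and every $r$ with $s\preceq_\beta r\preceq_\beta t$, to lie on the chain of successive longest predecessors of $t$. After that, (l)$_\beta$ follows from (k)$_\beta$ as you say.

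Your stronger ``compatible implies comparable'' form of (j) can also be proved, by induction on $\gamma\le\beta$ using (k)$_\gamma$ for $\gamma<\beta$; this is the paper's corollary right after the lemma. But it rests on (k) at lower ranks, not on (f) and (h).

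\textbf{The infinite case of (k).} The lemma allows $t\in 2^{\Leaves_\alpha}$. For infinite $t=x$, the labelling $T^x$ is not produced by the recursion, so there is no finite chain of longest predecessors ending at $x$, and your argument for (k) does not apply. This case is used later (with $s\prec_\beta x$ for a point $x$), so it must be handled separately:
\begin{itemize}
\item Since $T^x$ is total, $T^x(\tau)$ is defined, and (h) then gives $T^s(\tau)=1$ and $T^x(\tau)=0$.
\item Suppose some finite $r$ with $s\preceq_\beta r\prec_\beta x$ disagreed with $s$ below $\tau$. By the finite case of (k), its least disagreement $\s$ would have $T^s(\s)=1$ and $T^r(\s)=0$. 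But $T^x(\s)=T^s(\s)=1$ by minimality of $\tau$, contradicting (h) between $r$ and $x$.
\end{itemize}

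\textbf{A minor point in the finite case.} You should also check that clause (iii) of the definition never fires along the chain once $T^s$ has a rank-$\beta$ label. By (e), $s$ and all its $\preceq_\beta$-successors then have more than $a_\beta$ predecessors. If $T^s$ has no rank-$\beta$ label, (k) is vacuous.
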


\begin{proof}
  \ref{item:true_stages:basic_properties:zero}---\ref{item:true_stages:basic_properties:transitivity} are immediate from the definitions. 

  \medskip

  \ref{item:true_stages:basic_properties:initial_segment_convergence_at_level_beta} and \ref{item:true_stages:basic_properties:internally_consistent_and_witnessed} hold by definition when~$t$ is infinite. For finite~$t$, they are proved by a straightforward induction on $|t|$. 

  \smallskip

  For~\ref{item:true_stages:basic_properties:only_ones_into_zeros}: by \ref{item:true_stages:basic_properties:internally_consistent_and_witnessed}, there is some~$k$ such that $T^s(\s\conc k)=1$. Since $\trk(\s\conc k)<\beta$ and $s\prec_\beta t$, we have $T^t(\s\conc k)=1$. By \ref{item:true_stages:basic_properties:internally_consistent_and_witnessed} again, we cannot have $T^t(\s)=1$. 

  \medskip

  \ref{item:true_stages:basic_properties:finiteness} follows from the fact that there are only finitely many~$\beta$ with $a_\beta <|t|$, and induction on~$|t|$; the entire construction is computable. 

  \medskip

  \ref{item:true_stages:basic_properties:diamond}, \ref{item:true_stages:basic_properties:beta_plus_1_extension} and  \ref{item:true_stages:basic_properties:club}  are proved by simultaneous induction on~$\beta$. Let $\beta \le  \alpha+1$, and suppose that these have been verified for all $\gamma<\beta$. 

  \smallskip

  If $\beta =1$, then \ref{item:true_stages:basic_properties:diamond}$_\beta$ is immediate. If $\beta$ is a limit, then \ref{item:true_stages:basic_properties:diamond}$_\beta$ follows by induction, and \ref{item:true_stages:basic_properties:continuity}. If $\beta = \gamma+1>1$ is a successor, then \ref{item:true_stages:basic_properties:diamond}$_\beta$ follows from \ref{item:true_stages:basic_properties:diamond}$_\gamma$ and \ref{item:true_stages:basic_properties:club}$_\gamma$ (and~\ref{item:true_stages:basic_properties:nested}). 

  \smallskip

  For~\ref{item:true_stages:basic_properties:beta_plus_1_extension}$_\beta$, fix some $s\in 2^{<\Leaves_\alpha}$. If $T^s(\s)\diverge$ for all~$\s$ of rank~$\beta$, then $s\prec_{\beta+1} t$ whenever $s\prec_\beta t$. Hence, suppose that $T^s(\s)\converge$ for some $\s$ of rank~$\beta$. By induction on the length of $t\succeq_\beta s$, we prove that if $s\nprec_{\beta+1} t$ then \ref{item:true_stages:basic_properties:beta_plus_1_extension}$_\beta$ holds between~$s$ and~$t$. This is vacuous when $s = t$. 

  Fix some $t\succ_\beta s$, and suppose that $s\nprec_{\beta+1} t$. Let~$r$ be the $\prec_\beta$-predecessor of~$t$. By \ref{item:true_stages:basic_properties:diamond}$_\beta$, $s\preceq_\beta r \prec_\beta t$. Since $T^s(\s)\converge$ for some~$\s$, $s$ has more than $a_\beta$-many $\prec_\beta$-predecessors. By \ref{item:true_stages:basic_properties:transitivity}, each such is a $\prec_\beta$-predecessor of~$t$, so case \ref{item:true_stages:definition:saying_nothing} does not apply in the definition of $T^t_{\beta+1}$. 

  Let $\tau_{s,t}$ be the $<^*_\beta$-least~$\tau$ such that $T^s(\tau)\converge$ and $\lnot (T^t(\tau) = T^s(\tau))$. If $s\nprec_{\beta+1} r$, let $\tau_{s,r}$ be the $<^*_\beta$-least~$\tau$ such that $T^s(\tau)\converge$ and $\lnot (T^r(\tau) = T^s(\tau))$. Similarly, if $r\nprec_{\beta+1} t$, we let  $\tau_{r,t}$ be the $<^*_\beta$-least~$\tau$ such that $T^r(\tau)\converge$ and $\lnot (T^t(\tau) = T^r(\tau))$.

  If $s\nprec_{\beta+1} r$, then $\tau_{s,t}\le^*_\beta \tau_{s,r}$. For suppose otherwise. By induction, $T^s(\tau_{s,r})=1$ and $T^r(\tau_{s,r})=0$. By minimality of $\tau_{s,t}$, $T^t(\tau_{s,r})=T^s(\tau_{s,r})=1$. This contradicts~\ref{item:true_stages:basic_properties:only_ones_into_zeros} (between $r$ and~$t$). Hence, for all $\s<^*_\beta \tau_{s,t}$, $T^r(\s)\converge = T^s(\s)$. 

  Similarly, if $r\nprec_{\beta+1} t$ then $\tau_{s,t}\le^* \tau_{r,t}$. By the definition of $T^t$, in this case, $T^r(\tau_{r,t})=1$ and $T^t(\tau_{r,t})=0$. If $\tau_{r,t}<^*_\beta \tau_{s,t}$ then by the minimality of $\tau_{s,t}$, $T^s(\tau_{r,t})= 0$, contradicting \ref{item:true_stages:basic_properties:only_ones_into_zeros} between~$s$ and~$r$. 

  Hence, if $\s<^*_\beta \tau_{s,t}$, then $T^r(\s)\converge = T^t(\s)$. This confirms the second part of \ref{item:true_stages:basic_properties:beta_plus_1_extension}$_\beta$ between~$s$ and~$t$. Also, by the definition of $T^t$, this implies that $T^t(\tau_{s,t})\converge$. By \ref{item:true_stages:basic_properties:only_ones_into_zeros}, we indeed have $T^t(\tau_{s,t})=0$ and $T^s(\tau_{s,t})=1$.

  \smallskip

  This concludes the proof of \ref{item:true_stages:basic_properties:beta_plus_1_extension}$_\beta$ when~$t$ is finite. Suppose that $x\in 2^{\Leaves_\alpha}$, $s\prec_\beta x$ but $s\nprec_{\beta+1} x$. Define $\tau$ as in \ref{item:true_stages:basic_properties:beta_plus_1_extension}. Since $T^x(\tau)\converge$, by \ref{item:true_stages:basic_properties:only_ones_into_zeros}, we must have $T^x(\tau)=0$ and $T^s(\tau)=1$. For the second part of \ref{item:true_stages:basic_properties:beta_plus_1_extension}$_\beta$ between~$s$ and~$x$, let $r$ with $s\preceq_\beta r\prec_\beta x$, and suppose that there is some $\s<^*_\beta\tau$ such that it is not the case that $T^r(\s) = T^s(\s)$. Choosing the $<^*_\beta$-least such~$\s$, we have $T^r(\s)=0$ and $T^x(\s)=T^s(\s)=1$, as usual, contradicting \ref{item:true_stages:basic_properties:only_ones_into_zeros} between~$r$ and~$x$.

  \smallskip

  Finally, \ref{item:true_stages:basic_properties:club}$_\beta$ follows from \ref{item:true_stages:basic_properties:beta_plus_1_extension}$_\beta$: suppose that $r\prec_\beta s\prec_\beta t$, and $r\nprec_{\beta+1} s$. Let~$\tau$ be $<^*_\beta$-minimal with $T^s(\tau)\ne T^r(\tau)$. By \ref{item:true_stages:basic_properties:beta_plus_1_extension}$_\beta$, $T^r(\tau)=1$ and $T^s(\tau)=0$. By \ref{item:true_stages:basic_properties:only_ones_into_zeros}, we cannot have $T^t(\tau)=1$. So $r\nprec_{\beta+1} t$. 
\end{proof}

\begin{corollary} \label{cor:you_wouldnt_think_this_would_be_useful} 
  Let $\beta \le  \alpha+1$. Suppose that $s,t\in 2^{<\Leaves_\alpha}$ and that $T^s_\beta$ and $T^t_\beta$ are compatible  ($T^s_\beta \cup T^t_\beta$ is a function).  Then $s\preceq_\beta t$ or $t\preceq_\beta s$. 
\end{corollary}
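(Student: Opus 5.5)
The proof goes by induction on $\beta$. The first step is to reduce to the case where $s$ and $t$ are comparable as strings.

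\emph{Reduction.} Compatibility of $T^s_\beta$ and $T^t_\beta$ implies compatibility of their restrictions to leaves. These restrictions are $T^s_1=s$ and $T^t_1=t$, by clause~\ref{item:true_stages:definition:0} of the definition. Elements of $2^{<\Leaves_\alpha}$ are initial segments with respect to $<^*_0$, so if $s\cup t$ is a function then one of $s,t$ is an initial segment of the other. By symmetry we may assume $s\preceq t$. We then aim to prove the stronger statement:
\begin{center}
if $s\preceq t$ and $T^s_\beta\cup T^t_\beta$ is a function, then $s\preceq_\beta t$.
\end{center}
Compatibility at level $\beta$ implies compatibility at every level $0<\gamma\le\beta$, since $T^u_\gamma\subseteq T^u_\beta$. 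So the inductive hypothesis applies at all smaller levels.

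\emph{Base and limit cases.} For $\beta=1$ the claim is exactly \cref{lem:true_stages:basic_properties}\ref{item:true_stages:basic_properties:zero}. For limit $\beta$, the inductive hypothesis gives $s\preceq_\gamma t$ for every nonzero $\gamma<\beta$. If $s=t$ there is nothing to prove. Otherwise $s\prec_\gamma t$ for all such $\gamma$, and \cref{lem:true_stages:basic_properties}\ref{item:true_stages:basic_properties:continuity} yields $s\prec_\beta t$.

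\emph{Successor case.} Let $\beta=\gamma+1$ with $\gamma\ge 1$. The inductive hypothesis gives $s\preceq_\gamma t$, and we may assume $s\prec_\gamma t$ (the case $s=t$ is trivial). Suppose towards a contradiction that $s\nprec_{\gamma+1} t$. By \cref{lem:true_stages:basic_properties}\ref{item:true_stages:basic_properties:beta_plus_1_extension}, there is a node $\tau$ of rank $\gamma$ with $T^s(\tau)=1$ and $T^t(\tau)=0$. Since $\rk(\tau)=\gamma<\beta$, the node $\tau$ lies in the domains of both $T^s_\beta$ and $T^t_\beta$, where they take different values. This contradicts compatibility, so $s\prec_\beta t$.

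\emph{Main obstacle.} The only nontrivial ingredient is part \ref{item:true_stages:basic_properties:beta_plus_1_extension} of the lemma, which we take as given. The one subtle point is that the statement allows $\beta=\alpha+1$, whereas \ref{item:true_stages:basic_properties:beta_plus_1_extension} is stated for $\beta\le\alpha$. This is harmless: in the successor step we apply \ref{item:true_stages:basic_properties:beta_plus_1_extension} with $\gamma\le\alpha$ in its role, which is within range.
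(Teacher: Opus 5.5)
Your proof is correct and is essentially the paper's argument. The paper fixes $s\preceq t$, takes the greatest $\gamma$ with $s\prec_\gamma t$ (its existence tacitly uses the limit continuity \ref{item:true_stages:basic_properties:continuity}), and applies \ref{item:true_stages:basic_properties:beta_plus_1_extension} to get a node of rank $\gamma<\beta$ where $T^s$ and $T^t$ disagree. Your induction on $\beta$ unwinds that ``greatest $\gamma$'' step into explicit limit and successor cases, and also spells out the reduction to $s\preceq t$ and the trivial case $s=t$.
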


\begin{proof}
  We have $s\preceq t$ or $t\preceq s$; say $s\preceq t$. Let~$\gamma$ be the greatest with $s\prec_\gamma t$. Suppose that $\gamma<\beta$. By \ref{item:true_stages:basic_properties:beta_plus_1_extension} of \cref{lem:true_stages:basic_properties} , there is some~$\tau$ with $\rk(\tau)=\gamma$ and $T^s(\tau)=1$, $T^t(\tau)=0$. 
\end{proof}

\begin{lemma} \label{lem:true_stages:existence_of_true_stages}
  For $x\in 2^{\Leaves_\alpha}$ and nonzero $\beta \le \alpha+1$, there are infinitely many $s\prec_\beta x$, and 
  \[
    T^x_\beta = \bigcup \left\{ T^s_\beta \,:\,  s\prec_\beta x  \right\}. 
  \]
\end{lemma}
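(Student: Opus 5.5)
We argue by induction on nonzero $\beta\le\alpha+1$, proving simultaneously that (i) there are infinitely many $s\prec_\beta x$, and (ii) $T^x_\beta=\bigcup\{T^s_\beta : s\prec_\beta x\}$. Here $s\prec_\beta x$ means $T^s_\beta\subseteq T^x_\beta$. So the inclusion $\supseteq$ in (ii) is automatic, and the content of (ii) is that every node $\s$ of rank $<\beta$ is eventually labelled correctly by some true stage.

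For $\beta=1$, every finite $s\prec x$ satisfies $s\prec_1 x$ by \ref{item:true_stages:basic_properties:zero} of \cref{lem:true_stages:basic_properties}, and $T^s_1=s$, so both claims are clear. If $\beta$ is a limit, we use \ref{item:true_stages:basic_properties:continuity}: $s\prec_\beta x$ iff $s\prec_\gamma x$ for all $\gamma<\beta$. This alone does not give infinitely many such $s$, so we will derive the limit case from the successor case below. Applied at $\gamma+1$ for each $\gamma<\beta$, the successor case shows that the set of $s\prec_\beta x$ is cofinal. The key point is that only finitely many $\gamma<\beta$ have $a_\gamma<|s|$, so for a finite $s$ only finitely many levels are ``active''. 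For each $\gamma$ with $a_\gamma\ge|s|$, the labelling $T^s$ says nothing at rank $\gamma$ by clause \ref{item:true_stages:definition:saying_nothing}, since $s$ has at most $|s|$ predecessors. Hence $s\prec_\beta x$ reduces to $s\prec_{\gamma+1}x$ for the largest active $\gamma<\beta$, and that is handled by the successor step.

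Successor step, from $\beta$ to $\beta+1$ with $\beta\le\alpha$. By the inductive hypothesis, the $\prec_\beta$-true stages of $x$ form an infinite chain $s_0\prec s_1\prec\cdots$. By \ref{item:true_stages:basic_properties:diamond} they are $\prec_\beta$-linearly ordered, and each is the $\prec_\beta$-successor of the previous one. Along this chain clause \ref{item:true_stages:definition:injury_or_not} eventually applies, once more than $a_\beta$ predecessors exist, and $T^{s_{i+1}}$ at rank $\beta$ is computed from $T^{s_i}$.

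We then show, by induction along $<^*_\beta$, that every rank-$\beta$ node $\tau$ eventually stabilises at the correct value $T^x(\tau)$. Suppose all $<^*_\beta$-smaller nodes have stabilised correctly from some point on. Consider the two possible values of $T^x(\tau)$.
\begin{itemize}
\item If $T^x(\tau)=0$, pick $k$ with $T^x(\tau\conc k)=1$. By (ii) at level $\beta$, eventually $T^{s_i}(\tau\conc k)=1$. From then on the label of $\tau$ is $0$, or $\tau$ is reset to $0$ as the least injured node, and it stays $0$ by \ref{item:true_stages:basic_properties:only_ones_into_zeros}.
\item If $T^x(\tau)=1$, no child of $\tau$ is ever labelled $1$ at true stages. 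So once $\tau$ is the least undefined node it receives the label $1$ and is never injured afterwards.
\end{itemize}
In either case, after stabilisation no smaller node triggers a change, so $\tau$ becomes defined, which is the role of the $T^s(\tau)\diverge$ clause. Therefore infinitely many $s_i$ satisfy $T^{s_i}_{\beta+1}\subseteq T^x_{\beta+1}$, namely those stages at which the current frontier node is labelled correctly. This gives (i) and (ii) at $\beta+1$; to locate these stages we use \ref{item:true_stages:basic_properties:beta_plus_1_extension} and \ref{item:true_stages:basic_properties:club}.

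The main obstacle is guaranteeing infinitely many $\prec_{\beta+1}$-stages rather than only convergence of the labels. At a stage where the frontier node $\tau$ is newly set to $1$ but will later become $0$, that stage is not $\prec_{\beta+1}x$. We therefore need stages where every labelled rank-$\beta$ node is correct. The plan is to take, for each $n$, the least stage after which the first $n$ nodes are stable. At the next stage where the frontier advances to a node whose true label is $1$ (or which becomes $0$ only via a witnessed child), all labels are correct. If instead no such stage occurs and all remaining nodes have true value $0$, then the frontier sets a node to $0$ only when a witness exists, and such a label is correct, so those stages are true as well.
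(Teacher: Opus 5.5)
Your overall architecture matches the paper's: induction on $\beta$, and in the successor step, stabilisation of the rank-$\beta$ labels along $<^*_\beta$. Your case analysis there is essentially the paper's claim $(*)$. However, the limit step has a genuine gap. You correctly note that $T^s$ is silent at every rank $\gamma$ with $a_\gamma\ge|s|$, but this only says which levels are active as a function of $|s|$. The successor case at a fixed $\gamma+1$ gives infinitely many $s\prec_{\gamma+1}x$ with no control on their lengths. A long one has more active levels, so you then need $s\prec_{\gamma'+1}x$ for a larger $\gamma'$, and so on. ``That is handled by the successor step'' is therefore a moving target, and cofinality of $\{s : s\prec_\beta x\}$ does not follow.

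The missing idea is to bound the number of $\prec_\delta$-predecessors, which is what clause \ref{item:true_stages:definition:saying_nothing} actually counts, rather than the length. Given a long enough $r\prec x$, let $\gamma$ be the largest $\delta<\beta$ with $a_\delta\le|r|$, and let $t$ be the \emph{shortest} $t\succeq r$ with $t\prec_{\gamma+1}x$. For $\gamma<\delta<\beta$, any $s\prec_\delta t$ satisfies $s\prec_{\gamma+1}t\prec_{\gamma+1}x$, so $s\prec r$ by minimality of $t$. Thus $t$ has at most $|r|+1\le a_\delta$ many $\prec_\delta$-predecessors. Rank $\delta$ is therefore silent at $t$ however long $t$ is, and $T^t_\beta=T^t_{\gamma+1}\subseteq T^x_\beta$. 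The union clause at limit $\beta$, which you do not address, then follows from the lower levels and \cref{lem:true_stages:basic_properties}\ref{item:true_stages:basic_properties:diamond}.

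The last part of the successor step also fails as written. It is false that the stages ``at which the current frontier node is labelled correctly'' are $\prec_{\beta+1}$-true. It is likewise false that all labels are correct at the next stage where the frontier advances to a node of true label $1$. A $0$-label at a $\prec_\beta$-true stage is always correct, by \ref{item:true_stages:basic_properties:internally_consistent_and_witnessed}. A $1$-label, however, can be wrong, and nodes strictly between the stabilised block and the frontier can carry wrong $1$'s.

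For example, take $\rho_0<^*_\beta\rho_1$ with $T^x(\rho_0)=0$ via a late witness and $T^x(\rho_1)=1$. Then $\rho_0$ is first labelled $1$, and the frontier moves on to $\rho_1$ and labels it correctly. That stage is still not $\prec_{\beta+1}x$, because of $\rho_0$. Your closing case, where all remaining nodes have true value $0$, has the same flaw: a stage that sets the frontier to $0$ may still carry earlier wrong $1$'s.

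What is needed is the following argument. Suppose $s\prec_\beta x$ but $s\nprec_{\beta+1}x$, and let $\tau$ be $<^*_\beta$-least with $T^s(\tau)\converge\ne T^x(\tau)$. By \cref{lem:true_stages:basic_properties}\ref{item:true_stages:basic_properties:beta_plus_1_extension}, $T^s(\tau)=1$, $T^x(\tau)=0$, and every node $<^*_\beta\tau$ is correctly labelled at all $\prec_\beta$-stages between $s$ and $x$. Let $t$ be the shortest $t\prec_\beta x$ with $T^t(\tau)=0$, which exists by your stabilisation argument. Since the label of $\tau$ changes at $t$, $\tau$ is the node selected in clause \ref{item:true_stages:definition:injury_or_not}. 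Hence every node $>^*_\beta\tau$ is undefined in $T^t$, and $t\prec_{\beta+1}x$.

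Equivalently, the stage your plan starts from is itself $\prec_{\beta+1}$-true: this is the least stage from which the first $n$ nodes agree with $T^x$. The reason is the same: the node that changes there must be the $n$th one, and everything above it is undefined. That is what you needed to prove, rather than examining later stages.
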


\begin{proof}
  We prove this by induction on~$\beta$. It is immediate for $\beta = 1$. 

  \medskip

  Suppose that~$\beta$ is a limit, and that the lemma holds for all $\gamma<\beta$. It suffices to show that there are infinitely many $t\prec_\beta x$. Well, let $r\prec x$; let $\gamma = \max \{\delta<\beta\,:\, a_\delta \le |r|\}$. Let $t$ be the shortest with $t\succeq r$ and $t\prec_{\gamma+1} x$. Let $\delta\in [\gamma+1,\beta)$, and suppose that $s\prec_\delta t$. Since $\delta > \gamma$,  $s\prec_{\gamma+1} t$. Since $t\prec_{\gamma+1} x$, we have $s\prec_{\gamma+1} x$. The minimality of~$t$ implies that $s\preceq r$. Hence, $t$ has at most $|r|+1$ many $\prec_\delta$-predecessors. Since $a_\delta > |r|$, $t$ has at most $a_\delta$-many $\prec_\delta$-predecessors. By \ref{item:true_stages:definition:saying_nothing} of the definition of $T^t_\delta$, we have $T^t(\s)\diverge$ for all~$\s$ of rank~$\delta$. Hence, $T^t_\beta = T^t_{\gamma+1}$, so $t\prec_\beta x$. 

  \medskip

  For the successor case, suppose that $\beta \le \alpha$ and that the lemma has been verified for~$\beta$. We first show:

  \begin{description}
    \item[$(*)$] For all $\s$ of rank~$\beta$, for all but finitely many $s\prec_\beta x$ we have $T^s(\s)\converge = T^x(\s)$. 
  \end{description}
  First, note that it suffices to show that for all but finitely many $s\prec_\beta x$ we have $T^s(\s)\converge$. This is because by \cref{lem:true_stages:basic_properties}\ref{item:true_stages:basic_properties:only_ones_into_zeros}, if $s\prec_\beta x$ and $T^s(\s)=0$ then $T^x(\s)=0$; on the other hand, if $T^x(\s)=0$ then there is some~$k$ with $T^x(\s\conc k)=1$, and then, by induction, for all but finitely $s\prec_\beta x$ we have $T^s(\s\conc k)=1$, whence $T^s(\s)=0$ if $T^s(\s)\converge$. 

  We prove $(*)$ by induction on $<^*_\beta$. Let~$\s$ be a node of rank~$\beta$; by the induction on~$<^*_\beta$, we fix a long $s\prec_\beta x$ so that for all $t$ with $s\preceq_\beta t\prec_\beta x$, for all $\rho<^*_\beta \s$, we have $T^t(\rho) = T^x(\rho)$. Also assume that $s$ has more than $a_\beta$-many $\prec_\beta$-predecessors (this is required when $\s$ is the $<^*_\beta$-least node, otherwise, this is implied by $T^s(\rho)\converge$ for some~$\rho$). Then our construction ensures that for all~$t$ with $s\prec_\beta t\prec_\beta x$, $T^t(\s)\converge$. This completes the proof of~$(*)$. 

  \smallskip

  So to prove the lemma for~$\beta+1$, it suffices to show that there are infinitely many $t\prec_{\beta+1} x$. Let $s\prec_\beta x$ and suppose that $s\nprec_{\beta+1} x$. Let~$\tau$ be $<^*_\beta$-least with $T^s(\tau)\converge\ne T^x(\tau)$. By \cref{lem:true_stages:basic_properties}\ref{item:true_stages:basic_properties:beta_plus_1_extension}, $T^x(\tau)=0$ and $T^s(x)=1$, and for all $\s<^*_\beta \tau$, for all~$t$ with $s\preceq_\beta t\prec_\beta x$, we have $T^t(\s)\converge = T^x(\s)$. Let~$t$ be the shortest $t\prec_\beta x$ such that $T^t(\tau) = 0$; then $s\prec_\beta t$. Let~$r$ be the $\prec_\beta$-predecessor of~$t$. Since it is not the case that $T^t(\tau) = T^{r}(\tau)$, by the definition of $T^t$ (case \ref{item:true_stages:definition:injury_or_not}), for all $\s>^*_\beta \tau$ we have $T^t(\s)\diverge$. Hence, $t\prec_{\beta+1} x$.
\end{proof}

\begin{remark} \label{rmk:true_stages:unique_path}
  In fact, for each $x\in 2^{\Leaves_\alpha}$, 
    \[
      \left\{ s \,:\,  s\prec_\beta x \right\}
    \]
    is the unique infinite path in the tree $(\left\{ s \,:\, s\prec x   \right\},\prec_\beta)$. This is proved by induction on~$\beta$; for the successor step, use~\ref{item:true_stages:basic_properties:club}.
\end{remark}

\subsection{Dynamic coding locations}

For the rest of this section, we assume that $\alpha$ is the successor of a successor ordinal. We let 
\[
  \alpha^* = \alpha -2.
\]
The point is that every node of length~2 has rank $\alpha^*$, and so for all $t\in 2^{\le\Leaves_\alpha}$, 
\[
  T^t_{\alpha^*} = T^t\rest{\{ \s\in T_\alpha\,:\, |\s|\ge 3 \}}. 
\]

Using $T^t_{\alpha^*}$, we can generalise \cref{def:alpha_is_three:n_t_and_k_t_m} to $\alpha >3$.

\begin{definition} \label{def:n_t_and_k_t_m:general_case}
  For $t\in 2^{<\Leaves_\alpha}$ we define numbers $n^t$, and for $m<n^t$, numbers $k^t(m)$, by induction on $|t|$. We let $n^{\seq{}}=0$. 

  Suppose that $t\ne \seq{}$; let~$s$ be the longest such that $s\prec_{\alpha^*} t$.\footnote{Such an~$s$ exists; by \cref{lem:true_stages:basic_properties}\ref{item:true_stages:basic_properties:empty}, $\seq{}\prec_{\alpha^*} t$.}
  \begin{itemize}
    \item If there is some $m<n^{s}$ and some~$a$ such that $T^t(m,k^s(m),a)=1$, then we let $n^t$ be the least such~$m$. 
    \item Otherwise, we let $n^t = n^{s}+1$. 
  \end{itemize}

  Then, for each $m<n^t$, we let $k^t(m)$ be the least~$k$ such that there is no~$a$ with $T^t(m,k,a)=1$. 
\end{definition}

\begin{lemma} \label{lem:basics_on_n_r_and_k_r_m_for_general_alpha}
Let $s,t\in 2^{\le\Leaves_\alpha}$. 
\begin{sublemma}
  \item \label{item:n_and_k:n_t_bounded_by_length_of_t}
  If $t\in 2^{<\Leaves_\alpha}$, then $n^t \le |t|$. 

  \item \label{item:n_and_k:k_only_increases}
    If $s\preceq_{\alpha^*} t$ then for all~$m<\min \{n^s, n^t\}$, $k^s(m)\le k^t(m)$. 

  \item \label{item:n_and_k:when_there_is_a_defect}
    Suppose that $s\preceq_{\alpha^*} t$, $m < \min \{n^s,n^t\}$, and $k^s(m)\ne k^t(m)$. Then there is some~$r$ with $s\prec_{\alpha^*} r \prec_{\alpha^*} t$ such that $n^r\le m$. 
  \end{sublemma}
\end{lemma}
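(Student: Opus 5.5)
Part~\ref{item:n_and_k:n_t_bounded_by_length_of_t} is an induction on $|t|$ along \cref{def:n_t_and_k_t_m:general_case}. For $t\ne\seq{}$, let $s$ be the longest $s\prec_{\alpha^*}t$. Since $s\prec t$, we have $|s|<|t|$. If the first clause of the definition applies, then $n^t<n^s\le|s|<|t|$. Otherwise $n^t=n^s+1\le|s|+1\le|t|$. The base case $n^{\seq{}}=0$ is trivial.

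For part~\ref{item:n_and_k:k_only_increases}, first consider the case of an immediate $\prec_{\alpha^*}$-successor. Suppose $s$ is the longest $\prec_{\alpha^*}$-predecessor of $t$ and $m<\min\{n^s,n^t\}$. The value $k^s(m)$ is the least $k$ such that there is no $a$ with $T^s(m,k,a)=1$. The nodes $\smallseq{m,k,a}$ have length~$3$, so their rank is $<\alpha^*$. Hence $s\prec_{\alpha^*}t$ gives $T^t(m,k,a)=T^s(m,k,a)$ whenever the latter is defined. So every $k<k^s(m)$ is also blocked at $t$, and therefore $k^t(m)\ge k^s(m)$. For general $s\preceq_{\alpha^*}t$, list the chain $s=r_0\prec_{\alpha^*}r_1\prec_{\alpha^*}\cdots\prec_{\alpha^*}r_j=t$ of $\prec_{\alpha^*}$-predecessors of $t$ lying above $s$. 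By \cref{lem:true_stages:basic_properties}\ref{item:true_stages:basic_properties:diamond} these form a chain under $\prec_{\alpha^*}$, each $r_{i}$ being the longest $\prec_{\alpha^*}$-predecessor of $r_{i+1}$. We cannot simply compose the one-step inequalities, since some intermediate $r_i$ might have $n^{r_i}\le m$, leaving $k^{r_i}(m)$ undefined. Instead, observe that the blocking argument never used $n$: it shows directly that for every $k<k^s(m)$ some $a$ has $T^s(m,k,a)=1$. By transitivity $s\prec_{\alpha^*}t$, so $T^t(m,k,a)=1$ as well. This gives $k^t(m)\ge k^s(m)$ without passing through the intermediate stages, provided only that $m<n^t$.

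For part~\ref{item:n_and_k:when_there_is_a_defect}, suppose $k^s(m)\ne k^t(m)$, so by~\ref{item:n_and_k:k_only_increases} $k:=k^s(m)<k^t(m)$. Then there is some $a$ with $T^t(m,k,a)=1$, and there is none at $s$. Along the chain $r_0,\dots,r_j$, let $i$ be least such that some $a$ has $T^{r_i}(m,k,a)=1$; then $i\ge1$. Assume for contradiction that $n^{r_l}>m$ for all $0<l<j$. Then by induction on $l\le i-1$, $k^{r_l}(m)=k$: every $k'<k$ stays blocked by transitivity, and $k$ is not yet blocked. Now apply \cref{def:n_t_and_k_t_m:general_case} at $r_i$, whose longest $\prec_{\alpha^*}$-predecessor is $r_{i-1}$, with $m<n^{r_{i-1}}$ and $T^{r_i}(m,k^{r_{i-1}}(m),a)=1$. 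This forces $n^{r_i}\le m$. If $i<j$, this contradicts the assumption, and $r=r_i$ is the required witness. If $i=j$, we get $n^t\le m$, contradicting the hypothesis $m<n^t$. So some strictly intermediate $r$ has $n^r\le m$.

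The main obstacle is the chain bookkeeping: one must make sure that the longest $\prec_{\alpha^*}$-predecessor of each $r_{i+1}$ really is $r_i$, using \cref{lem:true_stages:basic_properties}\ref{item:true_stages:basic_properties:diamond}. One must also make sure that $\prec_{\alpha^*}$ preserves the labels of length-$3$ nodes, which rests on $\rk(m,k,a)<\alpha^*$ because $\alpha$ is the successor of a successor.
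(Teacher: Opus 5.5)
Your proof is correct and essentially the paper's: (a) and (b) are the same arguments (the chain detour in (b) is unnecessary, as you note), and in (c) you take the first stage on the $\prec_{\alpha^*}$-chain from $s$ to $t$ at which $k^s(m)$ becomes blocked, whereas the paper takes the shortest $r\preceq_{\alpha^*}t$ with $n^r>m$ and $k^r(m)>k^s(m)$ and shows that its $\prec_{\alpha^*}$-predecessor is the witness---the same minimal-stage argument via \cref{def:n_t_and_k_t_m:general_case} and \cref{lem:true_stages:basic_properties}\ref{item:true_stages:basic_properties:diamond}. As in the paper, your argument for (c) is written for finite $t$; for $t\in 2^{\Leaves_\alpha}$, run it on the finite chain from $s$ up to some $r'\prec_{\alpha^*}t$ at which $k^s(m)$ is already blocked (such an $r'$ exists by \cref{lem:true_stages:existence_of_true_stages}), and then the case $i=j$ directly gives the witness $r'$.
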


\begin{proof}
  \ref{item:n_and_k:n_t_bounded_by_length_of_t} is proved by induction on $|t|$, since $n^t\le n^s+1$, where~$s$ is the longest with $s\prec_{\alpha^*} t$. 

  \ref{item:n_and_k:k_only_increases} is immediate from the definition of $k^t(m)$; if $s\preceq_{\alpha^*} t$ and there is no~$a$ with $T^t(m,k,a)=1$, then there is no~$a$ with $T^s(m,k,a)=1$. 

  For \ref{item:n_and_k:when_there_is_a_defect}, suppose that $s\preceq_{\alpha^*} t$, $m < \min \{n^s,n^t\}$, and $k^s(m)\ne k^t(m)$. By \ref{item:n_and_k:k_only_increases}, $k^t(m)> k^s(m)$. Choose $r\preceq_{\alpha^*} t$ shortest such that $n^r>m$ and $k^r(m)> k^s(m)$. By \cref{lem:true_stages:basic_properties}\ref{item:true_stages:basic_properties:diamond}, $r$ and~$s$ are $\preceq_{\alpha^*}$-comparable; by \ref{item:n_and_k:k_only_increases} again, $s\prec_{\alpha^*} r$. Let $\bar r$ be the longest with $\bar r\prec_{\alpha^*} r$; by \cref{lem:true_stages:basic_properties}\ref{item:true_stages:basic_properties:diamond} again, $s\preceq_{\alpha^*} \bar r \prec_{\alpha^*} r$. By minimality of~$r$, either $n^{\bar r}\le m$, or $k^{\bar r}(m) = k^s(m)$. The latter case is impossible, since then we would have $k^{\bar r}(m)\ne k^r(m)$, whence by definition we would have $n^r\le m$. 
\end{proof}

\begin{definition} \label{def:square_extension}
  Let $r, s\in 2^{\le \Leaves_\alpha}$. We write
  \[
    r \sqsubseteq_m s
  \]
  if $r\preceq_{\alpha^*} s$, $m\le \min \{n^r,n^s\}$,  and for all $\ell<m$, $k^s(\ell) = k^r(\ell)$. 
\end{definition}

\begin{lemma} \label{lem:basics_of_square_extensions}
Let $s,t\in 2^{<\Leaves_\alpha}$. 
\begin{sublemma}
  \item \label{item:basics_of_square_extensions:transitive}
  $\sqsubseteq_m$ is transitive. 

  \item \label{item:basics_of_square_extensions:nested}
  If $s\sqsubseteq_m t$ and $m'<m$ then $s\sqsubseteq_{m'} t$.

  \item \label{item:basics_of_square_extensions:monotony}
  If $s\sqsubseteq_m t$ then for all~$r$ with $s\preceq_{\alpha^*} r \preceq_{\alpha^*} t$ we have $n^r\ge m$. 

  \item \label{item:basics_of_square_extensions:club}
   If $r\preceq_{\alpha^*} s \preceq_{\alpha^*} t$ and $r\sqsubseteq_m t$ then $r\sqsubseteq_m s$. 

   \item \label{item:basics_of_square_extensions:diamond}
   If $r\preceq s$ and $r,s\sqsubseteq_m t$ then $r\sqsubseteq_m s$. 

  \item \label{item:basics_of_square_extensions:hit_then_s_precisely}
   If $s$ is the shortest with $s\sqsubseteq_m t$, then $n^s = m$. 
\end{sublemma}
\end{lemma}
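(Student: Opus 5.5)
Each of the six parts should follow from the definition of $\sqsubseteq_m$, the ``diamond'' and ``club'' properties of $\preceq_{\alpha^*}$ (\cref{lem:true_stages:basic_properties}\ref{item:true_stages:basic_properties:diamond}, \ref{item:true_stages:basic_properties:club}), and the monotonicity and defect facts for the $k$-values (\cref{lem:basics_on_n_r_and_k_r_m_for_general_alpha}). I would prove them in the order (a), (b), (c), (d), (e), (f), since (d) and (e) reuse (c).

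Parts (a) and (b) come straight from the definition. For (a), $\preceq_{\alpha^*}$ is transitive. The bounds $m\le n^r,n^s,n^t$ hold, and equality of $k$ on $\ell<m$ composes.

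For (b), all three clauses weaken when $m$ is lowered.

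For (c), let $s\sqsubseteq_m t$ and $s\preceq_{\alpha^*} r\preceq_{\alpha^*} t$, and suppose towards a contradiction that $n^r<m$; put $\ell=n^r$. I would go through the definition of $n^r$. Let $\bar r$ be the $\prec_{\alpha^*}$-predecessor of $r$; by diamond, $s\preceq_{\alpha^*}\bar r$. If $n^r=n^{\bar r}+1$, then $n^{\bar r}<m$ and we descend. Otherwise there is some $a$ with $T^r(\ell,k^{\bar r}(\ell),a)=1$, so $k^{\bar r}(\ell)$ is killed. Iterating downward along the finite $\prec_{\alpha^*}$-chain, I would reach some $r'$ with $s\preceq_{\alpha^*} r'\preceq_{\alpha^*} r$ and a level $\ell'<m$ such that $k^{r'}(\ell')$ is killed at the next stage.

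I would organise this as an induction along the chain from $s$ to $t$, with the hypothesis ``$n^{r}\ge m$ and $k^r(\ell)=k^s(\ell)$ for $\ell<m$''. To show this is preserved at each step, suppose the first failure is at $r$. Then either $n^r<m$ through a kill at some $\ell<m$, or $k^r(\ell)\neq k^s(\ell)$. In both cases some $T^r(\ell,k^s(\ell),a)=1$. Since labels at rank $<\alpha^*$ persist along $\preceq_{\alpha^*}$ (the nodes $(\ell,k,a)$ have length~3, hence rank $<\alpha^*$), we get $T^t(\ell,k^s(\ell),a)=1$. This forces $k^t(\ell)>k^s(\ell)$, contradicting $s\sqsubseteq_m t$.

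Part (d) is then immediate. We have $r\preceq_{\alpha^*} s$, and by (c) $n^s\ge m$. For $\ell<m$, \cref{lem:basics_on_n_r_and_k_r_m_for_general_alpha}\ref{item:n_and_k:k_only_increases} gives
\[
k^r(\ell)\le k^s(\ell)\le k^t(\ell)=k^r(\ell),
\]
so $k^s(\ell)=k^r(\ell)$.

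For (e), diamond for $\preceq_{\alpha^*}$ gives $r\preceq_{\alpha^*} s\preceq_{\alpha^*} t$, and then (d) applies.

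Part (f) is the one I expect to need the most care. Let $s$ be shortest with $s\sqsubseteq_m t$; we know $n^s\ge m$. Suppose $n^s>m$. If $s=\seq{}$ then $n^s=0$, which is impossible, so let $\bar s$ be the $\prec_{\alpha^*}$-predecessor of $s$. By the definition of $n^s$, either $n^s=n^{\bar s}+1$ or $n^s$ is a kill level.

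In the kill case $n^s\le n^{\bar s}$. Moreover, for $\ell<n^s$ no kill occurs at $k^{\bar s}(\ell)$, so $k^s(\ell)=k^{\bar s}(\ell)$. Since $n^{\bar s}\ge n^s>m$, this gives $\bar s\sqsubseteq_m s$, and by (a) $\bar s\sqsubseteq_m t$, contradicting minimality.

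In the successor case no kill occurs below $n^{\bar s}$, so again $k^s(\ell)=k^{\bar s}(\ell)$ for $\ell<n^{\bar s}$. Here $n^{\bar s}=n^s-1\ge m$, so $\bar s\sqsubseteq_m s$, again contradicting minimality. Hence $n^s=m$.
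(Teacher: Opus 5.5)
Your proposal is correct and follows essentially the same route as the paper. Parts (a), (b), (d), (e) and (f) are argued exactly as there, and for (c) you take the first point $r$ on the $\prec_{\alpha^*}$-chain from $s$ to $t$ where something goes wrong, extract a kill $T^r(\ell,k^{\bar r}(\ell),a)=1$ with $\ell<m$, and push this label up to $t$ to contradict $k^t(\ell)=k^s(\ell)$. The only difference is minor: you carry the stronger invariant $k^r(\ell)=k^s(\ell)$ along the chain, whereas the paper takes $r$ shortest with $n^r<m$ and gets $k^{\bar r}(\ell)\ge k^s(\ell)$ from the monotonicity of the $k$-values; both versions work.
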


\begin{proof}
  \ref{item:basics_of_square_extensions:transitive} follows from the transitivity of $\preceq_{\alpha^*}$. \ref{item:basics_of_square_extensions:nested} is immediate from the definition. 

  \smallskip

For~\ref{item:basics_of_square_extensions:monotony}, suppose that $s\prec_{\alpha^*} t$, that $n^s,n^t\ge m$, and that there is some $r$ such that $s\prec_{\alpha^*} r \prec_{\alpha^*} t$ and $n^r<m$. Let~$r$ be shortest such; let $\bar r$ be the $\prec_{\alpha^*}$-predecessor of~$r$. Again, $s\preceq_{\alpha^*} \bar r$. The fact that $n^r < n^{\bar r}$ implies that there is some~$a$ such that $T^r(n,k,a)=1$, where $n = n^r$ and $k = k^{\bar r}(n)$. Since $r\preceq_{\alpha^*} t$, we have $T^t(n,k,a)=1$, so $k^t(n)\ne k$. By \cref{lem:basics_on_n_r_and_k_r_m_for_general_alpha}\ref{item:n_and_k:k_only_increases}, $k^t(n)> k^{\bar r}(n)\ge k^s(n)$, so $k^t(n)>k^s(n)$. This shows that $s\nsqsubset_{m} t$. 

\smallskip

  For \ref{item:basics_of_square_extensions:club}, suppose that $r\prec_{\alpha^*} s \prec_{\alpha^*} t$ and $r\sqsubseteq_m t$. By \ref{item:basics_of_square_extensions:monotony}, $n^s\ge m$. Let $\ell<m$. By \cref{lem:basics_on_n_r_and_k_r_m_for_general_alpha}\ref{item:n_and_k:k_only_increases}, $k^t(\ell)\ge k^s(\ell)\ge k^r(\ell)$. By assumption, $k^t(\ell) = k^r(\ell)$; so $k^s(\ell) = k^r(\ell)$ as well. This shows that $r\sqsubseteq_m s$. 

  \smallskip

  \ref{item:basics_of_square_extensions:diamond}: By \cref{lem:true_stages:basic_properties}\ref{item:true_stages:basic_properties:diamond}, $r\preceq_{\alpha^*} s$. Now $r\sqsubseteq_m s$ follows from~\ref{item:basics_of_square_extensions:club}. 

  \smallskip

  For \ref{item:basics_of_square_extensions:hit_then_s_precisely}, suppose that $s\sqsubseteq_m t$ and that $n^s>m$. Since $n^s>0$, $s\ne \seq{}$, so let $\bar s$ be the $\prec_{\alpha^*}$-predecessor of~$s$. There are two possibilities: either $n^s = n^{\bar s}+1$, in which case $\bar s\sqsubset_{n^{\bar s}} s$; or $n^s<n^{\bar s}$, in which case $\bar s \sqsubset_{n^s} s$. In either case, $\bar s \sqsubseteq_{m} s$, hence $\bar s\sqsubset_m t$, so $s$ is not the shortest with $s\sqsubseteq_m t$. 
\end{proof}

\begin{lemma} \label{lem:_true_initial_segments_for_approximating_n_x}
  Let $x\in 2^{\le\Leaves_\alpha}$. 
  \begin{sublemma}
    \item \label{item:n_x_initial_segments:k_stabilises}
    For all finite $m\le n^x$,  for all but finitely many $s\prec_{\alpha^*} x$, $s\sqsubset_m x$.

    \item \label{item:n_x_initial_segments:n_liminf}
    $n^x = \liminf \left\{ n^t \,:\,  t\prec_{\alpha^*} x \right\}$.

    \item \label{item:n_x_initial_segments:true_stages}
    There are infinitely many $s$ such that $s\sqsubset_{n^s} x$.
  \end{sublemma}
\end{lemma}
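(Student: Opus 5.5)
The plan is to follow the proof of \cref{lem:basics_on_n_r_and_k_r_m_for_alpha_is_three}, now working along the sequence of true stages $s\prec_{\alpha^*} x$ rather than along all initial segments. The basic tool is \cref{lem:true_stages:existence_of_true_stages} applied with $\beta=\alpha^*$. Nodes $\seq{m,k,a}$ have length~3, so their rank is below $\alpha^*$. Hence for $s\prec_{\alpha^*}x$ we have $T^s(m,k,a)=1$ only if $T^x(m,k,a)=1$. Conversely, every such $1$-label of $T^x$ is eventually recorded by all later true stages.

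For infinite $x$, $T^x(m,k)=1$ if and only if there is no $a$ with $T^x(m,k,a)=1$. So $k^x(m)$ is the least $k$ with no such $a$, consistent with the definition of $k^t(m)$. Consequently, for true $s$ with $n^s>m$, we get $k^s(m)\le k^x(m)$. Once all witnesses $T(m,k,a)=1$ for the $k<k^x(m)$ have appeared, we get $k^s(m)=k^x(m)$.

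By \cref{lem:true_stages:basic_properties}\ref{item:true_stages:basic_properties:diamond}, the true stages form a $\prec_{\alpha^*}$-chain. For a true $t$, the longest $\bar t\prec_{\alpha^*}t$ is therefore its predecessor in that chain. This is what makes the recursion of \cref{def:n_t_and_k_t_m:general_case} run along the true stages.

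For \ref{item:n_x_initial_segments:k_stabilises} I will induct on $m$; the case $m=0$ is trivial. Assume $s_0\prec_{\alpha^*}x$ is such that every true $s\succeq s_0$ has $s\sqsubseteq_m x$, and suppose $m+1\le n^x$. Choose a true $s_1\succeq s_0$ containing witnesses $T^{s_1}(m,k,a_k)=1$ for all $k<k^x(m)$. After $s_1$, every true $t$ with $n^t>m$ has $k^t(m)=k^x(m)$. Now consider a true $t$ beyond $s_1$ with predecessor $\bar t$.

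\begin{itemize}
\item No defect at a level $\ell\le m$ can occur. For $\ell<m$ this is because $k^{\bar t}(\ell)=k^x(\ell)$. For $\ell=m$ it holds whenever $n^{\bar t}>m$, since then $k^{\bar t}(m)=k^x(m)$ and a witness $T^t(m,k^x(m),a)=1$ would give $T^x(m,k^x(m))=0$.
\item So $n^t\ge m$ always. If $n^{\bar t}=m$, then $n^t=m+1$. If $n^{\bar t}>m$, then $n^t>m$.
\end{itemize}

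Thus from some point on $n^t>m$ and $k^t(m)=k^x(m)$, which gives $t\sqsubseteq_{m+1}x$.

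For \ref{item:n_x_initial_segments:n_liminf}, part \ref{item:n_x_initial_segments:k_stabilises} gives $\liminf\ge n^x$. For the reverse inequality, suppose $n=n^x$ is finite, and let $s$ be a true stage with $n^s>n$. Put $k=k^s(n)$. Since $T^x(n)=1$, we have $T^x(n,k)=0$, so some true $t$ has $T^t(n,k,a)=1$. Take the first such $t$. If no true stage strictly between $s$ and $t$ has $n$-value $\le n$, then $k^{\bar t}(n)=k$, and the defect clause forces $n^t\le n$. This is the same argument as in the $\alpha=3$ case. Hence infinitely many true $t$ have $n^t\le n$.

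For \ref{item:n_x_initial_segments:true_stages}, I treat the two cases separately.

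\begin{itemize}
\item If $n^x=\w$: for each $m$, part \ref{item:n_x_initial_segments:k_stabilises} gives some $s\sqsubseteq_m x$. Taking the shortest such $s$, \cref{lem:basics_of_square_extensions}\ref{item:basics_of_square_extensions:hit_then_s_precisely} gives $n^s=m$. This yields a distinct $s$ with $s\sqsubseteq_{n^s}x$ for each $m$.
\item If $n=n^x$ is finite: parts \ref{item:n_x_initial_segments:k_stabilises} and \ref{item:n_x_initial_segments:n_liminf} show that eventually every true $s$ satisfies $s\sqsubseteq_n x$ and $n^s\ge n$, while $n^s\le n$ infinitely often. So infinitely many true $s$ have $n^s=n$ and $s\sqsubseteq_{n}x$.
\end{itemize}

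The main obstacle is the bookkeeping in \ref{item:n_x_initial_segments:k_stabilises}. Specifically, one must check that $n^t$ cannot drop below $m$ along the true stages, and that it climbs back above $m$ after dropping to exactly $m$. This requires the identification of $\bar t$ with the chain predecessor and the agreement of the $k$-values below $m$. The case where $x$ is finite is handled the same way, with $n^x$ and $k^x$ read from \cref{def:n_t_and_k_t_m:general_case}; there the statements are essentially immediate, since the true stages of a finite $x$ form a finite chain.
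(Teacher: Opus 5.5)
Your proposal is correct and follows the paper's proof essentially step for step. The shared ingredients are the induction on $m$ for \ref{item:n_x_initial_segments:k_stabilises} via the chain predecessor $\bar t$, with the same two cases ($n^{\bar t}=m$ versus $n^{\bar t}>m$); the first-witness argument for the reverse inequality in \ref{item:n_x_initial_segments:n_liminf}; and the same case split for \ref{item:n_x_initial_segments:true_stages} using \cref{lem:basics_of_square_extensions}\ref{item:basics_of_square_extensions:hit_then_s_precisely}. In \ref{item:n_x_initial_segments:n_liminf} you use the dichotomy from the $\alpha=3$ proof, whereas the paper invokes \ref{item:n_x_initial_segments:k_stabilises} to get $n^t=n^x$ exactly; this difference is inessential.

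Drop your closing remark on finite $x$. The lemma is really about $x\in 2^{\Leaves_\alpha}$: for finite $x$, part \ref{item:n_x_initial_segments:true_stages} is false (only finitely many $s\prec x$ exist) and \ref{item:n_x_initial_segments:n_liminf} is not meaningful, so that case is not ``essentially immediate''.
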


\begin{proof}
  \ref{item:n_x_initial_segments:k_stabilises} and \ref{item:n_x_initial_segments:n_liminf} are similar to the proof of \cref{lem:basics_on_n_r_and_k_r_m_for_alpha_is_three}.

  We prove \ref{item:n_x_initial_segments:k_stabilises} by induction on~$m$. It is immediate for $m=0$. Suppose that it holds for~$m$, and that $n^x>m$. Let $s_0\prec_{\alpha^*} x$ such that for all~$s$ with $s_0\preceq_{\alpha^*} s\prec_{\alpha^*} x$, $s\sqsubset_m x$. By \cref{lem:true_stages:existence_of_true_stages}, there is some~$s_1$ such that $s_0\preceq_{\alpha^*} s_1 \prec_{\alpha^*} x$ and for all $k<k^x(m)$, there is some~$a$ such that $T^{s_1}(m,k,a)=1$. On the other hand, for all $s\prec_{\alpha^*} x$, there is no~$a$ with $T^s(m,k^x(m),a)=1$. Hence, for all~$s$ with $s_1\preceq_{\alpha^*} s\prec_{\alpha^*}x$, if $n^s>m$ then $k^s(m)= k^x(m)$, so $s\sqsubset_{m+1} x$. We argue that for all~$s$ with $s_1\prec_{\alpha^*} s \prec_{\alpha^*} x$ we have $n^s>m$. Let $s$ be such, and let $\bar s$ be the $\prec_{\alpha^*}$-predecessor of~$s$. So $s_1\preceq_{\alpha^*}\bar s \prec_{\alpha^*} x$. Since $n^{\bar s}\ge m$ and $\bar s\sqsubseteq_m x$, we have $\bar s\sqsubseteq_m s$. If $n^{\bar s} = m$ then by definition, $n^s = m+1$. If not, then $k^{\bar s}(m) = k^x(m)$ implies that $n^s > m$ as well. 

  \smallskip

  One direction of \ref{item:n_x_initial_segments:n_liminf} follows from \ref{item:n_x_initial_segments:k_stabilises}: if $m\le n^x$, then for all but finitely many $s\prec_{\alpha^*} x$ we have $n^s\ge m$. On the other hand, suppose that $n^x$ is finite, $s\prec_{\alpha^*} x$, and $n^s>n^x$; and that $r\sqsubset_{n^x} x$ for all~$r$ with $s\preceq_{\alpha^*} r \prec_{\alpha^*}x$. Let $t\prec_{\alpha^*} x$ be shortest with $T^t(n^x, k^s(n^x),a)=1$ for some~$a$. By induction on the length of~$u$ with $s\preceq_{\alpha^*} u \prec_{\alpha^*} t$ we see that $s\sqsubseteq_{n^x+1} u$. Applying this to $u$ the $\prec^*$-predecessor of~$t$, we see that $n^t = n^x$. 

  \smallskip

  If $n^x$ is finite, then \ref{item:n_x_initial_segments:true_stages} follows from \ref{item:n_x_initial_segments:k_stabilises} and \ref{item:n_x_initial_segments:n_liminf}: there are infinitely many $s\prec_{\alpha^*}$ with $n^s = n^x$, and for all but finitely many of these, $s\sqsubseteq_{n^x} x$. 

  Suppose that $n^x= \w$. Let $m<\w$; by \ref{item:n_x_initial_segments:k_stabilises}, let $s$ be the shortest such that $s\sqsubset_m x$. By \cref{lem:basics_of_square_extensions}\ref{item:basics_of_square_extensions:hit_then_s_precisely}, $n^s = m$, so again $s\sqsubset_{n^s} x$.   
\end{proof}

\begin{definition}
Let $t\in 2^{\le \Leaves_\alpha}$. For finite $m\le n^t$ we let 
  \[
    c^t(m) = \min \left\{ |s| \,:\,  s\sqsubseteq_m t \right\} .
  \]
\end{definition}

\begin{lemma} \label{lem:basics_on_coding_lengths_c}
Let $t\in 2^{\le \Leaves_\alpha}$, and let $m \le n^t$ be finite. 
 \begin{sublemma}
   \item \label{item:basics_on_coding_lengths:at_most_length_of_t}
   $c^t(m) \le |t|$. 

   \item \label{item:basics_on_coding_lengths:at_least_m}
   $c^t(m)\ge m$. 

   \item \label{item:basics_on_coding_lengths:is_finite}
   $c^t(m)<\w$. 

   \item \label{item:basics_on_coding_lengths_c:lengths_fixed_under_m_extensions}
   If $s\sqsubseteq_m t$ then $c^s(m) = c^t(m)$. 

   \item \label{item:basics_on_coding_lengths:lengths_are_strictly_increasing}
   If $m<m'\le n^t$ then $c^t(m)< c^t(m')$. 
 \end{sublemma}
\end{lemma}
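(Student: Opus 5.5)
We prove the five items in order, working directly from the definition $c^t(m)=\min\{|s| : s\sqsubseteq_m t\}$ and the properties of $\sqsubseteq_m$ in \cref{lem:basics_of_square_extensions}. The first task is to check that the set being minimised is nonempty. If $t$ is finite, then $t\sqsubseteq_m t$ trivially, since $t\preceq_{\alpha^*} t$ and $m\le n^t$. This gives \ref{item:basics_on_coding_lengths:at_most_length_of_t} and \ref{item:basics_on_coding_lengths:is_finite} for finite~$t$. If $t=x$ is infinite and $m\le n^x$ is finite, then \cref{lem:_true_initial_segments_for_approximating_n_x}\ref{item:n_x_initial_segments:k_stabilises} provides some finite $s\prec_{\alpha^*} x$ with $s\sqsubseteq_m x$, so $c^x(m)<\w$. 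Item~\ref{item:basics_on_coding_lengths:at_most_length_of_t} is then trivial, since $|x|=\w$.

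For \ref{item:basics_on_coding_lengths:at_least_m}, let $s$ be shortest with $s\sqsubseteq_m t$. By the definition of $\sqsubseteq_m$ we have $m\le n^s$, and $n^s\le |s|$ by \cref{lem:basics_on_n_r_and_k_r_m_for_general_alpha}\ref{item:n_and_k:n_t_bounded_by_length_of_t}. Hence $c^t(m)=|s|\ge m$.

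For \ref{item:basics_on_coding_lengths_c:lengths_fixed_under_m_extensions}, suppose $s\sqsubseteq_m t$. If $r\sqsubseteq_m s$, then $r\sqsubseteq_m t$ by transitivity, so $c^t(m)\le c^s(m)$. Conversely, let $r$ be shortest with $r\sqsubseteq_m t$. Then $|r|\le|s|$, and since $r$ and $s$ are both initial segments of~$t$, we get $r\preceq s$. Now \cref{lem:basics_of_square_extensions}\ref{item:basics_of_square_extensions:diamond} gives $r\sqsubseteq_m s$, so $c^s(m)\le |r|=c^t(m)$.

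For \ref{item:basics_on_coding_lengths:lengths_are_strictly_increasing}, let $m<m'\le n^t$ and let $s'$ be shortest with $s'\sqsubseteq_{m'} t$. By \cref{lem:basics_of_square_extensions}\ref{item:basics_of_square_extensions:nested}, $s'\sqsubseteq_m t$, so $c^t(m)\le |s'|=c^t(m')$. It remains to rule out equality. If $c^t(m)=|s'|$, then $s'$ would also be the shortest $s$ with $s\sqsubseteq_m t$, since all candidates are initial segments of~$t$ and are determined by their length. Then \cref{lem:basics_of_square_extensions}\ref{item:basics_of_square_extensions:hit_then_s_precisely} gives $n^{s'}=m$, whereas $s'\sqsubseteq_{m'} t$ requires $n^{s'}\ge m'>m$, a contradiction.

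The only point needing real care is the finiteness of $c^x(m)$ for infinite~$x$, and that is already supplied by \cref{lem:_true_initial_segments_for_approximating_n_x}. Everything else is bookkeeping with the ordering of initial segments by length.
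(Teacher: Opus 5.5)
Your proof is correct and follows essentially the same route as the paper: the witness set is nonempty because $t\sqsubseteq_m t$ for finite $t$, and by \cref{lem:_true_initial_segments_for_approximating_n_x} for infinite $t$; item (d) comes from transitivity plus the diamond property of $\sqsubseteq_m$; and item (e) comes from the fact that the shortest $\sqsubseteq_m$-witness has $n^s=m$. Two small differences: your item (b) is slightly more direct, using only $m\le n^s\le|s|$ instead of $n^s=m$, and in (d) you spell out the minimality step that the paper leaves implicit.
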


\begin{proof}
  Let $r$ be shortest with $r\sqsubseteq_m t$, so $c^t(m)= |r|$.

  \smallskip

  \ref{item:basics_on_coding_lengths:at_most_length_of_t} is immediate. For~\ref{item:basics_on_coding_lengths:at_least_m},  by \cref{lem:basics_of_square_extensions}\ref{item:basics_of_square_extensions:hit_then_s_precisely}, $n^r = m$; by \cref{lem:basics_on_n_r_and_k_r_m_for_general_alpha}\ref{item:n_and_k:n_t_bounded_by_length_of_t}, $m\le |r|$. 

  \ref{item:basics_on_coding_lengths:is_finite} follows from \cref{lem:_true_initial_segments_for_approximating_n_x}\ref{item:n_x_initial_segments:k_stabilises}. 

  \ref{item:basics_on_coding_lengths_c:lengths_fixed_under_m_extensions} follows from \cref{lem:basics_of_square_extensions}\ref{item:basics_of_square_extensions:diamond}, since we have $r\sqsubseteq_m s$, and~$r$ is shortest such. 

  \ref{item:basics_on_coding_lengths:lengths_are_strictly_increasing} follows from \cref{lem:basics_of_square_extensions}\ref{item:basics_of_square_extensions:hit_then_s_precisely}; if $r'$ is shortest with $r'\sqsubseteq_{m'} t$  then $n^{r'} = m' \ne m = n^r$; so $r'\ne r$. Since $r'\sqsubseteq_m t$ (\cref{lem:basics_of_square_extensions}\ref{item:basics_of_square_extensions:nested}), the minimality of~$r$ implies that $r\prec r'$. 
\end{proof}

\begin{lemma} \label{lem:the_longest_square_initial_segment}
  Let $t\in 2^{<\Leaves_\alpha}$, $t\ne \seq{}$. Suppose that $s\prec_{\alpha^*} t$ is the longest such that $n^s\le n^t$. 
  \begin{sublemma}
    \item \label{item:longest_square_initial_segment:square_extension}
    $s\sqsubseteq_{n^s} t$. 

    \item \label{item:longest_square_initial_segment:k_constant}
    The value $k^r(n^s)$ is constant for all $r$ with $s\prec_{\alpha^*} r \prec_{\alpha^*} t$. 

    \item \label{item:longest_square_initial_segment:when_n_increases}
    If $n^s<n^t$ then~$s$ is the $\prec_{\alpha^*}$-predecessor of~$t$, and $c^t(n^t) = |t|$. 
  \end{sublemma}
\end{lemma}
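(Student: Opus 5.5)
The plan is to analyse the $\prec_{\alpha^*}$-chain of initial segments of~$t$ strictly between $s$ and~$t$, using \cref{def:n_t_and_k_t_m:general_case} one step at a time. Since $\seq{}\prec_{\alpha^*} t$ and $n^{\seq{}}=0$, such an~$s$ exists. By \cref{lem:true_stages:basic_properties}\ref{item:true_stages:basic_properties:diamond}, the $\prec_{\alpha^*}$-predecessors of~$t$ form a chain. By the choice of~$s$, every $r$ with $s\prec_{\alpha^*} r\prec_{\alpha^*} t$ satisfies $n^r>n^t\ge n^s$.

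\textbf{Part (b).} Fix $m=n^s$ and go by induction along the chain $s=r_0\prec_{\alpha^*} r_1\prec_{\alpha^*}\cdots\prec_{\alpha^*} r_j$, where $r_j$ is the predecessor of~$t$.

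For the step from $r_0$ to $r_1$: since $n^{r_1}>n^s$, the definition forces $n^{r_1}=n^{s}+1$. This means no $m'<n^s$ was injured.

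For later steps $r_i\to r_{i+1}$ with $i\ge1$, both sides have $n>m$. If $k^{r_{i+1}}(m)\ne k^{r_i}(m)$, there is some~$a$ with $T^{r_{i+1}}(m,k^{r_i}(m),a)=1$. The definition then gives $n^{r_{i+1}}\le m$, contradicting $n^{r_{i+1}}>n^t\ge m$. Hence $k^r(m)$ is constant along the chain, which is (b). I will define $k^s(m)$ only where $m<n^s$; for $m=n^s$ the constancy is among the $r$'s only, as stated.

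\textbf{Part (a).} We must show $s\preceq_{\alpha^*} t$ (given), $n^s\le\min\{n^s,n^t\}$ (given), and $k^t(\ell)=k^s(\ell)$ for $\ell<n^s$.

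Run the same argument over the whole chain from $s$ to~$t$, including the last step into~$t$. Every element after~$s$ has $n>\ell$ for each $\ell<n^s$, because $n^t\ge n^s>\ell$. So a change of $k(\ell)$ at some step would force $n\le \ell$ at that step, which is impossible. Alternatively, apply \cref{lem:basics_on_n_r_and_k_r_m_for_general_alpha}\ref{item:n_and_k:when_there_is_a_defect} directly: a defect would produce some~$r$ strictly between $s$ and~$t$ with $n^r\le \ell<n^s\le n^t$, contradicting the maximality of~$s$. This second route is cleaner, and I will use it. Hence $s\sqsubseteq_{n^s} t$.

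\textbf{Part (c).} Suppose $n^s<n^t$, and let $r$ be the predecessor of~$t$.

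If $r\ne s$, then $n^r>n^t$. This means $t$ was obtained by a drop: $n^t$ is the least~$m$ with $T^t(m,k^r(m),a)=1$. Now apply (b) at $m=n^s<n^t$: it gives $k$ constant at $n^s$ on the $r$'s, while the $n^t$ coordinate is only being injured by~$t$. We need a contradiction from this. The cleaner observation is that the whole chain beyond~$s$ has $n>n^t$. The first element $r_1$ after $s$ has $n^{r_1}=n^s+1\le n^t$, which contradicts $n^{r_1}>n^t$ unless $r_1=t$. So $s$ is the predecessor of~$t$ and $n^t=n^s+1$.

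For $c^t(n^t)=|t|$: suppose some $u\prec t$ had $u\sqsubseteq_{n^t} t$. Then $u\preceq_{\alpha^*} s$, and by \cref{lem:basics_of_square_extensions}\ref{item:basics_of_square_extensions:club} we get $u\sqsubseteq_{n^t} s$. This requires $n^s\ge n^t$, a contradiction. Hence the shortest such $u$ is~$t$ itself.

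The main subtlety I expect is in the bookkeeping of the chain argument in (c). There I must apply \cref{lem:basics_of_square_extensions}\ref{item:basics_of_square_extensions:club} with $s$ in the middle position.
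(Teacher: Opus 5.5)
Your proposal is correct and follows the paper's proof essentially step for step. You prove (a) via \cref{lem:basics_on_n_r_and_k_r_m_for_general_alpha}\ref{item:n_and_k:when_there_is_a_defect} together with the maximality of~$s$. You prove (c) by looking at the first $\prec_{\alpha^*}$-successor of~$s$ toward~$t$, whose $n$-value would have to be $n^s+1\le n^t$, and then applying \cref{lem:basics_of_square_extensions}\ref{item:basics_of_square_extensions:club} with~$s$ in the middle position.

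The only difference is in (b). There you unfold the defect lemma one $\prec_{\alpha^*}$-step at a time, directly from the definition, instead of citing it; this is harmless. You should also delete the abandoned first attempt at the start of (c), since the argument that follows it is complete on its own.
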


\begin{proof}
  \ref{item:longest_square_initial_segment:square_extension} follows from \cref{lem:basics_on_n_r_and_k_r_m_for_general_alpha}\ref{item:n_and_k:when_there_is_a_defect}. 

  For \ref{item:longest_square_initial_segment:k_constant}, let $r,r'$ with $s\prec_{\alpha^*} r \prec_{\alpha^*} r' \prec_{\alpha^*} t$. If $k^{r'}(n^s)\ne k^r(n^s)$ then by \cref{lem:basics_on_n_r_and_k_r_m_for_general_alpha}\ref{item:n_and_k:when_there_is_a_defect}, there is some $u$ with $r\prec_{\alpha^*} u \prec_{\alpha^*} r'$ and $n^u\le n^s\le n^t$, contradicting the maximality of~$s$. 

  \ref{item:longest_square_initial_segment:when_n_increases}: suppsoe that $n^s < n^t$. If there is some~$r$ with $s\prec_{\alpha^*} r \prec_{\alpha^*} t$, let $r$ be the shortest such, so that $s$ is the $\prec_{\alpha^*}$-predecessor of~$r$. By maximality of~$s$, $n^r > n^t > n^s$ so $n^r \ge n^s+2$, which is impossible. Note that this means that $n^t = n^s+1$. 

  To see that $c^t(n^t) = |t|$, suppose, for a contradiction, that there is some $r\sqsubset_{n^t} t$. Then $r\preceq_{\alpha^*} s$, so by \cref{lem:basics_of_square_extensions}\ref{item:basics_of_square_extensions:club}, $r\sqsubseteq_{n^t} s$, which is impossible since $n^s<n^t$. 
\end{proof}

\subsection{The graphs $H_\alpha$}

We continue to use the list $(M_k)$ of reals (\cref{def:M_k}) for coding finite strings. We modify \cref{def:W_alpha_and_L_alpha}: instead of coding an edge at $n^x$, we code it at the much higher number $c^x(n^x)$, and we similarly increase the lengths of the initial segments of~$y$ coded by $k^x(m)$ for $m<n^x$. 

\begin{definition} \label{def:Y_alpha} \
    \begin{sublemma}
    \item We let $\YYY_\alpha$ be the collection of $(x,y)\in \XXX_\alpha$ such that for all $m<n^x$, $k^x(m)$ is a code for $y\rest{(c^x(m)+1)}$.

    \item For $(x,y), (x',y')\in \YYY_\alpha$, we let $(x,y)$ be related to~$(x',y')$ in the directed graph~$H_\alpha$ if $x=x'$, $n^x<\w$, $y\symdiff y' = \{c^x(n^x)\}$, and $c^x(n^x)\notin y$.
  \end{sublemma}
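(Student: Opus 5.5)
Although the statement is phrased as a definition, it carries implicit content that needs checking before $H_\alpha$ can be used. First, the numbers $c^x(m)$ and $c^x(n^x)$ must be finite and well defined. Second, $H_\alpha$ must be a genuine directed graph on $\YYY_\alpha$, in the sense that the intended partner of a point of $\YYY_\alpha$ again lies in $\YYY_\alpha$. Third, $\YYY_\alpha$ and $H_\alpha$ should be Borel of the expected complexity. My plan is to verify these three things in that order, using only the machinery of true stages and dynamic coding locations.

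For well-definedness, take $x\in 2^{\Leaves_\alpha}$ and finite $m\le n^x$. Then $c^x(m)$ is a minimum over the set $\{s: s\sqsubseteq_m x\}$. This set is nonempty by \cref{lem:_true_initial_segments_for_approximating_n_x}\ref{item:n_x_initial_segments:k_stabilises}, and \cref{lem:basics_on_coding_lengths_c}\ref{item:basics_on_coding_lengths:is_finite} gives $c^x(m)<\w$. Hence the coding requirement in $\YYY_\alpha$ refers to a finite string for each $m<n^x$. When $n^x<\w$, the edge location $c^x(n^x)$ is likewise a natural number.

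The key step is closure. Suppose $(x,y)\in\YYY_\alpha$, $n^x<\w$ and $c^x(n^x)\notin y$, and let $y'=y\cup\{c^x(n^x)\}$. By \cref{lem:basics_on_coding_lengths_c}\ref{item:basics_on_coding_lengths:lengths_are_strictly_increasing}, for every $m<n^x$ we have $c^x(m)<c^x(n^x)$. Therefore $y'\rest{(c^x(m)+1)}=y\rest{(c^x(m)+1)}$, so $k^x(m)$ still codes the required initial segment, and $(x,y')\in\YYY_\alpha$. This is exactly why the edge is placed at $c^x(n^x)$ rather than at an arbitrary location. It shows that $H_\alpha$ is the graph of a partial injection of $\YYY_\alpha$ into itself, with disjoint domain and range determined by the bit at $c^x(n^x)$, in analogy with $K_\alpha$ and $L_\alpha$.

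For complexity I would imitate \cref{prop:complexity_of_WWW_alpha}, and I expect this to be the main obstacle. The difficulty is that $c^x(m)$ depends on the whole relation $\sqsubseteq_m$, not just on finitely many labels. My approach would be to show that, for a fixed finite $s$, the condition $s\sqsubseteq_m x$ is a finite Boolean combination of conditions $T^x(\s)=i$ with $|\s|\ge 3$, together with the conditions $k^x(\ell)=k^s(\ell)$ for $\ell<m$. Each of these is $\Delta^0_2(\tau_{\alpha^*})$, using \cref{lem:true_stages:basic_properties}\ref{item:true_stages:basic_properties:finiteness}. It follows that "$c^x(m)=c$" is $\Delta^0_2(\tau_{\alpha^*})$. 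Then $\YYY_\alpha$ is an intersection over tuples $(k_0,\dots,k_m)$ and candidate lengths $(c_0,\dots,c_m)$ of $\Delta^0_2(\tau_{\alpha^*})$ sets, hence $\Pi^0_2(\tau_{\alpha^*})$. The edge relation is similarly Borel, and is $\Sigma^0_\alpha$ once we intersect with $T^x(\rooot)=0$.
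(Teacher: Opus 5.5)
Your proposal is correct and is essentially what the paper does. The definition itself needs no proof. The finiteness of $c^x(m)$ and the strict monotonicity $c^x(m)<c^x(n^x)$ from \cref{lem:basics_on_coding_lengths_c} are exactly the facts the paper uses implicitly, both to make the coding requirement meaningful and to flip the bit at $c^x(n^x)$ without disturbing earlier codes, and your complexity argument is the paper's proof of \cref{lem:complexity_of_Y_alpha}. The only cosmetic difference is that the paper assumes $k^x(\ell)=k_\ell$ and then identifies $c^x(m)$ as the length of the shortest $r\prec_{\alpha^*}x$ with $n^r=m$ and $k^r(\ell)=k_\ell$ for $\ell<m$, rather than quantifying over candidate lengths as you do; both give the same $\Delta^0_2(\tau_{\alpha^*})$ computation.
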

\end{definition}

\begin{lemma} \label{lem:complexity_of_Y_alpha}
  The set $\YYY_\alpha$ is $\Pi^0_2(\tau_{\alpha^*})$. 
\end{lemma}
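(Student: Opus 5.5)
The plan is to imitate the proof of \cref{prop:complexity_of_WWW_alpha}: write $\YYY_\alpha$ as a countable intersection of sets, each a finite Boolean combination of $\tau_{\alpha^*}$-clopen sets. The key observation is that all the data $n^x$, $k^x(m)$ and $c^x(m)$ can be read off from $T^x_{\alpha^*}$, i.e.\ from the labels of nodes of length $\ge 3$ (since $\alpha$ is the successor of a successor, these nodes have rank $<\alpha^*$). For every node $\s$ of rank $<\alpha^*$, the set $\{x: T^x(\s)=i\}$ is $\tau_{\alpha^*}$-clopen, because $\{\s\mapsto i\}$ together with a witness child lies in $\QQ_{\alpha^*}$. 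Hence any condition depending on finitely many such labels is $\tau_{\alpha^*}$-clopen, and the second coordinate $y$ contributes only ordinary clopen conditions.

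The first step is to recast membership in $\YYY_\alpha$ using finite approximations. By \cref{lem:_true_initial_segments_for_approximating_n_x}\ref{item:n_x_initial_segments:k_stabilises}, for $m<n^x$ we have $k^x(m)=k^s(m)$ whenever $s\sqsubset_{m+1}x$. By \cref{lem:basics_on_coding_lengths_c}\ref{item:basics_on_coding_lengths_c:lengths_fixed_under_m_extensions}, we also have $c^x(m)=c^s(m)$ for such $s$. I would therefore show
\[
(x,y)\in\YYY_\alpha \iff \text{for all finite } s \text{ and } m \text{ with } s\sqsubseteq_{m+1} x:\ \ k^s(m) \text{ codes } y\rest{(c^s(m)+1)} .
\]
For the forward direction, note that $s\sqsubseteq_{m+1}x$ implies $m+1\le n^x$, so $m<n^x$, and the values at $s$ agree with those at $x$. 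For the reverse direction, given $m<n^x$, item \ref{item:n_x_initial_segments:k_stabilises} supplies some $s\sqsubset_{m+1}x$. In the definition of $\sqsubseteq$ with infinite $x$, I would interpret $n^x$ and $k^x$ as in \cref{def:k_x_m}. The agreement $k^x=k^s$ below $m+1$ is exactly the content of \ref{item:n_x_initial_segments:k_stabilises}, so I need to check that the lemma is being used consistently with that convention.

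The second step is to handle the predicate $s\sqsubseteq_{m+1}x$ itself. It says three things: $s\preceq_{\alpha^*}x$, $n^x\ge m+1$, and $k^x(\ell)=k^s(\ell)$ for $\ell\le m$. The condition $s\preceq_{\alpha^*}x$ means $T^s_{\alpha^*}\subseteq T^x$; by \cref{lem:true_stages:basic_properties}\ref{item:true_stages:basic_properties:finiteness}, $T^s$ is finite, so this condition is $\tau_{\alpha^*}$-clopen. The remaining two conditions say: $T^x(\ell)=0$ for $\ell\le m$, $T^x(\ell,k^s(\ell))=1$, and $T^x(\ell,k)=0$ for $k<k^s(\ell)$. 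These are finitely many labels of nodes of lengths $1$ and $2$. Labels at length $2$ (rank $\alpha^*$) are $\Delta^0_2(\tau_{\alpha^*})$: the statement $T^x(\ell,k)=0$ is $\Sigma^0_1(\tau_{\alpha^*})$ and its negation is closed. Given the length-2 labels, the length-1 conditions follow automatically, because $T^x(\ell,k^s(\ell))=1$ forces $T^x(\ell)=0$.

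The third step is to combine these. The set $\{x: s\sqsubseteq_{m+1}x\}$ is therefore a finite Boolean combination of $\tau_{\alpha^*}$-open sets, hence $\Delta^0_2(\tau_{\alpha^*})$. Each set of the form
\[
A_{s,m}=\{(x,y) : \lnot(s\sqsubseteq_{m+1}x)\ \text{ or }\ k^s(m) \text{ codes } y\rest{(c^s(m)+1)}\}
\]
is then $\Delta^0_2(\tau_{\alpha^*})$, and $\YYY_\alpha=\bigcap_{s,m}A_{s,m}$ is $\Pi^0_2(\tau_{\alpha^*})$.

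The main obstacle is the equivalence in the first step. One must make sure that restricting to finite witnesses $s\sqsubseteq_{m+1}x$ loses nothing. One must also make sure that $c^s(m)$, which is computed from $s$, really equals $c^x(m)$. This is where \cref{lem:basics_on_coding_lengths_c}\ref{item:basics_on_coding_lengths_c:lengths_fixed_under_m_extensions} and transitivity (\cref{lem:basics_of_square_extensions}) must be applied carefully, with the infinite $x$ in the role of $t$.
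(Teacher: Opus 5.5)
Your proof is correct and follows essentially the paper's strategy: like the paper, you imitate the proof of \cref{prop:complexity_of_WWW_alpha} and write $\YYY_\alpha$ as a countable intersection of $\Delta^0_2(\tau_{\alpha^*})$ sets. The difference is only in the bookkeeping. The paper indexes by tuples $\bar k$ and identifies $c^x(m)$ as $|r|$ for the shortest $r\prec_{\alpha^*}x$ with $n^r=m$ and $k^r(\ell)=k_\ell$ for $\ell<m$. You instead index by the finite witness $s$ itself and invoke \cref{lem:basics_on_coding_lengths_c}\ref{item:basics_on_coding_lengths_c:lengths_fixed_under_m_extensions} and \cref{lem:_true_initial_segments_for_approximating_n_x}\ref{item:n_x_initial_segments:k_stabilises}, so each $A_{s,m}$ depends on only finitely many labels. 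Just set $A_{s,m}=\XXX_\alpha$ when $n^s\le m$, since then $k^s(m)$ is undefined and $s\sqsubseteq_{m+1}x$ cannot hold.
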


\begin{proof}
  Similar to the proof of \cref{prop:complexity_of_WWW_alpha}. We define $A_{\bar k}$ as in that proof, except that the third bullet point is:
  \begin{itemize}
    \item for the shortest $r\prec_{\alpha^*} x$ such that $n^r = m$ and for all $\ell< m$, $k^r(\ell) = k_\ell$, $k_m$ is a code for $y\rest{(|r|+1)}$.  \qedhere
  \end{itemize}
\end{proof}

In particular, $\YYY_3$ is $\Pi^0_2$ in the standard topology.

\subsection{Non-colorability when $\alpha$ is the successor of a successor}

Our aim is to design a notion of forcing similar to $\SSS$ from the previous section (\cref{def:notion_of_forcing_SSS}), with the purpose of obtaining generic points of $\YYY_\alpha$. The extra complication comes from needing to code at the correct lengths. Specifically, suppose that we are building a generic point $(x_G,y_G)\in \YYY_\alpha$, and let $m<n^{x_G}$. Let $k = k^{x_G}(m)$ and $c = c^{x_G}(m)$. Then $k$ must code $y_G\rest{(c+1)}$. Let $p = (u,\zeta)\in G$ be ``sufficiently large'', so that $n^u>m$, $|\zeta|>c$, and $k^u(m) = k$. Then the condition~$p$ should in some way ``know'' what~$c$ is, so that it can ensure that~$k$ codes $\zeta\rest{(c+1)}$. However, $c$ is computed by considering strings $s\sqsubset_m x_G$, in particular, strings $s\prec_{\alpha^*} x_G$. Thus, we will require that~$u$ has some witness that could play the role of such~$s$. Luckily, any two such witnesses will agree with each other, in that they will compute the same value of~$c$. 

For the following definition, recall the collection of conditions $\tilde \QQ$ (\cref{def:the_notion_of_forcing_tilde_Q}), and the numbers $n^u$ and $k^u(m)$ (\cref{def:k_u_and_n_u}).

\begin{definition} \label{def:supporting_conditions_in_tilde_QQ}
  We say that $s\in 2^{<\Leaves_\alpha}$ \emph{supports} a condition $u\in \tilde \QQ$ if:
  \begin{orderedlist}
    \item $T^s_{\alpha^*}\subseteq u$; and 
    \item $n^s\ge n^u$, and for all $m<n^u$, $k^s(m) = k^u(m)$.
  \end{orderedlist}
  We say that $u\in \tilde \QQ$ is \emph{supported} if some $s\in 2^{<\Leaves_\alpha}$ supports~$u$. 
\end{definition}

 Note that if $s\sqsubseteq_{n^u} t$ and~$t$ supports~$u$ then~$s$ supports~$u$ as well.

\begin{lemma} \label{lem:unique_supporter_class}
   Let $u,v\in \tilde \QQ$ be compatible, with $n^u\le n^v$. If~$s$ supports~$u$ and $t$ supports~$v$ then $s$ and~$t$ are $\sqsubseteq_{n^u}$-comparable.  
\end{lemma}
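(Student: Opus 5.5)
Write $w = u\cup v$; since $u$ and $v$ are compatible and both lie in $\tilde\QQ$, \cref{lem:tilde_Q_is_closed_under_unions} gives $w\in\tilde\QQ$. Because $u,v\subseteq w$ we have $n^u,n^v\le n^w$, and the values $k^u(m)$, $k^v(m)$, $k^w(m)$ coincide wherever they are defined. The plan has three steps.

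First, $s$ and $t$ are $\preceq_{\alpha^*}$-comparable. By (i) of \cref{def:supporting_conditions_in_tilde_QQ}, $T^s_{\alpha^*}\subseteq u\subseteq w$ and $T^t_{\alpha^*}\subseteq v\subseteq w$. Hence $T^s_{\alpha^*}\cup T^t_{\alpha^*}$ is a function, and \cref{cor:you_wouldnt_think_this_would_be_useful} shows that $s\preceq_{\alpha^*} t$ or $t\preceq_{\alpha^*} s$. We may assume $s\preceq_{\alpha^*} t$; the other case is symmetric, because the $\sqsubseteq_{n^u}$ condition to be checked is symmetric in its numerical clauses.

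Second, we check the numerical clauses of \cref{def:square_extension} with $m = n^u$. We have $n^s\ge n^u$ since $s$ supports $u$, and $n^t\ge n^v\ge n^u$ since $t$ supports $v$. For $\ell<n^u$, support gives $k^s(\ell) = k^u(\ell)$ and $k^t(\ell) = k^v(\ell)$. Since $\ell<n^u\le n^v$, both $u(\ell,k^u(\ell))=1$ and $v(\ell,k^v(\ell))=1$ hold, and $w$ extends both. Each of $k^u(\ell)$ and $k^v(\ell)$ is the least $k$ with label $1$ at $\seq{\ell,k}$ in the respective condition. If, say, $k^u(\ell)<k^v(\ell)$, then $w(\ell,k^u(\ell))=1$. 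But requirement (iii) of \cref{def:the_notion_of_forcing_tilde_Q} says $v(\ell,k)\converge$ for every $k<k^v(\ell)$. Minimality of $k^v(\ell)$ then forces $v(\ell,k^u(\ell))=0$, which contradicts compatibility. So $k^u(\ell)=k^v(\ell)$, hence $k^s(\ell)=k^t(\ell)$, and $s\sqsubseteq_{n^u} t$.

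The main obstacle is the first step: making sure the hypotheses of \cref{cor:you_wouldnt_think_this_would_be_useful} are met. It is stated for $\beta\le\alpha+1$ and for finite strings, and compatibility of the partial labellings comes exactly from clause (i) of support. The corollary also implicitly uses that $s$ and $t$ are $\preceq$-comparable as strings. Its proof just asserts this, so we are entitled to assume it; if necessary, it can be justified from the fact that the rank-$0$ parts $T^s_1=s$ and $T^t_1=t$ are compatible and both are initial segments in the $<^*_0$ ordering. The rest of the argument is the bookkeeping on $k$-values described above.
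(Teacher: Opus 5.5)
Your proposal is correct and follows the paper's proof: compatibility of $T^s_{\alpha^*}$ and $T^t_{\alpha^*}$ inside $u\cup v$, together with \cref{cor:you_wouldnt_think_this_would_be_useful}, gives $\preceq_{\alpha^*}$-comparability, and then $k^s(m)=k^u(m)=k^v(m)=k^t(m)$ for $m<n^u$ gives $\sqsubseteq_{n^u}$-comparability. The paper asserts $k^u(m)=k^v(m)$ without comment; your derivation of it from clause (iii) of the definition of $\tilde\QQ$ correctly fills in that step, which genuinely needs clause (iii).
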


\begin{proof}
  Let $w = u\cup v$. We have $T^s_{\alpha^*}, T^t_{\alpha^*}\subseteq w$, and so $T^s_{\alpha^*}\cup T^t_{\alpha^*}$ is a function. By \cref{cor:you_wouldnt_think_this_would_be_useful}, $s$ and~$t$ are $\preceq_{\alpha^*}$-comparable. For all $m<n^u$, $k^s(m) = k^u(m) = k^v(m) = k^t(m)$, so~$s$ and~$t$ are $\sqsubseteq_{n^u}$-comparable. 
\end{proof}

By \cref{lem:basics_on_coding_lengths_c}\ref{item:basics_on_coding_lengths_c:lengths_fixed_under_m_extensions}, if $s$ and~$t$ both support~$u$, then for all $m\le n^u$, $c^s(m) = c^t(m)$. We therefore define:

\begin{definition} \label{def:c_u_m}
  If $u\in \tilde\QQ$ is supported, then for $m\le n^u$, we let $c^u(m) = c^s(m)$ for any (all) $s$ that support~$u$. 
\end{definition}

By \cref{lem:basics_on_coding_lengths_c}\ref{item:basics_on_coding_lengths_c:lengths_fixed_under_m_extensions} we get:

\begin{lemma} \label{lem:extension_of_supported_preserves_coding_lengths}
  If $u,v\in \tilde \QQ$ are supported and compatible, then for all $m\le \min\{n^u,n^v\}$, $c^u(m) = c^v(m)$. In particular, if $u\subseteq v$ then for all $m<n^u$, $c^u(m) = c^v(m)$. 
\end{lemma}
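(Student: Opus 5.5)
The plan is to reduce to \cref{lem:unique_supporter_class} and \cref{lem:basics_on_coding_lengths_c}\ref{item:basics_on_coding_lengths_c:lengths_fixed_under_m_extensions}. Fix supported compatible $u,v\in\tilde\QQ$. By symmetry assume $n^u\le n^v$, and let $m\le n^u=\min\{n^u,n^v\}$. Choose $s$ supporting $u$ and $t$ supporting $v$, so that by \cref{def:c_u_m} we have $c^u(m)=c^s(m)$ and $c^v(m)=c^t(m)$. It then suffices to show that $c^s(m)=c^t(m)$.

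By \cref{lem:unique_supporter_class}, $s$ and $t$ are $\sqsubseteq_{n^u}$-comparable; say $s\sqsubseteq_{n^u} t$ (the other case is symmetric). Since $m\le n^u$, \cref{lem:basics_of_square_extensions}\ref{item:basics_of_square_extensions:nested} gives $s\sqsubseteq_m t$; when $m=n^u$ this is just the relation itself. We need $m$ to be finite and $m\le n^t$ for $c^t(m)$ to be defined. Finiteness is clear, since $n^u$ is finite for a finite condition $u$. The bound $m\le n^t$ holds because $t$ supports $v$, so $n^t\ge n^v\ge n^u\ge m$. Then \cref{lem:basics_on_coding_lengths_c}\ref{item:basics_on_coding_lengths_c:lengths_fixed_under_m_extensions} yields $c^s(m)=c^t(m)$, as required.

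For the ``in particular'' clause: if $u\subseteq v$ then $u$ and $v$ are compatible, since $v$ itself is a common extension. Moreover $n^u\le n^v$. To see this, note that $v(m)=0$ for all $m<n^u$. Also, if $n^u$ is realised with $u(n^u)\converge$, then $u(n^u)\ne 0$ by the definitions (it is $1$). So the main clause applies for all $m<n^u\le\min\{n^u,n^v\}$.

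The only point needing care is checking that the supporters satisfy the bounds $m\le\min\{n^s,n^t\}$ that \cref{def:square_extension} requires. This follows directly from clause (ii) of \cref{def:supporting_conditions_in_tilde_QQ}, so I expect no real obstacle.
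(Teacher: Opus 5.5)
Your proof is correct and follows the paper's own argument: the supporters of compatible $u,v$ are $\sqsubseteq_{n^u}$-comparable by \cref{lem:unique_supporter_class}, and then \cref{lem:basics_on_coding_lengths_c}\ref{item:basics_on_coding_lengths_c:lengths_fixed_under_m_extensions} gives equality of the coding lengths. One small point: the remark that $u(n^u)=1$ when defined is not needed for the ``in particular'' clause, since $u\subseteq v$ already gives $v(m)=0$ for all $m<n^u$, and hence $n^v\ge n^u$.
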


\begin{remark} \label{rmk:minimal_supporter}
  The \emph{minimal support} of a supported~$u$ is the $\sqsubseteq_{n^u}$-least $s$ that supports~$u$, which exists by  \cref{lem:unique_supporter_class}. By \cref{lem:basics_of_square_extensions}\ref{item:basics_of_square_extensions:hit_then_s_precisely}, for this~$s$ we have $n^s = n^u$. 
\end{remark}

\begin{lemma} \label{lem:union_and_restriction_of_supported_conditions_is_supported} \ 
\begin{sublemma}
  \item If $u,v\in \tilde \QQ$ are compatible, and are both supported, then $u\cup v$ is supported.
  \item If $u\in \tilde \QQ$ is supported, then for all $\beta\le \alpha$, $u\rest{\beta}$ is supported.
\end{sublemma}
\end{lemma}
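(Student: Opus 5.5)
For (a), let $w = u\cup v$; by \cref{lem:tilde_Q_is_closed_under_unions}, $w\in\tilde\QQ$. Without loss of generality $n^u\le n^v$. Let $s$ support $u$ and $t$ support $v$. We have $n^w = \max\{n^u,n^v\} = n^v$: for $m<n^v$, $w(m)=v(m)=0$, and $w(n^v)$ is either undefined or, by requirement (ii) of \cref{def:the_notion_of_forcing_tilde_Q}, equal to $1$. Also $k^w(m)=k^v(m)$ for $m<n^v$. For $m<n^u$ we have $k^u(m)=k^v(m)$ as well, since $u$ and $v$ agree on the relevant labels; for this use (iii), which says $u(m,k)\converge$ for all $k<k^u(m)$, together with compatibility.

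The natural candidate supporter of $w$ is $t$. We have $n^t\ge n^v=n^w$ and $k^t(m)=k^v(m)=k^w(m)$ for $m<n^w$, so condition (ii) of \cref{def:supporting_conditions_in_tilde_QQ} holds. The real issue is condition (i): $T^t_{\alpha^*}\subseteq u\cup v$. We already know $T^t_{\alpha^*}\subseteq v\subseteq w$, so this is automatic. So $t$ supports $w$ and (a) follows. (\Cref{lem:unique_supporter_class} is not even needed here, only that $w\in\tilde\QQ$ and the computation of $n^w,k^w$.)

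For (b), let $s$ support $u$ and let $v=u\rest\beta$. By \cref{lem:restrictions_of_conditions_in_tilde_Q}, $v\in\tilde\QQ$, $n^v\le n^u$, and $k^v(m)=k^u(m)$ for $m<n^v$. Hence $n^s\ge n^u\ge n^v$ and $k^s(m)=k^v(m)$ for $m<n^v$, which is condition (ii). Condition (i) requires $T^s_{\alpha^*}\subseteq u\rest\beta$. If $\beta>\alpha^*$, every node of rank $<\alpha^*$ in $\dom u$ lies in $\dom(u\rest\beta)$, so (i) holds.

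The main obstacle is $\beta\le\alpha^*$, since then $T^s_{\alpha^*}$ may mention nodes of rank $\ge\beta$ that $u\rest\beta$ drops. Here I would replace $s$ by a shorter string: take $s'$ to be the longest $s'\preceq s$ with $T^{s'}_{\alpha^*}\subseteq u\rest\beta$, or more cleverly the $\sqsubseteq$-least relevant initial segment. Note that $n^v=0$ when $\beta\le\alpha^*$, since nodes $\smallseq{m,k}$ have rank $\alpha^*\ge\beta$ and nodes $\smallseq m$ have rank above. Indeed, by \cref{lem:restrictions_of_conditions_in_tilde_Q}'s proof, $(u\rest\beta)(m)\converge$ iff $\rk(m,k^u(m))<\beta$, which fails. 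So condition (ii) is vacuous, and it suffices to find any $s'$ with $T^{s'}_{\alpha^*}\subseteq u\rest\beta$. The empty string works, since $T^{\seq{}}$ is empty by clause \ref{item:true_stages:definition:empty_string_empty_tree}, and $n^{\seq{}}=0$. Thus $\seq{}$ supports $u\rest\beta$ in this case, completing (b).
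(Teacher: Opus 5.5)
Your proof is correct and follows the paper's argument essentially verbatim. In (a), a supporter of whichever of $u,v$ has the larger $n$ also supports $u\cup v$, because $n^{u\cup v}$ and the values $k^{u\cup v}(m)$ agree with that condition's. In (b), you split into $\beta\le\alpha^*$ (where $n^{u\rest\beta}=0$, so $\seq{}$ supports) and $\beta>\alpha^*$ (where any supporter of $u$ also supports $u\rest\beta$).

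The only difference is minor. For $\beta>\alpha^*$ the paper verifies $n^{u\rest\beta}=n^u$ directly, whereas you correctly note that the inequality $n^{u\rest\beta}\le n^u$ from the preceding lemma already suffices for supportedness. The exploratory remark about a ``longest $s'\preceq s$'' is unnecessary and can be deleted.
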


\begin{proof}
  (a): By \cref{lem:tilde_Q_is_closed_under_unions}, $w = u\cup v\in \tilde \QQ$. Suppose that $n^u\le n^v$. The main point is that $n^w = n^v$; if $n^u<n^v$ then necessarily $u(n^u)\diverge$. Also, for all $m<n^v$, $k^w(m) = k^v(m)$ (note that for all $m<n^u$, $k^u(m) = k^v(m)$).  Hence, if~$t$ supports~$v$ then it also supports~$w$.

  (b): There are two cases. If $\beta \le \alpha^*$, then $n^{u\rest\beta}=0$: if $(u\rest{\beta})(\s)=1$ then $\rk(\s)<\alpha^*$, meaning that $|\s|\ge 3$; hence there is no~$m$ with $(u\rest{\beta})(m)\converge$. In this case the empty string supports $u\rest\beta$. 

  Suppose that $\beta > \alpha^*$. Then $n^{u\rest{\beta}}= n^u$: for all $m<n^u$, $\rk(m,k^u(m))\le \alpha^* <\beta$, so $(u\rest{\beta})(m,k^x(m))=1$, implying that $(u\rest{\beta})(m)=0$. This also shows that for all $m<n^u$, $k^{u\rest{\beta}}(m) = k^u(m)$. Further, if $T^s_{\alpha^*}\subseteq u$ then $T^s_{\alpha^*}\subseteq u\rest{\beta}$. Hence, any string that supports~$u$ also supports $u\rest{\beta}$. 
\end{proof}

\begin{lemma} \label{lem:supported_conditions_are_dense}
 For every $u\in \tilde \QQ$ there is some $v\supseteq u$ in~$\tilde \QQ$ such that:
 \begin{orderedlist}
    \item $v$ is supported;
    \item $n^v = n^u$; 
    \item If $u(n^u)\diverge$ then $v(n^v)\diverge$. 
  \end{orderedlist} 
\end{lemma}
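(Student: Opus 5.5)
The plan is to obtain the supporting string as a true stage of an actual point of $[u]$, and to let $v$ be $u$ together with that string's labelling below rank~$\alpha^*$. By the Rasiowa--Sikorski lemma applied to $\Dpoint$, $[u]\ne\emptyset$, so I fix $x\in 2^{\Leaves_\alpha}$ with $u\subseteq T^x$.

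First I check that $x$ has the right coding data up to $n^u$. For $m<n^u$ we have $u(m)=0$, so $T^x(m)=0$, and hence $n^x\ge n^u$. By requirement~(iii) of \cref{def:the_notion_of_forcing_tilde_Q}, $u(m,k)$ is defined for every $k<k^u(m)$. These values must be $0$ by the minimality of $k^u(m)$, and $u(m,k^u(m))=1$, so $k^x(m)=k^u(m)$. Since $n^u$ is finite and $n^u\le n^x$, \cref{lem:_true_initial_segments_for_approximating_n_x}\ref{item:n_x_initial_segments:k_stabilises} gives some $s\prec_{\alpha^*}x$ with $s\sqsubseteq_{n^u}x$. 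I fix such an~$s$. It is finite, and by \cref{def:square_extension} it satisfies $n^s\ge n^u$ and $k^s(m)=k^x(m)=k^u(m)$ for all $m<n^u$.

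Set $v=u\cup T^s_{\alpha^*}$. By \cref{lem:true_stages:basic_properties}\ref{item:true_stages:basic_properties:finiteness} this is finite, and since $s\prec_{\alpha^*}x$ we have $T^s_{\alpha^*}\subseteq T^x$. So $v\subseteq T^x$ is a finite function, and it is supported by~$s$ by construction. To see $v\in\QQ$, I use that it is a finite piece of $T^x$. No node of $v$ can be labelled $1$ while one of its children is labelled $1$. Every $0$-label already has a $1$-labelled child inside $v$: for labels from $u$ this holds because $u\in\QQ$, and for labels from $T^s_{\alpha^*}$ it holds by \cref{lem:true_stages:basic_properties}\ref{item:true_stages:basic_properties:internally_consistent_and_witnessed}. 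A child of a rank~$<\alpha^*$ node also has rank $<\alpha^*$, so that witness lies in $T^s_{\alpha^*}$.

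For membership in $\tilde\QQ$ and items (ii) and (iii), the key observation is that $T^s_{\alpha^*}$ only labels nodes of length $\ge 3$, since every node of length $2$ has rank exactly $\alpha^*$. Hence $v$ agrees with $u$ on the root and on nodes of lengths $1$ and $2$. It follows that $n^v=n^u$, that $k^v=k^u$, that $v(n^v)\diverge$ whenever $u(n^u)\diverge$, and that requirements (i)--(iv) of \cref{def:the_notion_of_forcing_tilde_Q} transfer verbatim from~$u$. The only point needing care is the compatibility of $u$ with $T^s_{\alpha^*}$ and the closure of $v$ under witnesses. I expect no real obstacle there, because everything sits inside the single total labelling $T^x$ and the witnesses for $T^s$ stay within rank $<\alpha^*$.
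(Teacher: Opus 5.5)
Your route is the paper's: pick $x\in[u]$, take $s\prec_{\alpha^*}x$ with $s\sqsubseteq_{n^u}x$, and add $T^s_{\alpha^*}$ to $u$. The gap is the step ``$v=u\cup T^s_{\alpha^*}\in\QQ$'', which fails as stated.

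Condition (ii) of \cref{def:simple_forcing_notion} is a biconditional for \emph{every} non-leaf $\s\in T_\alpha$, including $\s\notin\dom p$. So if $p(\s\conc k)=1$, then $\s\in\dom p$ and $p(\s)=0$. The paper relies on this reading in several places:
\begin{itemize}
\item the proof of \cref{lem:simple_forcing:filters_are_points} (``since $p\in\QQ$, $p(\s)=0$'');
\item the second clause of \cref{def:simple_forcing:restriction_of_condition_to_beta};
\item \cref{lem:forcing_S:extension_lemma}\ref{item:forcing_S:extension_lemma:adding_a_1}, which explicitly adds $v(\rooot)=0$.
\end{itemize}
Without it, \cref{lem:simple_forcing:compatibility_is_being_a_function} would fail.

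You checked that no $1$-label has a $1$-labelled child and that every $0$-label is witnessed. You did not check that parents of $1$-labels are present, and they need not be. $T^s_{\alpha^*}$ can label a node of height $3$ by $1$. Its parent has height $2$, hence rank exactly $\alpha^*$, so it is not in $\dom T^s_{\alpha^*}$, and it need not be in $\dom u$. For instance, take $\alpha=3$ and $u=\emptyset$. Then $T^s_1=s$, and any admissible $s$ with a leaf $\seq{m,k,a}$ labelled $1$ gives $v=s\notin\QQ$. Even below rank $\alpha^*$, clauses (iii) and (iv) of the definition of $T^t$ allow $T^s$ to label a node $1$ while leaving its parent undefined.

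The repair is what the paper does. Extend $w=u\cup T^s_{\alpha^*}$ by labelling $0$ every parent of a $1$-labelled node of $w$ that is missing from $\dom w$.
\begin{itemize}
\item These labels agree with $T^x$, are witnessed by the $1$-labelled child, and impose nothing further up, so the result is in $\QQ$.
\item The $1$-labels of $u$ already have their parents, and those of $T^s_{\alpha^*}$ sit at height $\ge 3$. So all new labels are $0$'s at height $\ge 2$.
\end{itemize}
Consequently, your claim that $v$ agrees with $u$ on all nodes of length $\le 2$ must be weakened: $v$ agrees with $u$ on the root and on length~$1$, and adds only $0$-labels at length~$2$. This still gives $n^v=n^u$ and $k^v=k^u$, since by requirement (iii) of \cref{def:the_notion_of_forcing_tilde_Q}, $u(m,k)$ is already defined for $k\le k^u(m)$ and $m<n^u$. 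It also preserves requirements (i)--(iv) and item (iii) of the lemma. Finally, $T^s_{\alpha^*}\subseteq v$ still holds, so $s$ still supports $v$.
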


\begin{proof}
  Choose any $x\in [u]$. By \cref{lem:_true_initial_segments_for_approximating_n_x}\ref{item:n_x_initial_segments:k_stabilises} and \ref{item:n_x_initial_segments:n_liminf}, find some $t\prec_{\alpha^*} x$ such that $n^t\ge n^u$ and for all $m<n^u$, $k^t(m) = k^x(m)$. Note that since $x\in [u]$, for $m<n^u$, $k^x(m) = k^u(m)$. Since $T^t_{\alpha^*}\subseteq T^x$ and $u\subseteq T^x$, $w = u\cup T^t_{\alpha^*}$ is a function and does not contain any contradictions (there is no~$\s$ with $w(\s) = 1 = w(\s)\conc k$ for some~$k$). By \cref{lem:true_stages:basic_properties}\ref{item:true_stages:basic_properties:finiteness}, $\dom w$ is finite. Since $u\in \tilde \QQ$ and every $\s\in \dom T^t_{\alpha^*}$ has height $\ge 3$, we can extend~$w$ to a condition in $v\in \QQ$ without adding any labels to nodes of height 0 or 1, or any 1-labels to  any nodes of height~2. This ensures that $v\in \tilde \QQ$ and has the desired properties; $t$ supports~$v$. 
\end{proof}

\begin{definition} \label{def:the_notion_of_forcing_R}
  Let $\RR$ be the collection of pairs $(u,\zeta)\in \tilde \QQ\times 2^{<\w}$  such that~$u$ is supported, $|\zeta|\ge c^u(n^u)$, and for all $m<n^u$, $k^u(m)$ is a code of $\zeta\rest{(c^u(m)+1)}$. 
\end{definition}

As above, pairs are ordered by co-ordinatewise extension. By \cref{lem:basics_on_coding_lengths_c}\ref{item:basics_on_coding_lengths:lengths_are_strictly_increasing}, the requirement $|\zeta|\ge c^u(n^u)$ implies $|\zeta|>c^u(m)$ for all $m<n^u$, so the string $\zeta\rest{(c^u(m)+1)}$ makes sense.

\begin{lemma} \label{lem:forcing_R:extension_lemma} \  
  \begin{sublemma}
    \item \label{item:forcing_R:extension_lemma:starting_condition}
    The condition~$p^*$ defined by $\dom u^{p^*} = \{ \rooot \}$, $u^{p^*}(\rooot)=1$, and $\zeta^{p^*} = \seq{}$ is in~$\RR$.
    \end{sublemma}

Suppose that $(u,\zeta)\in\RR$. 

    \begin{sublemma}[resume]
    \item \label{item:forcing_R:extension_lemma:extending_zeta}
    For all $\xi\succeq \zeta$, $(u,\xi)\in \RR$. 

    \item \label{item:forcing_R:extension_lemma:extending_a_1}
    Suppose that $u(n^u)= 1$.  Then for all $v\supseteq u$ in~$\tilde\QQ$,  $(v,\zeta)\in \RR$.

    \item \label{item:forcing_R:extension_lemma:adding_a_1}
    Suppose that $u(\rooot)\diverge$. Let~$v$ extend~$u$ by defining $v(n^u)=1$ (and $v(\rooot)=0$). Then $(v,\zeta)\in \RR$. 

    \item \label{item:forcing_R:extension_lemma:extending_a_void}
    Suppose that $u(n^u)\diverge$. Then there is some $(w,\xi)\in \RR$ extending $(u,\zeta)$ such that $w(n^w)\diverge$ and $|\xi|= c^w(n^w)$. 
  \end{sublemma}
\end{lemma}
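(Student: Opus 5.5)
I would model the proof on \cref{lem:forcing_S:extension_lemma}, with the additional tasks of checking that supports exist and of tracking the coding lengths $c^u(m)$ via \cref{lem:extension_of_supported_preserves_coding_lengths}.

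For \ref{item:forcing_R:extension_lemma:starting_condition}, first check $u^{p^*}\in\tilde\QQ$. Here $n^{u^{p^*}}=0$ and $u^{p^*}(0)\diverge$, so clauses (i)--(iv) of \cref{def:the_notion_of_forcing_tilde_Q} are vacuous. Next, $T^{\seq{}}_{\alpha^*}=\emptyset$, so the empty string supports $u^{p^*}$. Then $c^{u^{p^*}}(0)=c^{\seq{}}(0)=0$, and $|\zeta^{p^*}|=0\ge 0$; there are no coding requirements.

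Part \ref{item:forcing_R:extension_lemma:extending_zeta} is immediate, since all requirements on $\zeta$ concern initial segments of length at most $c^u(n^u)+1$. For \ref{item:forcing_R:extension_lemma:extending_a_1}, any $v\supseteq u$ in $\tilde\QQ$ has $n^v=n^u$ and $k^v=k^u$. A supporter of $u$ may fail to support $v$, since $T^s_{\alpha^*}\subseteq v$ is fine but $v$ may contain other information. So in \ref{item:forcing_R:extension_lemma:extending_a_1} I read the hypothesis as requiring $v$ to be supported; if it is not, first apply \cref{lem:supported_conditions_are_dense}, which preserves $n$. Then \cref{lem:extension_of_supported_preserves_coding_lengths} gives $c^v(m)=c^u(m)$ for $m\le n^u$, so the $\RR$-requirements carry over. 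For \ref{item:forcing_R:extension_lemma:adding_a_1}, again $n^v=n^u$, $k^v=k^u$ and $\dom v\setminus\dom u$ contains only nodes of height $\le 1$, so any supporter $s$ of $u$ still satisfies $T^s_{\alpha^*}\subseteq v$. Thus $s$ supports $v$, $c^v=c^u$, and $(v,\zeta)\in\RR$.

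The main work is \ref{item:forcing_R:extension_lemma:extending_a_void}. Let $s$ be the minimal support of $u$, so $n^s=n^u$ by \cref{rmk:minimal_supporter}. I would extend $s$ to a string $t$ with $s\sqsubseteq_{n^u}t$ and $n^t=N$ for some suitable $N$, together with strings $s=t_{n^u}\sqsubset\cdots$ witnessing $s\sqsubseteq_m t$ for each $m\le N$. This is the general-$\alpha$ analogue of \cref{lem:alpha_is_three:extending_s_to_have_desired_n}(b). To build it, I would choose $x\in[u]$ extending $s$ with $s\prec_{\alpha^*}x$ and $n^x=\w$, with $k^x(m)$ for $m\ge n^u$ chosen to respect the rank constraint (iv). By \cref{lem:_true_initial_segments_for_approximating_n_x} this gives true stages $t\prec_{\alpha^*}x$ with $t\sqsubset_{n^t}x$, and the value $c^x(m)$ is fixed by the minimal such stage. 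I would then build $w$ from $u\cup T^t_{\alpha^*}$ as in \cref{lem:supported_conditions_are_dense}, adding $w(m)=0$, $w(m,k^x(m))=1$, and witnesses for the smaller $k$'s, for $n^u\le m<N$. The key difficulty is that the codes $k^x(m)$ must code $\zeta$-extensions of length $c^x(m)+1$, but $c^x(m)$ depends on the construction. I would handle this inductively, one $m$ at a time. First fix the stage $r_m$ where $n^{r_m}=m$ is first reached; by \cref{lem:basics_of_square_extensions}\ref{item:basics_of_square_extensions:hit_then_s_precisely} and \cref{lem:basics_on_coding_lengths_c} this determines $c(m)=|r_m|$. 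Next extend $\xi$ to length $c(m)+1$. Only then choose $k_m$, large with admissible rank and with $M_{k_m}$ extending $\xi$; this uses the density of $(M_k)$ in \cref{def:M_k}. Finally, continue the leaf-string so that $k_m$ becomes the stable $k$-value. Since $k_m$ is chosen after $c(m)$ is known, the circularity disappears. Finally set $N$ so that $w(N)\diverge$ and pad $\xi$ to length exactly $c^w(N)$, which is possible since $c^w(N)\ge c^w(N-1)+1$. The step I expect to be delicate is verifying that adding the height-1 and height-2 labels does not alter $T^t_{\alpha^*}$ or the computed $c$-values; this holds because those labels live at ranks $\ge\alpha^*$.
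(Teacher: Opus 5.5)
Your argument for part~\ref{item:forcing_R:extension_lemma:extending_a_1} rests on a false claim. Supporting (\cref{def:supporting_conditions_in_tilde_QQ}) asks only for two things. The first is the inclusion $T^s_{\alpha^*}\subseteq v$, which is inherited from $T^s_{\alpha^*}\subseteq u\subseteq v$. The second is $n^s\ge n^v$ and $k^s(m)=k^v(m)$ for $m<n^v$, which hold because, as you note, $n^v=n^u$ and $k^v=k^u$ when $u(n^u)=1$. So under that hypothesis every supporter of $u$ supports every $v\supseteq u$ in $\tilde\QQ$. This is the paper's argument, and it is the same reasoning you use in part~\ref{item:forcing_R:extension_lemma:adding_a_1}. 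By restricting to supported $v$, or passing to a further extension via \cref{lem:supported_conditions_are_dense}, you prove something weaker than the statement, which asserts $(v,\zeta)\in\RR$ for the given $v$.

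The substantive gap is in part~\ref{item:forcing_R:extension_lemma:extending_a_void}. You correctly see the circularity: the code at a new position $m$ must code a string of length $c(m)+1$, and $c(m)$ is only determined by the construction. Your resolution, however, relies on a general-$\alpha$ version of \cref{lem:alpha_is_three:extending_s_to_have_desired_n,lem:alpha_is_three:more_on_possibilities_for_extensions} that you do not prove and the paper does not provide. That lemma would let you extend finite leaf-strings so that $n^t$ climbs to prescribed values along $\prec_{\alpha^*}$-true stages and a chosen $k_m$ becomes the stable value. For $\alpha=3$ the nodes $\seq{m,k,a}$ are leaves, so one can simply place $1$'s there. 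For $\alpha>3$ they have positive rank, and their labels $T^t(m,k,a)$ come from the true-stage approximation, which you have no tool to steer. The plan is also internally inconsistent: you first fix $x\in[u]$ with the values $k^x(m)$ prescribed, and then propose to choose $k_m$ only after seeing $c(m)$.

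The missing idea is exactly what the flexible coding by $(M_k)$ is for (see \cref{rmk:why_this_coding_scheme}). Choose one large $k$ with $\zeta\prec M_k$. For all $n^u\le m<l$, with $l\ge|\zeta|$ large, set $v(m)=0$, $v(m,k)=1$, and $v(m,k')=0$ with a witness for each $k'<k$. Apply \cref{lem:supported_conditions_are_dense} to get a supported $w\supseteq v$ with $n^w=l$ and $w(l)\diverge$, and put $\xi=M_k\rest{c^w(l)}$. Since $k$ codes every initial segment of $M_k$, the unknown values $c^w(m)$ for $n^u\le m<l$ do not matter. For $m<n^u$ one uses $c^w(m)=c^u(m)$ from \cref{lem:extension_of_supported_preserves_coding_lengths}, and $c^w(l)\ge l\ge|\zeta|$ gives $\xi\succeq\zeta$.

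If you prefer your one-position-at-a-time idea, it can be rescued by working with conditions instead of leaf-strings. For supported $u$, the value $c^u(n^u)$ is already fixed for every supported extension by \cref{lem:extension_of_supported_preserves_coding_lengths}. So you may choose the code for position $n^u$ at that point, add the labels below $\seq{n^u}$, re-support with \cref{lem:supported_conditions_are_dense}, and iterate.
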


\begin{proof}
Most are the same as in the proof of \cref{lem:forcing_S:extension_lemma}; for example, for  \ref{item:forcing_R:extension_lemma:extending_a_1}, again the point is that $n^u = n^v$, and $k^v(m) = k^u(m)$ for all $m<n^u$, so any~$s$ that supports~$u$ also supports~$v$. 

The argument for~\ref{item:forcing_R:extension_lemma:extending_a_void} is a little more elaborate. Let $(u,\zeta)\in \RR$. Let~$l$ be a large number. Note that since we are assuming that $\alpha = \alpha^*+2$ is the successor of a successor, the rank of every node of height~2 is~$\alpha^*$, in particular, requirement~(iv) of \cref{def:the_notion_of_forcing_tilde_Q} holds automatically. (This is not really crucial, but simplifis notation.) We choose a large number~$k$ such that $\zeta \prec M_k$. 

We first define~$v$ to be the extension of~$u$ defined by letting, for every $m$ with $n^u\le m <l$, 
\begin{itemize}
  \item $v(m)=0$, and  $v(m,k)=1$;
  \item For all $k'<k$,  $v(m,k')=0$ and $v(m,k',l)=1$. 
\end{itemize}
Since $u\in \tilde \QQ$ and $u(l)\diverge$, this is consistent with~$u$, and $v\in \tilde \QQ$. Note that $v(n^v)\diverge$. 

By \cref{lem:supported_conditions_are_dense}, extend~$v$ to a supported $w\in \tilde \QQ$ with $n^w = n^v=l$ and $w(n^w)\diverge$. Let $\xi  = M_k\rest{c^w(l)}$. Then $(w,\xi)$ is as required: if $m<n^u$, then as $c^w(m) = c^u(m)$ (\cref{lem:extension_of_supported_preserves_coding_lengths}), and $k^w(m) = k^u(m)$ codes $\zeta\rest{(c^u(m)+1)} = \xi\rest{(c^w(m)+1)}$. If $n^u\le m < l$, then $k^w(m)=k$ codes every initial segment of~$\xi$, in particular, $\xi\rest{(c^w(m)+1)}$. 
\end{proof}

\begin{remark} 
  The proof of \cref{lem:forcing_R:extension_lemma}\ref{item:forcing_R:extension_lemma:extending_a_void} is where we need the flexible coding mechanism given by the sequence $(M_k)$: we need to choose some~$k$ that will code $\xi\rest{(c^w(k)+1)}$, without knowing what $c^w(k)$ is going to be. 
\end{remark}

\begin{lemma} \label{lem:forcing_R:totality}
  For all $\s\in T_\alpha$, the collection of $q\in \RR$ such that $u^q(\s)\converge$ is dense in~$\RR$. 
\end{lemma}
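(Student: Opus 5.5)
The plan is to mirror the proof of \cref{lem:forcing_S:totality}, using \cref{lem:forcing_R:extension_lemma} in place of \cref{lem:forcing_S:extension_lemma}. Fix $p=(u,\zeta)\in\RR$ and $\s\in T_\alpha$ with $u(\s)\diverge$. If $\s=\rooot$, then $u(\rooot)\diverge$ and \ref{item:forcing_R:extension_lemma:adding_a_1} gives $(v,\zeta)\in\RR$ with $v(\rooot)=0$. So assume $\s\neq\rooot$, and let $m$ be such that $\smallseq{m}\preceq\s$.

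First suppose $u(n^u)=1$. Extend $u$ to some $v\in\QQ$ defined at $\s$ without touching height-$1$ nodes other than those already labelled, and without placing $1$-labels on height-$2$ nodes $\smallseq{m',k}$ with $m'<n^u$ and $k<k^u(m')$. Concretely, set $v(\s)=0$ and $v(\s\conc l)=1$ for a large $l$. If $\s$ has height $1$, then $m>n^u$, and we would rather set $v(\s)=1$, labelling some children $0$ with large witnesses. Such a $v$ stays in $\tilde\QQ$, and \ref{item:forcing_R:extension_lemma:extending_a_1} gives $(v,\zeta)\in\RR$.

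Now suppose $u(n^u)\diverge$. Apply \ref{item:forcing_R:extension_lemma:extending_a_void} repeatedly, with \ref{item:forcing_R:extension_lemma:extending_zeta} where needed. By choosing the large number $l$ in its proof, we pass to $(w,\xi)\in\RR$ with $n^w>m$ and $w(n^w)\diverge$. Then $w(\smallseq m)=0$ and $w$ is defined at $\smallseq{m,k}$ for every $k\le k^w(m)$. This settles the cases where $\s=\smallseq m$, and where $\s=\smallseq{m,k}$ with $k\le k^w(m)$. In the remaining cases, $\s$ is deeper, or $\s=\smallseq{m,k}$ with $k>k^w(m)$. Here we extend $w$ by $w'(\s)=0$ and $w'(\s\conc l)=1$ for some large $l$; if $\s$ is a leaf we assign any value. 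This leaves $n^{w'}=n^w$ and all $k^{w'}(m')=k^w(m')$ unchanged.

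The main obstacle, which the $\SSS$ case lacked, is that membership in $\RR$ requires $w'$ to be \emph{supported}. Adding labels deep in the tree may clash with $T^s_{\alpha^*}$ for the supporting string $s$, or simply fail to lie above it. To handle this, I would note that $w'\in\tilde\QQ$ and apply \cref{lem:supported_conditions_are_dense}, which gives a supported $w''\supseteq w'$ with $n^{w''}=n^{w'}$ and $w''(n^{w''})\diverge$. Since $w''$ extends $w'$, it has the same $k$-values below $n^{w'}$. Then \cref{lem:extension_of_supported_preserves_coding_lengths} gives $c^{w''}(m')=c^w(m')$ for all $m'\le n^w$, so the coding requirements still hold with the same $\xi$. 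Hence $(w'',\xi)\in\RR$, it extends $p$, and it is defined at $\s$. The same supporting step should also be inserted in the case $u(n^u)=1$ if \ref{item:forcing_R:extension_lemma:extending_a_1} is doubted there; that case is in fact already covered, because any supporter of $u$ supports $v$.
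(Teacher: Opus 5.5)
Your proof is correct and follows essentially the paper's argument. You handle the root via \ref{item:forcing_R:extension_lemma:adding_a_1}, the case $u(n^u)=1$ via \ref{item:forcing_R:extension_lemma:extending_a_1}, and otherwise push $n$ past $m$ with \ref{item:forcing_R:extension_lemma:extending_zeta} and \ref{item:forcing_R:extension_lemma:extending_a_void}, as the paper does. The paper differs only in minor ways: it uses \ref{item:forcing_R:extension_lemma:extending_a_void} as a black box and compares $c^w(n^u)=c^u(n^u)<|\xi|=c^w(n^w)$ rather than choosing $l$ inside its proof, and it handles all nodes of height $\ge 2$ directly without first raising $n$.

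The support issue you call the main obstacle does not actually arise. We have $T^s_{\alpha^*}\subseteq w\subseteq w'$, $n^{w'}=n^w$ and $k^{w'}=k^w$, so every supporter of $w$ already supports $w'$; this is exactly your own observation in the case $u(n^u)=1$. Hence the appeal to \cref{lem:supported_conditions_are_dense} is valid but redundant. One small fix: for a leaf $\s$, assign the value $0$ rather than ``any value'', since a $1$ could clash with a parent labelled $1$ or require labelling an undefined parent.
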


\begin{proof}
  Let $p = (u,\zeta)\in \RR$, and let $\s\in T_\alpha$. As in the proof of \cref{lem:forcing_S:totality}, if $\s = \rooot$, then \cref{lem:forcing_R:extension_lemma}\ref{item:forcing_R:extension_lemma:adding_a_1} allows us to add~$\s$ to the domain of~$u$. Also, if $u(n^u)\converge$ then \ref{item:forcing_R:extension_lemma:extending_a_1} allows us to similarly add any~$\s$. Suppose that $u(n^u)\diverge$. If $|\s|\ge 2$ then we can extend~$u$ to $v$ by setting $v(\s)=0$ and $v(\s\conc l)=1$ for some large~$l$, so that $n^v = n^u$, and $q=(v,\zeta)$ is as required. So to complete the proof of this lemma, it suffices to show that there is some $q\in \RR$ extending $p$ with $n^{u^q}>n^u$. By \ref{item:forcing_R:extension_lemma:extending_a_void}, we may assume that $|\zeta| = c^s(n^u)$. By \ref{item:forcing_R:extension_lemma:extending_zeta}, extend~$\zeta$ to any longer string, say $\zeta\conc 0$. Now apply \ref{item:forcing_R:extension_lemma:extending_a_void} to the condition $(u,\zeta\conc 0)$ to obtain $q = (w,\xi)\in \RR$ with $|\xi| = c^t(n^w)$. Since $c^w(n^u) = c^u(n^u) = |\zeta|<|\xi|$ (\cref{lem:extension_of_supported_preserves_coding_lengths}), we must have $n^w>n^u$, as required. 
\end{proof}

\begin{lemma} \label{lem:forcing_R:compatibility}
  Two conditions $p,q\in \RR$ are compatible in~$\RR$ if and only if $u^p\cup u^q$ is a function and $\zeta^p$ and $\zeta^q$ are comparable. 
\end{lemma}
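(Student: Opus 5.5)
The forward direction is immediate: if $r\in\RR$ extends both $p$ and $q$, then $u^p\cup u^q\subseteq u^r$ is a function, and $\zeta^p,\zeta^q\preceq\zeta^r$ are comparable. For the converse, the plan follows \cref{lem:forcing_S:compatibility}. Suppose $w=u^p\cup u^q$ is a function and, without loss of generality, $\zeta^p\preceq\zeta^q$. We show that $(w,\zeta^q)\in\RR$. This condition clearly extends both $p$ and $q$.

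First, \cref{lem:simple_forcing:compatibility_is_being_a_function} together with \cref{lem:tilde_Q_is_closed_under_unions} gives $w\in\tilde\QQ$. Next, $u^p$ and $u^q$ are compatible and both supported, so \cref{lem:union_and_restriction_of_supported_conditions_is_supported}(a) shows that $w$ is supported. We also need the bookkeeping from that proof. Assume $n^{u^p}\le n^{u^q}$ (the other case is symmetric). Then $n^w=n^{u^q}$, and for $m<n^w$ we have $k^w(m)=k^{u^q}(m)$; moreover, for $m<n^{u^p}$, $k^{u^p}(m)=k^{u^q}(m)$. By \cref{lem:extension_of_supported_preserves_coding_lengths}, applied to the compatible supported pairs $(u^p,w)$ and $(u^q,w)$, we get $c^w(m)=c^{u^p}(m)$ for $m\le n^{u^p}$ and $c^w(m)=c^{u^q}(m)$ for $m\le n^{u^q}=n^w$.

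The coding requirements now transfer. We have $|\zeta^q|\ge c^{u^q}(n^{u^q})=c^w(n^w)$. For each $m<n^w=n^{u^q}$, $k^w(m)=k^{u^q}(m)$ codes $\zeta^q\rest{(c^{u^q}(m)+1)}=\zeta^q\rest{(c^w(m)+1)}$, because $q\in\RR$. In this case $p$ contributes nothing extra.

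The remaining case is the main obstacle: $\zeta^p\preceq\zeta^q$ but $n^{u^p}>n^{u^q}$, so that $n^w=n^{u^p}$ and the length bound comes from $p$. Here the requirement $|\zeta^q|\ge c^w(n^w)=c^{u^p}(n^{u^p})$ follows from $|\zeta^q|\ge|\zeta^p|$. For $m<n^{u^p}$, $k^w(m)=k^{u^p}(m)$ codes $\zeta^p\rest{(c^{u^p}(m)+1)}$. This string is an initial segment of $\zeta^q$ because $\zeta^p\preceq\zeta^q$ and $|\zeta^p|>c^{u^p}(m)$ (\cref{lem:basics_on_coding_lengths_c}\ref{item:basics_on_coding_lengths:lengths_are_strictly_increasing}). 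So it equals $\zeta^q\rest{(c^w(m)+1)}$. The step I would check most carefully is the claim $n^w=\max\{n^{u^p},n^{u^q}\}$ with $k^w$ agreeing with the condition of larger $n$. This relies on property (ii) of $\tilde\QQ$: the smaller one, $u$, has no $0$-label at positions $\ge n^u$ and cannot have $u(n^u)=1$ when the other has $0$ there. That is exactly the point used in \cref{lem:union_and_restriction_of_supported_conditions_is_supported}(a).
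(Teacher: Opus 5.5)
Your proof is correct and is essentially the paper's argument. The paper likewise reduces the converse to checking $(u^p\cup u^q,\zeta^q)\in\RR$ as in \cref{lem:forcing_S:compatibility}, using \cref{lem:union_and_restriction_of_supported_conditions_is_supported} for supportedness and \cref{lem:extension_of_supported_preserves_coding_lengths} to transfer the coding lengths. One minor correction: the agreements $k^w(m)=k^{u^q}(m)$ and $k^{u^p}(m)=k^{u^q}(m)$ also need property (iii) of $\tilde\QQ$ together with $w$ being a function, not only property (ii); this is already covered by the proof of \cref{lem:union_and_restriction_of_supported_conditions_is_supported}(a), which you cite.
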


\begin{proof}
  Similar to the proof of \cref{lem:forcing_S:compatibility}, using \cref{lem:union_and_restriction_of_supported_conditions_is_supported,lem:extension_of_supported_preserves_coding_lengths}. 
\end{proof}

Similarly, we obtain the analogue of \cref{lem:forcing_S:end_up_in_WWW}:

\begin{lemma} \label{lem:forcing_R:end_up_in_YYY}
  If $G\subset \RR$ is sufficiently generic, then:
\begin{orderedlist}
  \item For all finite $m\le n^{x_G}$, $c^{x_G}(m) = c^{u^p}(m)$ for some (or any) $p\in G$ with $n^{u^p}\ge m$.
  \item $(x_G,y_G)\in \YYY_\alpha$.
  \item For all $p\in \RR$, $(x_G,y_G)\in [p]$ if and only if $p\in G$.
\end{orderedlist}
\end{lemma}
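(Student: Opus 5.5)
We follow the pattern of \cref{lem:forcing_S:end_up_in_WWW}, first fixing a countable family of dense subsets of~$\RR$ and then reading off (i)--(iii). The family consists of the sets from \cref{lem:forcing_R:totality} (one for each $\s\in T_\alpha$); the sets $\{q\,:\,|\zeta^q|\ge N\}$, which are dense by \cref{lem:forcing_R:extension_lemma}\ref{item:forcing_R:extension_lemma:extending_zeta}; and the sets $D_p=\{q\,:\,q\le p\text{ or $q,p$ incompatible}\}$ for $p\in\RR$.

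Let $G$ meet all of these. Since $G$ is directed, $S=\bigcup_{p\in G}u^p$ is a function; by totality it is defined on all of~$T_\alpha$. The argument of \cref{lem:simple_forcing:filters_are_points} shows $S=T^{x_G}$, and $y_G\in 2^\w$ is total.

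\textbf{Step (iii).} If $p\in G$ then $u^p\subseteq T^{x_G}$ and $\zeta^p\prec y_G$, so $(x_G,y_G)\in[p]$. Conversely, suppose $(x_G,y_G)\in[p]$. For every $q\in G$, $u^p\cup u^q\subseteq T^{x_G}$ is a function and $\zeta^p,\zeta^q$ are comparable. By \cref{lem:forcing_R:compatibility}, $p$ is compatible with every $q\in G$. Meeting $D_p$ then gives an element of $G$ extending~$p$, so $p\in G$.

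\textbf{Step (i), the main obstacle.} Fix a finite $m\le n^{x_G}$. We first show some $p\in G$ has $n^{u^p}\ge m$. Each $\seq{\ell}$ with $\ell<m$ has $T^{x_G}(\ell)=0$, as does each $\seq{\ell,k^{x_G}(\ell)}$ with label $1$ and the smaller siblings $\seq{\ell,k}$ with label $0$. These labels lie in some $u^p$ with $p\in G$, so $n^{u^p}\ge m$ and $k^{u^p}(\ell)=k^{x_G}(\ell)$ for $\ell<m$.

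Next, let $s$ support~$u^p$. Then $T^s_{\alpha^*}\subseteq u^p\subseteq T^{x_G}$, so by \cref{cor:you_wouldnt_think_this_would_be_useful} (in its infinite form via \cref{lem:true_stages:basic_properties}) we get $s\preceq_{\alpha^*}x_G$. Moreover $n^s\ge n^{u^p}\ge m$ and $k^s(\ell)=k^{u^p}(\ell)=k^{x_G}(\ell)$ for $\ell<m$. Hence $s\sqsubseteq_m x_G$, and \cref{lem:basics_on_coding_lengths_c}\ref{item:basics_on_coding_lengths_c:lengths_fixed_under_m_extensions} gives $c^{x_G}(m)=c^s(m)=c^{u^p}(m)$.

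The "any" clause follows from \cref{lem:extension_of_supported_preserves_coding_lengths}, since any two elements of $G$ are compatible. The delicate point is justifying $s\preceq_{\alpha^*}x_G$ for infinite $x_G$. I would argue it directly: the longest $r\prec x_G$ of length $\ge|s|$ with $r\prec_{\alpha^*}x_G$ is compatible with~$s$ at level $\alpha^*$, so the finite corollary applies, and \cref{lem:true_stages:basic_properties}\ref{item:true_stages:basic_properties:diamond} then transfers the relation to $x_G$. Alternatively, one can pick $r\prec_{\alpha^*}x_G$ with $|r|\ge|s|$ (using \cref{lem:true_stages:existence_of_true_stages}), apply the corollary to $s$ and~$r$, and then use transitivity.

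\textbf{Step (ii).} Let $m<n^{x_G}$. By (i), choose $p\in G$ with $n^{u^p}>m$ and, using genericity, with $|\zeta^p|>c^{x_G}(m)$. Then $k^{x_G}(m)=k^{u^p}(m)$ codes $\zeta^p\rest{(c^{u^p}(m)+1)}=y_G\rest{(c^{x_G}(m)+1)}$, so $(x_G,y_G)\in\YYY_\alpha$.
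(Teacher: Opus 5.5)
Your proof is correct and follows the same route as the paper's. You take $p\in G$ with $n^{u^p}\ge m$ and a support $s$ of $u^p$, observe $s\sqsubseteq_m x_G$, apply \cref{lem:basics_on_coding_lengths_c}\ref{item:basics_on_coding_lengths_c:lengths_fixed_under_m_extensions}, and then read off the coding for (ii). The step you flag as delicate is in fact immediate: \cref{def:true_stage_relations} defines $s\preceq_{\alpha^*} x_G$ for infinite $x_G$ simply as $T^s_{\alpha^*}\subseteq T^{x_G}_{\alpha^*}$, and this follows at once from $T^s_{\alpha^*}\subseteq u^p\subseteq T^{x_G}$. (Your first workaround is also garbled, since there is no longest $r\prec_{\alpha^*}x_G$ and the diamond property does not transfer relations upward, but your alternative via transitivity is fine.)
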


\begin{proof}
  As in the proof of \cref{lem:forcing_S:end_up_in_WWW}, we have $(x_G,y_G)\in \XXX_\alpha$, and property~(iii) above holds. 

  For~(i) and~(ii),  let $m\le n^{x_G}$ be finite; let $p = (u,\zeta)\in G$ with $n^{u}\ge m$. Let $s$ support~$p$. Since $T^s_{\alpha^*}\subseteq u\subset T^{x_G}$, we have $s\prec_{\alpha^*} x_G$. For each $\ell<n^u$, since $u\subset T^{x_G}$, we have $k^u(\ell) = k^{x_G}(\ell)$. Since $s$ supports~$u$, $k^u(\ell) = k^s(\ell)$ for such~$\ell$. This shows that $s\sqsubseteq_{n^u} x_G$. By \cref{lem:basics_on_coding_lengths_c}\ref{item:basics_on_coding_lengths_c:lengths_fixed_under_m_extensions}, for all $m\le n^u$,  $c^u(m)$, which is defined to be $c^s(m)$, equals $c^{x_G}(m)$. 

  For all $\ell<n^u$, since $u\in \RR$, $k^{x_G}(\ell) = k^u(\ell)$ codes $\zeta\rest{(c^{x_G}(\ell)+1)}$, and $\zeta \prec y_G$. 
\end{proof}

We define $\beta$-complete conditions as usual. 

\begin{lemma} \label{lem:forcing_R:extending_to_beta_complete}
  For every $\beta \le \alpha+1$, every $p\in \RR$ can be extended to a $\beta$-complete condition in~$\RR$. 
\end{lemma}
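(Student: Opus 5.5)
We follow the proof of \cref{lem:forcing_S:extending_to_beta_complete}, checking at each step that membership in~$\RR$ (not just in~$\SSS$) is preserved. Write $p=(u,\zeta)$. The only nodes requiring work are $\s$ with $u(\s)=1$ and $\rk(\s)>\beta$ a limit, and for each such~$\s$ only finitely many children $\s\conc k$ have rank $<\beta$. Since $\alpha$ is the successor of a successor, the root has successor rank and every node of height~$1$ or~$2$ has successor rank. So the problematic~$\s$ all have height $\ge 3$ (and indeed rank $<\alpha^*$).

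First we handle the nodes $\s$ of height $\ge 3$. As in \cref{lem:simple_forcing:beta_complete_is_dense}, let $v\supseteq u$ set $v(\s\conc k)=0$ and $v(\s\conc k\conc l)=1$ for some large~$l$, for each such~$\s$ and each~$k$ with $\rk(\s\conc k)<\beta$. Only nodes of height $\ge 4$ receive new labels, so no labels at heights $1$ or~$2$ change. Hence $v\in\tilde\QQ$, $n^v=n^u$, $k^v(m)=k^u(m)$ for $m<n^u$, and $v(n^v)$ is defined if and only if $u(n^u)$ is.

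The remaining point is that~$v$ is supported. Any~$s$ that supports~$u$ has $T^s_{\alpha^*}\subseteq u\subseteq v$ and the same values $n$ and~$k$, so $s$ still supports~$v$. By \cref{def:c_u_m} and \cref{lem:extension_of_supported_preserves_coding_lengths}, $c^v(m)=c^u(m)$ for all $m\le n^u$. Therefore the coding requirements of \cref{def:the_notion_of_forcing_R} carry over verbatim, and $(v,\zeta)\in\RR$.

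Finally, the new $1$-labels sit at nodes of rank $<\beta$, so they create no further obligations, and $(v,\zeta)$ is $\beta$-complete, as in the proof of \cref{lem:simple_forcing:beta_complete_is_dense}. The root case from \cref{lem:forcing_S:extending_to_beta_complete}, where one adds $0$-labels to children of the root, never arises here: the root has successor rank, so no child of the root has rank $<\beta<\rk(\rooot)$.
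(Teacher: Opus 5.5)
Your proof is correct and follows the paper's argument. The paper only says that the proof for $\SSS$ applies because nodes of height $\le 1$ have successor rank; you additionally spell out that supports, and hence the lengths $c^u(m)$, are preserved. One small slip: nodes of height~$2$ have rank $\alpha^*$, which may be a limit (e.g.\ $\alpha=\omega+2$), so a node $\s$ needing new labels can have height~$2$ and new $0$-labels can land at height~$3$. This is harmless: still no label at height $\le 2$ changes, so $n^v=n^u$, $k^v=k^u$, $v\in\tilde\QQ$, and every supporter of $u$ supports~$v$.
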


\begin{proof}
  The proof of \cref{lem:forcing_S:extending_to_beta_complete} applies; since we are assuming that $\alpha = \alpha^*+2$, we do not need to consider the case $|\s|\le 1$. 
\end{proof}

We define $p\rest{\beta}$ as usual. 

\begin{lemma} \label{lem:forcing_R:restrictions_of_conditions_stay_in_R}
  For all $\beta \le \alpha+1$, for every $p\in \RR$, $p\rest{\beta}\in \RR$.
\end{lemma}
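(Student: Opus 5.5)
We write $p=(u,\zeta)\in\RR$. Since $p\rest\beta=(u\rest\beta,\zeta)$, three things must be checked: $u\rest\beta\in\tilde\QQ$, $u\rest\beta$ is supported, and the coding requirements of \cref{def:the_notion_of_forcing_RR} hold for $(u\rest\beta,\zeta)$ with the lengths $c^{u\rest\beta}(m)$. The first is \cref{lem:restrictions_of_conditions_in_tilde_Q}, which also gives $n^{u\rest\beta}\le n^u$ and $k^{u\rest\beta}(m)=k^u(m)$ for all $m<n^{u\rest\beta}$.

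For support, if $\beta\le\alpha$ we apply \cref{lem:union_and_restriction_of_supported_conditions_is_supported}(b) directly. If $\beta=\alpha+1$, every node has rank $<\alpha+1$, so $u\rest{(\alpha+1)}=u$ and there is nothing to prove. It is convenient to recall how that lemma was proved. When $\beta\le\alpha^*$ we have $n^{u\rest\beta}=0$ and the empty string supports $u\rest\beta$. When $\beta>\alpha^*$ we have $n^{u\rest\beta}=n^u$ and any supporter of $u$ also supports $u\rest\beta$.

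The key step is comparing coding lengths. Since $u\rest\beta\subseteq u$ and both are supported, \cref{lem:extension_of_supported_preserves_coding_lengths} gives $c^{u\rest\beta}(m)=c^u(m)$ for all $m\le n^{u\rest\beta}$. This also follows from the case analysis above: when $n^{u\rest\beta}=n^u$ they share a supporter, and when $n^{u\rest\beta}=0$ we only need $c(0)$, which is $0$ for both, because $\seq{}\sqsubseteq_0 s$ for every $s$.

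We then check the two clauses. For the length condition,
\[
|\zeta|\ge c^u(n^u)\ge c^u(n^{u\rest\beta})=c^{u\rest\beta}(n^{u\rest\beta}),
\]
using monotonicity of $c$ from \cref{lem:basics_on_coding_lengths_c}\ref{item:basics_on_coding_lengths:lengths_are_strictly_increasing}. For the coding condition, if $m<n^{u\rest\beta}$ then $k^{u\rest\beta}(m)=k^u(m)$ codes $\zeta\rest{(c^u(m)+1)}=\zeta\rest{(c^{u\rest\beta}(m)+1)}$. Hence $p\rest\beta\in\RR$.

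The only delicate point is keeping straight which supporter is used, so that $c^{u\rest\beta}$ is well defined and agrees with $c^u$. Compatibility of $u\rest\beta$ with $u$ together with \cref{lem:extension_of_supported_preserves_coding_lengths} handles this cleanly.
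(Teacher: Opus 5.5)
Your proof is correct and follows the same route as the paper: $u\rest\beta$ stays in $\tilde\QQ$ and remains supported, the coding lengths satisfy $c^{u\rest\beta}(m)=c^u(m)$ by compatibility of supported conditions, and the codes $k^{u\rest\beta}(m)=k^u(m)$ are unchanged. You also spell out two points the paper leaves implicit: the length requirement $|\zeta|\ge c^{u\rest\beta}(n^{u\rest\beta})$, which you get from monotonicity of $c$, and the trivial case $\beta=\alpha+1$, where $u\rest{(\alpha+1)}=u$.
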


\begin{proof}
  Let $p = (u,\zeta)\in \RR$. By \cref{lem:union_and_restriction_of_supported_conditions_is_supported}, $u\rest{\beta}$ is supported. If $m\le n^{u\rest\beta}$ then by \cref{lem:extension_of_supported_preserves_coding_lengths} (or the proof of \cref{lem:union_and_restriction_of_supported_conditions_is_supported}), $c^{u\rest{\beta}}(m) = c^u(m)$, and when $m<n^{u\rest\beta}$, $k^{u\rest{\beta}}(m) = k^u(m)$, so codes correctly; we conclude that $p\rest\beta = (u\rest\beta,\zeta)\in \RR$. 
\end{proof}

We can now continue with analogues of \cref{lem:forcing_S:untagging:extension_lemma} and of \cref{prop:simple_forcing:Sigma_untagging_lemma,lem:K_alpha:Sigma_untagging_lemma}. Using the tools above, we can mimic the proof of \cref{prop:L_alpha_is_non_colourable} to obtain:

\begin{theorem} \label{prop:H_alpha_is_not_colourable}
  If $\alpha\ge 3$ is the successor of a successor ordinal, then the graph $H_\alpha$ does not have a countable $\bSigma^0_\alpha$ colouring. 
\end{theorem}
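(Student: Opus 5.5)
We follow the proof of \cref{prop:L_alpha_is_non_colourable}, with $\RR$ in place of $\SSS$. First we record the untagging machinery for $\RR$. The analogue of \cref{lem:forcing_S:untagging:extension_lemma} says: if $p\in\RR$ is $\beta$-complete and $r\in\RR$ extends $p\rest{(\beta+1)}$, then $p$ and $r\rest\beta$ are compatible in $\RR$. To prove it, let $v=u^p\cup(u^r\rest\beta)$. By \cref{lem:simple_forcing:untagging:extension_lemma}, $v\in\QQ$. By \cref{lem:forcing_R:restrictions_of_conditions_stay_in_R}, $r\rest\beta\in\RR$, and then \cref{lem:forcing_R:compatibility} shows that $(v,\zeta^r)$ is a common extension. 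Here $\zeta^r\succeq\zeta^p$ because $r\le p\rest{(\beta+1)}$.

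With this lemma, \cref{lem:forcing_R:extending_to_beta_complete}, and the density facts (\cref{lem:forcing_R:totality}, \cref{lem:forcing_R:end_up_in_YYY}), the proofs of \cref{prop:simple_forcing:Pi_untagging_lemma} and \cref{prop:simple_forcing:Sigma_untagging_lemma} go through verbatim. For $\vphi$ a $\Sigma_\alpha$ code and $G\subset\RR$ sufficiently generic, we get $(x_G,y_G)\in[\vphi]$ iff some $p\in G\cap\RR_\alpha$ strongly forces $\vphi$, where $\RR_\alpha=\PP_\alpha\cap\RR$.

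Now let $\+C$ be a countable family of $\bSigma^0_\alpha$ sets covering $\YYY_\alpha$, and fix codes for its members. Start from $p^*$, which lies in $\RR$ by \cref{lem:forcing_R:extension_lemma}\ref{item:forcing_R:extension_lemma:starting_condition}, and take $G\ni p^*$ sufficiently generic. By \cref{lem:forcing_R:end_up_in_YYY}, $(x_G,y_G)\in\YYY_\alpha$, so it lies in some $C\in\+C$. We obtain $p=(u,\zeta)\in G\cap\RR_\alpha$ strongly forcing the code of $C$. Since $p\in\QQ_\alpha$ and $p$ is compatible with $p^*$, we have $u(\rooot)\diverge$, hence $u(n^u)\diverge$.

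By \cref{lem:forcing_R:extension_lemma}\ref{item:forcing_R:extension_lemma:extending_a_void}, extend to $(w,\xi)\le p$ with $w(n^w)\diverge$ and $|\xi|=c^w(n^w)$. Put $n=n^w$ and $c=c^w(n)=|\xi|$. Let $v$ extend $w$ by $v(n)=1$ and $v(\rooot)=0$. By \ref{item:forcing_R:extension_lemma:adding_a_1} and \ref{item:forcing_R:extension_lemma:extending_zeta}, both $q_0=(v,\xi\conc0)$ and $r_0=(v,\xi\conc1)$ lie in $\RR$. Since $n^v=n$ and $v(n)=1$, \ref{item:forcing_R:extension_lemma:extending_a_1} says that any $v'\supseteq v$ in $\tilde\QQ$ paired with any extension of either string is again in $\RR$.

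We therefore run the interleaved construction from \cref{prop:K_alpha:no_colouring}. The sequences $(q_\ell)$ and $(r_\ell)$ share first coordinates and have second coordinates differing only at position $c$, and each meets every dense set in the relevant countable family. The resulting filters $Q$ and $R$ are both sufficiently generic and both contain $p$, so $(x_Q,y_Q)$ and $(x_R,y_R)$ lie in $C\cap\YYY_\alpha$, with $x_Q=x_R=x$ and $y_Q\symdiff y_R=\{c\}$, where $c\notin y_Q$.

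The step needing the most care is checking that $c^{x}(n^{x})=c$, so that these two points really form an $H_\alpha$-edge. Since $v\subset T^x$ and $v(n)=1$ with $v(m)=0$ for $m<n$, we have $n^x=n$. By \cref{lem:forcing_R:end_up_in_YYY}(i), $c^x(n)=c^{u^{p'}}(n)$ for any $p'\in Q$ with $n^{u^{p'}}\ge n$. By \cref{lem:extension_of_supported_preserves_coding_lengths}, this equals $c^w(n)=c$, since $v$ is supported by the same supporter as $w$ (the proof of \ref{item:forcing_R:extension_lemma:extending_a_1}). Hence $C$ contains an edge of $H_\alpha$, so $\+C$ is not an independent partition.
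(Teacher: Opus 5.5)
Your proposal is correct and is essentially the paper's own argument. The paper proves this theorem by asserting the $\RR$-analogues of the untagging lemmas and then mimicking the proof of \cref{prop:L_alpha_is_non_colourable} with $\RR$ in place of $\SSS$, which is exactly what you carry out. Your key adaptation is to use \cref{lem:forcing_R:extension_lemma}\ref{item:forcing_R:extension_lemma:extending_a_void} to arrange $|\xi|=c^w(n^w)$, so that the flipped bit lands at $c^x(n^x)$. One small bookkeeping point: applying \ref{item:forcing_R:extension_lemma:adding_a_1} to $w$ needs $w(\rooot)\diverge$, and this comes from the proof of \ref{item:forcing_R:extension_lemma:extending_a_void} rather than from its statement, just as in the paper's own argument for $L_\alpha$.
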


\subsection{Non-minimality of $L_3$}

We show that $L_3$ is not least among graphs with no $\bSigma^0_3$ colourings:

\begin{proposition} \label{prop:L_3_is_not_minimal}
  There is no continuous homomorphism from $L_3$ into $H_3$. 
\end{proposition}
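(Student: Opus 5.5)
Given a continuous $F\colon \WWW_3\to\YYY_3$, I will build, by a finite-injury diagonalisation modelled on the proof of \cref{prop:no_embedding_of_K_3_into_L_3}, two points of $\WWW_3$ joined by an $L_3$-edge whose images under~$F$ are not joined by an $H_3$-edge. Both points will have the form $a=(z,w^a)$ and $b=(z,w^b)$. We approximate~$z$ by strings $s_\ell$. Unlike before, our points must lie in $\WWW_3$, so while we build $z$ we also have to maintain the coding requirement that $k^z(m)$ codes $w\rest{(m+1)}$ for $m<n^z$.

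The flexible coding with the reals $M_k$ handles this. At every stage I keep $n^{s_\ell}$ small, say with current target $n$. I put the same bits on $w^a$ and $w^b$ below~$n$ and let them differ only at position~$n$, so each $k^{s_\ell}(m)$ for $m<n$ can be chosen to code the common prefix. This keeps the finite approximations $[s_\ell,\xi^a_\ell]$ and $[s_\ell,\xi^b_\ell]$ meeting $\WWW_3$. Concretely, I maintain conditions $(u,\xi)$ in the forcing $\SSS$ of \cref{def:notion_of_forcing_SSS}, viewed through strings $s\in 2^{<\Leaves_3}$ with $n^s$ controlled as in \cref{lem:alpha_is_three:extending_s_to_have_desired_n}. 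On the target side I read off approximations $t^a_\ell,\zeta^a_\ell$ and $t^b_\ell,\zeta^b_\ell$ exactly as before. In addition I read off the approximations $n^{t}$ and $c^{t}(m)$ of \cref{def:n_t_and_k_t_m:general_case} and of the coding lengths $c$.

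The phases are as follows. In Phase~1 the first and second coordinates look compatible, and we keep $n^{s_\ell}=0$, so $a$ and $b$ differ at~$0$. In Phase~3 the first coordinates split, and we are done, as before. The new case is Phase~2, where $\zeta^a_\ell$ and $\zeta^b_\ell$ become incomparable at some position $d$. For the images to be $H_3$-adjacent we would need $d=c^x(n^x)$. In addition, $F(a),F(b)\in\YYY_3$ forces that for every $m<n^x$ the code $k^x(m)$ codes both $y^a\rest{(c^x(m)+1)}$ and $y^b\rest{(c^x(m)+1)}$, hence $c^x(m)<d$.

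My plan at that point is to move the source edge to a much later location: reset $n^z$ to a large $\ell^*$ and use $c=(z,w^c)$ differing from $a$ only at $\ell^*$. Any $H_3$-edge between $F(a)$ and $F(c)$ would then have to sit at a position beyond $|\zeta^a_{\ell^*}|>d$. That position equals $c^x(n^x)$ for the common first coordinate $x$. But $x$ must equal $x^b$, and since $c^x$ is strictly increasing (\cref{lem:basics_on_coding_lengths_c}), the data already seen pin $c^x(n^x)\ge$ something $>d$. The contradiction should come from comparing $c^x(n^x)$ computed for the $a,b$ pair with the one computed for the $a,c$ pair.

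The main obstacle is exactly this comparison. In $H_3$ the location $c^x(n^x)$ is a length of a true-stage string rather than $n^x$ itself. So the argument of the $K_3$-versus-$L_3$ proof, which bounds $n^x$ by $n_{a,b}$ via the codes, only yields $c^x(m)<d$ for $m<n^x$ and leaves $c^x(n^x)$ free. This is presumably why there is no homomorphism in this direction: $L_3$ fixes edges at the cheap location~$n$, while the opponent's edge location keeps growing. I would try to exploit \cref{lem:the_longest_square_initial_segment}\ref{item:longest_square_initial_segment:when_n_increases}, that $c^t(n^t)=|t|$ whenever $n$ increases. The idea is to make $z$ oscillate, forcing the opponent's approximations $n^{t}$ to drop and rise infinitely often at the relevant level, so that $c^x(n^x)$ is pushed beyond every bound. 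If this works, $n^x=\omega$ or the required edge location is infinite, and no edge exists. Making this oscillation compatible with $z$ having finite $n^z$, while keeping $a,b,c\in\WWW_3$, is the delicate part of the construction.
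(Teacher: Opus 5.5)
There is a genuine gap. The case you call ``the main obstacle'' is the entire content of the proposition, and the proposal leaves it open. After the split at $d=\min(\zeta^a_\ell\triangle\zeta^b_\ell)$ and one switch to the point $c$, you have only two facts. First, $c^x(m)<d$ for $m<n^x$, from $F(a),F(b)\in\YYY_3$. Second, any edge between $F(a)$ and $F(c)$ sits at a location $>d$. As you observe, these are compatible, since $c^x(n^x-1)<d<c^x(n^x)$ is possible. So the one-switch argument of \cref{prop:no_embedding_of_K_3_into_L_3} yields nothing here.

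The proposed oscillation does not fill this gap. By \cref{lem:basics_on_coding_lengths_c}\ref{item:basics_on_coding_lengths:is_finite}, $c^x(m)$ is finite for every finite $m\le n^x$. So pushing $c^x(n^x)$ beyond every bound amounts to forcing $n^x=\omega$, and you give no mechanism that compels the fixed map $F$ to do that. Moreover, if $z$ returns to $n^{s_\ell}=0$ infinitely often, then $n^z=0$, and only the $a$--$b$ edge counts anyway. What is actually needed is much less: just $n^x\ne n_0$ for one specific $n_0$.

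The missing idea is that an $H_3$-edge at a given position is a finitely observable commitment about $x$. Suppose $F(a),F(b)$ are adjacent and $\zeta^a_\ell,\zeta^b_\ell$ first differ at $c_0$. Then $c^x(n^x)=c_0$. By \cref{lem:basics_of_square_extensions}\ref{item:basics_of_square_extensions:hit_then_s_precisely}, the string $r_0=x\rest{c_0}$ (already visible in $t^a_\ell$) must be the shortest $r$ with $r\sqsubseteq_{n^x}x$, and $n^x=n_0:=n^{r_0}$.

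The paper's construction proceeds as follows.
\begin{itemize}
\item It monitors this first commitment. If $r_0$ is not shortest, or a later $t^a_\ell$ shows $r_0\not\sqsubset_{n_0}x$, it works toward $0$ forever and wins with $a,b$.
\item Otherwise it works toward $(\ell^*,0^{\ell^*})$, keeping $a,d\in\WWW_3$, until $\zeta^a_\ell,\zeta^d_\ell$ split at some $c_1$. Here $c_1>c_0$, because both extend $\zeta^a_{\ell^*}$.
\item This gives a second commitment, $r_1=x\rest{c_1}$ and $n_1=n^{r_1}$, monitored the same way. If it fails, the construction works toward $(\ell^*,0^{\ell^*})$ forever and wins with $a,d$.
\item Otherwise it switches back to working toward $0$, still monitoring both commitments.
\end{itemize}

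If both commitments survive, then $r_0\sqsubset_{n_0}r_1$ by \cref{lem:basics_of_square_extensions}\ref{item:basics_of_square_extensions:club}. Minimality of $r_1$ then forces $n_0<n_1$. Hence $r_1\sqsubset_{n_1}x$ gives $n^x\ge n_1>n_0$, so $F(a),F(b)$ are not adjacent even though $a,b$ are.

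So two switches suffice. The leverage comes from the rigidity of the shortest $\sqsubseteq_m$-witnesses: they satisfy $n^r=m$, and their lengths are strictly increasing in $m$. It does not come from driving the opponent's edge location to infinity.
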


The argument will be an elaboration on the proof of \cref{prop:no_embedding_of_K_3_into_L_3}. The difficulty is that we must construct elements of $\WWW_3$, rather than $\XXX_3$; that is, we need to make sure that we are coding correctly. Our advantage over the opponent is that we can keep changing our mind between, say, coding at $n=0$ and coding at some large value~$\ell^*$, whereas if the opponent stops coding at some current version of $c(n^*)$, and later wants to go back to $c(n^*)$, the new version of $c(n^*)$ is forced to be large. 

We need an elaboration on \cref{lem:alpha_is_three:extending_s_to_have_desired_n}. 

\begin{lemma} \label{lem:alpha_is_three:more_on_possibilities_for_extensions}
  Let $s\in 2^{<\Leaves_3}$; let $m\ge n^s$, and let $\zeta\in 2^{<\w}$ with $|\zeta|= m$. There is some $t\in 2^{<\Leaves_3}$ such that:
  \begin{itemize}
    \item $n^t = m$; 
    \item $s\sqsubseteq_{n^s} t$; 
    \item For all $\ell$ with $n_s \le \ell < m$, $k^t(\ell)$ codes $\zeta\rest{(\ell+1)}$.
  \end{itemize}
\end{lemma}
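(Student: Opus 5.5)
The plan is to build $t$ from $s$ by a finite sequence of one-leaf extensions, tracking $n^r$ and $k^r(\cdot)$ along the way. Since $\alpha=3$, we have $\alpha^*=1$, so $\preceq_{\alpha^*}$ is just $\preceq$ (\cref{lem:true_stages:basic_properties}\ref{item:true_stages:basic_properties:zero}), and \cref{def:n_t_and_k_t_m:general_case} reduces to \cref{def:alpha_is_three:n_t_and_k_t_m}. Consequently, $s\sqsubseteq_{n^s} t$ only requires three things: $s\preceq t$, $n^t\ge n^s$, and $k^t(\ell)=k^s(\ell)$ for all $\ell<n^s$. If $m=n^s$, take $t=s$. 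So assume $m>n^s$.

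There are two basic moves.

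\emph{Zero move.} Appending a $0$ to $r$ gives $r'$ with $n^{r'}=n^r+1$, and $k^{r'}(\ell)=k^r(\ell)$ for all $\ell<n^r$. Indeed, no new $1$ appears, so the first clause of the definition does not fire, and the values $k$ depend only on the set of $1$'s present.

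\emph{Hit move.} Let $\ell<n^r$, and let $a$ be chosen so that the leaf $\smallseq{\ell,k^r(\ell),a}$ is $<^*$-later than every leaf in $\dom r$. First pad with $0$'s up to that leaf, then put a $1$ there. In the resulting $r'$, we get $n^{r'}=\ell$: this is the only new $1$, and for $\ell'<\ell$ the values $k(\ell')$ are unchanged. Also $k^{r'}(\ell)$ is strictly larger than before. All other $k(\ell')$ with $\ell'<\ell$ are untouched.

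Before starting, choose codes once and for all. For each $\ell$ with $n^s\le\ell<m$, choose $K_\ell$ such that $\zeta\rest{(\ell+1)}\prec M_{K_\ell}$ and $K_\ell$ exceeds all the indices currently relevant (say, all $k$ with $\smallseq{\ell,k,a}\in\dom s$ for some $a$). This is possible because $(M_k)$ is dense in Cantor space, so each basic clopen set contains $M_k$ for infinitely many $k$.

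Then process $\ell=n^s,n^s+1,\dots,m-1$ in increasing order. At stage $\ell$:
\begin{itemize}
\item first apply zero moves until $n^r>\ell$;
\item then apply hit moves at $\ell$ repeatedly until $k^r(\ell)=K_\ell$.
\end{itemize}
Each hit move raises $k^r(\ell)$, since the least index without a $1$ increases. So after finitely many hits we reach exactly $K_\ell$, with every $k<K_\ell$ witnessed by a $1$ at some $\smallseq{\ell,k,a}$ and no $1$ at any $\smallseq{\ell,K_\ell,a}$.

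Later stages only place $1$'s at locations $\smallseq{\ell',\cdot,\cdot}$ with $\ell'>\ell$. Hence they never alter $k(\ell)$: its value is the least index with no $1$ under $\smallseq{\ell,k}$, and that set of $1$'s is frozen once stage $\ell$ ends. Also, every drop of $n$ goes down to some $\ell\ge n^s$. Therefore along the entire construction, every intermediate $r$ satisfies $n^r\ge n^s$ and $k^r(\ell')=k^s(\ell')$ for $\ell'<n^s$.

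After stage $m-1$ we have $n^r=m-1$. One more zero move gives $t$ with $n^t=m$. Its values are $k^t(\ell)=K_\ell$ for $n^s\le\ell<m$, each of which codes $\zeta\rest{(\ell+1)}$ by the choice of $K_\ell$. Finally, $s\sqsubseteq_{n^s} t$ holds by the invariant above.

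The main point needing care is the bookkeeping in the hit move. A hit only lowers $n$ if it lands on the \emph{current} guess $k^{r^-}(\ell)$ with $\ell<n^{r^-}$. So before each hit at $\ell$ we must ensure $n^r>\ell$ (via zero moves, which is harmless), and we must check that no unintended $1$'s arise. That check holds because we only ever add $1$'s at our chosen leaves, and every other new leaf receives a $0$. Everything else is a direct reading of \cref{def:alpha_is_three:n_t_and_k_t_m}.
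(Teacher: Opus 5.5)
Your argument is correct and is the same as the paper's (only sketched) proof: appending $0$'s raises $n$ by one per leaf and leaves every $k(\ell)$ unchanged, and placing $1$'s on the current guesses $\langle \ell,k(\ell),a\rangle$ pushes each $k(\ell)$ with $n^s\le\ell<m$ up to a prechosen code of $\zeta\rest{(\ell+1)}$, without touching columns $\ell<n^s$. One bookkeeping slip: if stage $m-1$ needs no hit, your final zero move overshoots, since then $n^r=m$ already after that stage and the extra $0$ gives $m+1$. Your choice of $K_{m-1}$ allows this: it may equal the least $k$ such that $s(m-1,k,a)=1$ for no $a$, for example when $\dom s$ has no leaf of the form $\langle m-1,k,a\rangle$. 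To fix it, finish with zero moves only until $n^r=m$, or choose each $K_\ell$ strictly above that least $k$ so that every stage ends with a hit.
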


The proof is similar; we can always just extend by mostly~0's, and place 1's in the correct places to increase $k^t(\ell)$ to a desired value if necessary, and bring down~$n^t$ to the required value.

\begin{proof}[Proof of \cref{prop:L_3_is_not_minimal}]
  Let $F\colon \WWW_3\to \YYY_3$ be continuous. As in the proof of \cref{prop:no_embedding_of_K_3_into_L_3}, we define a sequence $(s_\ell)$, define $t^a_\ell, \zeta^a_\ell$, and $t^b_\ell, \zeta^b_\ell$ so that $F[s_\ell,0^\ell]\subseteq [t^a_\ell, \zeta^a_\ell]$ and $F[s_\ell,1\conc0^{\ell-1}]\subseteq [t^b_\ell,\zeta^b_\ell]$. It will be convenient for us to have $|\zeta^a_\ell|\le |t^a_\ell|$, so we can define $t^a_\ell$  as above, and then let $\zeta^a_\ell$ be the longest~$\zeta$ such that $|\zeta|\le |t^a_\ell|$ and $F[s_\ell,0^\ell]\subseteq [t^a_\ell, \zeta]$.

  Further, if the construction ever leaves phase~1 below, say at stage~$\ell^*$, then we will analogously define $(t^d_\ell, \zeta^d_\ell)$ such that $F[s_\ell,0^{\ell^*}\conc 1\conc 0^\w]\subseteq [t^d_\ell, \zeta^d_\ell]$.

  To avoid repetition, we define the following. 
  \begin{itemize}
    \item \emph{Working toward~$0$ at stage~$\ell$} means extending $s_\ell$ to $s_{\ell+1}$ satisfying $n^{s_{\ell+1}}=0$. 

    \item Suppose that $\ell^*\in \Nat$, and that $w$ is a binary string of length~$\ell^*$. A string $t\in 2^{<\Leaves_3}$ is \emph{$(\ell^*,w)$-admissible} if $n^t = \ell^*$ and for all $m<\ell^*$, $k^t(m)$ codes $w\rest{(m+1)}$. 

    \emph{Working toward $(\ell^{*},w)$} (at stage~$\ell$ of the construction) means extending~$s_{\ell}$ to a string $s_{\ell+1}$ that is $({\ell^*},w)$-admissible, and further, if $s_{\ell}$ itself is already $(\ell^*,w)$-admissible, then $s_{\ell}\sqsubset_{\ell^*} s_{\ell+1}$. 
  \end{itemize}

  Below we describe the phases of the construction. To avoid clutter, instead of specifying a separate phase analogous to phase 3 of the previous construction, we just declare that if we ever see that $t^a_\ell$ and $t^b_\ell$ are incomparable, then we move to a terminal phase at which at every stage we work toward~$0$. Hence, for the rest of the description of the construction, we assume that  $t^a_\ell$ and $t^b_\ell$ are comparable. 

  Similarly, if the construction ever leaves phase~1, say at stage~$\ell^*$, and we later see that $t^a_\ell$ and $t^d_\ell$ are incomparable, then the construction enters a special, terminal phase at which we always work toward $(\ell^*,0^{\ell^*})$. Hence, for the rest of the description of the description of the construction, after we leave phase~1, we assume that  $t^a_\ell$ and $t^d_\ell$ are comparable. 

  \smallskip
\noindent\textit{Phase 1: $\zeta^a_\ell$ and $\zeta^b_\ell$ are comparable.} Work toward~0.

  \smallskip

  We leave phase~1 when we see that $\zeta^a_\ell$ and $\zeta^b_\ell$ are incomparable. If this happens, we define:
  \begin{itemize}
    \item $c_0 = \min (\zeta^a_\ell\symdiff \zeta^b_\ell)$; 
    \item $r_0 = t^a_\ell \rest{c_0}$; and
    \item $n_0 = n^{r_0}$. 
  \end{itemize}

  If there is some $r\sqsubset_{n_0} r_0$ then we enter a terminal phase in which we work toward~0. Also, if we ever see that $r_0 \nsqsubset_{n_0} t^a_\ell$, we enter a terminal phase in which we work toward~0. (Observe that by \cref{lem:basics_of_square_extensions}\ref{item:basics_of_square_extensions:club}, once we see that $r_0\nsqsubset_{n_0} t^a_{\ell}$, the same holds for all $\ell'>\ell$.)

  \smallskip
\noindent\textit{Phase 2: $\zeta^a_\ell$ and $\zeta^d_\ell$ are comparable.} Let $\ell^*$ be the stage at which the construction leaves phase~1. Work toward $(\ell^*, 0^{\ell^*})$. 

\smallskip

We leave phase~2 when we see that $\zeta^a_\ell$ and $\zeta^d_\ell$ are incomparable. If this happens, we define:
\begin{itemize} 
    \item $c_1 = \min (\zeta^a_\ell\symdiff \zeta^d_\ell)$; 
    \item $r_1 = t^a_\ell \rest{c_1}$; and
    \item $n_1 = n^{r_1}$. 
\end{itemize}

If there is some $r\sqsubset_{n_1} r_1$ then we enter a terminal phase in which we work toward $(\ell^*,0^{\ell^*})$. Also similarly to above, if we ever see that $r_1 \nsqsubset_{n_1} t^a_\ell$, we enter a terminal phase in which we work toward $(\ell^*,0^{\ell^*})$. If this is not the case, we stay in phase 3:

\smallskip

\noindent\textit{Phase 3.} Work toward~0. 

\smallskip

For the verification, we let $z= \bigcup_\ell s_\ell$, $a = (z,0^\w)$,  $b = (z,1\conc 0^\w)$, and
if the construction ever leaves phase~1, we let $d = (z,0^{\ell^*}\conc 1 \conc 0^\w)$. We observe: 
\begin{itemize}
  \item If in the final phase of the construction we work toward $(\ell^*, 0^{\ell^*})$ then $n^{z} = \ell^{*}$ and for all $m<\ell^*$, $k^z(m)$ codes $0^{m+1}$ (this follows from \cref{lem:_true_initial_segments_for_approximating_n_x}\ref{item:n_x_initial_segments:k_stabilises},\ref{item:n_x_initial_segments:n_liminf}). Hence, in this case, both $a$ and~$d$ are in~$\WWW_3$, and $a$ is connected to~$d$ by an edge of $L_3$.

  \item If in the final phase of the construction we work toward $0$, then $n^z =0$, and in this case, $a$ and~$b$ are in $\WWW_3$ and are connected by an edge of~$L_3$. 
\end{itemize}

We define $F(a)= (x^a,y^a)$, $F(b)= (x^b,y^b)$. We obtain ``easy victory'' if $x^a\ne x^b$, or if we never leave phase~1.  Suppose that we do leave phase~1 at stage $\ell^*$, and that $x^a = x^b = x$. We observe:
\begin{itemize}
  \item If $F(a)$ and $F(b)$ are connected by an edge of $H_3$ then $n^x = n_0$ and $c^x(n_0) = c_0$, so that $r_0$ is the shortest~$r$ with $r\sqsubset_{n_0} x$. This follows from $c^x(n^x)=c_0$, and \cref{lem:basics_of_square_extensions}\ref{item:basics_of_square_extensions:hit_then_s_precisely}. 
\end{itemize}
Hence, if there is some $r\sqsubset_{n_0} r_0$, then $a$ and~$b$ are connected by an edge, and $F(a)$ and $F(b)$ are not. Similarly, if $r_0\nsqsubset_{n_0} x$, i.e., if we ever see that $r_0\nsqsubset_{n_0} t^a_\ell$, then we win by the same outcome. Suppose that this is not the case. 

\smallskip

Since we are assuming that we do leave phase~1, let $F(d)= (x^d,y^d)$. If $x^d\ne x^a$, or if we never leave phase~2, then we again obtain easy victory. So we assume that $x^a = x^b = x^d = x$, and that we leave phase~2 at some stage. Similarly to above:
\begin{itemize}
  \item If $F(a)$ and $F(d)$ are connected by an edge then $n^x = n_1$ and $r_1$ is shortest with $r_1 \sqsubset_{n_1} x$.
\end{itemize}
Similarly to above, we may assume that $r_1\sqsubset_{n_1} x$, and is the shortest such. 

\smallskip

Now, the thing to observe is that $c_1> c_0$. This is because, as in the previous construction, $y^a$ and~$y^d$ must both extend $\zeta^a_{\ell^*}$, and $|\zeta^a_{\ell^*}|> c_0$. Hence, $r_0 \prec r_1 \prec x$.

\smallskip

So we are assuming that the final phase of the construction is phase~3;  $r_0 \sqsubset_{n_0} x$ and $r_1\sqsubset_{n_1} x$. By \cref{lem:basics_of_square_extensions}\ref{item:basics_of_square_extensions:club}, $r_0 \sqsubset_{n_0} r_1$. The minimality of~$r_1$ implies that $n_0 < n_1$. The fact that $r_1\sqsubset_{n_1} x$ implies $n^x \ge n_1$, in particular, $n^x\ne n_0$. Hence, $F(a)$ and $F(b)$ are not connected by an edge, whereas $a$ and~$b$ are. 
\end{proof}

\section{An embedding result} \label{sec:an_embedding_result}

In this section we show that $H_3$ is minimal for non-$\bSigma^0_3$-colourability. 

\begin{theorem} \label{thm:minimality_of_H_3}
  Let $X$ be a computably presented Polish space, let $G$ be a $\Sigma^1_1$ directed graph on~$X$, and suppose that there is no countable $\bSigma^0_3$ colouring of~$G$. Then there is a  continuous graph homomorphism from  $(\YYY_3, H_3)$ to $(X,G)$.
\end{theorem}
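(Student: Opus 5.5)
We plan a Kechris--Solecki--Todor\v{c}evi\'c-style construction, using the Gandy--Harrington type topology from \cite{LecomteZeleny}. By \cite[Theorem~2.1]{LecomteZeleny}, since $G$ has no countable $\bSigma^0_3$ colouring, it has no $\Sigma^0_3(\Delta^1_1)$ colouring. Let $\tau_2$ be the topology on $X$ generated by the $\Sigma^1_1\cap\bPi^0_{1}$ sets; we use the version relevant to $\alpha=3$, namely sets that are $\Sigma^1_1$ and $\bPi^0_{<3}$. Let $V$ be the union of all $\Sigma^1_1$, $\bPi^0_{<3}$, $G$-independent sets. This union is $\Pi^1_1$ and, by reflection and the remark after \cref{prop:dichotomy_alpha_alpha}, it is a countable union of such sets. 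Consequently its complement $N=X\setminus V$ is a nonempty $\Sigma^1_1$ set. Every nonempty $\Sigma^1_1$ subset of $N$ that is open in the topology $\tau_2$ then contains an edge of $G$ that is ``$\tau_2$-dense'' in the sense of \cite[Prop.~2.4]{LecomteZeleny}.

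We build $f\colon\YYY_3\to X$ by a finite-approximation scheme indexed by the finite approximations $(t,\xi)$ with $t\in 2^{<\Leaves_3}$ and $\xi\in 2^{<\w}$. To each such node we assign a nonempty $\Sigma^1_1$ set $X_{t,\xi}\subseteq N$ of diameter at most $2^{-|t|}$ in a fixed complete metric. We also attach, for the edge structure, nonempty $\Sigma^1_1$ sets $E_{t,\xi,\xi'}\subseteq G$ whenever the pair $(t,\xi),(t,\xi')$ is a potential edge. This happens when $n^t$ is the current guess at $n^x$, $c^t(n^t)$ is defined, and $\xi\symdiff\xi'=\{c^t(n^t)\}$. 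We require that the projections of $E$ lie in $X_{t,\xi}$ and $X_{t,\xi'}$. The closures of the sets must shrink along the approximations, and we arrange this with a Gandy--Harrington (Choquet) argument in the topology generated by the $\Sigma^1_1$ sets, restricted to nodes along the $\prec_{\alpha^*}=\prec_1$ true-stage structure.

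The key requirement is coherence with the true-stage machinery. When $t\prec_1 t'$ extends with $s\sqsubseteq_{m}t'$, the sets indexed by $t'$ must lie inside $\tau_2$-closures of the sets indexed by $s$, rather than inside the sets themselves. This is the ``untagging'' in dual form. The point is that along a true path $x$, the labels $k^x(m)$ stabilise (\cref{lem:_true_initial_segments_for_approximating_n_x}) and $c^x(m)$ equals $|s|$ for the shortest $s\sqsubseteq_m x$. Only the $\sqsubseteq$-true stages impose genuine inclusion. At non-true stages, where the guess $n^t$ drops, we start afresh inside the $\Pi^0_2$-closure. We use that the $\Sigma^1_1\cap\bPi^0_2$ closure of a $\Sigma^1_1$ subset of $N$ still meets $N$ nontrivially. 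At a stage where $n^t=m$ is newly reached, so that $c^t(m)=|t|$ by \cref{lem:the_longest_square_initial_segment}, we choose a fresh $G$-edge inside the current $\tau_2$-closure. We split the $y$-coordinate at position $c^t(m)$ and propagate both sides. The coding condition $k^x(m)$ codes $y\rest{(c^x(m)+1)}$ guarantees that the points whose images must be matched with the same edge set are exactly those sharing a common $y$-prefix up to $c^x(m)$. Thus only boundedly many constraints are active at each finite stage, as required by \cref{def:Y_alpha}.

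Continuity of $f$ follows because $\YYY_3$ carries the standard topology, and $f(x,y)$ is determined as $\bigcap$ of the closures of the sets along the true-stage approximations. These approximations are computed from finite initial segments of $(x,y)$, since $\YYY_3$ is $\Pi^0_2$ (\cref{lem:complexity_of_Y_alpha}) and the relation $\sqsubseteq_m$ is read off finite data. The homomorphism property comes from the edge sets $E$ along the true stage $c^x(n^x)$. The main obstacle will be the convergence argument. We must show that, along the liminf-defined true path, the sequence of Gandy--Harrington sets has nonempty intersection even though at non-true stages we take $\tau_2$-closures rather than subsets. For this we plan to use a strong Choquet game in which the non-true excursions are absorbed by the $\Pi^0_2$ closure. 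This relies on the fact that each excursion is finite and is eventually followed by a return with $s\sqsubseteq_m t$ (\cref{lem:basics_of_square_extensions}).
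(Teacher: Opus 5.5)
Your outline has the right overall shape: fresh edges when $n^t$ first reaches a new value, and closure conditions tied to $\sqsubseteq_m$. But there is a genuine gap at the one delicate step, when the guess $n^t$ comes back down to a value $n$ after an excursion.

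\textbf{The obstacle you identify is not the real one.} At a return, the edge set at the new node must lie \emph{inside} the edge set $U_{v(p)}$ of the last $\sqsubseteq_n$-true stage, not merely in its closure. This is because $G$ is only $\Sigma^1_1$, and the homomorphism property comes from intersecting these sets along the true stages. A strong Choquet argument cannot be run on sets that are only contained in closures of their predecessors, so your proposed remedy does not work.

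The paper keeps the edge sets genuinely nested along $\sqsubseteq$-chains with constant $n$. It uses closures only to maintain an invariant: if $q\sqsubseteq p$ and $n^q<n^p$, then every endpoint of an edge in $U_p$ is a limit of left or right endpoints (according to the orientation of $q$) of edges in $U_q$. At fresh-edge steps this is maintained by choosing the new edge in $Y\cap\overline{Z}\cap\bigcap_{q\sqsubseteq v(p)}\overline{K_q}$. At a return, it lets one pick a genuine edge of $U_{v(p)}$ with one endpoint as close as desired to the current point $x_{p^-}$.

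\textbf{Only one endpoint can be controlled this way.} Continuity requires the point-neighbourhoods to be nested along \emph{all} predecessors in the index set, not just along true stages, since $s\sqsubseteq_m x$ is not decided by a finite piece of $x$. So both new points $x_p$ and $x_{\hat p}$ would have to lie near the points already assigned to their immediate predecessors. An edge of $U_{v(p)}$ with both endpoints near two prescribed points need not exist; this is exactly why $L_3$ fails to be minimal.

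The missing idea is \cref{lem:TTT:v_p_and_p_hat_properties}\ref{item:TTT:v_p_p_hat:when_v_p_is_same}. Index the construction only by $\TTT$, the pairs $(t,\zeta)$ satisfying the coding condition with respect to the \emph{finite} guesses $k^t(m)$ and $c^t(m)$ (\cref{def:TTT}). Between $v(p)$ and $p$ the guess $k^s(n)$ is constant. An intermediate pair lies in $\TTT$ only if that code is compatible with the bit of $\zeta$ at position $c^p(n)$, which is where $p$ and $\hat p$ disagree. Hence at most one of $p$, $\hat p$ has elements of $\TTT$ strictly between it and its last true stage. The other side's immediate predecessor is $v(\hat p)=\widehat{v(p)}$ itself, and $U_{v(p)}\subseteq X_{v(p)}\times X_{v(\hat p)}$ handles that side automatically.

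Indexing by all pairs $(t,\xi)$, as you propose, destroys this property, since both sides then always have intermediate predecessors. Your remark that only boundedly many constraints are active does not substitute for it.

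A lesser issue: the property you state for $N$ applies only to $\tau_2$-open subsets of $N$. The fresh-edge step, however, needs edges in sets $N\cap V$ with $V$ closed $\Sigma^1_1$, and these are not of that form because $N$ is in general not $\bPi^0_2$. The paper instead uses a nonempty $\Sigma^1_1\cap\bPi^0_2$ set $Y$ from \cite[Theorem~5.1]{LecomteZeleny}, which has exactly the needed property.
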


The argument is similar to the one given in \cite{LecomteZeleny}; we give it for completeness.

\subsubsection*{Preparation}

To define the continuous map (and further witnesses for its success), we define a collection of initial segments of elements of $\YYY_3$. 

\begin{definition} \label{def:TTT}
  We let $\TTT$ be the collection of pairs $(t,\zeta)\in 2^{<\Leaves_3}\times 2^{<\w}$ satisfying:
\begin{orderedlist}
  \item $|\zeta| =|t|+1$; 
  \item For all $m<n^t$, $k^t(m)$ is a code of $\zeta\rest{(c^t(m)+1)}$. 
\end{orderedlist}
\end{definition}

Observe that by \cref{lem:basics_on_coding_lengths_c}\ref{item:basics_on_coding_lengths:at_most_length_of_t}, for all $m<n^t$, $c^t(m)\le |t|$, so $|\zeta|>|t|$ ensures that the strings $\zeta\rest{(c^t(m)+1)}$ make sense.

For brevity, for $p=(t^p,\zeta^p)\in \TTT$ we write:
  \begin{itemize}
    \item $|p| = |t^p|$; 
    \item $n^p=n^{t^p}$;
    \item For $m<n^p$, $k^p(m)=k^{t^p}(m)$; 
    \item For $m\le n^p$, $c^p(m)=c^{t^p}(m)$. 
  \end{itemize}

We also define, for $p,q\in \TTT$,
\begin{itemize}
    \item $p\preceq q$ if $t^p\preceq t^q$ and $\zeta^p\preceq \zeta^q$; 
    \item $p\sqsubseteq q$ if $p\preceq q$ and further, $t^p\sqsubseteq_{n^p} t^q$. 
  \end{itemize}  

Note that there are two conditions $p\in \TTT$ with $|p|= 0$, namely, $(\seq{}, \seq{0})$ and $(\seq{}, \seq{1})$; $n^{\seq{}}=0$, so no coding is required.  For any $q\in \TTT$, If $\zeta^q\succeq \seq{i}$, then $(\seq{}, \seq{i})\sqsubseteq q$. By \cref{lem:basics_on_coding_lengths_c}\ref{item:basics_on_coding_lengths_c:lengths_fixed_under_m_extensions}, if $p\sqsubseteq q$ then for all $m\le n^p$, $c^p(m) = c^q(m)$.

\begin{lemma} \label{lem:TTT:sqsubset_extensions}
Let $p,q,r\in \TTT$. 
  \begin{sublemma}
     \item \label{item:TTT:sqsubset:transitive} 
    $\sqsubseteq$ is transitive.


     \item  \label{item:TTT:sqsubset:club}
     If $p\preceq q \preceq r$ and $p\sqsubseteq r$ then $p\sqsubseteq q$.
   \end{sublemma}
\end{lemma}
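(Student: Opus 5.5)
The plan is to reduce both parts to the corresponding properties of $\sqsubseteq_m$ on strings (\cref{lem:basics_of_square_extensions}), together with the fact that $\preceq$ on the second coordinates is just the prefix order on binary strings. Recall that $p\sqsubseteq q$ means $p\preceq q$ and $t^p\sqsubseteq_{n^p} t^q$.

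For \ref{item:TTT:sqsubset:transitive}, suppose $p\sqsubseteq q\sqsubseteq r$.
\begin{orderedlist}
\item The relation $\preceq$ on $\TTT$ is transitive, since it is coordinatewise prefix extension; so $p\preceq r$.
\item Since $t^p\sqsubseteq_{n^p} t^q$, the definition of $\sqsubseteq_{n^p}$ gives $n^p\le \min\{n^{t^p},n^{t^q}\}$, so $n^p\le n^q$.
\item By \cref{lem:basics_of_square_extensions}\ref{item:basics_of_square_extensions:nested}, $t^q\sqsubseteq_{n^q} t^r$ yields $t^q\sqsubseteq_{n^p} t^r$. (If $n^p=n^q$ there is nothing to do.)
\item Transitivity of $\sqsubseteq_{n^p}$, \cref{lem:basics_of_square_extensions}\ref{item:basics_of_square_extensions:transitive}, then gives $t^p\sqsubseteq_{n^p} t^r$.
\end{orderedlist}
Hence $p\sqsubseteq r$.

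For \ref{item:TTT:sqsubset:club}, suppose $p\preceq q\preceq r$ and $p\sqsubseteq r$. We already have $p\preceq q$, so it remains to show $t^p\sqsubseteq_{n^p} t^q$.
\begin{orderedlist}
\item We know $t^p\preceq t^q$ as strings and $t^p\sqsubseteq_{n^p} t^r$.
\item The relation $t^q\sqsubseteq_{n^p} t^r$ is not available, so I would instead apply \cref{lem:basics_of_square_extensions}\ref{item:basics_of_square_extensions:diamond} with $r=t^p$, $s=t^q$, $t=t^r$ and $m=n^p$.
\item That lemma requires $t^p\preceq t^q$, which holds, and $t^p\sqsubseteq_{n^p} t^r$, which holds. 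It also requires $t^q\sqsubseteq_{n^p} t^r$, which is what remains to be established.
\end{orderedlist}

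The step that needs care is extracting this missing hypothesis, since only $t^q\preceq t^r$ is given. First, from $t^p\prec_{\alpha^*} t^r$ and $t^p\preceq t^q\preceq t^r$, \cref{lem:true_stages:basic_properties}\ref{item:true_stages:basic_properties:diamond} gives $t^p\preceq_{\alpha^*} t^q$. This does not give $t^q\preceq_{\alpha^*} t^r$ in general. So I would avoid the diamond lemma and argue directly from $t^p\preceq_{\alpha^*} t^q$.
\begin{orderedlist}
\item Suppose for contradiction that $t^p\not\sqsubseteq_{n^p} t^q$. Then either $n^{t^q}<n^p$, or $k^{t^q}(\ell)\ne k^{t^p}(\ell)$ for some $\ell<n^p$.
\item In either case, \cref{lem:basics_on_n_r_and_k_r_m_for_general_alpha}\ref{item:n_and_k:when_there_is_a_defect} (or the definition of $n^t$ directly) produces some $u$ with $t^p\prec_{\alpha^*} u\preceq_{\alpha^*} t^q$ and $n^u<n^p$. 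In the second case this uses $k^{t^q}(\ell)>k^{t^p}(\ell)$, which holds by \cref{lem:basics_on_n_r_and_k_r_m_for_general_alpha}\ref{item:n_and_k:k_only_increases}.
\item Any such drop is witnessed by some leaf label $T^{u}(\ell',k,a)=1$, where $\ell'<n^p$ and $k=k^{t^p}(\ell')$.
\item That label has height $3$, so it is determined by the finite string $u\preceq t^q\preceq t^r$ and survives to $t^r$; thus $T^{t^r}(\ell',k^{t^p}(\ell'),a)=1$. In this setting the nodes of height $3$ are leaves, which is what makes this survival automatic.
\item Therefore $k^{t^r}(\ell')\ne k^{t^p}(\ell')$, contradicting $t^p\sqsubseteq_{n^p} t^r$.
\end{orderedlist}
This mirrors the proof of \cref{lem:basics_of_square_extensions}\ref{item:basics_of_square_extensions:monotony}, and it is the main obstacle of the lemma.
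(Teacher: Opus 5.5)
Your part (a) is the same argument as the paper's. Part (b) is also correct, but it takes an unnecessary detour because of a missed observation. $\TTT$ is defined over $2^{<\Leaves_3}$, so $\alpha^*=1$, and by \cref{lem:true_stages:basic_properties}\ref{item:true_stages:basic_properties:zero} the relation $\preceq_{\alpha^*}$ is just $\preceq$. Your remark that $t^q\preceq_{\alpha^*} t^r$ is ``not given in general'' is therefore wrong in this setting: it is exactly the hypothesis $t^q\preceq t^r$.

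The paper's proof of (b) is one line. It applies \cref{lem:basics_of_square_extensions}\ref{item:basics_of_square_extensions:club} to $t^p\preceq_{\alpha^*} t^q\preceq_{\alpha^*} t^r$ with $m=n^p$. Your contradiction argument finds a drop $n^u<n^p$ between $t^p$ and $t^q$ and pushes the witnessing leaf label up to $t^r$. In effect it re-proves parts \ref{item:basics_of_square_extensions:monotony} and \ref{item:basics_of_square_extensions:club} of that lemma inline, specialised to height-3 nodes being leaves. It is self-contained but gains nothing over citing the lemma. It also sits oddly with your earlier claim: the persistence of leaf labels along $\preceq$ that you use at the end is precisely why $\preceq_1$ and $\preceq$ coincide.

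Two smaller points about your write-up.

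First, citing \cref{lem:true_stages:basic_properties}\ref{item:true_stages:basic_properties:diamond} to get $t^p\preceq_{\alpha^*} t^q$ is not valid as stated. That property assumes both $t^p\prec_{\alpha^*} t^r$ and $t^q\prec_{\alpha^*} t^r$, and the second is the very relation you had just declared unavailable. The conclusion is harmless here only because it is trivial when $\alpha^*=1$.

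Second, the claim that ``any such drop'' is witnessed by a label $T^u(\ell',k^{t^p}(\ell'),a)=1$ needs $u$ to be chosen shortest. Then its predecessor $\bar u$ satisfies $n^{\bar u}\ge n^p>n^u$, so the drop really happens at $u$, via a label $T^u(\ell',k^{\bar u}(\ell'),a)=1$ with $\ell'=n^u$. From there you have two options:
\begin{itemize}
\item note that $k^{\bar u}(\ell')=k^{t^p}(\ell')$, by minimality of $u$ and \cref{lem:basics_on_n_r_and_k_r_m_for_general_alpha}\ref{item:n_and_k:when_there_is_a_defect}; or
\item as in the paper's proof of \ref{item:basics_of_square_extensions:monotony}, use only $k^{t^r}(\ell')>k^{\bar u}(\ell')\ge k^{t^p}(\ell')$.
\end{itemize}
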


\begin{proof}
  \ref{item:TTT:sqsubset:transitive}: Suppose that $p\sqsubseteq q \sqsubseteq r$, so $p\preceq q \preceq r$ and $t^p\sqsubseteq_{n^p} t^q \sqsubseteq_{n^q} t^r$. Then $p\preceq r$. Since $n^p\le n^q$, we have $t^q \sqsubseteq_{n^p} t^r$ (\cref{lem:basics_of_square_extensions}\ref{item:basics_of_square_extensions:nested}); now apply \cref{lem:basics_of_square_extensions}\ref{item:basics_of_square_extensions:transitive}.

  \ref{item:TTT:sqsubset:club} follows from \cref{lem:basics_of_square_extensions}\ref{item:basics_of_square_extensions:club}.
\end{proof}

For the following lemma, we extend the relations $p\preceq q$ and $p\sqsubseteq q$ to elements of~$\YYY_3$.

\begin{lemma} \label{lem:points_have_initial_segments_in_TTT} \ 
  \begin{sublemma}
      \item \label{item:TTT:initial_segments_in_TTT:in_TTT}
      Suppose that $p\in \TTT$ or $p = (t^p,\zeta^p)\in \YYY_3$. If $s\sqsubset_{n^s} t^p$ then $q=(s,\zeta^p\rest{(|s|+1)})$ is in~$\TTT$ (and $q\sqsubseteq p$). 
  
      \item \label{item:TTT:initial_segments_in_TTT:existence_of_true_stages}
      For every $a\in \YYY_3$ there are infinitely many $p\sqsubset a$ in~$\TTT$. 
  \end{sublemma}
\end{lemma}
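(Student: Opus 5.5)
For part (a), I will check the two clauses of \cref{def:TTT} for $q=(s,\zeta^p\rest{(|s|+1)})$ directly.

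Clause~(i) holds by construction, provided $\zeta^p\rest{(|s|+1)}$ makes sense. If $p\in\YYY_3$ this is automatic. If $p\in\TTT$, then $s\preceq_{\alpha^*} t^p$ gives $s\preceq t^p$, so $|s|\le|t^p|$ and $|\zeta^p|=|t^p|+1\ge|s|+1$.

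For clause~(ii), let $m<n^s$. By \cref{lem:basics_of_square_extensions}\ref{item:basics_of_square_extensions:nested}, $s\sqsubseteq_{m+1}t^p$. In particular $m<n^{t^p}$, and $k^s(m)=k^{t^p}(m)$. Also $s\sqsubseteq_m t^p$, so \cref{lem:basics_on_coding_lengths_c}\ref{item:basics_on_coding_lengths_c:lengths_fixed_under_m_extensions} gives $c^s(m)=c^{t^p}(m)$. Since $p\in\TTT$, or $p\in\YYY_3$ (where $n^x$ and $c^x$ play the same roles), $k^{t^p}(m)$ codes $\zeta^p\rest{(c^{t^p}(m)+1)}$. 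By \cref{lem:basics_on_coding_lengths_c}\ref{item:basics_on_coding_lengths:at_most_length_of_t} we have $c^s(m)\le|s|$. Hence this string is an initial segment of $\zeta^q=\zeta^p\rest{(|s|+1)}$, and $k^s(m)$ codes $\zeta^q\rest{(c^s(m)+1)}$. This gives $q\in\TTT$.

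Finally, $q\sqsubseteq p$ holds: $s\preceq t^p$ and $\zeta^q\preceq\zeta^p$ are clear, and $t^q=s\sqsubseteq_{n^s}t^p$ is the hypothesis.

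For part (b), let $a=(x,y)\in\YYY_3$. By \cref{lem:_true_initial_segments_for_approximating_n_x}\ref{item:n_x_initial_segments:true_stages} there are infinitely many $s$ with $s\sqsubset_{n^s}x$. Applying part~(a) to each such $s$ yields $p_s=(s,y\rest{(|s|+1)})\in\TTT$ with $p_s\sqsubseteq a$. Distinct $s$ give distinct $p_s$, so there are infinitely many such $p$.

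The only delicate point is the bookkeeping in clause~(ii): passing from $m<n^s$ to agreement of both $k$ and $c$ requires applying $\sqsubseteq_{m+1}$ for the $k$-values and $\sqsubseteq_m$ for the $c$-values, both obtained from $\sqsubseteq_{n^s}$. We also need the bound $c^s(m)\le|s|$ to ensure that the coded string lies inside $\zeta^q$.
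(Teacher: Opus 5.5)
Your proof is correct and follows the paper's argument. For $m<n^s$, the relation $s\sqsubset_{n^s}t^p$ gives $k^s(m)=k^{t^p}(m)$, and \cref{lem:basics_on_coding_lengths_c}\ref{item:basics_on_coding_lengths_c:lengths_fixed_under_m_extensions} gives $c^s(m)=c^{t^p}(m)$, so the coding property transfers from $p$ to $q$; part (b) then follows from (a) and \cref{lem:_true_initial_segments_for_approximating_n_x}\ref{item:n_x_initial_segments:true_stages}. You also make explicit the length checks $|\zeta^p|\ge|s|+1$ and $c^s(m)\le|s|$, which the paper leaves to the remark after \cref{def:TTT}. Your detour through $\sqsubseteq_{m+1}$ is harmless but unnecessary, since agreement of the $k$-values below $n^s$ is already part of the definition of $s\sqsubseteq_{n^s}t^p$.
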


\begin{proof}
  For \ref{item:TTT:initial_segments_in_TTT:in_TTT}, suppose that $s\sqsubset_{n^s} t^p$. Let $m<n^s$. Then $k^s(m) = k^{t^p}(m)$, and by \cref{lem:basics_on_coding_lengths_c}\ref{item:basics_on_coding_lengths_c:lengths_fixed_under_m_extensions}, $c^s(m) = c^{t^p}(m)$. Since $p\in \TTT\cup \YYY_3$, $k^{t^p}(m)$ codes $\zeta^p\rest{(c^{t^p}(m)+1)}$. This shows that $(s,\zeta^p\rest{(|s|+1)})\in \TTT$. 

  \ref{item:TTT:initial_segments_in_TTT:existence_of_true_stages} follows from \ref{item:TTT:initial_segments_in_TTT:in_TTT} and  \cref{lem:_true_initial_segments_for_approximating_n_x}\ref{item:n_x_initial_segments:true_stages}.  
\end{proof}

\begin{definition} \label{def:TTT:v_p_and_p_hat}
Let $p\in \TTT$. 
\begin{sublemma}
  \item We let $\hat p$ be the pair $(t^p,\hat \zeta^p)$, where $|\hat \zeta^p| = |\zeta^p|$ and $\hat  \zeta^p\triangle \zeta^p  = \{c^p(n^p)\}$. 

  \item We call $p$ a \emph{lefty} if $\zeta(c^p(n^p))=0$, a \emph{righty} otherwise.   

  \item If $|p|>0$, we let $v(p)$ be the longest $q\sqsubset p$ in~$\TTT$. 
\end{sublemma}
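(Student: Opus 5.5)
The statement above is a definition, so what needs proving is that its three clauses make sense. Specifically: $\hat p$ is a well-defined element of $\TTT$, lefty/righty is determined, and for $|p|>0$ the longest $q\sqsubset p$ in $\TTT$ exists and is unique. I would check these in turn, using only \cref{lem:basics_on_coding_lengths_c} and the remarks following \cref{def:TTT}.

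First, $c^p(n^p)$ is a legitimate position in $\zeta^p$. By \cref{lem:basics_on_coding_lengths_c}\ref{item:basics_on_coding_lengths:is_finite} and \ref{item:basics_on_coding_lengths:at_most_length_of_t}, applied with $m=n^p\le n^{t^p}$ (which is finite since $t^p$ is finite), we get $c^p(n^p)\le |t^p|<|t^p|+1=|\zeta^p|$. So $\zeta^p(c^p(n^p))$ is defined, which settles the lefty/righty clause. It also shows that $\hat\zeta^p$ is a string of the same length as $\zeta^p$, differing from it in exactly one place.

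Second, $\hat p\in\TTT$. Condition (i) of \cref{def:TTT} is immediate, since $|\hat\zeta^p|=|t^p|+1$. For condition (ii), let $m<n^p$. By \cref{lem:basics_on_coding_lengths_c}\ref{item:basics_on_coding_lengths:lengths_are_strictly_increasing}, $c^p(m)<c^p(n^p)$, so $c^p(m)+1\le c^p(n^p)$. Hence $\hat\zeta^p\rest{(c^p(m)+1)}=\zeta^p\rest{(c^p(m)+1)}$, and this is coded by $k^p(m)$ because $p\in\TTT$. Since $t^{\hat p}=t^p$, all of $n$, $k$ and $c$ are unchanged, and so $\hat p\in\TTT$. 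In other words, the flipped bit sits strictly above every coding length, which is exactly why the edges of $H_3$ are placed at $c^x(n^x)$.

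Third, $v(p)$ exists when $|p|>0$. Nonemptiness: as noted after \cref{def:TTT}, $(\seq{},\seq{i})\sqsubseteq p$ where $i=\zeta^p(0)$. This inclusion is strict because $|p|>0=|(\seq{},\seq{i})|$. Uniqueness of a longest one: any $q\sqsubset p$ has $t^q\preceq t^p$, and $\zeta^q=\zeta^p\rest{(|t^q|+1)}$ since $|\zeta^q|=|t^q|+1$. So $q$ is determined by its length, and there are at most $|p|$ candidates, all of length less than $|p|$. Therefore a unique longest one exists. The only point needing care is this determination of $q$ by its length, which follows from clause (i) of \cref{def:TTT}. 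I expect no real obstacle; the main step is the strict monotonicity of $c^p$ used in the second part.
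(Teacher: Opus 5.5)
Your proposal is correct and matches the paper's justification, which observes that $v(p)$ makes sense because, when $|p|>0$, one of $(\seq{},\seq{0})$, $(\seq{},\seq{1})$ is a strict $\sqsubset$-predecessor of $p$; the paper also notes that $\hat{\hat p}=p$. Your additional checks are also correct and fill in points the paper leaves implicit: that $c^p(n^p)<|\zeta^p|$, that $\hat p\in\TTT$ because the flipped bit lies strictly above every $c^p(m)$ for $m<n^p$, and that the longest predecessor is unique since each candidate is determined by its length.
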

\end{definition}

Note that $\hat{\hat{p}} = p$. The definition of $v(p)$ makes sense, since if $|p|>0$ then there is some $q\in \TTT$ such that $q\sqsubset p$, namely one of $(\seq{}, \seq{0})$ or $(\seq{},\seq{1})$. 

For the proof of the following lemma, and below, we recall that since we are working with $\alpha=3$, we have $\alpha^*=1$; so for $s,t\in 2^{<\Leaves_3}$, $s\prec_{\alpha^*} t$ means $s\prec t$ (\cref{lem:true_stages:basic_properties}\ref{item:true_stages:basic_properties:zero}).

\begin{lemma} \label{lem:TTT:v_p_and_p_hat_properties}
  Let $p\in \TTT$ with $|p|>0$. 
  \begin{sublemma}
    \item \label{item:TTT:v_p_p_hat:longest_with_n_q_below_n_p}
    If $v(p)\prec q \prec p$ then $n^q>n^p$. 

    \item \label{item:TTT:v_p_p_hat:same_parity_when_same_n}
    If $q\sqsubseteq p$ and $n^q = n^p$ then $p$ and~$q$ have the same orientation (they are both lefties or both righties). 

    \item \label{item:TTT:v_p_p_hat:when_v_p_is_same}
    If $n^{v(p)} = n^p$ then $v(\hat{p}) = \hat{v(p)}$, and either $v(p)$ is the $\prec$-predecessor of $p$, or $v(\hat{p})$ is the $\prec$-predecessor of $\hat{p}$. 

    \item \label{item:TTT:v_p_p_hat:when_v_p_is_smaller}
    If $n^{v(p)}<n^p$ then $v(\hat p) = v(p)$, and $v(p)$ is the $\prec$-predecessor of both~$p$ and~$\hat{p}$. 
  \end{sublemma}
\end{lemma}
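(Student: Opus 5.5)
Everything takes place with $\alpha=3$, so $\alpha^*=1$ and $\prec_{\alpha^*}$ is simply $\prec$. The main tools are \cref{lem:the_longest_square_initial_segment} and \cref{lem:basics_of_square_extensions}, applied to the first coordinates. The key observation is that $\hat p$ and $p$ have the same first coordinate $t^p$, so $n^{\hat p}=n^p$, $k^{\hat p}=k^p$ and $c^{\hat p}=c^p$. Moreover, $\zeta^p$ and $\hat\zeta^p$ agree on every position except $c^p(n^p)$, and $c^p(m)<c^p(n^p)$ for $m<n^p$ (\cref{lem:basics_on_coding_lengths_c}\ref{item:basics_on_coding_lengths:lengths_are_strictly_increasing}). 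Hence every coding requirement is still met and $\hat p\in\TTT$. Whether $q\sqsubseteq p$ holds is decided by the first coordinates together with $\zeta^q\preceq\zeta^p$.

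For (a), let $s\prec t^p$ be the longest with $n^s\le n^p$. By \cref{lem:the_longest_square_initial_segment}\ref{item:longest_square_initial_segment:square_extension}, $s\sqsubseteq_{n^s}t^p$. Then $(s,\zeta^p\rest{(|s|+1)})\in\TTT$ and it lies $\sqsubset p$, by \cref{lem:points_have_initial_segments_in_TTT}\ref{item:TTT:initial_segments_in_TTT:in_TTT}. So $|v(p)|\ge|s|$. Now take any $q$ with $v(p)\prec q\prec p$. Then $|t^q|>|s|$, and by the choice of $s$ we get $n^{t^q}>n^p$. For (b), take $q\sqsubseteq p$ with $n^q=n^p$. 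Then $c^q(n^q)=c^p(n^p)$ by \cref{lem:basics_on_coding_lengths_c}\ref{item:basics_on_coding_lengths_c:lengths_fixed_under_m_extensions}. Since $|\zeta^q|=|q|+1>c^q(n^q)$ and $\zeta^q\preceq\zeta^p$, the two strings agree at that position.

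For (c) and (d), first notice that $v(p)$ is determined by $t^p$ alone, together with truncating the second coordinate. Write $v(p)=(s,\zeta^p\rest{(|s|+1)})$. Here $s$ is the longest $s\prec t^p$ with $s\sqsubset_{n^s}t^p$: by \cref{lem:points_have_initial_segments_in_TTT}\ref{item:TTT:initial_segments_in_TTT:in_TTT} any such $s$ yields an element of $\TTT$ lying $\sqsubset p$, and conversely. The same first coordinate $s$ also works for $\hat p$. The only question is the second coordinate, $\hat\zeta^p\rest{(|s|+1)}$, which differs from $\zeta^p\rest{(|s|+1)}$ exactly when $c^p(n^p)\le|s|$.

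In case (d), $n^s<n^p$. Then \cref{lem:the_longest_square_initial_segment}\ref{item:longest_square_initial_segment:when_n_increases} (with (a) identifying $s$ as the longest $s\prec t^p$ with $n^s\le n^p$) shows that $s$ is the immediate predecessor of $t^p$ and $c^p(n^p)=|t^p|>|s|$. So the truncations agree, giving $v(\hat p)=v(p)$, and this common value is the predecessor of both $p$ and $\hat p$. In case (c), $n^s=n^p$. Then $c^p(n^p)=c^s(n^s)\le|s|$, so the truncations differ exactly at $c^p(n^p)$, and $v(\hat p)=\widehat{v(p)}$. For the predecessor claim we need $|s|=|t^p|-1$, which I would get as follows. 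Let $s'$ be the predecessor of $t^p$. Either $n^{s'}\le n^p$, in which case $s'=s$ by (a) and maximality, or $n^{s'}>n^p$. I expect the main obstacle here: in the second case I need to show the predecessor statement still holds, and the lemma's phrasing (``$v(p)$ is the predecessor of $p$, \emph{or} $v(\hat p)$ is the predecessor of $\hat p$'') suggests that I must instead check that the second coordinates satisfy $|\zeta|=|t|+1$ consistently. Since these are identical for $p$ and $\hat p$, I suspect the disjunction collapses. I would therefore recheck (c) carefully against the definition of $\prec$ on $\TTT$ (the immediate $\prec$-predecessor in $\TTT$, not in $2^{<\Leaves_3}$), and settle it by showing there is no element of $\TTT$ strictly between $v(p)$ and $p$ except possibly one which lies on the side of either $\zeta^p$ or $\hat\zeta^p$.
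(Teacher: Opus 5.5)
Your arguments for (a), (b), (d), and for the identity $v(\hat p)=\widehat{v(p)}$ in (c), are correct and follow the paper's proof. Your explicit checks that $\hat p\in\TTT$ and that $t^{v(p)}$ is the longest $s\prec t^p$ with $n^s\le n^p$ are details the paper leaves implicit.

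The gap is the second assertion of (c), which you leave unproved, and both routes you float for it fail. First, you cannot show $|t^{v(p)}|=|t^p|-1$ in general. Extend $\seq{}$ by $0$'s for a while, so that $n^r=|r|$ and $k^r(0)=0$. Then put a $1$ at the next leaf of the form $\seq{0,0,a}$. The resulting $t^p$ has $n^{t^p}=0$, but $t^{v(p)}=\seq{}$.

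Second, the disjunction does not collapse. Membership in $\TTT$ is governed by the coding requirement, not just by $|\zeta|=|t|+1$. The strings $\zeta^p$ and $\hat\zeta^p$ differ exactly at the position that the level-$n^p$ code must respect. In the example, suppose $\zeta^p\prec M_0$. Then every $(r,\zeta^p\rest{(|r|+1)})$ with $\seq{}\prec r\prec t^p$ lies in $\TTT$. By contrast, no pair $(r,\hat\zeta^p\rest{(|r|+1)})$ does, since $c^r(0)=0$ and $\hat\zeta^p(0)\neq M_0(0)$. This asymmetry is genuine, and it is why the fourth case of the construction in \cref{sec:an_embedding_result} is needed.

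The missing idea is that all intermediate strings carry one and the same code at level $n$. Let $n=n^p$, $c=c^p(n)$ and $s=t^{v(p)}$, and let $r$ be any string with $s\prec r\prec t^p$.
\begin{itemize}
\item By maximality of $s$ (as in your proof of (a)), $n^r>n$.
\item By \cref{lem:basics_of_square_extensions}\ref{item:basics_of_square_extensions:club}, $s\sqsubseteq_n r$. Hence $c^r(n)=c^s(n)=c$ by \cref{lem:basics_on_coding_lengths_c}\ref{item:basics_on_coding_lengths_c:lengths_fixed_under_m_extensions}.
\item Since $s$ is the longest $s\prec t^p$ with $n^s\le n^p$, \cref{lem:the_longest_square_initial_segment}\ref{item:longest_square_initial_segment:k_constant} yields a single $k$ with $k^r(n)=k$ for all such $r$.
\end{itemize}
Since $c\le|s|<|r|$, an element of $\TTT$ strictly between $v(p)$ and $p$ forces $M_k$ to extend $\zeta^p\rest{(c+1)}$. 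Likewise, an element strictly between $v(\hat p)$ and $\hat p$ forces $M_k$ to extend $\hat\zeta^p\rest{(c+1)}$. These two strings differ at position $c$, so intermediate elements of $\TTT$ exist below at most one of $p$ and $\hat p$. On the other side, $v(\cdot)$ is the $\prec$-predecessor in $\TTT$, which is the required disjunction.
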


\begin{proof}
  By \cref{lem:points_have_initial_segments_in_TTT}\ref{item:TTT:initial_segments_in_TTT:in_TTT}, $t^{v(p)}$ is the longest~$s$ with $s\sqsubset_{n^s} t^p$. Then~\ref{item:TTT:v_p_p_hat:longest_with_n_q_below_n_p} follows from \cref{lem:the_longest_square_initial_segment}\ref{item:longest_square_initial_segment:square_extension}.  Observe also that $|v(p)|$ only depends on $t^p$, hence, $t^{v(p)} = t^{v(\hat p)}$, in particular, $|v(p)| = |v(\hat{p})|$. 

  \smallskip

 \ref{item:TTT:v_p_p_hat:same_parity_when_same_n} follows from the fact that $c^q(n^q) = c^p(n^p)$ (\cref{lem:basics_on_coding_lengths_c}\ref{item:basics_on_coding_lengths_c:lengths_fixed_under_m_extensions}) and $\zeta^q\preceq \zeta^p$. 

 \smallskip

 \ref{item:TTT:v_p_p_hat:when_v_p_is_same}: suppose that $n^{v(p)}= n^p$. Let $n = n^p$ and $c = c^p(n)$. By~\ref{item:TTT:v_p_p_hat:same_parity_when_same_n}, $c^{v(p)}(n) = c$. Since $|v(p)| = |v(\hat{p})|$, we get that $v(\hat{p}) = \hat{v(p)}$. 

 For any $s$ with $t^{v(p)}\prec s \prec t^p$ we have $n^s>n$ and $t^{v(p)}\sqsubset_{n} s$ (\cref{lem:the_longest_square_initial_segment}\ref{item:longest_square_initial_segment:square_extension} and \cref{lem:basics_of_square_extensions}\ref{item:basics_of_square_extensions:club}), so $c^s(n) = c$ (\cref{lem:basics_on_coding_lengths_c}\ref{item:basics_on_coding_lengths_c:lengths_fixed_under_m_extensions} again). By \cref{lem:the_longest_square_initial_segment}\ref{item:longest_square_initial_segment:k_constant}, let~$k$ be the constant value $k^s(n)$ for all~$s$ with $t^{v(p)}\prec s \prec t^p$. 

 Suppose that $v(p)$ is not the $\prec$-predecessor of~$p$ in~$\TTT$, i.e., that there is some $q\in \TTT$ with $v(p)\prec q\prec p$. Since $q\in \TTT$, $n^q>n$, $c^q(n)=c$ and $k^q(n)= k$, $k$ codes $\zeta^p\rest{(c+1)}$. For any~$s$ with $t^{v(p)}\prec s \prec t^p$, $k = k^s(n)$ cannot code $\zeta^{\hat{p}}\rest{(c+1)}$, since $\zeta^p(c)\ne \zeta^{\hat{p}}(c)$. Hence, $(s,\zeta^{\hat{p}}\rest{(|s|+1)})$ cannot be in~$\TTT$. This shows that $v(\hat{p})$ is the $\prec$-predecessor of~$\hat{p}$ in~$\TTT$. 

 \smallskip

 \ref{item:TTT:v_p_p_hat:when_v_p_is_smaller}: suppose that $n^{v(p)}< n^p$. By \cref{lem:the_longest_square_initial_segment}\ref{item:longest_square_initial_segment:when_n_increases}, $c^p(n^p)= |p|$. Since $|v(p)|<|p|$, this implies that $\zeta^{p}\rest{(|v(p)|+1)} = \zeta^{\hat{p}}\rest{(|v(p)|+1)}$, so $v(\hat{p}) = v(p)$. Further, \cref{lem:the_longest_square_initial_segment}\ref{item:longest_square_initial_segment:when_n_increases} says that $t^{v(p)}$ is the $\prec$-predecessor of $t^p$, i.e., $|p| = |v(p)|+1$, whence there cannot be any~$q$ with $v(p)\prec q\prec p$ (or $\prec \hat{p}$). 
\end{proof}

\subsubsection*{Construction}

Suppose that $X$ is a computably presented Polish space, $G$ is a $\Sigma^1_1$ directed graph on~$X$, and that there is no countable $\bSigma^0_3$ colouring of~$G$.  Following \cite[Theorem~5.1]{LecomteZeleny}, we obtain a nonempty $\bPi^0_2$, $\Sigma^1_1$ set $Y\subseteq X$ with the property that for every closed, $\Sigma^1_1$ set $V\subseteq X$, if $V\cap Y\ne \emptyset$ then $V\cap Y$ is not $G$-independent. 

We replace $G$ by $G\cap Y^2$ (note that this keeps the edge relation $\Sigma^1_1$), so we assume that $G\subseteq Y^2$.

For each $p\in \TTT$ we will define:

\begin{itemize}
  \item A point $x_p\in X$;
  \item A rational open ball $X_p\subseteq X$;
  \item An effectively closed set $D_p\subseteq X^2\times \w^\w$. 
\end{itemize}
We will ensure that:
\begin{orderedlist} 
  \item \label{item:construction:edge_condition:x_p_in_X_p}
  $x_p\in X_p$.

  \item \label{item:construction:edge_condition:D_p_is_symmetric}
  $D_p = D_{\hat p}$.
  
  \item \label{item:construction:edge_condition:U_p_in_G}
  The projection~$U_p$ of $D_p$ onto~$X^2$ is a subset of~$G$.
  
  \item \label{item:construction:edge_condition:x_p_x_p_hat_is_in_U_p}
  If~$p$ is a lefty, then $(x_p,x_{\hat p})\in U_p$.
  
  \item \label{item:construction:edge_condition:U_p_contained_in_X_p_times_X_p_hat}
  If~$p$ is a lefty, then $U_p\subseteq X_p\times X_{\hat p}$.
  
  \item \label{item:construction:edge_condition:diameters}
  If $|p|>0$ then the diameters of $X_p$ and of~$D_p$ are $\le 1/|p|$. 

  \item \label{item:construction:edge_condition:simple_extension}
   If $p\prec q$ then $\overline{X}_q\subseteq X_p$. 

  \item \label{item:construction:2-extensions}
  If $q\sqsubseteq p$ and $n^p = n^q$ then $D_p\subseteq D_q$. 

  \item  \label{item:construction:main_closure_condition}
  If $q\sqsubseteq p$ and $n^q < n^p$, and~$q$ is a lefty, then every (left or right) endpoint of an edge in~$U_p$ is a limit point of points which are left endpoints of edges in~$U_q$; analogously if~$q$ is a righty. 
\end{orderedlist}

Note that \ref{item:construction:edge_condition:D_p_is_symmetric} and \ref{item:construction:edge_condition:x_p_x_p_hat_is_in_U_p} imply that if~$p$ is a righty, then $(x_{\hat{p}},x_p)\in U_p$, and similarly, \ref{item:construction:edge_condition:U_p_contained_in_X_p_times_X_p_hat} implies that when~$p$ is a righty, $U_p\subseteq X_{\hat{p}}\times X_p$. Note that~\ref{item:construction:edge_condition:x_p_x_p_hat_is_in_U_p} implies that for all~$p$, $x_p\in Y$, since we modified~$G$ so that $G\subseteq Y^2$. Also observe that~\ref{item:construction:edge_condition:x_p_x_p_hat_is_in_U_p} and~\ref{item:construction:main_closure_condition} imply that if $q\sqsubseteq p$ and $n^q < n^p$, then both $x_p$ and $x_{\hat{p}}$ are limits of points that are left endpoints of edges in~$U_q$ (or right, depending on the orientation of~$q$). 

\medskip

Let $p\in \TTT$; we suppose that the construction has been performed for all $q\prec p$ in~$\TTT$. We will consider both~$p$ and~$\hat{p}$ at the same time. There are several cases.

\medskip

\noindent{\textit{First case:}} $|p|=0$. We let $D_{p} = D_{\hat p}$ be an effectively closed set projecting to~$G$; $X_{p} = X_{\hat p} = X$ and we choose $x_p,x_{\hat{p}}$ so that $(x_p,x_{\hat{p}})\in G$, where $p = (\seq{}, \seq{0})$ is the lefty condition with $|p|=0$.

\medskip

Suppose that $|p|>0$, so $v(p)$ and $v(\hat p)$ are defined. 

\smallskip

\noindent{\textit{Second case:}} $n^{v(p)} < n^p$. This situation will be symmetric between~$p$ and~$\hat{p}$, so suppose that~$p$ is a lefty. 

For each~$q$ with $q \sqsubseteq v(p)$, 
\begin{itemize}
  \item If~$q$ is a lefty, let $K_q$ be the collection of left end-points of edges in~$U_q$;
  \item If~$q$ is a righty, let $K_q$ be the collection of right end-points of edges in~$U_q$. 
\end{itemize}

Let~$Z$ be a rational open ball centered at $x_{v(p)}$ whose closure is contained in $X_{v(p)}$. Let 
\[
R =  Y\cap \overline{Z} \cap \bigcap_{q\sqsubseteq v(p)} \overline{K_q}. 
 \]
Note that~$R$ is the intersection with~$Y$ of a closed $\Sigma^1_1$ set. We claim that $x_{v(p)}\in R$. Let $q\sqsubseteq v(p)$. To see that $x_{v(p)}\in \overline{K_q}$, there are two cases. If $n^q<n^{v(p)}$, then $x_{v(p)}\in K_q$ is guaranteed by requirement \ref{item:construction:main_closure_condition} of the construction, which by induction, holds for $v(p)$. Suppose that $n^q = n^{v(p)}$. Then by requirement~\ref{item:construction:2-extensions}, $U_{v(p)}\subseteq U_q$. By \cref{lem:TTT:v_p_and_p_hat_properties}\ref{item:TTT:v_p_p_hat:same_parity_when_same_n}, $v(p)$ and~$q$ have the same orientation. By~\ref{item:construction:edge_condition:x_p_x_p_hat_is_in_U_p}, $x_{v(p)}$ is a left / right end-point of an edge in $U_{v(p)}$, hence of $U_q$, and so $x_{v(p)}\in K_q$. 

By the main property of~$Y$, we can choose $x_p$ and $x_{\hat p}$ in~$R$ that are connected by an edge of~$G$. We then choose sufficiently small neighbourhoods $X_p$ and $X_{\hat p}$ of~$x_p$ and $x_{\hat p}$, subsets of~$Z$, and sufficiently small $D_p = D_{\hat p}\subseteq D_{v(p)}$ that projects to a subset of $G\cap R^2 \cap (X_p\times X_{\hat p})$, and whose projection contains the edge $(x_p,x_{\hat{p}})$.

Let us verify that the requirements of the construction hold for~$p$ and $\hat p$. The requirements \ref{item:construction:edge_condition:x_p_in_X_p}--\ref{item:construction:edge_condition:diameters} are immediate by our choices. To verify~\ref{item:construction:edge_condition:simple_extension}, let $q\in \TTT$, and suppose that either $q\prec p$ or $q\prec \hat p$. By \cref{lem:TTT:v_p_and_p_hat_properties}\ref{item:TTT:v_p_p_hat:when_v_p_is_smaller}, $q\preceq v(p)$. By induction, $X_{v(p)}\subseteq X_q$, and by construction, $\overline{X}_p, \overline{X}_{\hat{p}}\subseteq X_{v(p)}$. 

To verify~\ref{item:construction:2-extensions} and~\ref{item:construction:main_closure_condition}, suppose that $q\sqsubset p$ or $q\sqsubset \hat p$. By \cref{lem:TTT:sqsubset_extensions}\ref{item:TTT:sqsubset:club}, $q\sqsubseteq v(p)$. Since $n^q \le n^{v(p)}< n^p$, in this case \ref{item:construction:2-extensions} holds vacuously; \ref{item:construction:main_closure_condition} holds by construction, since we ensured that $U_p\subseteq R^2$, so all endpoints of edges in~$U_p$ are in $R\subseteq \overline{K_q}$. 

\medskip

In the third and fourth cases, we assume that $n^{v(p)} = n^p$. 

\medskip

\noindent{\textit{Third case:}} $v(p)$ is the $\prec$-predecessor of~$p$, and $v(\hat{p})$ is the $\prec$-predecessor of~$\hat{p}$. In this case we let $x_p = x_{v(p)}$, $x_{\hat{p}} = x_{v(\hat{p})}$, and choose $X_p\subseteq X_{v(p)}$, $X_{\hat{p}}\subseteq X_{v(\hat{p})}$, and $D_p\subseteq D_{v(p)}$, appropriately small so that requirements \ref{item:construction:edge_condition:x_p_x_p_hat_is_in_U_p}, \ref{item:construction:edge_condition:U_p_contained_in_X_p_times_X_p_hat}, and \ref{item:construction:edge_condition:diameters} are satisfied. All the other requirements follow by induction, using the fact that $\hat{v(p)} = v(\hat p)$ (\cref{lem:TTT:v_p_and_p_hat_properties}\ref{item:TTT:v_p_p_hat:when_v_p_is_same}).

\medskip

\noindent{\textit{Fourth case:}} The third case does not hold. Without loss of generality, suppose that $v(p)$ is not the $\prec$-predecessor of~$p$ in~$\TTT$ (we can therefore not assume that~$p$ is a lefty). Let $p^-$ be the $\prec$-predecessor of~$p$. By \cref{lem:TTT:v_p_and_p_hat_properties}\ref{item:TTT:v_p_p_hat:longest_with_n_q_below_n_p}, $n^{p^-}> n^p = n^{v(p)}$. By induction, $x_{p^-}$ is a limit point of endpoints of edges of $U_{v(p)}$, left or right depending on the orientation of $v(p)$, which is the same as the orientation of~$p$ (\cref{lem:TTT:v_p_and_p_hat_properties}\ref{item:TTT:v_p_p_hat:same_parity_when_same_n}). We therefore can choose $x_p$ and $x_{\hat{p}}$ such that:
  \begin{itemize}
    \item $x_p\in X_{p^-}$; and
    \item The edge $(x_p, x_{\hat p})$ (or the reverse, according to parity) is in $U_{v(p)}$. 
  \end{itemize}
We observe that $x_{\hat{p}}\in X_{v(\hat p)}$, since \ref{item:construction:edge_condition:U_p_contained_in_X_p_times_X_p_hat} holds for $v(p)$. As above, we choose small neighbourhoods $X_p$ and $X_{\hat{p}}$ of $x_p$ and~$x_{\hat p}$, and choose sufficiently small $D_p\subseteq D_{v(p)}$ so that $(x_p,x_{\hat p})$ belongs to $U_p$, and $U_p\subseteq X_p\times X_{\hat p}$ (or the reverse). 

For verifying that the requirements hold, the main fact is \cref{lem:TTT:v_p_and_p_hat_properties}\ref{item:TTT:v_p_p_hat:when_v_p_is_same}, that says that $\hat{v(p)} = v(\hat{p})$ is the $\prec$-predecessor of~$\hat{p}$ in~$\TTT$ (this is really the heart of the construction, the reason that~$H_3$ is minimal and~$L_3$ is not). This mainly impacts~\ref{item:construction:edge_condition:simple_extension}. Suppose that $q\prec p$. Then $q\preceq p^-$; so $\overline{X}_p\subseteq X_{p^-}\subseteq X_{q}$. On the other hand, if $q\prec \hat p$, then $q\preceq v(\hat{p})$, and $\overline{X}_{\hat{p}}\subseteq X_{v(\hat{p})}\subseteq X_q$. 

For \ref{item:construction:2-extensions} and~\ref{item:construction:main_closure_condition}, suppose that $q\sqsubset p$ or $q\sqsubset \hat{p}$; then $q\sqsubseteq v(p)$ or $q\sqsubseteq v(\hat{p})$. If $n^q = n^p = n^{v(p)}$ then by induction, $D_{v(p)} = D_{v(\hat{p})}\subseteq D_q$, and $D_p\subseteq D_{v(p)}$. If $n^q<n^p$ then by induction, every end point of an edge in $U_{v(p)}$ is a limit of left / right endpoints of edges in $U_q$, and $U_p\subseteq U_{v(p)}$. [Note that it is only here, to keep the induction going, that we use the full~\ref{item:construction:main_closure_condition}, rather than just assuming that $x_p$ is a limit points of endpoints of~$U_q$, which is what was used up until now.]

\subsubsection*{Verification}

Having performed the construction of $x_p$, $X_p$, $D_p$ for all $p\in \TTT$, we define $F\colon \YYY_3\to X$ as follows: 
\begin{itemize}
  \item For $a\in \YYY_3$, we let $F(a)$ be the limit of $\left\{ x_p \,:\,  p\in \TTT \andd p\prec a \right\}$. 
\end{itemize}

Here we use the properties of the construction, in particular~\ref{item:construction:edge_condition:simple_extension}, as well as \cref{lem:points_have_initial_segments_in_TTT}\ref{item:TTT:initial_segments_in_TTT:existence_of_true_stages}, that ensures that the diameters of $X_p$ for $p\prec a$ indeed go to~0, to see that~$F$ is well-defined and continuous. 

We show that~$F$ is a graph homomorphism. Suppose that $(a,b)$ is an edge of~$H_3$. Then $a = (z,w)$ and $b = (z,w')$, where $n^z<\w$ and $w\symdiff w' = \{c^z(n^z)\}$. For all but finitely many $p\sqsubset a$ in~$\TTT$  we have $n^p = n^z$, and for these~$p$ we have $\hat p \sqsubset b$. If $p,q \sqsubset a$ and $p\preceq q$ then $p\sqsubseteq q$ (\cref{lem:TTT:sqsubset_extensions}\ref{item:TTT:sqsubset:club}); if $n^p = n^z = n^q$ then by requirement \ref{item:construction:2-extensions}, $D_q\subseteq D_p$. Since each $D_p$ is closed and their diameters shrink to~0, $\bigcap \left\{ D_p \,:\,  p\sqsubseteq a \andd n^p = n^z \right\}$ is nonempty; by \ref{item:construction:edge_condition:U_p_contained_in_X_p_times_X_p_hat}, this intersection necessarily projects to $(F(a), F(b))$, so we get $(F(a), F(b))\in G$, as required. 

This completes the proof of \cref{thm:minimality_of_H_3}.

\begin{remark} \label{rmk:can_use_GH}
  Instead of defining shrinking, closed $D_p\subseteq X^2\times \w^\w$ that project to $U_p$, we can (as is done in \cite{LecomteZeleny})  define $U_p \subseteq  \{ (x,x')\in X^2 \,:\, \omega_1^{(x,x')} = \omega_1^{\textup{ck}} \}$. The latter set (call it $\Omega_{X^2}$) is $\Sigma^1_1$, and the restriction of the Gandy-Harrington topology to $\Omega_{X^2}$ is Polish (whereas the Gandy-Harrington topology on all of $X^2$ is not). We can then simply require that the sets $U_p$ are shrinking in a metric that gives the Gandy-Harrington topology on $\Omega_{X^2}$. The construction is essentially the same. 
\end{remark}

\begin{remark} \label{rmk:minimality_for_higher_alphas}
  The argument above shows the following. Let $\alpha \ge 3$. Let~$X$ be a computably presented Polish space, let $G$ be a $\Sigma^1_1$ graph on~$X$, and suppose that there is no countable $\Sigma^0_\alpha(\Delta^1_1)$ colouring of~$G$. Then there is a graph homomorphism $g\colon (\YYY_\alpha, H_\alpha)\to (X,G)$ such that the pullback by~$g$ of any $\Sigma^0_{\alpha^*}(\Delta^1_1)$ set is $\tau_{\alpha^*}$-open. 

  To see this, we can repeat the argument, except that we define $p\preceq q$ to mean $t^p\preceq_{\alpha^*} t^q$ and $\zeta^p\preceq \zeta^q$. There are no new ideas needed, so for length considerations, we omit the details. 
\end{remark}

\section{Separators of iterated Fr\'echet ideals} \label{sec:separators_of_iterated_fr_echet_ideals}

Here we give a new proof of a result of Debs and Saint Raymond \cite{DebsSaintRaymond}, using our forcing methods and untagging.  Day and Marks \cite{DayMarks} gave another proof using forcing, though theirs is a different forcing notion and does not make use of untagging. The theorem (\cref{thm:frechet_separation} below) is not stated explicitly in \cite{DebsSaintRaymond}, but follows immediately from Theorem 3.2 and the proof of Theorem 6.5 in that paper. 

Recall that the first Fr\'echet ideal is the ideal of finite sets; the second is the ideal of sets, all but finitely many of whose columns are finite, and in general, the $\alpha\tth$ iterate of the Fr\'echet ideal are those sets such that for almost all~$n$, their $n\tth$ column belongs to the $(\alpha_n)\tth$ iterate. Thus the natural ``playing ground'' of the $\alpha\tth$ ideal is $2^{\Leaves_\alpha}$. We cast the definition in these terms.

\begin{definition}
For $x \in 2^{\Leaves_\alpha}$, the {\em filter labelling} $F^x = F^x_\alpha$ of $T_\alpha$ is defined as follows:
\begin{itemize}
\item For each $\sigma \in \Leaves_\alpha$, $F^x(\sigma) = x(\sigma)$;
\item For each $\sigma \in T_\alpha\setminus \Leaves_\alpha$,
\[
F^x(\sigma) = 1 \Longleftrightarrow \{ k \in \omega : F^x(\sigma\cat k) = 1\} \text{ is cofinite}.
\]
\end{itemize}

The {\em ideal labelling} $I^x = I^x_\alpha$ of $T_\alpha$ is the dual:
\begin{itemize}
\item For each $\sigma \in \Leaves_\alpha$, $I^x(\sigma) = x(\sigma)$;
\item For each $\sigma \in T_\alpha\setminus \Leaves_\alpha$,
\[
I^x(\sigma) = 0 \Longleftrightarrow \{ k \in \omega : I^x(\sigma\cat k) = 0\} \text{ is cofinite}.
\]
\end{itemize}

The {\em $\alpha\tth$ iterate of the Fr\'echet filter} is the set
\[
\+F_\alpha = \{ x \in 2^{\Leaves_\alpha} : F^x(\seq{}) = 1\}.
\]

The {\em $\alpha\tth$ iterate of the Fr\'echet ideal} is the dual:
\[
\+I_\alpha = \{ x \in 2^{\Leaves_\alpha} : I^x(\seq{}) = 0\}.
\]
\end{definition}

Our objective is the following theorem.
\begin{theorem}[Debs \& Saint Raymond, \cite{DebsSaintRaymond}] \label{thm:frechet_separation}
The $\alpha\tth$ iterates of the Fr\'echet filter and ideal cannot be separated by a ${\mathbf \Delta}^0_{\alpha+1}$ set.
\end{theorem}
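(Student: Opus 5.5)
The plan is to argue by contradiction with a variant of the untagging forcing of \cref{sec:topology_and_forcing}, adapted to the filter/ideal labellings. Suppose $B$ is $\mathbf{\Delta}^0_{\alpha+1}$ with $\+F_\alpha\subseteq B$ and $B\cap \+I_\alpha=\emptyset$. Fix a $\Sigma_{\alpha+1}$ code $\vphi=\bigvee_n\vphi_n$ for $B$ and a $\Sigma_{\alpha+1}$ code $\psi=\bigvee_n\psi_n$ for its complement, with each $\vphi_n,\psi_n$ a $\Pi_{\gamma}$ code for some $\gamma\le\alpha$.

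\textbf{The forcing.} I would define a notion of forcing $\mathbb{F}$ whose conditions are finite partial labellings $p\colon T_\alpha\to\{0,1\}$ of the following kind. A label $p(\s)=i$ on a non-leaf $\s$ is read as the commitment ``$\s\conc k$ is labelled $i$ for all $k$ beyond the children of $\s$ currently in $\dom p$''. The consistency requirement is that every child $\s\conc k\in\dom p$ with $k$ larger than the point at which the commitment was made carries the label $i$. To keep the conditions finite, I would record this threshold explicitly as part of the condition.

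For a sufficiently generic filter, every labelled node has cofinitely many children with its label, and every node gets labelled. Hence the generic point $x_G$ satisfies $F^{x_G}=I^{x_G}=\bigcup G$.

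I would then prove, for $\mathbb{F}$, analogues of the following earlier results:
\begin{itemize}
\item the compatibility criterion (\cref{lem:simple_forcing:compatibility_is_being_a_function});
\item the genericity lemma \cref{lem:simple_forcing:filters_are_points};
\item the restrictions $p\rest\beta$ (\cref{def:simple_forcing:restriction_of_condition_to_beta}), now keeping only nodes of rank $<\beta$;
\item $\beta$-completeness (\cref{def:simple_forcing:beta-complete}): all threshold children of rank $<\beta$ of committed nodes of higher rank are explicitly labelled;
\item the extension lemma \cref{lem:simple_forcing:untagging:extension_lemma};
\item the untagging proposition \cref{prop:simple_forcing:Pi_untagging_lemma}, whose proof is formally identical once the extension lemma is available.
\end{itemize}
From these I obtain the $\Sigma$-untagging statement \cref{prop:simple_forcing:Sigma_untagging_lemma}. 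I would apply it with $\beta=\alpha+1$, but inside the subforcing $\mathbb{F}_\alpha$ of conditions that do not label the root, exactly as in the proof of \cref{thm:Matrai}.

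\textbf{The contradiction.} Take $G\subseteq\mathbb{F}_\alpha$ sufficiently generic (equivalently, $\tau_\alpha$-generic). Then the root's children receive infinitely many $0$-labels and infinitely many $1$-labels, so $x_G\notin\+F_\alpha\cup\+I_\alpha$. Nevertheless $x_G$ lies in $B$ or in its complement; say $x_G\in B$.

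By the forcing theorem there is $p\in G$ strongly forcing some $\vphi_n$. By untagging we may take $p$ to be $\gamma$-complete, and then $p\rest{(\gamma+1)}$, which does not label the root, strongly forces $\vphi_n$. Now extend $p\rest{(\gamma+1)}$ by labelling the root $0$, with threshold above all children in its domain. Every generic through the extended condition lies in $\+I_\alpha$. By \cref{lem:basic_forcing:basic_properties_of_strong_forcing}(c) and the forcing theorem, such a generic is also in $[\vphi]=B$, contradicting $B\cap\+I_\alpha=\emptyset$.

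If instead $x_G\notin B$, I would run the symmetric argument with $\psi$, labelling the root $1$ to land in $\+F_\alpha\setminus B$.

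\textbf{Main obstacle.} The step I expect to be hardest is the extension lemma for $\mathbb{F}$. The thresholds make compatibility nonlocal: a commitment $p(\s)=i$ at a high-rank node may conflict with $r\rest\beta$ labelling some child $\s\conc k$ of rank $<\beta$ with $1-i$ beyond the threshold. The fix I would try first is to build the threshold data into $\beta$-completeness, requiring every child of rank $<\beta$ of a committed node to lie below its threshold and be explicitly labelled. This is finitely many children, by the finiteness fact from \cref{def:the_space_L_alpha}. Then any $r\le p\rest{(\beta+1)}$ agrees with $p$ on all such children, and $p\cup(r\rest\beta)$ is a condition.
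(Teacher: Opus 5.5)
\textbf{Verdict.} Your overall architecture is sound, but the step that gains the extra level is not justified as written.

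\textbf{The gap.} Your $G$ is generic only for the root-free subforcing $\mathbb{F}_\alpha$. So the forcing theorem gives you $p\in G$ with $p\Vdash^*_{\mathbb{F}_\alpha}\vphi_n$, where negations range only over root-free extensions. Your final step (monotonicity, then the forcing theorem for an $\mathbb{F}$-generic through $p$ with the root labelled $0$) needs $p\Vdash^*_{\mathbb{F}}\vphi_n$ instead.

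Untagging, as you invoke it, does not bridge this:
\begin{itemize}
\item its hypothesis is $\Vdash^*_{\mathbb{F}}$, which you do not have;
\item $p$ already omits the root, so restricting it removes nothing relevant;
\item in the case that matters, $\gamma=\alpha$, your restriction gives $p\rest{(\alpha+1)}=p$. (If every disjunct were $\Pi_{<\alpha}$, $B$ would be $\bSigma^0_\alpha$ and the naive argument would already suffice.)
\end{itemize}
Likewise, ``$\Sigma$-untagging at $\beta=\alpha+1$'' is vacuous for your restriction, since $\mathbb{F}_{\alpha+1}=\mathbb{F}$. The analogue of \cref{prop:simple_forcing:untagging_lemma_topologically} only relates $\tau_\beta$-generics to $\Sigma_\beta$ codes. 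That is one level short of the $\Sigma_{\alpha+1}$ codes you need for $\tau_\alpha$-generics.

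\textbf{The fix.} The missing ingredient is a transfer lemma: for $p\in\mathbb{F}_\alpha$ and a $\Pi_\delta$ code $\vphi$ with $\delta\le\alpha$, $p\Vdash^*_{\mathbb{F}_\alpha}\vphi$ if and only if $p\Vdash^*_{\mathbb{F}}\vphi$. Prove both directions together by induction on~$\delta$, writing $\vphi=\lnot\bigvee_m\psi_m$.
\begin{itemize}
\item Suppose $r\le p$ in $\mathbb{F}$ has $r\Vdash^*_{\mathbb{F}}\psi_m$, where $\psi_m$ is a $\Pi_{\gamma'}$ code with $\gamma'<\delta$. Extend $r$ to be $\gamma'$-complete and untag to $r\rest{(\gamma'+1)}$; since $\gamma'+1\le\alpha$, this drops the root.
\item Then $r\rest\alpha$ lies in $\mathbb{F}_\alpha$ and extends both $r\rest{(\gamma'+1)}$ and $p$. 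This is where $p\in\mathbb{F}_\alpha$ is used.
\item By induction, $r\rest\alpha\Vdash^*_{\mathbb{F}_\alpha}\psi_m$, contradicting $p\Vdash^*_{\mathbb{F}_\alpha}\vphi$.
\item The other direction follows directly from the induction hypothesis.
\end{itemize}
This is where untagging genuinely enters, and only below~$\alpha$. With the lemma, your final step works with $p$ itself.

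A minor point: the ``lie below its threshold'' clause of your $\beta$-completeness cannot be arranged by extension if thresholds are frozen. You only need the finitely many rank-$<\beta$ children of committed higher-rank nodes to be explicitly labelled. That is dense and suffices for your extension lemma.

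\textbf{Comparison with the paper.} Once repaired, your route differs from the paper's. The paper adds a third label \texttt{both} to a single forcing~$\UUU$. Its restriction keeps rank-$\beta$ nodes carrying a $0/1$ label, so $p\rest\gamma$ already forces $\Pi_\gamma$ facts. The generic through $p_{\texttt{both}}$ then avoids both $\mathcal{F}_\alpha$ and $\mathcal{I}_\alpha$. The $\Sigma_{\alpha+1}$ witness $q\in G\cap\UUU_\alpha$ cannot label the root, so the root can then be set to~$1$.

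Your version is the forcing counterpart of the $\tau_\alpha$ versus $\tau_{\alpha+1}$ arguments of \cref{sec:topology_and_forcing,sec:a_weak_dichotomy}. It trades the extra label for a second forcing plus the transfer lemma.
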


We will make use of a modified notion of forcing.

\begin{definition}
We let $\UUU$ be the collection of all finite partial functions $p: T_\alpha \to \{0, 1, \texttt{both}\}$ satisfying: if $\sigma \in \Leaves_\alpha \cap \dom p$, then $p(\sigma) \in \{0, 1\}$.

The set $\UUU$ is partially ordered as follows: for $p, q \in \UUU$, $q \le p$ if and only if:
\begin{itemize}
\item $p \subseteq q$; and
\item If $p(\sigma) \in \{0, 1\}$, then for all $k$ with $\sigma\cat k \in \dom q \setminus \dom p $, $q(\sigma\cat k) = p(\sigma)$.
\end{itemize}
Note that the set $\UUU$ is simpler than the set $\QQ$, but the extension relation is more complicated.
\end{definition}

For a filter $G\subset \UUU$, as above we define $x_G = \bigcup G\rest{\Leaves_\alpha}$. The labelling $\bigcup G$ is neither $F^x$ or $I^x$, but it does indicate which pieces of $x_G$ lie in the appropriate filters and ideals. 
The following is the analogue of \cref{lem:simple_forcing:filters_are_points}, proven by induction on the rank of $\sigma$. In analogy with \cref{def:simple_forcing:Dpoint}, we let $\Dpoint(\UUU)$ be the collection of the following dense subsets of~$\UUU$:
\begin{itemize}
  \item The sets $\{p\in \UUU\,:\, p(\sigma)\converge\}$ for $\sigma\in T_\alpha$;
  \item The sets $\{p\in \UUU\,:\, p\le q\lor p\perp q\}$ for $q\in \UUU$; and
  \item For non-leaf $\sigma\in T_\alpha$, $i\in\{0,1\}$, and $k\in \Nat$, the sets 
  \[
    \left\{ p\in \UUU  \,:\,  p(\s)= \texttt{both} \then (\exists m>k)\,\,p(\s\conc m)=i \right\}. 
  \]
\end{itemize}

\begin{lemma}\label{lem:frechet_forcing_equals_truth}
Suppose that $G\subset \UUU$ is $\Dpoint(\UUU)$-generic. Then $x_G\in 2^{\Leaves_{\alpha}}$, and for all~$\s$,  $F^{x_G}(\sigma) = 1 \iff \left(\bigcup G\right) (\sigma) = 1$, and $I^{x_G}(\sigma) = 0 \iff \left(\bigcup G\right)(\sigma) = 0$.
\end{lemma}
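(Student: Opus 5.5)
Let $S=\bigcup G$. Since $G$ is directed, $S$ is a function. Since $G$ meets each set $\{p:p(\s)\converge\}$, $S$ is total on $T_\alpha$. As conditions take values in $\{0,1\}$ on leaves, $x_G=S\rest{\Leaves_\alpha}\in 2^{\Leaves_\alpha}$. The two equivalences will be proved simultaneously, by induction on $\rk(\s)$, in the sharper form
\[
F^{x_G}(\s)=1\iff S(\s)=1, \qquad I^{x_G}(\s)=0\iff S(\s)=0 .
\]
For leaves both reduce to $F^{x_G}(\s)=I^{x_G}(\s)=x_G(\s)=S(\s)$.

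The main structural fact comes from the ordering on $\UUU$. Suppose $S(\s)=i\in\{0,1\}$, witnessed by some $p\in G$ with $p(\s)=i$. Every $q\in G$ extending $p$ must satisfy $q(\s\conc k)=i$ for all $\s\conc k\in\dom q\setminus\dom p$. Since $\dom p$ is finite and any two members of $G$ have a common extension in $G$, we get $S(\s\conc k)=i$ for all but finitely many $k$. This uses totality of $S$ and the fact that a child label in $S$ is realised in some $q\in G$, which can be taken below $p$.

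For a non-leaf $\s$, split on the value of $S(\s)$ and apply the induction hypothesis to the children.

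\emph{Case $S(\s)=1$.} Cofinitely many children have $S(\s\conc k)=1$, so by induction cofinitely many have $F^{x_G}(\s\conc k)=1$, hence $F^{x_G}(\s)=1$. Also cofinitely many children have $S(\s\conc k)\neq 0$, so by induction $I^{x_G}(\s\conc k)=1$ for cofinitely many $k$. Thus $\{k: I^{x_G}(\s\conc k)=0\}$ is finite, not cofinite, and $I^{x_G}(\s)=1$. Both equivalences hold.

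\emph{Case $S(\s)=0$.} This is symmetric: $I^{x_G}(\s)=0$ and $F^{x_G}(\s)=0$.

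\emph{Case $S(\s)=\texttt{both}$.} Here the third family of dense sets in $\Dpoint(\UUU)$ is used. For each $k$ and each $i\in\{0,1\}$, $G$ meets the set $\{p: p(\s)=\texttt{both}\to(\exists m>k)\,p(\s\conc m)=i\}$. Since some member of $G$ labels $\s$ by $\texttt{both}$, compatibility forces the consequent. So there are infinitely many $m$ with $S(\s\conc m)=1$ and infinitely many with $S(\s\conc m)=0$. By induction, infinitely many children have $F^{x_G}=0$, since $S(\s\conc m)=0\neq 1$, so $F^{x_G}(\s)=0\neq 1$. Likewise infinitely many children have $I^{x_G}=1$, since $S(\s\conc m)=1\neq0$, so $I^{x_G}(\s)=1\neq 0$. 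Both equivalences hold.

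The step I expect to need the most care is verifying that the listed sets really are dense in $\UUU$, which the statement presupposes.

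\emph{Totality set $\{p:p(\s)\converge\}$.} Extending $p$ by labelling a new node $\s$ is allowed provided that, when $\s$'s parent carries a $\{0,1\}$ label, $\s$ receives that same label. If $\s$ is a leaf whose parent is labelled $\texttt{both}$, we may pick either value freely.

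\emph{The set for $\texttt{both}$.} Given $p$ with $p(\s)=\texttt{both}$, add $\s\conc m$ with label $i$ for some large $m$ outside $\dom p$. This is legal because the parent's label is $\texttt{both}$, so no restriction applies. If $i$ conflicts with nothing and $\s\conc m$ is not a leaf, its own future children will inherit the label $i$.

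The rest of the argument is routine once the ordering's inheritance rule is exploited as above.
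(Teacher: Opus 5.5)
Your route, induction on $\rk(\s)$, is the one the paper has in mind; it says only ``by induction on the rank of $\s$''. Your inheritance argument (if $S(\s)=i\in\{0,1\}$ then $S(\s\conc k)=i$ for cofinitely many $k$) and your cases $S(\s)=0$ and $S(\s)=1$ are correct.

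\textbf{The gap is in the case $S(\s)=\texttt{both}$.} The sets $\{p : p(\s)=\texttt{both}\to(\exists m>k)\,p(\s\conc m)=i\}$ contain every condition that leaves $\s$ unlabelled. In particular they contain the empty condition, which lies in every filter. So $G$ can meet them vacuously, and ``compatibility forces the consequent'' does not follow: a witness $p\in G$ with $p(\s)$ undefined is compatible with everything relevant and says nothing about the children of $\s$.

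This is not a harmless imprecision. With the family read literally (as the paper writes it), the lemma is false. Take $\alpha=1$, and let $G$ be the set of $p$ such that $p(\seq{})$ is \texttt{both} or undefined and $p(\seq{m})=0$ for every leaf $\seq{m}\in\dom p$.
\begin{itemize}
\item $G$ is a filter: for directedness, $p\cup q\cup\{(\seq{},\texttt{both})\}$ is a common extension.
\item $G$ meets every set $\{p : p(\tau)\converge\}$.
\item $G$ meets every $\{p : p\le q \lor p\perp q\}$. If $q\notin G$, then either $q(\seq{})\in\{0,1\}$ or $q(\seq{m})=1$ for some $m$. Then $\{(\seq{},\texttt{both})\}$, respectively $\{(\seq{m},0)\}$, is a member of $G$ incompatible with $q$.
\item $G$ meets the third family through $\emptyset$.
\end{itemize}
Yet $S(\seq{})=\texttt{both}$ while $x_G$ is constantly $0$, so $I^{x_G}(\seq{})=0$. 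The concern you flagged, density, is therefore not the real issue: the literal sets are trivially dense, but too weak.

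\textbf{The repair} is to use the sets $\{p : p(\s)\converge \text{ and } (p(\s)=\texttt{both}\to(\exists m>k)\,p(\s\conc m)=i)\}$ instead.
\begin{itemize}
\item These are dense. First label $\s$, copying the parent's label if that label is $0$ or $1$. If the label is \texttt{both}, add a fresh child $\s\conc m$ with $m>k$, labelled $i$. This is a legal extension because its parent carries \texttt{both}.
\item A member of $G$ in such a set is defined at $\s$, so it labels $\s$ by $S(\s)=\texttt{both}$. Hence it has a child $\s\conc m$ with $m>k$ labelled $i$.
\end{itemize}
With this change, the rest of your \texttt{both} case, and hence the whole induction, goes through as you wrote it.
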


Define $p_0, p_1 \in \UUU$ by $p_0(\seq{}) = 0$, $p_1(\seq{}) = 1$, and both are undefined everywhere else.  Then $\+F_\alpha$ is equivalent to $[p_1]$ modulo a $\tau_{\UUU}$-meagre set, and the same for $\+I_\alpha$ and $[p_0]$.

We have a modified version of restriction.
\begin{definition}
For $p \in \UUU$ and $\beta \le \alpha$, we define a condition $p\rest \beta \subseteq p$ as follows: for $\sigma \in \dom p$, $\sigma \in \dom p\rest \beta$ if and only if either:
\begin{itemize}
\item $\rk(\sigma) < \beta$; or
\item $\rk(\sigma) = \beta$ and $p(\sigma) \neq \texttt{both}$.
\end{itemize}
\end{definition}

Observe that we ``gain an ordinal'' compared to the previous notion of restriction; for example, $p\rest{\Leaves_\alpha}$ is $p\rest{0}$, not $p\rest{1}$.  This will occur again in the modified untagging lemma.

We use the same definition of strong forcing for this new notion of forcing, and the analog of \cref{prop:simple_forcing:forcing_theorem} is by the same proof.

We again define a notion of $\beta$-completeness. Again the idea is that if $\rk(\s)$ is a limit, $\beta<\rk(\s)$, and $p(\s)$ tells us what the values of all undefined $p(\s\conc k)$ should be, then $p$ records these values where $\rk(\s\conc k)<\beta$. 
\begin{definition}
For $\beta \le \alpha$, we say that $p \in \UUU$ is {\em $\beta$-complete} if for all $\sigma\in \dom p$ with $\rk(\sigma) > \beta$ and $p(\sigma) \neq \texttt{both}$, $p(\sigma\cat k)\converge$ for all $k$ such that $\rk(\sigma\cat k) < \beta$.
\end{definition}

The following density of $\beta$-complete conditions is straightforward.
\begin{lemma}
Let $\beta \le \alpha$.  For all $p \in \UUU$, there is a $\beta$-complete $p'$ extending $p$.
\end{lemma}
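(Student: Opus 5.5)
The plan is to mimic the proof of \cref{lem:simple_forcing:beta_complete_is_dense}, with adjustments for the extension relation of $\UUU$. Given $p\in\UUU$, let $S$ be the set of $\s\in\dom p$ with $\rk(\s)>\beta$ and $p(\s)\in\{0,1\}$, and let $N$ be the set of pairs $(\s,k)$ with $\s\in S$, $\rk(\s\conc k)<\beta$ and $\s\conc k\notin\dom p$. We define $p'\supseteq p$ by setting $p'(\s\conc k)=p(\s)$ for every $(\s,k)\in N$, and leaving $p'=p$ elsewhere.

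The first step is to check that $p'$ is a finite partial function into $\{0,1,\texttt{both}\}$ whose values at leaves lie in $\{0,1\}$. For finiteness we use the fact noted after \cref{def:the_space_L_alpha}: for each $\s$ and each $\beta<\rk(\s)$ there are only finitely many $k$ with $\rk(\s\conc k)<\beta$. Since $S$ is finite, $N$ is finite. The new values $p(\s)$ lie in $\{0,1\}$, so the leaf condition holds automatically. Distinct pairs in $N$ give distinct nodes, and none of these nodes is in $\dom p$, so $p'$ is well defined.

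The second step is to verify that $p'\le p$. Clearly $p\subseteq p'$. For the second clause, take $\tau\in\dom p$ with $p(\tau)\in\{0,1\}$ and $\tau\conc k\in\dom p'\setminus\dom p$. Then $\tau\conc k=\s\conc k'$ for some $(\s,k')\in N$, so $\tau=\s$ and $p'(\tau\conc k)=p(\tau)$, as required. Note that the rank restriction in the definition of $N$ is what keeps the added set finite, and the fact that we copy the parent's value is what makes the extension legitimate.

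The third step is to check that $p'$ is $\beta$-complete. Let $\s\in\dom p'$ with $\rk(\s)>\beta$ and $p'(\s)\ne\texttt{both}$. If $\s\in\dom p$, then $\s\in S$, and every $\s\conc k$ with $\rk(\s\conc k)<\beta$ is in $\dom p'$ by construction. If $\s\notin\dom p$, then $\s$ is one of the newly added nodes, so $\rk(\s)<\beta$, contradicting $\rk(\s)>\beta$. So no iteration of the construction is needed.

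None of these steps is a serious obstacle. The only point needing care is that new nodes could themselves require completion, and this is ruled out by the rank bound in the third step.
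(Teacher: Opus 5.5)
Your proof is correct and is the intended argument. The paper only calls this lemma ``straightforward''; your construction is the $\UUU$-analogue of the proof of \cref{lem:simple_forcing:beta_complete_is_dense}. You copy $p(\s)$ onto the finitely many missing children of rank $<\beta$, as the extension relation of $\UUU$ requires, and you rightly add no witnesses, since $\UUU$ imposes no internal consistency condition.
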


We have our version of the key technical lemma.
\begin{lemma}\label{lem:frechet_technical}
Let $\gamma' < \gamma \le \alpha$.  If $p \in \UUU$ is $\gamma$-complete and $r$ extends $p\rest \gamma$, then $(r\rest \gamma') \cup p$ extends both $p$ and $r\rest \gamma'$.
\end{lemma}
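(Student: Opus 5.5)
The plan is to verify the three required facts directly from the definitions. Write $q = r\rest\gamma'$ and $s = q\cup p$. We must show that $s$ is a function, that $s\in \UUU$, and that $s\le p$ and $s\le q$ in the ordering of $\UUU$. Only these ingredients are used: domains of restrictions contain only nodes of rank $\le$ the restriction level; $r\le p\rest\gamma$; and the $\gamma$-completeness of $p$.

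First, $s$ is a function. Suppose $\s\in \dom p\cap\dom q$. Since $\s\in\dom q$, we have $\rk(\s)\le\gamma'<\gamma$. Hence $\s\in\dom(p\rest\gamma)$, because every node of $\dom p$ of rank $<\gamma$ lies there. As $r$ extends $p\rest\gamma$, we get $r(\s)=p(\s)$, and $q(\s)=r(\s)$ because $q\subseteq r$. So $p$ and $q$ agree on common nodes. Moreover $s\in\UUU$ trivially: $\dom s$ is finite, and leaves are labelled in $\{0,1\}$ because both $p$ and $q$ satisfy this.

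Next, $s\le q$. Clearly $q\subseteq s$. Now let $q(\s)\in\{0,1\}$ and suppose $\s\conc k\in\dom s\setminus\dom q$, so $\s\conc k\in\dom p$. Since $\rk(\s)\le\gamma'$, we have $\rk(\s\conc k)<\gamma'<\gamma$. Then $\s\conc k\in\dom(p\rest\gamma)\subseteq\dom r$. Because its rank is $<\gamma'$, it also lies in $\dom(r\rest\gamma')=\dom q$, which is a contradiction. So this clause holds vacuously.

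The main step is $s\le p$. Clearly $p\subseteq s$. Let $p(\s)\in\{0,1\}$ and $\s\conc k\in\dom q\setminus\dom p$; we need $q(\s\conc k)=p(\s)$. Note that $\rk(\s\conc k)\le\gamma'<\gamma$. We split on the rank of $\s$.

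If $\rk(\s)>\gamma$, then $\gamma$-completeness of $p$, applied to $p(\s)\neq\texttt{both}$ and $\rk(\s\conc k)<\gamma$, forces $\s\conc k\in\dom p$. This is a contradiction, so this case cannot occur.

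If $\rk(\s)\le\gamma$, then $\s\in\dom(p\rest\gamma)$: either its rank is $<\gamma$, or its rank equals $\gamma$ and $p(\s)\neq\texttt{both}$. Also $(p\rest\gamma)(\s)=p(\s)\in\{0,1\}$, and $\s\conc k\notin\dom(p\rest\gamma)$ since it is not in $\dom p$, while $\s\conc k\in\dom r$. So the definition of $r\le p\rest\gamma$ gives $r(\s\conc k)=p(\s)$, and therefore $q(\s\conc k)=r(\s\conc k)=p(\s)$.

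The only delicate point is this last case split. It is exactly where $\gamma$-completeness and the strictness $\gamma'<\gamma$ are needed: the first handles high-rank parents, and the second guarantees the child has rank $<\gamma$.
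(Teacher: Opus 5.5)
Your proof is correct and takes essentially the same route as the paper's: the key step is showing that a new child $\sigma\conc k$ contributed by $r\rest\gamma'$ receives the value $p(\sigma)$. You handle it exactly as the paper does, using $\gamma$-completeness to force $\rk(\sigma)\le\gamma$, so that $\sigma\in\dom(p\rest\gamma)$ and the extension clause of $r\le p\rest\gamma$ applies. You also check explicitly that the union is a function and that the clause for extending $r\rest\gamma'$ holds vacuously, which the paper leaves as ``immediate by definition''.
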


\begin{proof}
Extending $r\rest \gamma'$ is immediate by definition.

To show extension of $p$, the only concern is that there might be $\sigma \in \dom p$ with $p(\sigma) \neq \texttt{both}$, and $k$ with $\sigma \cat k \in \dom (r\rest \gamma') \setminus \dom p$.  But this indicates that $\rk(\sigma \cat k) \le \gamma' < \gamma$, so by the $\gamma$-completeness of $p$ we know that $\rk(\sigma) \le \gamma$, and so $\sigma \in \dom (p\rest \gamma)$.  By definition of the extension relation $(r\rest \gamma')(\sigma\cat k) = r(\sigma\cat k) = p(\sigma)$, so there is no obstacle to extending $p$.
\end{proof}

Now we have our untagging lemma. Note that in contrast with \cref{prop:simple_forcing:Pi_untagging_lemma}, the somewhat different definition of $p\rest{\gamma}$ allows us to ``gain a quantifier''; to force a $\Pi_\gamma$ fact, $p\rest\gamma$ suffices, we don't need $p\rest{(\gamma+1)}$. 

\begin{proposition}
Let $\gamma \le \alpha$, and let $\varphi$ be a $\Pi_\gamma$ Borel code.  For $p \in \UUU$, if $p$ is $\gamma$-complete and $p \Vdash^* \varphi$, then $p\rest \gamma \Vdash^* \varphi$.
\end{proposition}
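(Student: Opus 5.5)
We argue by induction on~$\gamma$, following the proof of \cref{prop:simple_forcing:Pi_untagging_lemma}, with \cref{lem:frechet_technical} playing the role of \cref{lem:simple_forcing:untagging:extension_lemma}.

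\emph{Base case $\gamma=0$.} A $\Pi_0$ code is a finite partial function $r$ on $\Leaves_\alpha$, and $p\force^* r$ means $r\subseteq p$. Since leaves have rank~$0$ and take values in $\{0,1\}$, they survive in $p\rest 0$. Hence $r\subseteq p\rest 0$, and therefore $p\rest 0\force^* r$.

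\emph{Inductive step, $\gamma>0$.} Write $\vphi=\lnot\psi$ with $\psi=\bigvee_n\psi_n$, each $\psi_n$ being $\Pi_{\gamma_n}$ for some $\gamma_n<\gamma$. We prove the contrapositive. Suppose $p$ is $\gamma$-complete and $p\rest\gamma\nforce^*\vphi$. Then there is some $r\le p\rest\gamma$ and some~$n$ with $r\force^*\psi_n$; put $\gamma'=\gamma_n<\gamma$.
\begin{orderedlist}
\item By the density of $\gamma'$-complete conditions and monotonicity of $\force^*$ (the analogue of \cref{lem:basic_forcing:basic_properties_of_strong_forcing}(c), which holds with the same proof), we may extend $r$ to be $\gamma'$-complete. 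It still extends $p\rest\gamma$, by transitivity of~$\le$.
\item The induction hypothesis gives $r\rest\gamma'\force^*\psi_n$, hence $r\rest\gamma'\force^*\psi$.
\item By \cref{lem:frechet_technical}, applied with $\gamma'<\gamma$, the condition $q=(r\rest\gamma')\cup p$ extends both $p$ and $r\rest\gamma'$.
\item By monotonicity, $q\force^*\psi$. So $p$ has an extension strongly forcing~$\psi$, and therefore $p\nforce^*\lnot\psi$.
\end{orderedlist}

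\emph{Points to check.} Some routine facts must be verified for the new restriction and ordering: $p\rest\beta\in\UUU$, $p\le p\rest\beta$, and $\le$ is transitive. The bookkeeping for these is straightforward.

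In step~(ii) we also need $r\rest\gamma'$ to be a legitimate condition with $r\le r\rest\gamma'$. This holds because restriction only removes nodes.

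\emph{Main obstacle.} Unlike \cref{prop:simple_forcing:Pi_untagging_lemma}, there is no need to pass from $r\rest(\gamma'+1)$ up to $r\rest\gamma$: \cref{lem:frechet_technical} already combines $r\rest\gamma'$ directly with~$p$. This is exactly where the ``gained ordinal'' is used. The step I expect to need most care is confirming that the hypotheses of \cref{lem:frechet_technical} hold in the form used, namely that $r$ extends $p\rest\gamma$ in the new ordering. Making $r$ $\gamma'$-complete in step~(i) preserves this, since extensions compose.
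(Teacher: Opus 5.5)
Your proof is correct and is essentially the paper's own argument. You induct on $\gamma$, argue the contrapositive, extend $r$ to be $\gamma'$-complete, apply the induction hypothesis to get $r\rest{\gamma'}\force^*\psi_n$, and use \cref{lem:frechet_technical} to see that $(r\rest{\gamma'})\cup p$ extends both $p$ and $r\rest{\gamma'}$.

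One small quibble: in $\UUU$, deleting nodes from a condition does not in general give a weaker condition. So $r\le r\rest{\gamma'}$ holds not merely because restriction removes nodes, but because no removed node is a child of a retained node, since children of retained nodes have rank $<\gamma'$. In any case, this fact is not actually used in your argument.
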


\begin{proof}
The proposition is proved by induction on $\gamma$.

The base case $\gamma = 0$ follows from the definition of strong forcing, along with the fact that $p(\sigma) \neq \texttt{both}$ for all $\sigma \in \Leaves_\alpha$.

Suppose that $\gamma > 0$, and that the proposition has been verified for all $\gamma' < \gamma$.  Let $\varphi$ be a $\Pi_\gamma$ Borel code, and let $p \in \UUU$ be $\gamma$-complete.  We prove the contrapositive.

Suppose $p\rest \gamma \not \Vdash^* \varphi$.  Since $\varphi$ is $\neg \bigvee_n \psi_n$ (where each $\psi_n$ is $\Pi_{\gamma'}$ for some $\gamma' < \gamma$), by definition this means there is some $n$ and some $r$ extending $p\rest \gamma$ such that $r \Vdash^* \psi_n$.  We may assume $r$ is $\gamma'$-complete.  By induction, $r\rest \gamma' \Vdash^* \psi_n$.  By \cref{lem:frechet_technical}, $(r\rest \gamma') \cup p$ extends $p$ and $r\rest \gamma'$, and thus witnesses that $p \not \Vdash^* \varphi$.
\end{proof}

As usual we obtain:

\begin{corollary}
Let $\gamma \le \alpha$, and let $\varphi$ be a $\Sigma_{\gamma+1}$ Borel code.  If $G\subset \UUU$ is sufficiently generic, then $x_G\in [\vphi]$ if and only if there is some $p\in (G\cap \UUU_\gamma)$such that $p \Vdash^* \varphi$. 
\end{corollary}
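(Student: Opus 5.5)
The plan follows the proof of \cref{prop:simple_forcing:Sigma_untagging_lemma}, with the $\UUU$-versions of the tools in place of the $\QQ$-versions. Here $\UUU_\gamma = \{p\rest\gamma : p\in\UUU\}$, with the modified restriction. Write $\vphi=\bigvee_n\vphi_n$, where each $\vphi_n$ is $\Pi_{\gamma_n}$ for some $\gamma_n\le\gamma$ (since $\vphi$ is $\Sigma_{\gamma+1}$).

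First I fix the genericity requirements. For each $p\in\UUU$ and $\delta\le\alpha$, let $E_{p,\delta}$ be the set of $q\in\UUU$ that either extend $p$ and are $\delta$-complete, or are incompatible with $p$. This set is dense by the density of $\delta$-complete conditions. By the analogue of \cref{prop:simple_forcing:forcing_theorem} for $\UUU$, there is a countable $\+D\supseteq\Dpoint(\UUU)$ such that for every $\+D$-generic $G$ we have $x_G\in[\vphi]$ iff some $p\in G$ satisfies $p\force^*\vphi$. I take $G$ to be generic for $\+D$ together with all the $E_{p,\delta}$.

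The easy direction: if $p\in G\cap\UUU_\gamma$ and $p\force^*\vphi$, then $p\in G$, so $x_G\in[\vphi]$ by the forcing theorem.

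For the other direction, suppose $x_G\in[\vphi]$. Take $p\in G$ with $p\force^*\vphi$; then $p\force^*\vphi_n$ for some $n$. Since $G$ meets $E_{p,\gamma_n}$ and contains $p$, there is some $q\in G$ with $q\le p$ that is $\gamma_n$-complete. By \cref{lem:basic_forcing:basic_properties_of_strong_forcing}(c), $q\force^*\vphi_n$. The untagging proposition for $\UUU$ then gives $q\rest{\gamma_n}\force^*\vphi_n$, hence $q\rest{\gamma_n}\force^*\vphi$.

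It remains to move from $q\rest{\gamma_n}$ to $q\rest\gamma$ and to see that the latter lies in $G$. For this I need two facts about the modified restriction:
\begin{itemize}
\item $q\le q\rest\gamma\le q\rest{\gamma_n}$ in $\UUU$, so $q\rest\gamma\in G$ by upward closure;
\item strong forcing is preserved downward, so $q\rest\gamma\force^*\vphi$, and $q\rest\gamma\in\UUU_\gamma$.
\end{itemize}

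The main obstacle is the first fact. Extension in $\UUU$ is not mere inclusion. If $q(\s)\in\{0,1\}$ and $\s\in\dom(q\rest\delta)$, then every $\s\conc k$ lying in $\dom q$ but not in $\dom(q\rest\delta)$ must carry the same value as $\s$. So I must check that whenever $\s\in\dom(q\rest\delta)$ has a non-\texttt{both} value, all children of $\s$ in $\dom q$ already lie in $\dom(q\rest\delta)$.

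This holds by rank. If $\rk(\s)\le\delta$ and $q(\s)\ne\texttt{both}$, then every child has rank $<\rk(\s)\le\delta$, so it belongs to $q\rest\delta$. Thus the restriction never omits a child of a non-\texttt{both} node that it includes. The same computation comparing $q\rest\gamma$ with $q\rest{\gamma_n}$ gives $q\rest\gamma\le q\rest{\gamma_n}$. Once these order facts are established, the corollary follows.
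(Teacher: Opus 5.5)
Your proposal is correct and is essentially the argument the paper intends by ``as usual'': the proof of the $\Sigma_\beta$ untagging proposition for $\QQ$, carried over to $\UUU$ with the $\Pi_{\gamma_n}$ untagging proposition for $\UUU$ applied to each disjunct. The one point the paper leaves implicit is that $q\le q\rest{\gamma}\le q\rest{\gamma_n}$ holds in $\UUU$, whose order is not mere inclusion; you verify this correctly by noting that a node retained by $q\rest{\delta}$ has rank $\le\delta$, so all of its children in $\dom q$ are retained as well, which makes the extension clause vacuous.
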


We are now ready to prove \cref{thm:frechet_separation}.

\begin{proof}
Towards a contradiction, suppose $Z$ were a ${\mathbf \Delta}^0_{\alpha+1}$ set with $\+I_\alpha\subseteq Z$ and $\+F_\alpha\subseteq Z^\complement$. Fix $\varphi_i$ and $\varphi_f$, $\Sigma_{\alpha+1}$ Borel codes for $Z$ and its complement, respectively.

Let $p_{\texttt{both}} \in \UUU$ be given by $p_{\texttt{both}}(\seq{}) = \texttt{both}$, and $p_{\texttt{both}}$ is undefined everywhere else.  Let $G$ be a sufficiently generic filter containing $p_{\texttt{both}}$, and let $x = x_G$.

Suppose $x \in Z$.  Fix $q \in G\cap \UUU_{\alpha}$ with $q \Vdash^* \varphi_i$. Since $q\in \UUU_\alpha$, $q$ is undefined at the root.  Define $q'$ extending $q$ by the definition $q'(\seq{}) = 1$.  Note that $q' \le q$, and thus $q' \Vdash^* \varphi_i$.

Let $H$ be a sufficiently generic filter containing $q'$.  Then $x_H \in Z$, since $q' \Vdash^* \varphi_i$.  But $F^{x_H}(\seq{}) = 1$ by \cref{lem:frechet_forcing_equals_truth}, and so $x_H\in \+F_\alpha$, a contradiction. If $x \not \in Z$, mutatis mutandis.
\end{proof}

\section{Questions} \label{sec:questions}

We state some open questions. 

\begin{question}
    Is there a way to define graphs $H_\alpha$ for $\alpha$ which are not successors of successors?
\end{question}

One possibility would be to replace the relation $s\prec_{\alpha^*}t$ in the definition of $n^t$ and $k^t(m)$ by the relation ``for all $\s$ with $|\s|\ge 3$, if $T^s(\s)\converge$ then $T^t(\s) = T^s(\s)$''. What quickly goes wrong is the property \cref{lem:true_stages:basic_properties}\ref{item:true_stages:basic_properties:diamond}. It would be interesting to see if the partial labellings $T^t$ can be modified so that this property is recovered. Even then, it is not clear how to prove that such a graph is not $\bSigma^0_\alpha$-colourable. 

\smallskip

For the following question, let $\AAA_3 = \left\{ (a,a) \,:\,  a= (x,y)\in \YYY_3 \andd T^x(\rooot)=1 \right\}$, and let $\BBB_3$ be the directed graph $H_3$. Recall that for Polish spaces $X$ and~$Y$, a set $C\subseteq X\times Y$ is $(\bSigma^0_\alpha\times \bSigma^0_\alpha)_\s$ if it is of the form $\bigcup_n (D_n\times E_n)$, where each $D_n$ and $E_n$ are $\bSigma^0_\alpha$. The results above show that there is no set $C\subseteq (\YYY_3)^2$ which is $(\bSigma^0_3\times \bSigma^0_3)_\s$ and separates $\AAA_3$ from $\BBB_3$. The question is whether this is a least example: 

\begin{question}
  Suppose that $X$, $Y$ are Polish spaces, that $A,B\subseteq X\times Y$ are $\bSigma^1_1$ and disjoint, and further, that there is no set $C\subseteq X\times Y$ which is $(\bSigma^0_3\times \bSigma^0_3)_\s$ such that $A\subseteq C$ and $B\subseteq C^\complement$. 

  Must there be continuous functions $f\colon \YYY_3\to X$ and $g\colon \YYY_3\to Y$ such that for all $(a,a)\in \AAA_3$, $(f(a),g(a))\in A$, and for all $(a,b)\in \BBB_3$, $(f(a), f(b))\in B$?
\end{question}

The background here is as follows. Lecomte \cite{L1} derived from the $\mathbb{G}_0$ dichotomy (\cref{thm:G_0_dichotomy}) a dichotomy result, characterising when two disjoint analytic sets can be separated by a countable union of Borel rectangles. He also showed that \cref{thm:G_0_dichotomy} is an easy corollary of this other dichotomy. In \cite{LecomteZeleny}, Lecomte and Zelen\'y found least examples of sets that are not separable by $(\bSigma^0_1\times \bSigma^0_1)_\s$ sets, and by $(\bSigma^0_2\times \bSigma^0_2)_\s$ sets; the problem for $\alpha =3$ and higher is still open.

\bibliography{mainbib}
\bibliographystyle{alpha}

\end{document}